\documentclass[11pt]{article}

\usepackage[english]{babel}
\usepackage{amsmath,amsthm,amssymb}
\usepackage[all]{xy}
\usepackage{setspace}
\usepackage{tikz-cd, mathrsfs, mathtools}
\usepackage{color}
\definecolor{grau}{rgb}{0.3,0.3,0.3}
\usepackage[colorlinks, linkcolor=grau, citecolor=grau, urlcolor=grau]{hyperref}
\usepackage{breakurl}
\urlstyle{same}
\usepackage{bibspacing}
\usepackage[top=1.3in, bottom=1.6in, left=1.3in, right=1.3in]{geometry}
\frenchspacing
\sloppy
\usepackage{booktabs}
\usepackage[ruled,vlined]{algorithm2e}
\usepackage{enumerate}

\newtheorem{theorem}{Theorem}[section]
\newtheorem{conjecture}[theorem]{Conjecture}
\newtheorem{corollary}[theorem]{Corollary}
\newtheorem{lemma}[theorem]{Lemma}

\newtheorem{proposition}[theorem]{Proposition}

\theoremstyle{definition}

\numberwithin{equation}{section}

\title{Rational points and rational moduli spaces} 

\author{ Shijie Fan \ \ and \ \  Rafael von K\"anel 
}
\newcommand{\OL}{\mathcal O}

\newcommand{\isomto}{\overset \sim \to}

\newcommand{\ZZ}{\mathbb Z}
\newcommand{\Z}{\mathbb Z}
\newcommand{\Aut}{\textup{Aut}}

\newcommand{\Q}{\mathbb Q}

\newcommand{\Hom}{\textup{Hom}}

\newcommand{\spec}{\textnormal{Spec}}

\newcommand{\QQ}{\mathbb Q}

\newcommand{\End}{\textup{End}}

\newcommand{\absg}{\underline{A}_g}

\newcommand{\sch}{\textnormal{(Sch)}}
\newcommand{\sets}{\textnormal{(Sets)}}

\newcommand{\CC}{\mathbb C}

\newcommand{\RR}{\mathbb R}
\newcommand{\cmp}{\mathcal M_{\mathcal P}}

\newcommand{\rad}{\textnormal{rad}}

\def\sl2{\textnormal{SL}_2}
\newcommand{\NN}{\mathbb N}
\newcommand{\mb}{\bar{M}}
\newcommand{\mbd}{\bar{M}^{\textnormal{deg}}}
\newcommand{\sabsg}{\underline{\mathcal A}_g}
\newcommand{\xt}{\tilde{X}}
\newcommand{\qb}{\bar{\mathbb Q}}
\newcommand{\bs}{\mathbb S}

\def\proj{\textnormal{Proj}}
\def\gl2{\textnormal{GL}_2}

\def\mgl2{M_{\textnormal{GL}_2,g}}

\newcommand {\OK}  {{\mathcal O_{K}}}

\newcommand{\DarkGreenempty}[1]{}
\newcommand{\M}{\mathcal{M}}
\renewcommand{\P}{\mathcal{P}}








\onehalfspacing

\usepackage[continuous, page]{pagenote}
\makepagenote

\begin{document}
\date{}
\maketitle

{\scriptsize
\begin{abstract}
Let $X$ be a variety over $\QQ$. We introduce a geometric non-degenerate criterion for $X$ using moduli spaces $M$ over $\QQ$ of abelian varieties. If $X$ is non-degenerate, then we construct via $M$ an open dense moduli space $U\subseteq X$ whose forgetful map defines a Par{\v{s}}in construction for $U(\QQ)$. For example if $M$ is a Hilbert modular variety then $U$ is a coarse Hilbert moduli scheme and $X$ is non-degenerate iff a projective model $Y\subset \bar{M}$ of $X$ over $\QQ$ contains no singular points of the minimal compactification $\bar{M}$. We motivate our constructions when $M$ is a rational variety over $\QQ$ with $\dim(M)>\dim(X)$.

We study various geometric aspects of the non-degenerate criterion and we deduce arithmetic applications: If $X$ is non-degenerate, then $U(\QQ)$ is finite by Faltings. Moreover, our constructions are made for the effective strategy which combines the method of Faltings (Arakelov, Par{\v{s}}in, Szpiro) with modularity and Masser--W\"ustholz isogeny estimates. When $M$ is a coarse Hilbert moduli scheme, we use this strategy to explicitly bound the height and the number of $x\in U(\QQ)$ if $X$ is non-degenerate.

We illustrate our approach in the case when $M$ is the Hilbert modular surface given by the classical icosahedron surface studied by Clebsch, Klein and Hirzebruch. For any curve $X$ over $\QQ$, we construct and study explicit projective models $Y\subset\mb$ called ico models. If $X$ is non-degenerate, then we give via $Y$ an effective Par{\v{s}}in construction and an explicit Weil height bound for $x\in U(\QQ)$. As most ico models are non-degenerate and $X\setminus U$ is controlled,  this establishes the effective Mordell conjecture for large classes of (explicit) curves over $\QQ$. We also solve the ico analogue of the generalized Fermat problem by combining our height bounds with Diophantine approximations.



\end{abstract}
}


\newpage

{\scriptsize
\tableofcontents
}


\newpage

\section{Introduction}


Let $X$ be a variety\footnote{A variety over a field $k$ is a separated finite type $k$-scheme. A curve (resp. surface) over a field $k$ is a variety over $k$ whose irreducible components all have dimension one (resp. two).} over $\QQ$. In this paper we introduce a geometric non-degenerate criterion for $X$ using moduli spaces $M$ over $\QQ$ of abelian varieties.  If $X$ is non-degenerate, then we construct via $M$ an open dense moduli space $U\subseteq X$ whose forgetful map \eqref{def:forgetful} defines a Par{\v{s}}in construction for $U(\QQ)$. Our constructions are made for the (effective) strategy which was developed over the last 70 years by many people: It combines the method of Faltings (Arakelov, Par{\v{s}}in, Szpiro) with modularity and Masser--W\"ustholz isogeny estimates; see the surveys \cite{faltings:history,rvk:cetraro}. If $X$ is non-degenerate, then we use this strategy to prove (effective) finiteness for $U(\QQ)$. We motivate our constructions when $\dim(M)>\dim(X)$ and the moduli space $M$ is rational, that is $M$ is a rational variety over $\QQ$. 

In this introduction we write $\mathbb P^m=\mathbb P^m_\QQ$ and $\mathbb A^m=\mathbb A^m_\QQ$ for any $m\in \NN=\ZZ_{\geq 1}$.

\subsection{Fermat problem}

To provide some motivation, we now discuss for $m=4$ the Fermat problem (F) inside any projective rational surface $S\subseteq\mathbb P^m$ over $\QQ$. We denote by $S(\ZZ)$ the
 set of $x\in\ZZ^{5}$ with $(x_i)\in S(\QQ)$ and $\textnormal{gcd}(x_0,\dotsc,x_4)=1$, and we call $x\in S(\ZZ)$ trivial if all $x_i\in\{-1,0,1\}$. 
 
\vspace{0.3cm}

\noindent {\bf Problem (F).}\emph{ 
For arbitrary nonzero $a,b,c,d,e\in\ZZ$, try to construct $n_0\in \NN$ such that all solutions of the generalized Fermat equations $(F_n)$ are trivial when $n\geq n_0$}:
\begin{equation}\label{eq:fermatintro}
(F_n)  \quad ax_0^n+bx_1^n+cx_2^n+dx_3^n+ex_4^n=0, \quad x\in S(\ZZ), \quad n\in\NN.
\end{equation}

If $S=\mathbb P^2\subset \mathbb P^4:x_3=0=x_4$ then (F) is the classical Fermat problem solved by Wiles~\cite{wiles:modular} for $a=b=-c=1$ with the optimal $n_0=3$. Now, replace $S=\mathbb P^2$ by the birationally equivalent surface $S^{\textnormal{ico}}\subset \mathbb P^4:\sigma_2=0=\sigma_4$ for $\sigma_i$ the $i$-th elementary symmetric polynomial. Then our Theorem~E and Diophantine approximations solve (F).

\vspace{0.3cm}

\noindent {\bf Corollary F.} \emph{If $S=S^{\textnormal{ico}}$ then all solutions $x$ of $(F_n)$, $n\in\NN$, satisfy $\log |x_i|\leq \kappa \nu^{24}$ for $\nu=\rad(abcde)$ and there is $n_0\in\NN$ such that all solutions of $(F_n)$ are trivial when $n\geq n_0$.}

\vspace{0.3cm}

 Here $\kappa=10^{10^{12}}$. The analogue of Corollary~F  is still open for general $a,b,c$ in the classical case $S=\mathbb P^2$. However for many $a,b,c$ optimal results are known if $S=\mathbb P^2$, while our $n_0$ is ineffective. As the underlying geometry of (F) is equivalent, we conjecture that (F) behaves similarly (or might be even related) for $\mathbb P^2$, $S^{\textnormal{ico}}$ and other $S$; see Section~\ref{sec:fermat}.


\subsection{Par{\v{s}}in constructions} 
We next briefly explain and motivate the concept of a Par{\v{s}}in construction. For each $g\in\NN$ we denote by $\absg$ the `space' classifying  abelian varieties of dimension $g$, see Section~\ref{sec:moduli}. Let $X$ be a variety over $\QQ$. Then one can try to construct an integer $g\in\NN$, an open $T\subseteq \spec(\OK)$ with $K$ a number field, and a finite map called Par{\v{s}}in construction:
$$
\phi: X(\QQ)\to \absg(T).
$$
Faltings~\cite{faltings:finiteness} proved the (polarized) Shafarevich conjecture which gives that $\absg(T)$, and thus $X(\QQ)$, is finite. In particular $X(\QQ)$ is finite if $X$ is a curve of geometric\footnote{We say that a curve $X$ over $\QQ$ has geometric genus $\geq 2$ if all irreducible components (equipped with the reduced scheme structure) of $X_{\qb}$ have geometric genus $\geq 2$. For each $g\in \ZZ_{\geq 0}$, we say that a curve $X$ over $\QQ$ is of geometric genus $g$ if  $(X_{\bar{\QQ}})_{\textnormal{red}}$ is integral of geometric genus $g$.} genus $\geq 2$ via Par{\v{s}}in's original construction~\cite{parshin:construction} for a suitable model of $X$. However the following fundamental Diophantine problems are still widely open for many varieties $X$ over $\QQ$: 
\begin{itemize}
\item[(i)] Prove or disprove  finiteness for $X(\QQ)$ when $X$ has dimension $\geq 2$.
\item[(ii)] Control the `size' of the points in $X(\QQ)$ when $X(\QQ)$ is finite.
\end{itemize}
Any $\phi$ suffices for (i). But (ii) requires (\cite[$\mathsection$8]{rvk:cetraro}) an effective $\phi$ with $\phi(X(\QQ))$ contained in a `subspace' of $\absg(T)$ for which the effective Shafarevich conjecture is known. There already exist various Par{\v{s}}in constructions in the literature. Par{\v{s}}in~\cite{parshin:construction}, Kodaira~\cite{kodaira:construction} and Lawrence--Venkatesh~\cite{lave:mordell} give maps $\phi$ (for curves) which all have their own advantages. Szpiro's idea~\cite[p.98]{szpiro:faltings} gives a map $\phi$ for many projective $X$ with $\dim(X)\geq 2$. Moreover the moduli formalism, developed by Katz--Mazur~\cite{kama:moduli} for elliptic curves, turned out to be very useful for (effectively) studying Par{\v{s}}in constructions; see $\mathsection$\ref{sec:parsinmotivation}. 

\paragraph{New constructions.}In this paper we give new Par{\v{s}}in constructions $\phi$ which we shall describe in more detail in $\mathsection$\ref{sec:introgeneral}. As the constructions in \cite{rvk:intpointsmodell,vkkr:hms,vkkr:chms}, they are defined by forgetful maps \eqref{def:forgetful} of coarse moduli schemes (of abelian varieties) introduced in Section~\ref{sec:moduli}. Our new idea is to use certain coarse moduli schemes $M$ which are rational varieties over $\QQ$: The key observation is that via such $M$ one can study $X(\QQ)$ for many $X$ with $\dim(X)<\dim(M)$ by first constructing $\phi$ only for an open dense $U\subseteq X$ and by separately analysing the lower dimensional variety $X\setminus U$. 
Our new constructions allow us to solve problem (ii) for large classes of (explicit) curves over $\QQ$. Moreover, for any $n\in \NN$, we solve problems (i) and (ii) for (infinitely) many varieties over $\QQ$ of dimension $n$.

\subsection{Ico models of curves and rational points}\label{sec:introico}

An important feature of our general construction ($\mathsection$\ref{sec:introgeneral}) is its utility for explicitly studying Diophantine equations. To demonstrate this, we conducted some effort to work out a special case in which we can describe everything without using any `moduli terminology'.

\paragraph{Ico models.}Let $X$ be a curve over $\QQ$. To introduce certain explicit models of $X$ inside $\mathbb P^4_R$ for $R\subseteq \QQ$ a subring, we recall that $\sigma_i$ is the $i$-th elementary symmetric polynomial. Let $f_j\in R[x_0,\dotsc,x_4]$ be homogeneous of degree $n_j\geq 1$ such that $(X_f)_\QQ$, where
\begin{equation}\label{def:icomod}
X_f \subset \mathbb P^4_R:\sigma_2=0=\sigma_4, \, f=0,  \ \quad f=(f_1,\dotsc,f_m),
\end{equation}
is a curve  over $\QQ$.  We say that $X_f$ is an ico model of $X$ over $R$ if there is an open dense $U\subseteq X$ which is isomorphic to an open dense of $(X_f)_\QQ$; write $U_f\subseteq X$ for such an open dense which is maximal as in \eqref{def:controlledopen}. Our approach is based on the following observation which we deduce from classical constructions of Clebsch--Klein \cite{clebsch:clebschsurface,klein:cubicsurfaces}.

\vspace{0.3cm}

\noindent{\bf Theorem A.} Any integral curve over $\QQ$ admits an ico model over $R$.

\vspace{0.3cm}

 Let $\sum a_{ij}x_i^{n_j}$ be the diagonal part of $f_j$.  We say that  $X_f$ is degenerate if there exists $i$ with $a_{ij}=0$ for all $j$, and 
 we say that $X$ is degenerate if all its ico models are degenerate. Theorem~\ref{thm:main} gives that any  $X$ of geometric genus $g$ is degenerate if $g<2$, while in ico form most $X$ are non-degenerate if $g\geq 2$. A general ico model of degree $n$ is a non-degenerate smooth curve of genus $(2n+1)^2$, see Proposition~\ref{prop:generalico} for (explicit) moduli spaces. It is an open problem to classify all $X$ of geometric genus $g\geq 2$ which are non-degenerate.

\paragraph{Effective Par{\v{s}}in.} Let $X$ be a non-degenerate integral curve over $\QQ$, with a Weil height $h$ as in \eqref{def:weilheight}. Let $U\subseteq X$ be open with $U=U_f$ for some non-degenerate ico model $X_f$ of $X$ over $\ZZ$. On combining Theorem~A with \cite[Thm E]{vkkr:chms}, we prove the following:
 
\vspace{0.3cm}

\noindent{\bf Theorem B.} \emph{There is an effective Par{\v{s}}in construction $\phi:U(\QQ)\to \underline{A}_2(T)$ of $\gl2$-type.} 

\vspace{0.3cm}
Roughly speaking effective means that $T$ is controlled and $\phi$ is compatible with heights, while of $\gl2$-type means that each $A\in \phi(U(\QQ))$ is of $\gl2$-type; see $\mathsection$\ref{sec:ratpoints}. In Corollary~\ref{cor:effparico} we show that $\phi$ has additional geometric properties which are crucial for applications. 

\paragraph{Effective Mordell.}Let $d_X$ and $h(X)$ be the normalized degree and height of $X$, defined in $\mathsection$\ref{sec:ratpoints}. Theorem~B and the $\gl2$-case of the effective Shafarevich conjecture established in \cite{rvk:gl2,vkkr:chms} give the following result in which $\nu_f=\textnormal{rad}\bigl(\prod_{a_{ij}\neq 0}a_{ij}\bigl)$ is as in \eqref{eq:defnuf}.
\vspace{0.3cm} 

\noindent{\bf Theorem C.}
\emph{Any $x\in U(\QQ)$ satisfies
$h(x)\leq c\cdot d_X\nu_f^{24}+h(X)$.}

\vspace{0.3cm}
Here $c=10^{10^{12}}$. Usually $h$ is controlled on $(X\setminus U)(\QQ)$ and thus on the whole $X(\QQ)$ by Theorem~C. 
To illustrate this, we let $F\in \ZZ[x,y]$ be of degree $d\geq 1$ and we introduce for the plane curve $X=V(F)\subset \mathbb A^2\subset\mathbb P^2$ the following simple non-degeneracy criterion:
$$(\tau) \ \, \textnormal{The closed image }\overline{\tau(X)} \textnormal{ contains no } e_i.$$
Here $\tau:\mathbb P^2\dashrightarrow \mathbb P^4$ is the explicit rational map over $\QQ$ in \eqref{def:taui} which is birational onto its image, and $e_i\in\mathbb P^4$ are the five permutations of $e_1=(1,0,\dotsc,0)$. For any given $F$ one can compute (over $\QQ$ or $\CC$) whether $(\tau)$ holds. Set $|F|=\max_{\iota} |c_{\iota}|$ for $c_{\iota}$ the coefficients of $F$, and write $h(x)=\log \max (|a|,|b|)$ for $x=\tfrac{a}{b}$ with $a,b\in \ZZ$ coprime. Theorem~C leads to:
\vspace{0.3cm} 

\noindent{\bf Corollary D.}
\emph{
If $X=V(F)$ satisfies $(\tau)$, then the curve $X$ over $\QQ$ is non-degenerate and any solution $(x,y)\in\QQ\times \QQ$ of $F(x,y)=0$ has height $\max(h(x),h(y))\leq \mu|F|^{\kappa}$.}

\vspace{0.3cm} 

Here $\kappa=8^8d^2$ and $\mu=8^{\kappa^{2}d}$. There exist ($\mathsection$\ref{sec:families}) large classes of explicit plane curves satisfying $(\tau)$. For example the space $\mathbb A^r\setminus\bigl(\cup_{i=1}^5V(z_i)\bigl)$ of dimension $r\sim 4n^2$ parametrizes such curves of degree $12n$ if $n\in \NN$, and a moduli space of dimension $\sim g$ parametrizes such curves of geometric genus $g$ if $g\geq 2$ is an odd square. In Section~\ref{sec:effmordell} we discuss:

\vspace{0.3cm} 

\noindent{\bf Conjecture (EM).}
\emph{
If $X=V(F)$ has geometric genus $\geq 2$, then any $(x,y)\in\QQ\times \QQ$ with $F(x,y)=0$ satisfies $\max(h(x),h(y))\leq \mu|F|^{\kappa}$ for effective $\mu,\kappa$ depending only on $d$.}
             
\vspace{0.3cm}

Corollary~D proves this conjecture for all plane curves satisfying $(\tau)$.  
Many non-degenerate plane curves fail the simple criterion $(\tau)$. Theorem~C allows to prove Conjecture~(EM) for other explicit  classes of  non-degenerate plane curves satisfying  variations $(\tau')$ of $(\tau)$, see $\mathsection$\ref{sec:taurho}. In the case of ico models, Theorem~C becomes:


\vspace{0.3cm}

\noindent {\bf Theorem E.}{\emph{
Let $X_f$ be any ico model over $\ZZ$ as in \eqref{def:icomod}. If $X_f$ is non-degenerate, then the curve $X=(X_f)_\QQ$ over $\QQ$ is non-degenerate and all $x\in X(\QQ)$ satisfy $h(x)\leq c\nu_f^{24}$.}

\vspace{0.3cm}

Here $h$ is the usual (\cite[p.16]{bogu:diophantinegeometry}) logarithmic Weil height on $\mathbb P^4$. Our height bounds have useful features as discussed in $\mathsection$\ref{sec:ratpoints}. For example $\nu_f$ depends only on the diagonal part of $f_j$, and  $\nu_f$ is independent of $n_j=\deg(f_j)$ which is crucial for solving (F) in Corollary~F.

\paragraph{Known height bounds.}Let $X$ be a projective curve over $\QQ$ of geometric genus $\geq 2$. Problem (i) is widely open for many $X$. In fact the analogous problem for integral points is also not yet solved, see Corvaja--Lombardo--Zannier~\cite{coloza:effsiegel} for important recent progress for integral points (in any number field). We now discuss known bounds for $X(\QQ)$.

Suppose that $X$ satisfies the Manin--Dem'janenko criterion. Then the method of \cite{demjanenko:rational,manin:torsion} gives height bounds for $X(\QQ)$ in many cases. Moreover, in the elliptic setting,  Checcoli--Veneziano--Viada~\cite{chvevi:effmordell1,chvevi:effmordell2,vevi:effmordell} established in all cases strong explicit height bounds for $X(\QQ)$. They use a different method based on the theory of anomalous intersections of Bombieri--Masser--Zannier~\cite{bomaza:anomalous}. Their results hold over any number field and apply to large families of explicit curves with growing genus.

As $X$ is projective, the results of \cite{rvk:gl2,vkkr:hms,vkkr:chms} give explicit height bounds for $X(\QQ)$ if $X$ satisfies \raisebox{0.15ex}{{\scriptsize($\gl2$)}}, \raisebox{0.15ex}{{\scriptsize$(H)$}} or \raisebox{0.15ex}{{\scriptsize$(cH)$}} for certain geometric criteria  \raisebox{0.15ex}{{\scriptsize($\gl2$)}}, \raisebox{0.15ex}{{\scriptsize$(H)$}} and \raisebox{0.15ex}{{\scriptsize$(cH)$}} in \eqref{def:criteria}. Here the involved heights $h_\phi$ can always be compared to Weil heights, but explicit height comparisons usually require substantial additional effort. Each of the criteria is satisfied by large classes of projective curves $X$. A first explicit example of such $X$ satisfying \raisebox{0.15ex}{{\scriptsize($\gl2$)}} was constructed by Alp\"oge~\cite[$\mathsection$6]{alpoge:modularitymordell} via the hypergeometric construction of Deines--Fuselier--Long--Swisher--Tu~\cite{fuselier:hypergeom}. A completely different construction via Hirzebruch's \cite{hirzebruch:ck} is used in \cite[Thm E]{vkkr:chms}: This explicit result for a surface produces in particular large classes of explicit projective curves $X$ satisfying \raisebox{0.15ex}{{\scriptsize$(cH)$}}. For example all non-degenerate ico models $X_f$ satisfy \raisebox{0.15ex}{{\scriptsize$(cH)$}}, and Theorem~E is a direct consequence of \cite{vkkr:chms}. 

\paragraph{Computing $X(\QQ)$.}Let $X$ be a curve over $\QQ$ of geometric genus $\geq 2$. Based on the works of Skolem~\cite{skolem:method}, Chabauty~\cite{chabauty:method}, Coleman~\cite{coleman:method}, Kim~\cite{kim:siegel} and many others, very powerful practical methods were developed: These methods allow to efficiently compute $X(\QQ)$ in many situations of interest. Also, the explicit height bounds of Checcoli--Veneziano--Viada combined with additional tools (see Stoll~\cite[Appendix A]{chvevi:effmordell2}) allow to compute $X(\QQ)$ for large families of explicit curves $X$ with growing genus.

Suppose that $X$ is as in Corollary~D or Theorem~E. Then our explicit Weil height bounds  allow in principle to compute $X(\QQ)$ by listing all points of bounded height. However this is not practical without using additional tools.  To transform our strategy  into a practical method for computing $X(\QQ)$, one can try  to apply the two approaches of \cite{vkma:computation}: The first approach combines optimized height bounds with efficient sieves, while the second approach first determines the (relevant newforms for the) abelian surfaces in $\phi(X(\QQ))$ and then computes $X(\QQ)=\phi^{-1}\bigl(\phi(X(\QQ))\bigl)$. For example computing $X_f(\QQ)$  for non-degenerate $X_f$ with small $\nu_f$ might not be completely out of reach for these approaches.   


\subsection{General constructions}\label{sec:introgeneral}

To prove our results we use the strategy which combines the method of Faltings (Arakelov, Par{\v{s}}in, Szpiro) with modularity and Masser--W\"ustholz isogeny estimates. We now explain our new constructions which are tailored for  this strategy. Let $X$ be a variety over $\QQ$.

\paragraph{Moduli formalism.}Intuitively one can think of a (coarse) moduli scheme of finite level as a variety whose (geometric) points  parametrize pairs $(A,\alpha)$, where $A$ is an abelian variety and $\alpha$ lies in a certain quasi-finite set $\mathcal P(A)$ associated to $A$. The forgetful map is
$$(A,\alpha)\mapsto A.$$ Building on Katz--Mazur~\cite{kama:moduli}, we give in Section~\ref{sec:moduli} the formal definitions via algebraic stacks of a (coarse) moduli scheme of finite level, its forgetful map \eqref{def:forgetful} and its branch locus.  To simplify the exposition we only write (coarse) moduli scheme from now on. 

\paragraph{Examples.}The class of (coarse) moduli schemes contains large classes of Shimura varieties (\cite{fach:deg,lan:compactifications}), but also many other varieties. For example any smooth, projective and geometrically connected curve $X$ over $\QQ$ of genus $\geq 2$ is (\'etale locally) a moduli scheme whose forgetful map defines  a Par{\v{s}}in construction for $X(\QQ)$, see \cite[$\mathsection$6.2]{rvk:cetraro} which uses Kodaira's construction. The (coarse) Hilbert moduli schemes in \cite{vkkr:hms,vkkr:chms} and the moduli schemes of $\gl2$-type in \cite[$\mathsection$6.1]{rvk:cetraro} are all (coarse) moduli schemes, see Section~\ref{sec:moduli}.

\paragraph{Criteria \raisebox{0.15ex}{{\scriptsize($\gl2$)}}, \raisebox{0.15ex}{{\scriptsize$(H)$}}, \raisebox{0.15ex}{{\scriptsize$(cH)$}}.} The following criteria are geometric in the sense that they only depend on the variety $X$ over $\QQ$ but not on its arithmetic points: If $X$ extends over some $\ZZ[1/\nu]$, $\nu\in\NN$, to a moduli scheme of $\gl2$-type, a Hilbert moduli scheme, or a coarse Hilbert moduli scheme with empty branch locus, then we say that $X$ satisfies 
\begin{equation}\label{def:criteria}
(\gl2), \quad  (H) \ \ \textnormal{ or } \ \ (cH)
\end{equation}
respectively. If $X$ satisfies \raisebox{0.15ex}{{\scriptsize($\gl2$)}} or \raisebox{0.15ex}{{\scriptsize$(H)$}} then so does any $X'$ with a quasi-finite morphism $X'\to X$. This construction shows for example that \raisebox{0.15ex}{{\scriptsize($\gl2$)}} is satisfied by any cubic Thue curve (\cite{rvk:intpointsmodell}) and that \raisebox{0.15ex}{{\scriptsize$(H)$}} is satisfied by any $X$ with a quasi-finite morphism to a representable Hilbert modular variety (\cite{vkkr:hms}). We next discuss a birational variant of \eqref{def:criteria}.

\paragraph{Non-degenerate criteria.}Let $M$ be a coarse moduli scheme over $\QQ$, with a compactification $\mb$. Suppose that $M$ satisfies the (harmless technical) assumption of $\mathsection$\ref{sec:results}. Then we introduce in \eqref{def:mbd} the following non-degenerate criterion: We say that $X$ satisfies \raisebox{0.15ex}{{\scriptsize$(M)$}} if $X$ has a model\footnote{A model of $X$ in $\mb$ is a closed subscheme $Y$ of $\mb$ such that there exist open dense $U\subseteq X$ and $U'\subseteq Y$ with an isomorphism $U\isomto U'$ of $\QQ$-schemes.} in $\mb$ which is disjoint to the `degenerate locus' $\mbd$ defined in \eqref{def:mbd}. For example $\mbd$ is the singular locus of $\mb$ if $M$ is a Hilbert modular variety of dimension $\geq 2$ with minimal compactification $\mb$. Our motivation for studying \raisebox{0.15ex}{{\scriptsize$(M)$}} comes from:

\vspace{0.3cm}

\noindent {\bf Theorem G.} \emph{Let $
X$ be a variety over $\QQ$. If $X$ satisfies} \raisebox{0.15ex}{{\scriptsize$(M)$}}\emph{, then there exists a controlled open dense $U\subseteq X$ which is a coarse moduli scheme over $\QQ$ with empty branch locus.}
\begin{itemize}
\item[(i)] \emph{The forgetful map of the coarse moduli scheme $U$ defines a Par{\v{s}}in construction $\phi:U(\QQ)\to \absg(T)$ and thus $U(\QQ)$ is finite by Faltings.}
\item[(ii)] \emph{In the Hilbert case, each $x\in U(\QQ)$ satisfies $h_\phi(x)\leq c\nu_U^{\kappa}$ and it holds $|U(\QQ)|\leq c\nu_U^{\kappa}$.}
\end{itemize}

\vspace{0.3cm}

Here $c,\kappa$ are the explicit constants in \eqref{def:corconstants} which depend only on $M$, while the height $h_\phi$ is the pullback via $\phi$ of the stable Faltings height~\cite[p.354]{faltings:finiteness}. The height $h_\phi$ can be compared to Weil heights, but explicit height comparisons usually require substantial additional effort. The definitions of a controlled open $U\subseteq X$ and of the integral degeneration $\nu_U$ are given in $\mathsection$\ref{sec:terminology}. Intuitively one can think of the prime divisors of $\nu_U$ as the closed points of $\spec(\ZZ)$ over which a `minimal model of $X$ in $\mb_\ZZ$' degenerates. 

The Hilbert case refers to the case when $M$ is a coarse Hilbert moduli scheme. In particular we can apply Theorem~G~(i) and (ii) with any Hilbert modular variety $M$ over $\QQ$. As explained below, this gives for each $n\in \NN$ large classes of integral projective varieties $X$ over $\QQ$ of dimension $n$ such that $X$ satisfies criterion \raisebox{0.15ex}{{\scriptsize$(M)$}} and Theorem~G~(i) and (ii) hold for $X$ with $U=X$. It is an open problem to explicitly construct such $X$ if $n\geq 2$. Theorem~G~(ii) is a direct consequence of the construction in (i) and results in \cite{vkkr:chms}.

\paragraph{Main idea.} We next explain the idea underlying our construction in Theorem~\ref{thm:main}. The key observation is as follows: If $M$ is a  rational variety over $\QQ$, then any integral variety $X$ over $\QQ$ with $\dim(X)<\dim(M)$ has models in $\mb$ and the variety $X$ satisfies \raisebox{0.15ex}{{\scriptsize$(M)$}} iff at least one of these models is disjoint to $\mbd$. In particular if
\begin{equation}\label{eq:constructionidea}
(1) \  M \textnormal{ is a rational variety over } \QQ     \quad  \textnormal{ and } \quad    (2) \  m=\dim(\mb)-\dim(\mbd)\geq 2,
\end{equation}
then for each $n\in\NN$ with $n<m$ there exist many integral varieties $X$ over $\QQ$ of dimension $n$ which satisfy criterion \raisebox{0.15ex}{{\scriptsize$(M)$}}. For each of these $X$, we can apply Theorem~G to $U(\QQ)$ and we are reduced to study $(X\setminus U)(\QQ)$ where $X\setminus U$ has dimension at most $\dim(X)-1$. This construction motivates to find $M$ and $\mb$ which have properties (1) and (2).

\paragraph{Hilbert case.}Let $n\in \NN$. Many coarse Hilbert moduli schemes $M$ over $\QQ$ have property (2). For example any Hilbert modular variety $M$ over $\QQ$ of dimension $g\geq 2$ with minimal compactification $\mb$ has property (2) since $m=\dim(\mb)=g$. For each such $M$ with $g=n+1$, Theorem~\ref{thm:main}~(ii) then gives many integral projective varieties $X$ over $\QQ$ of dimension $n$ such that $X$ satisfies criterion \raisebox{0.15ex}{{\scriptsize$(M)$}} and Theorem~G~(i) and (ii) hold for $X$. Moreover Theorem~G (i) and (ii) hold with $U=X$ for infinitely many of these $X$. 

The rationality condition in (1) is very restrictive. In particular most Hilbert modular varieties are not rational by Tsuyumine~\cite{tsuyumine:hilbertkodaira}. However there exist several Hilbert modular surfaces $M$ over $\QQ$ which are known to have property (1). To discuss an example, we denote by $M^{\textnormal{ico}}$ the Hilbert modular surface over $\QQ$ of principal level 2 for the totally real field $\QQ(\sqrt{5})$.  Hirzebruch~\cite{hirzebruch:ck} identified $\mb^{\textnormal{ico}}$ with the classical icosahedron surface $S^{\textnormal{ico}}$  which is a rational surface over $\QQ$. In particular $M^{\textnormal{ico}}$ has property (1).

Given $M$ with (1) and (2), one can try to construct explicit classes of varieties $X$ over $\QQ$ satisfying criterion \raisebox{0.15ex}{{\scriptsize$(M)$}} and apply Theorem~G~(ii). For instance the results for non-degenerate curves in $\mathsection$\ref{sec:introico} were obtained by working out our construction \eqref{eq:constructionidea} for $M^{\textnormal{ico}}$ using the geometric results of \cite[$\mathsection$11]{vkkr:chms}. Here \cite[$\mathsection$11]{vkkr:chms} proves that $S^{\textnormal{ico}}$  extends over the explicit base $\ZZ[\tfrac{1}{30}]$  to the minimal compactification of a coarse Hilbert moduli scheme over $\ZZ[\tfrac{1}{30}]$ with empty branch locus. This is crucial for applying Theorem~G~(ii) with an effective $\nu_U$, which in turn is crucial for deducing the effective results in $\mathsection$\ref{sec:introico}.

\paragraph{General case.} We are currently trying to use our construction \eqref{eq:constructionidea} to find new interesting classes of varieties for which one can deduce via Theorem~G~(i) finiteness results from Faltings~\cite{faltings:finiteness}. For large classes of projective (subvarieties of) Shimura varieties, Deligne--Szpiro~\cite[p.98]{szpiro:faltings} and Ullmo~\cite{ullmo:ratpoints} established the finiteness of rational points via Faltings~\cite{faltings:finiteness}. Moreover Javankpeykar--Loughran~\cite{jalo:stackychevalley} formalized this line of reasoning, see e.g. \cite[$\mathsection$3.1]{rvk:cetraro} for a survey of known finiteness results for moduli schemes. 

Another motivation for proving Theorem~G~(i) in general was as follows: If one can prove (conditional)  height bounds in new situations (e.g. conditional on Venkatesh's height conjecture~\cite{venkatesh:icm,venkatesh:heightconjecture}), then one can directly deduce the (conditional) analogue of Theorem~G~(ii) in these new situations and can combine it with our construction \eqref{eq:constructionidea}.

\paragraph{Number fields.}The constructions underlying Theorem~G~(i) are geometric and Faltings' finiteness results~\cite{faltings:finiteness} hold over any number field $K$. Thus one can work out the analogue of Theorem~G~(i) for any $K$ without introducing new ideas. However substantial new ideas and new modularity results are required to prove the analogue of Theorem~G~(ii) for any number field $K$, see also the corresponding discussions in \cite[$\mathsection$1.3.4]{vkkr:chms}.

\subsection{Outline}
The plan of the paper is as follows. After introducing in Section~\ref{sec:moduli} some terminology from the moduli formalism, we state in Section~\ref{sec:nondegvarieties} our main constructions (Theorem~\ref{thm:main}) for non-degenerate varieties over $\QQ$. We also briefly discuss a corollary which gives explicit bounds in the Hilbert case. In Section~\ref{sec:varietiesparsin} we construct our Par{\v{s}}in constructions for non-degenerate varieties over $\QQ$ and then we complete the proof of Theorem~\ref{thm:main} in Section~\ref{sec:construction}.

In the second part we explicitly work out a special case of our constructions in Theorem~\ref{thm:main} and we deduce applications. We study various aspects of non-degenerate curves in Section~\ref{sec:nondegcurves}. In particular we prove explicit height bounds. Then, in Section~\ref{sec:p2}, we illustrate our results and methods  for a certain class of non-degenerate curves: The class of plane curves which satisfy criterion $(\tau)$. In Section~\ref{sec:fermat} we study the Fermat problem and finally we discuss some aspects of the effective Mordell problem in Section~\ref{sec:effmordell}.

\subsection{Acknowledgements}

The second author is incredibly grateful to Arno Kret and Benjamin Matschke for countless discussions over the last decade about topics related to this work. He also would like to thank Akshay Venkatesh for motivating discussions about his height conjecture. The first author would like to thank Professor Ye Tian for inviting him to his seminar on December 19, 2024 to report on the results of this work. Both authors are very grateful to Shouwu Zhang and Ye Tian; without their support this joint work would not have been possible. Finally we would like to thank Vesselin Dimitrov and Shouwu Zhang for useful comments.

\section{Notation and Conventions}

Unless mentioned otherwise, we shall use the following conventions and terminology. By $\log$ we mean the principal value of the natural logarithm. The product taken over the empty set is defined as $1$.  For any set $M$ we denote by $\lvert M\rvert$ the number of elements of $M$ and we write $N^c$ for the complement in $M$ of a subset $N\subseteq M$.  A map of sets $f:M'\to M$ is called finite if for each $m\in M$ the fiber $f^{-1}(m)$ of $f$ over $m$ is finite. 

\paragraph{Algebra.} We write $\NN=\ZZ_{\geq 1}$. Let $k$ be a field. We denote by $\bar{k}$ an algebraic closure of $k$ and we write $\OK$ for the ring of integers of a number field $K$.  For any subset $\{x_0,x_1,x_2,\dotsc\}$ of $\ZZ$, we write $\gcd(x_i)$ for the greatest common divisor $\gcd(x_0,x_1,x_2,\dotsc)>0$.  The radical $\textnormal{rad}(m)$ of a nonzero $m\in\ZZ$ is defined as the product $\prod_{p\mid m} p$ of all the rational primes $p$ which divide $m$. We denote by $R^\times$ the group of units of a commutative ring $R$. 

\paragraph{Schemes.} Let $S$ be a scheme. We denote by $(\textnormal{Sch}/S)$ the category of schemes over $S$.     
If $T$ and $X$ are $S$-schemes, then we define $X(T)=\Hom_{(\textnormal{Sch}/S)}(T,X)$  and we write $X_T=X\times_S T$ for the base change of $X$ from $S$ to $T$. A variety $X$ over $S$ is an $S$-scheme $X$ whose structure morphism $X\to S$ is separated and of finite type.  We denote by $k(S)$ the function field of $S$ if $S$ is integral. A Dedekind scheme is a normal noetherian scheme of dimension 0 or 1. We often identify an affine scheme $S=\spec(R)$ with the ring $R$. In particular we write $X_R$ for $X_T$ and $X(R)$ for $X(T)$ if $T=\spec(R)$ is affine. Let $n\in\NN$ and let $I\subseteq A$ be a homogeneous ideal of $A=R[x_0,\dotsc,x_n]$. We equip $V_+(I)\subseteq\mathbb P^n_R$ with the closed subscheme structure given by $\proj(A/I)$ and we often identify $\proj(A/I)$ with $V_+(I)$. A variety over $R$ inside $V=V_+(I)$ is a closed subscheme of $V$ with $R$-scheme structure induced by $\mathbb P^n_R$. Let $m\in\NN$. For each $f\in A^m$ with $f_j$ homogeneous, the variety $X$ over $R$ defined inside $V$ by $X\subseteq V: f=0$ is the closed subscheme $X\subseteq V$ given by the ideal $(I,f)$ of $A$. For example the ico model $X_f$ over $R$ defined in \eqref{def:icomod} is the variety over $R$ defined inside $\mathbb P^4_R$ by $f^*=0$ where $f^*=(\sigma_2,\sigma_4,f)\in A^{m^*}$ with $m^*=m+2$.



\paragraph{Varieties.}  A rational variety over $k$ is an integral variety over $k$ which is birationally equivalent over $k$ to $\mathbb P^n_k$ for some $n\in\NN$.  A compactification $\mb$ of a variety $M$ over $k$ is a proper variety $\mb$ over $k$ such that there exists an open immersion $M\hookrightarrow \mb$ of $k$-schemes whose image is dense in $\mb$. A curve (resp. surface) over $k$ is a variety over $k$ whose irreducible components all have dimension one (resp. two). Let $X$ be a curve over $k$. We say that $X$ has geometric genus $\geq 2$ if all irreducible components (equipped with the reduced scheme structure) of $X_{\bar{k}}$ have geometric genus $\geq 2$. For any $g\in \ZZ_{\geq 0}$ we say that $X$ is of geometric genus $g$ if $(X_{\bar{k}})_{\textnormal{red}}$ is integral of geometric genus $g$.


\paragraph{Stacks.}Let $S$ be a scheme and let $\mathcal C$ be a category. A contravariant functor from $\mathcal C$ to the category of sets is called a presheaf on $\mathcal C$. Unless mentioned otherwise we will use the terminology and definitions of the Stacks project~\cite{sp}.  An algebraic stack over $S$ is an algebraic stack over $(\textnormal{Sch}/S)_{\textnormal{fppf}}$ in the sense of \cite{sp}. Throughout we shall freely use well-known standard results for algebraic stacks or DM stacks which all can be conveniently found in \cite{sp}. Usually we identify schemes (or algebraic spaces) over $S$ with the associated algebraic stacks over $S$. Let $\mathcal M$ be a category fibred in groupoids over $(\textnormal{Sch}/S)$. We denote by $[\mathcal M]$ the presheaf on $(\textnormal{Sch}/S)$ which sends an $S$-scheme $T$ to the set $[\mathcal M(T)]$ of isomorphism classes of the objects in the fiber $\mathcal M(T)$ of $\mathcal M$ over $T$.

\section{Moduli formalism}\label{sec:moduli}

We introduce in this section terminology from the moduli problem formalism which is useful for (effectively) studying Par{\v{s}}in constructions. The formalism was developed by Katz--Mazur~\cite{kama:moduli} for elliptic curves via Mumford's language of relative representability~\cite{mumford:picardmoduli}. Those parts which are relevant for us were extended via the language of algebraic stacks to the Hilbert case  in \cite{vkkr:hms,vkkr:chms} and to all abelian varieties in~\cite{rvk:cetraro}.  


\paragraph{Moduli schemes.}Let $g\in\NN$, let $\sabsg$ be the category fibred in groupoids over $\sch$ of abelian schemes of relative dimension $g$, and  let $\mathcal P$ be a presheaf on $\sabsg$. We call $\mathcal P$ a moduli problem. For each object $A$ of $\sabsg$ the elements of $\mathcal P(A)$ are called $\mathcal P$-level structures of $A$. Let $Y$ be a scheme. We say that $Y$ is a moduli scheme of $\mathcal P$, and we write $Y=M_{\mathcal P}$, if there exists an object in $\sabsg(Y)$ representing $\mathcal P$.  For example, consider the moduli problem  $$\mathcal P_{g,n}: \sabsg\to \sets, \quad A\mapsto \textnormal{Pol}(A)\times \mathcal P_n(A)$$ where $\textnormal{Pol}(A)$ denotes the set of principal polarizations of $A$ and  $\mathcal P_n(A)$ denotes for any $n\in\NN$ the set of principal level $n$-structures on $A$ as in \cite[p.5]{fach:deg}. There exists a moduli scheme $A_{g,n}$ of $\mathcal P_{g,n}$ if $n\geq 3$, which is automatically a $\ZZ[1/n]$-scheme since $\mathcal P_{g,n}$ is defined over $\ZZ[1/n]$. However there exists no moduli scheme of $\mathcal P_{g,n}$ if $n\leq 2$. 


\paragraph{Coarse moduli schemes.}Write $\mathcal M$ for $\sabsg$. Let $\mathcal P$ be a moduli problem on $\mathcal M$ and let $\cmp$ be the category fibred in groupoids over $\sch$ of pairs $(A,\alpha)$ with $A\in\mathcal M$ and $\alpha\in\mathcal P(A)$. We say that  $\mathcal P$  is algebraic (resp. arithmetic) if $\cmp$ is an algebraic stack (resp. a DM-stack which is separated and of finite type over $\ZZ$). Let $Y$ be a scheme and suppose that $\mathcal P$ is algebraic. We say that $Y$ is a coarse moduli scheme of  $\mathcal P$, and we write $Y=M_{\mathcal P}$, if there exists a coarse moduli space $\pi:\cmp\to Y$ in the usual sense (\cite[$\mathsection$11]{olsson:stacks}); we call $\pi$ an initial morphism. The branch locus $B_\mathcal P\subseteq Y$ of $\mathcal P$ is defined as the complement in $Y$ of the union of all open $U\subseteq Y$ such that $\pi_U$ is \'etale. For example, if $Y$ is a moduli scheme of $\mathcal P$ then it is a coarse moduli scheme of $\mathcal P$   whose branch locus $B_{\mathcal P}$ is empty. 

\paragraph{Isomorphism classes.}The presheaf $\absg=[\sabsg]$ on $\sch$ classifies isomorphism classes of abelian schemes of relative dimension $g$. If $g=1$ then $\sabsg\cong \mathcal A_1$ is a separated DM-stack over $\ZZ$. However the category $\sabsg$ over $\sch$ and the presheaf $\absg=[\sabsg]$ on $\sch$ both do not have a `reasonable' geometric structure when $g\geq 2$. The reason is that they both classify (isomorphism classes of) unpolarized abelian schemes of relative dimension $g$.

\paragraph{Forgetful map.}Let $Y$ be a coarse moduli scheme of an arithmetic moduli problem $\mathcal P$ on $\mathcal M$ with initial morphism $\pi:\cmp\to Y$, and let $k$ be an algebraically closed field. Composing $\pi^{-1}:Y(k)\isomto [\cmp(k)]$ with $[(A,\alpha)]\mapsto [A]$ defines the forgetful map
\begin{equation}\label{def:forgetful}
\phi_k: Y(k)\to \absg(k).
\end{equation}
Let $\phi:Y(R)\to \absg(T)$ be a map for subrings $R$ and $T$ of $k$. We say that $\phi_{k}$ defines $\phi$ if the restriction of $\phi_k$ to $Y(R)$ factors as $Y(R)\to^\phi\absg(T)\to\absg(k)$. 
Let $k_0$ be a field with algebraic closure $k$. We say that a subset $W$ of $\absg(T)$ is $G_{k_0}$-stable if for each $[A]\in W$ and any $\sigma\in \Aut(k/k_0)$ there is an isomorphism $A_k\isomto \sigma^*A_k$ of abelian varieties over $k$. 

\paragraph{Finite level.}We say that a variety $Y$ over $\Z$ is a coarse moduli scheme of finite level if it is a coarse moduli scheme of an arithmetic moduli problem $\mathcal P$ on $\mathcal M$ satisfying:
\begin{itemize}
\item[(i)] The set $\mathcal P(A)/\Aut(A)$ is finite for each abelian scheme  $A\in \mathcal M(T)$ and for any connected Dedekind scheme $T$ whose function field is algebraic over $\QQ$.
\item[(ii)] There exist $n\in\NN$, a scheme $Y'$ and a finite \'etale cover $Y'\to (\cmp)_{\ZZ[1/n]}$.
\end{itemize}
Here (i) assures (\cite[$\mathsection$6.1]{rvk:cetraro}) that the forgetful map $\phi_{\qb}$ is finite. Both (i) and (ii) are usually satisfied in situations of interest in arithmetic. In particular for all $n\in\NN$ the coarse moduli scheme $A_{g,n}$ of $\mathcal P_{g,n}$ is of finite level  by (the proof of) Narasimhan--Nori \cite{nano:polarizations}. 


\paragraph{Hilbert case.}On replacing in the above definitions the category $\mathcal M=\sabsg$ over $\sch$ by a Hilbert moduli stack over $\ZZ$ of Deligne--Pappas~\cite[$\mathsection$2]{depa:hilbertmodular}, one obtains the analogous Hilbert notions of \cite{vkkr:hms,vkkr:chms}. These Hilbert notions are compatible (\cite[$\mathsection$11.1]{rvk:cetraro}) with the above notions. In particular any (coarse) Hilbert moduli scheme of \cite{vkkr:hms,vkkr:chms} is a (coarse) moduli scheme. In fact any coarse Hilbert moduli scheme of an arithmetic moduli problem automatically satisfies condition (ii); this follows from \cite[Lem 5.1]{vkkr:chms} combined with Rapoport's~\cite[Lem~1.23]{rapoport:hilbertmodular}.
 
\paragraph{Over $\ZZ[1/n]$ or $\QQ$.}Let $S$ be either $\QQ$ or a nonempty open subscheme of $\spec(\ZZ)$. On replacing in the above definitions the base $\sch$ by $(\textnormal{Sch}/S)$, we obtain as in \cite[$\mathsection$3.5]{vkkr:chms} or \cite[$\mathsection$6.1]{rvk:cetraro} the analogues over $S$ of the above notions.  As $S\to \spec(\ZZ)$ is a flat monomorphism, the above notions are compatible with  base change to $S$; see Lemma~\ref{lem:coarsebasechange}. 
The compatibility with a non-flat base change depends on the automorphism groups. Let $Y$ be a coarse moduli scheme over $S$ of an arithmetic moduli problem $\mathcal P$. We say that $Y=M_{\mathcal P}$ is tame if $\cmp$ is a tame stack over $S$ in the usual sense (\cite[Def 11.3.2]{olsson:stacks}).} 

\subsection{Par{\v{s}}in constructions via forgetful maps}\label{sec:parsinmotivation}
The moduli formalism allows to (effectively) study Par{\v{s}}in constructions via the forgetful map \eqref{def:forgetful} of coarse moduli schemes of finite level. This approach to Par{\v{s}}in constructions has useful features as explained in \cite[$\mathsection$6.2]{rvk:cetraro}. It allows to conceptually simplify/unify many of the known constructions and it can provide crucial advantages for the effective study. For example it can be beneficial for solving equations (\cite[$\mathsection$4.2.6]{vkma:computation}). Moreover it can be very useful for finding/studying new (effective) Par{\v{s}}in constructions with additional geometric properties. Here an important property of the forgetful map \eqref{def:forgetful} is its compatibility with the $G_\QQ$-action: This assures that the image of the resulting Par{\v{s}}in construction is $G_\QQ$-stable, which is crucial for \cite{vkkr:chms} and for the effective results in this paper. This $G_\QQ$-stability is one of the key advantages of Par{\v{s}}in constructions via forgetful maps compared to other Par{\v{s}}in constructions for curves (e.g. via Kodaira's construction involving \'etale covers) which are not known to be compatible with $G_\QQ$. On the other hand, many of the other Par{\v{s}}in constructions work in general (i.e. for all relevant curves) and an effective Kodaira construction~\cite{kodaira:construction} is  known in general by R\'emond~\cite{remond:construction}.

\section{Non-degenerate varieties and Par{\v{s}}in constructions}\label{sec:nondegvarieties}

In this section we first state in Theorem~\ref{thm:main} our main constructions for non-degenerate varieties. Then we discuss a corollary which gives explicit bounds in the Hilbert case. 

\subsection{Results}\label{sec:results}

Let $M$ be a variety over $\QQ$ which extends over a nonempty open of $\spec(\ZZ)$ to a tame coarse moduli scheme of finite level, and let $\mb$ be a compactification of $M$. In what follows we refer by the Hilbert case to the case when here the coarse moduli scheme is a tame coarse Hilbert moduli scheme of finite level. Let $X$  be a variety over $\QQ$.

\paragraph{Non-degenerate varieties in $\mb$.}  A model of $X$ in $\bar{M}$ is a closed subscheme $Y$ of $\bar{M}$ such that there exist dense open $U\subseteq X$ and $U'\subseteq Y$ with an isomorphism $U\isomto U'$ of $\QQ$-schemes. We say that $X$ is non-degenerate in $\bar{M}$ if it has a model $Y$ in $\bar{M}$  with
\begin{equation}\label{def:mbd}
Y\cap \bar{M}^{\textnormal{deg}}=\emptyset, \quad \quad \mbd=B\cup (\bar{M}\setminus M)
\end{equation}
for $B\subseteq M$                                                                      the branch locus of $M$. Here the terminology `non-degenerate' is compatible ($\mathsection$\ref{sec:terminology}) with the theory of degenerations of abelian varieties \cite{fach:deg,lan:compactifications}. Our motivation for studying varieties which are non-degenerate in $\mb$ comes from Theorem~\ref{thm:main}.

\begin{theorem}\label{thm:main}
The following statements hold.
\begin{itemize}
\item[(i)]Let $X$ be a variety over $\QQ$. Suppose that $X$ is non-degenerate in $\bar{M}$. Then there exists a dense open $U\subseteq X$ which is a coarse moduli scheme over $\QQ$ of finite level with empty branch locus. Its forgetful map $U(\bar{\QQ})\to \absg(\bar{\QQ})$ defines a Par{\v{s}}in construction $\phi:U(\QQ)\to \absg(T)$ and thus  $U(\QQ)$ is finite by Faltings.
\item[(ii)]Firstly, if $M$ is a rational variety over $\QQ$ then each integral variety $X$ over $\QQ$ with $\dim(X)<\dim(M)$ has a model in $\bar{M}$. Secondly, if $\mb$ is an integral projective variety over $\QQ$ then for each $n\in\NN$ with $n<\dim(\mb)-\dim(\mbd)$ there exist infinitely many integral closed subschemes $X\subset\mb$ of dimension $n$ with $X\cap \mbd=\emptyset$.
\item[(iii)] Any curve over $\QQ$ which is non-degenerate in $\mb$ has geometric genus $\geq 2$.
\end{itemize}
\end{theorem}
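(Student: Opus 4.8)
Being of geometric genus $\geq 2$ is a condition on the geometric irreducible components, so the plan is to first pass to a convenient geometric model and then exhibit a non-isotrivial abelian scheme on a curve covering it. Fix a model $Y$ of $X$ in $\mb$ with $Y\cap\mbd=\emptyset$, witnessed by dense opens $U\subseteq X$, $U'\subseteq Y$ and an isomorphism $U\isomto U'$; note $Y=\overline{U'}$ is a curve, closed in the proper $\QQ$-scheme $\mb$. Base changing to $\Qbar$ — formation of the coarse moduli scheme $M$, of its branch locus $B$, of $\mb$ and of the universal abelian scheme all commute with this base change, the moduli problem being tame in characteristic $0$ (cf.\ Lemma~\ref{lem:coarsebasechange}) — one sees that every irreducible component of $X_{\Qbar}$ meets the dense open $U_{\Qbar}$ and hence corresponds, via $U_{\Qbar}\isomto U'_{\Qbar}$, to a birational irreducible component of $Y_{\Qbar}$, which has the same geometric genus. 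Such a component $C$, taken with reduced structure, is a \emph{complete} integral curve over $\Qbar$, and $Y\cap\mbd=\emptyset$ forces $C\subseteq M_{\Qbar}\setminus B_{\Qbar}$. So it suffices to show that every complete integral curve $C\subseteq M_{\Qbar}\setminus B_{\Qbar}$ has geometric genus $\geq 2$, i.e.\ that its normalization $\widehat C$ has genus $\geq 2$.

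Next I would build an abelian scheme over a smooth projective curve covering $\widehat C$. Over $M\setminus B$ the initial morphism $\cmp\to M$ is \'etale, and the finite-level hypothesis (ii) provides a finite \'etale cover of $\cmp$ by a scheme carrying the universal abelian scheme (with its $\mathcal P$-level structure); together these yield, over $M\setminus B$, a scheme $Y''$ finite \'etale over $M\setminus B$ equipped with an abelian scheme (this is essentially the construction behind Theorem~\ref{thm:main}(i)). Pulling $Y''$ back along the finite morphism $\widehat C\to C\hookrightarrow M_{\Qbar}\setminus B_{\Qbar}$ and passing to a connected component gives a smooth projective connected curve $W$, finite \'etale and surjective over $\widehat C$, with an abelian scheme $\mathcal A_W\to W$. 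Since $W\to M\setminus B$ is finite and the forgetful map of $M$ has finite fibers by finite level (i), the assignment $w\mapsto[\mathcal A_{W,w}]$ has finite fibers; as $\dim W=1$, the family $\mathcal A_W\to W$ is \emph{non-isotrivial}. Replacing $\mathcal A_W$ by $\mathcal A_W\times_W\mathcal A_W^{\vee}$ I may moreover assume $\mathcal A_W$ is principally polarized, still with positive Hodge degree; and because $W$ is complete with image in $\mb_{\Qbar}$ contained in $M_{\Qbar}$, the family has good reduction at every point of $W$ — this is exactly where the non-degeneracy hypothesis $C\cap(\mb\setminus M)=\emptyset$ enters (cf.\ $\mathsection$\ref{sec:terminology}) — so its discriminant divisor on $W$ vanishes.

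To conclude I would invoke Arakelov's inequality for abelian schemes together with positivity of the Hodge bundle. Writing $g'$ for the relative dimension and $g(W)$ for the genus of $W$, the inequality with vanishing discriminant gives $\deg\det\omega_{\mathcal A_W/W}\leq\tfrac{g'}{2}\bigl(2g(W)-2\bigr)$, while non-isotriviality of the family over the complete curve $W$ forces $\deg\det\omega_{\mathcal A_W/W}>0$ (Faltings, Deligne: the Hodge bundle is nef, with degree $0$ only for families that become constant after a finite \'etale base change; equivalently, the pullback of the ample Hodge class on a Satake compactification of $A_{g'}$ along the non-constant moduli map has positive degree). Hence $2g(W)-2>0$, so $g(W)\geq 2$. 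Finally, as $W\to\widehat C$ is finite \'etale, the Hurwitz formula reads $2-2g(W)=\deg(W/\widehat C)\cdot\bigl(2-2g(\widehat C)\bigr)$, and $2-2g(W)<0$ forces $2-2g(\widehat C)<0$, i.e.\ $g(\widehat C)\geq 2$; since $g(\widehat C)$ is the geometric genus of $C$, we are done.

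The main obstacle is not the (somewhat delicate but essentially routine) bookkeeping needed to descend the tautological family on the stack $\cmp$ to an honest abelian scheme on a scheme finite \'etale over $\widehat C$ — the finite-level conditions (i)--(ii) and the hypothesis $Y\cap B=\emptyset$ are tailored precisely to enable this — but rather the external analytic input: Arakelov's inequality for abelian schemes and the positivity $\deg\det\omega_{\mathcal A_W/W}>0$ for non-isotrivial families over complete curves. The crucial subtlety is that the hypothesis must be used in full strength: it is the disjointness of the model from the boundary $\mb\setminus M$ that makes $\mathcal A_W$ an abelian scheme over the \emph{entire} complete curve $W$ (so the discriminant term is zero); without it one only obtains $\deg\det\omega\leq\tfrac{g'}{2}(2g(W)-2+s)$ with $s>0$, consistent with curves of genus $\leq 1$ sitting in $\mb$ (e.g.\ modular curves on Hilbert modular surfaces), whose models are necessarily forced to meet $\mb\setminus M$.
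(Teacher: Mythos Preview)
Your proposal addresses only part (iii); for (iii) it is correct and takes a genuinely different route from the paper.

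The paper's proof of (iii) is \emph{arithmetic}: assuming some component of $X_{\Qbar}$ has geometric genus $\leq 1$, one produces a number field $K$ with $X(K)$ infinite, then runs the Par{\v{s}}in construction of (i) over $K$ to obtain a finite map $U(K)\to\absg(T)$, and invokes Faltings' finiteness of $\absg(T)$ for a contradiction. The authors explicitly remark that ``it would be interesting to find a purely geometric proof of (iii)''; your argument supplies exactly that. You pull the universal abelian scheme down to a non-isotrivial family $\mathcal A_W\to W$ over a smooth projective curve $W$ finite \'etale over the normalization $\widehat C$, observe that the non-degeneracy hypothesis forces everywhere good reduction, and then Arakelov's inequality $\deg\det\omega_{\mathcal A_W/W}\leq\tfrac{g'}{2}\bigl(2g(W)-2\bigr)$ together with strict positivity of the Hodge degree gives $g(W)\geq 2$, whence $g(\widehat C)\geq 2$ by Hurwitz. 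What each approach buys: the paper's argument is immediate once (i) is in hand but imports Faltings' deep arithmetic theorem; yours is self-contained modulo the (characteristic-zero geometric) Arakelov inequality and in fact works verbatim over any base field of characteristic $0$, not just $\QQ$. One small point to tighten: when you replace $\mathcal A_W$ by $\mathcal A_W\times_W\mathcal A_W^\vee$, the phrase ``still with positive Hodge degree'' is premature --- what you need is that non-isotriviality is preserved (which follows since a family of abelian subvarieties of a fixed abelian variety is rigid, hence locally constant), and only then does the non-constant moduli map to the Satake compactification of $A_{2g'}$ yield positive Hodge degree.
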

Building on the constructions in \cite{vkkr:chms}, we will obtain in Theorem~\ref{thm:parshin} the finite map $\phi$ of (i). Moreover Theorem~\ref{thm:parshin} gives additional geometric properties of $U$ and $\phi$ which are important for the effective study of $U(\QQ)$ via $\phi$. We shall deduce the geometric statement (iii) from the proof of (i) and Faltings' finiteness result~\cite{faltings:finiteness} which is of arithmetic nature. It would be interesting to find a purely geometric proof of (iii). Both observations in (ii) are rather direct consequences of standard constructions in algebraic geometry. Variations of the second observation in (ii) are commonly used to construct compact subvarieties of moduli spaces; see Zaal~\cite{zaal:compactsubvarieties} and the references therein. We included (ii) in the theorem to clarify the idea of our construction as described in \eqref{eq:constructionidea}. 

\paragraph{Coarse Hilbert moduli schemes.}In the Hilbert case, we can describe more precisely in Theorem~\ref{thm:parshin} the image of the Par{\v{s}}in construction $\phi:U(\QQ)\to \absg(T)$ of Theorem~\ref{thm:main}~(i).  This description allows to apply for each $A\in \phi(U(\QQ))$ the $\gl2$-case of the  effective Shafarevich conjecture  established in \cite{rvk:gl2,vkkr:chms}, which leads to the following result. 

\begin{corollary}\label{cor:hmssimple}
Let $X$ be a variety over $\QQ$. Suppose that $X$ is non-degenerate in $\bar{M}$. Then in the Hilbert case there exists a controlled dense open $U\subseteq X$ as in Theorem~\ref{thm:main}~(i) such that any $x\in U(\QQ)$ satisfies $h_\phi(x)\leq c\nu_U^{\kappa}$ and such that $|U(\QQ)|\leq c\nu_U^\kappa.$
\end{corollary}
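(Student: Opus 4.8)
The plan is to reduce Corollary~\ref{cor:hmssimple} to Theorem~\ref{thm:main}~(i) together with the effective Shafarevich conjecture in the $\gl2$-case from \cite{rvk:gl2,vkkr:chms}, so the work is essentially bookkeeping: applying known height and counting bounds along a Par\v{s}in construction whose image is $G_\QQ$-stable. First I would invoke Theorem~\ref{thm:main}~(i) (in the refined form of Theorem~\ref{thm:parshin}, which is promised to supply additional geometric data) to obtain a controlled dense open $U\subseteq X$ which is a coarse moduli scheme over $\QQ$ of finite level with empty branch locus, together with its forgetful Par\v{s}in construction $\phi:U(\QQ)\to\absg(T)$. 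In the Hilbert case, $U$ is a coarse Hilbert moduli scheme, and the point of the refined statement in Theorem~\ref{thm:parshin} is that $\phi(U(\QQ))$ lands in the locus of abelian surfaces of $\gl2$-type; moreover the compatibility of the forgetful map \eqref{def:forgetful} with the $G_\QQ$-action shows $\phi(U(\QQ))$ is $G_\QQ$-stable. These are exactly the hypotheses under which the $\gl2$-case of the effective Shafarevich conjecture applies.

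Next I would unwind the definitions of $\nu_U$ and of `controlled': by construction $U$ extends over $\ZZ[1/\nu_U]$ to a coarse Hilbert moduli scheme with empty branch locus, so any $x\in U(\QQ)$ lifts, via the initial morphism, to a point of the Hilbert moduli stack over a controlled ring $\OK[1/S]$ whose $S$ and $[K:\QQ]$ are bounded in terms of $\nu_U$ and $M$ alone; concretely $\phi(x)\in\absg(T)$ for $T$ an open of $\spec(\OK)$ whose ramification and degree are controlled by $\nu_U$. The height $h_\phi(x)$ is by definition the stable Faltings height of $\phi(x)$. Now the effective Shafarevich conjecture in the $\gl2$-case (as established in \cite{rvk:gl2,vkkr:chms}) gives simultaneously (a) a bound $h_\phi(x)\le c\nu_U^\kappa$ on the stable Faltings height of any abelian surface of $\gl2$-type with good reduction outside a controlled set, and (b) a bound on the number of isomorphism classes of such abelian surfaces, hence on $|\phi(U(\QQ))|$. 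Since $\phi$ is finite — indeed $\phi_{\qbar}$ is finite by condition (i) of finite level, and the fibre cardinality is controlled uniformly because the $\mathcal P$-level structures modulo automorphisms form a set of bounded size — one gets $|U(\QQ)|\le c\nu_U^\kappa$ after adjusting the constants $c,\kappa$ (which depend only on $M$, as in \eqref{def:corconstants}). Both inequalities are then assembled into the statement.

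The only genuinely non-routine point is matching the shape of the bound: the effective Shafarevich results are typically phrased in terms of the conductor (or the radical of the discriminant) of the relevant abelian surface, whereas the corollary is phrased in terms of the integral degeneration $\nu_U$ of a minimal model of $X$ in $\mb_\ZZ$. So the main obstacle I anticipate is the comparison step: showing that the set of primes of bad reduction of $\phi(x)$, together with the primes ramifying in the field $K$ of definition, is contained in (a controlled set of primes times) the prime divisors of $\nu_U$, uniformly in $x\in U(\QQ)$. This is precisely where one needs the input from \cite[$\mathsection$11]{vkkr:chms} that the relevant coarse Hilbert moduli scheme, and its compactification, extend nicely over $\ZZ[1/\nu_U]$ with empty branch locus there — so that a rational point on $U$ spreads out to an integral point over a controlled base and the associated abelian surface acquires good reduction away from $\nu_U$. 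Once that spreading-out/good-reduction dictionary is in place, plugging into the effective Shafarevich bounds and absorbing all auxiliary constants into $c$ and $\kappa$ finishes the proof; I would present this last part briefly, citing \eqref{def:corconstants} for the explicit values.
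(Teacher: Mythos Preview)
Your proposal is correct and follows essentially the same route as the paper: invoke the refined Par\v{s}in construction of Theorem~\ref{thm:parshin}(iii) in the Hilbert case to get $\phi$ with image admitting an effective covering of $\gl2$-type controlled by $(d,n)=(3^{4g},3\nu_U)$, then feed this into the $\gl2$-case of effective Shafarevich from \cite{vkkr:chms} exactly as you describe. The only cosmetic difference is that for the cardinality bound the paper injects $U(\QQ)\hookrightarrow (M_{\bs}\setminus B_{\bs})(R)$ for $R=\ZZ[\tfrac{1}{3\nu_U}]$ and counts there (which is how $|\mathcal P|_{\qb}$ enters the constant $c_2$), rather than bounding fibres of $\phi$ directly, but this is equivalent to your fibre-counting step.
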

Here $h_\phi$ is the pullback of $h_F$ via $\phi$ from Theorem~\ref{thm:main}~(i) for $h_F$ the stable Faltings height introduced by Faltings~\cite[p.354]{faltings:finiteness}, and the quantity $\nu_U$ defined in \eqref{def:intdeg} measures the integral degeneration of $U$ inside $\mb$. Further $c$ and $\kappa$ are constants depending only on $M$ which are given explicitly in \eqref{def:corconstants}, while $U\subseteq X$ is controlled in the sense of \eqref{def:controlledopen}. 

\paragraph{Hilbert modular varieties.} Corollary~\ref{cor:hmssimple} can be applied with any Hilbert modular variety $M$ over $\QQ$, since each such $M$ has a canonical integral model with the required properties (\cite{rapoport:hilbertmodular,lan:compactifications}). Moreover many (connected) Hilbert modular varieties are defined over $\QQ$, including all with no level structure and all with principal level 2-structure. 

Suppose now that $M$ is a Hilbert modular variety over $\QQ$ of dimension $g\geq 2$. Then the non-degenerate criterion is more concrete, since the branch locus $B$ of $M$ is simply the singular locus $M^{\textnormal{sing}}$. Indeed \cite[(5.9)]{vkkr:chms} identifies $B(\CC)$ with the set of elliptic points of $M(\CC)\cong \mathbb H^g/\Gamma$ which is given by $M^{\textnormal{sing}}(\CC)$ since $g\geq 2$, and thus $B=M^{\textnormal{sing}}$. 
The explicit constants $c$ and $\kappa$ in \eqref{def:corconstants} can be directly computed for any given Hilbert modular variety $M$ over $\QQ$. However, to compute the integral degeneration $\nu_U$, one first needs to compute the branch locus $B_{\bs}$ of a canonical integral model $M_{\bs}=M_{\mathcal P}$ of $M$ over a controlled open $\bs\subseteq\spec(\ZZ)$. If $M$ is representable then $B_{\bs}$ is empty; there exist plenty of classical families of such representable $M$ over $\QQ$ for any $g\geq 2$. Furthermore, if $M$ has principal level 2-structure and $\bs=\ZZ[1/2]$, then $B_{\bs}$ is again empty by \cite[Cor 5.6]{vkkr:chms} whose proof relates $B_{\bs}$ to jumps of the automorphism groups of $\mathcal M_{\mathcal P}$. However, in the general case more machinery is required  to compute $B_{\bs}$ over a controlled $\bs$.

\subsection{Terminology}\label{sec:terminology}

In this section we first briefly discuss the origin of the notion `non-degenerate'. Then we introduce some terminology which we shall use in the effective study of Par{\v{s}}in constructions. We continue our notation and we let $M\subseteq \mb$ be as in $\mathsection$\ref{sec:results}. 

\paragraph{Non-degenerate.}A non-degenerate model in $\mb$ is defined as a closed subscheme of $\mb$ which is disjoint to $\mbd$. The notion `non-degenerate' comes from the following property of any non-degenerate model $Y$ in $\mb$: The points of $Y$ parametrize (\'etale locally) semi-abelian varieties which are all abelian and thus non-degenerate. This is compatible with Faltings--Chai \cite{fach:deg,chai:hilbmod} when $M$ is a Siegel or Hilbert modular variety with minimal compactification $\mb$, and with  Lan~\cite{lan:compactifications} when $M$ is a Shimura variety of PEL-type.

\paragraph{Controlled $U\subseteq X$.}We next introduce terminology which allows to `measure' quantities appearing in Par{\v{s}}in constructions defined by forgetful maps of coarse moduli schemes. Let $X$ be a variety over $\QQ$. For any model $Y$ of $X$ in $\mb$, we consider a dense open 
\begin{equation}\label{def:controlledopen}
U_{Y}\subseteq X
\end{equation}
 with a dominant open immersion $\iota:U_Y\hookrightarrow Y$ of $\QQ$-schemes such that: If $U_{Y}$ is contained in an open $U\subseteq X$ with an open immersion $U\hookrightarrow Y$ of $\QQ$-schemes extending $\iota$, then $U=U_{Y}$. Here $(U_Y,\iota)$ always exists by Zorn's lemma, but it is not necessarily unique.

\paragraph{Integral degeneration.}For what follows we need to specify a suitable base scheme: Let $\bs\subseteq\spec(\ZZ)$ be a nonempty open subscheme such that the variety $M$ over $\QQ$ extends over $\bs$ to a tame coarse moduli scheme $M_\bs$ of finite level. We denote by $B_\bs$ the branch locus of $M_\bs$. For any non-degenerate model $Y$ in $\mb$, we define its integral degeneration 
\begin{equation}\label{def:intdeg}
\nu_Y
\end{equation}
as the smallest $\nu\in\NN$ such that $R=\ZZ[1/\nu]$ is an $\bs$-scheme and  $Y$ extends to a proper scheme $Y_{R}$ over $R$ with an immersion $Y_R\hookrightarrow (M_\bs\setminus B_\bs)_R$ of $R$-schemes. Here $\nu_Y$ is well-defined since we can always spread out an immersion $Y\hookrightarrow \mb\setminus \mbd=M\setminus B$ of $\QQ$-schemes and since $B\cong(B_\bs)_\QQ$ by Lemma~\ref{lem:coarsebasechange}.  If an open $U\subseteq X$ is of the form $U=U_Y$ for some non-degenerate model $Y$ of $X$ in $\mb$, then we write $\nu_U$ for $\nu_{Y}$. 
\paragraph{Effective coverings.} Let $g\in\NN$ and let $T_0\subseteq \spec(\OK)$ be nonempty open with $K\subset \qb$ a number field. We introduce certain effective coverings of a subset $W$ of $\absg(T_0)$. Let $(d,n)\in\NN\times\NN$ and let $\mathcal F_{d,n}$ be the set of subfields $F\subset \qb$ of degree $[F:\QQ]\leq d$ with $F/\QQ$ normal and unramified outside $n$. We say that $W$ admits for $(d,n)$ an effective covering 
\begin{equation}\label{def:effcover}
\cup_T J(T)
\end{equation}
of $\gl2$-type with $G_\QQ$-isogenies if $T_0$ is an $R$-scheme for each $R\in \mathcal R$ and if base change defines a surjective map $\sqcup_{R\in \mathcal R} J(R)\to W$. Here $\mathcal R$ is the set of all $R$ such that $R$ is the integral closure of $\ZZ[1/n]$ in some $F\in\mathcal F_{d,n}$, while $J(R)$ is the set of all $[A]$ in $\absg(R)$ such that $A$ is of $\gl2$-type with $G_\QQ$-isogenies as in \cite[$\mathsection$7.1]{vkkr:chms} and such that $A_3(R)\isomto A_3(\qb)$.

\paragraph{Quantities underlying $M$.}In certain situations we need to specify the quantities underlying $M$. Let $\bs$ and $M_\bs$ be as above. Further let $g\in\NN$ and let $\mathcal P$ be an arithmetic moduli problem on $\mathcal M_\bs$, where $\mathcal M$ is either $\sabsg$ or a Hilbert moduli stack $\mathcal M^I$ over $\ZZ$ of Deligne--Pappas~\cite{depa:hilbertmodular} defined with respect to (\cite[$\mathsection$3]{vkkr:hms}) a totally real number field $L$ of degree $g$. Suppose that the moduli problem $\mathcal P$ satisfies  (i) and (ii) of the definition of finite level in Section~\ref{sec:moduli}. Now, if $M_\bs$ is a tame coarse moduli scheme over $\bs$ of $\mathcal P$, then we say that 
$\mathcal P$ underlies $M$ and in the Hilbert case we say moreover that $L$ underlies $M$.

\section{Par{\v{s}}in constructions}\label{sec:varietiesparsin}

Let $M$ be a variety over $\QQ$ which extends over a nonempty open of $\spec(\ZZ)$ to a tame coarse moduli scheme of finite level, and let $\mb$ be a compactification of $M$. The main goal of this section is to prove the following more precise version of Theorem~\ref{thm:main}~(i).

\begin{theorem}\label{thm:parshin}
Let $X$ be a variety over $\QQ$. If $X$ is non-degenerate in $\mb$, then there exists a dense open subscheme $U\subseteq X$ with the following properties. 
\begin{itemize}
\item[(i)] The variety $U$ over $\QQ$ extends over a dense open $S\subseteq \spec(\ZZ)$ to a coarse moduli scheme $U_S$ of finite level with empty branch locus. Its forgetful map $U(\bar{\QQ})\to\absg(\bar{\QQ})$ defines a Par{\v{s}}in construction $\phi:U(\QQ)\to\absg(T)$ with $G_\QQ$-stable image. 
\item[(ii)] We can take any $U$ of the form $U=U_{Y}$ with $Y$ a non-degenerate model of $X$ in $\mb$.  
\item[(iii)] In the Hilbert case, $U_S$ is a coarse Hilbert moduli scheme and $\phi(U_Y(\QQ))$ admits for $(d,n)=(3^{4g},3\nu_{Y})$ an effective covering $\cup_T J(T)$ of $\gl2$-type with $G_\QQ$-isogenies.
\end{itemize}  
\end{theorem}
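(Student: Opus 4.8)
\textbf{Proof proposal for Theorem~\ref{thm:parshin}.}

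The plan is to build the open dense $U\subseteq X$ directly from a non-degenerate model of $X$ in $\mb$ and to transfer the moduli structure along the resulting open immersion. First I would fix a non-degenerate model $Y$ of $X$ in $\mb$, which exists by the hypothesis that $X$ is non-degenerate in $\mb$, and I would choose $U=U_Y\subseteq X$ together with the dominant open immersion $\iota\colon U_Y\hookrightarrow Y$ of $\QQ$-schemes from \eqref{def:controlledopen}; this immediately gives part~(ii). Since $Y\cap\mbd=\emptyset$ by \eqref{def:mbd}, the open immersion $\iota$ lands inside $M\setminus B=\mb\setminus\mbd$, so $U$ is identified with an open subscheme of $M\setminus B$. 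Spreading out, the variety $M$ over $\QQ$ extends over a nonempty open $\bs\subseteq\spec(\ZZ)$ to a tame coarse moduli scheme $M_\bs$ of finite level with branch locus $B_\bs$, and (enlarging the inverted set if necessary) the immersion $U\hookrightarrow M\setminus B$ spreads out to an immersion $U_S\hookrightarrow (M_\bs\setminus B_\bs)_S$ of $S$-schemes over some nonempty open $S\subseteq\spec(\ZZ)$; here I use that $B\cong(B_\bs)_\QQ$ via Lemma~\ref{lem:coarsebasechange}.

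The next step is to check that an open subscheme of $M_\bs\setminus B_\bs$ is again a coarse moduli scheme of finite level with empty branch locus. Write $M_\bs=M_{\mathcal P}$ for an arithmetic moduli problem $\mathcal P$ on $\mathcal M_\bs$ satisfying (i) and (ii) of the finite-level definition in Section~\ref{sec:moduli}, with initial morphism $\pi\colon\cmp\to M_\bs$. Restricting $\mathcal P$ to the open substack $\pi^{-1}(U_S)\subseteq\cmp$ produces an arithmetic moduli problem whose associated stack is the open substack $\pi^{-1}(U_S)$, and $\pi$ restricts to a coarse moduli space $\pi^{-1}(U_S)\to U_S$ because coarse moduli spaces restrict to opens. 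Conditions (i) and (ii) are inherited by open substacks, so $U_S$ is a coarse moduli scheme of finite level. Its branch locus is contained in $B_\bs\cap U_S=\emptyset$ since by definition $U_S\subseteq M_\bs\setminus B_\bs$ and $\pi$ is \'etale over $M_\bs\setminus B_\bs$; hence the branch locus is empty, and in particular $\pi^{-1}(U_S)\to U_S$ is finite \'etale over an appropriate open. By the compatibility with base change along the flat monomorphism $\QQ\to\spec(\ZZ)$ from the remarks preceding Lemma~\ref{lem:coarsebasechange}, the generic fiber $U=(U_S)_\QQ$ is a coarse moduli scheme over $\QQ$ of finite level with empty branch locus; this is exactly what Theorem~\ref{thm:main}~(i) asserts, and the forgetful map $\phi_{\qb}\colon U(\qb)\to\absg(\qb)$ of \eqref{def:forgetful} is finite by finite level. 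That $\phi_{\qb}$ defines a Par{\v{s}}in construction $\phi\colon U(\QQ)\to\absg(T)$ for a suitable open $T\subseteq\spec(\OK)$, and that its image is $G_\QQ$-stable, follows from the compatibility of the forgetful map with the $\galq$-action discussed in $\mathsection$\ref{sec:parsinmotivation}: a $\QQ$-point of $U$ gives a $\qb$-point fixed by $\galq$, hence an abelian variety whose isomorphism class over $\qb$ is $\galq$-stable and which, being of finite level, descends over the ring of integers of a number field with controlled ramification; finiteness of $U(\QQ)$ is then Faltings~\cite{faltings:finiteness}. This gives (i).

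For (iii), in the Hilbert case $\mathcal M$ is a Hilbert moduli stack and $M_\bs=M_{\mathcal P}$ a tame coarse Hilbert moduli scheme of finite level, so the restriction $U_S$ constructed above is again a coarse Hilbert moduli scheme, proving the first assertion. For the effective covering I would invoke the constructions of \cite{vkkr:chms} directly: by \cite[Thm E]{vkkr:chms} (or the relevant statement underlying it), for a coarse Hilbert moduli scheme with empty branch locus the image of the forgetful Par{\v{s}}in construction on $U_Y(\QQ)$ consists of isomorphism classes of abelian varieties of $\gl2$-type with $G_\QQ$-isogenies, defined over the integral closure of $\ZZ[1/n]$ in normal fields of bounded degree unramified outside $n$, with the degree bound $d=3^{4g}$ and the ramification bound $n=3\nu_Y$ coming from the principal level-$3$ rigidification (to kill automorphisms of the abelian varieties, \`a la \cite{nano:polarizations}) together with the integral degeneration $\nu_Y$ of $Y$ controlling where the family can degenerate. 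Concretely: pull back the universal abelian scheme over $\pi^{-1}(U_S)$ along a finite \'etale level-$3$ cover and along $\phi$, use that $Y_R\hookrightarrow(M_\bs\setminus B_\bs)_R$ for $R=\ZZ[1/\nu_Y]$ by the definition \eqref{def:intdeg} of $\nu_Y$, and read off that every $A\in\phi(U_Y(\QQ))$ base-changes from some $J(R)$ with $R$ ranging over the set $\mathcal R$ attached to $(d,n)=(3^{4g},3\nu_Y)$; surjectivity of $\sqcup_R J(R)\to\phi(U_Y(\QQ))$ is precisely the assertion of an effective covering \eqref{def:effcover}.

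The main obstacle I anticipate is \emph{not} the formal manipulation of coarse moduli spaces (restriction to opens, base change, inheritance of the branch locus and of finite level are routine given Section~\ref{sec:moduli} and Lemma~\ref{lem:coarsebasechange}), but rather pinning down the explicit bounds $(d,n)=(3^{4g},3\nu_Y)$ in part~(iii) and verifying that the effective covering of \cite{vkkr:chms} really applies verbatim to the open coarse Hilbert moduli scheme $U_S$ rather than to the full $M_\bs$: one must check that shrinking the base from $M_\bs\setminus B_\bs$ to $U_S$ does not enlarge the field of definition or the ramification locus of the abelian varieties in the image, and that the level-$3$ rigidification used in \cite{vkkr:chms} to control automorphism groups is compatible with the restricted moduli problem. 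This is where I would spend most of the care, tracking through the $\gl2$-type and $G_\QQ$-isogeny bookkeeping of \cite[$\mathsection$7]{vkkr:chms}.
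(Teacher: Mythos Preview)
Your restriction-of-moduli-structure argument (spreading out $U\hookrightarrow M\setminus B$ and transferring the coarse moduli structure along the resulting open immersion) is fine as far as it goes, and indeed matches how the paper uses Lemma~\ref{lem:coarsebasechange}. But there is a genuine gap at the step where you pass from the forgetful map $\phi_{\qb}$ to an actual Par{\v{s}}in construction $\phi:U(\QQ)\to\absg(T)$ with a \emph{fixed} $T$. You write that an abelian variety attached to a $\QQ$-point ``being of finite level, descends over the ring of integers of a number field with controlled ramification''; this does not follow. Finite level only gives you a finite \'etale cover $Y'\to(\cmp)_{\ZZ[1/n]}$ by a scheme, and the map in Proposition~\ref{prop:parshinmodulischeme}~(i) lands in $\absg(T)$ only when fed \emph{integral} points, i.e.\ points of $(Y\setminus Z)(\mathcal O)$. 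Your $U_S$ is an open subscheme of $M_\bs\setminus B_\bs$ and is not proper over $S$, so a point of $U(\QQ)$ has no reason to extend to a point of $U_S(S)$; consequently the places of bad reduction of the resulting abelian variety may depend on the point, and you have no uniform $T$.

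The paper fixes this by routing through the non-degenerate model $Y$ itself rather than through $U$ or $M$ directly: since $Y$ is \emph{closed} in the proper variety $\mb$, it is proper over $\QQ$, and Lemma~\ref{lem:extndmod} spreads it out to a proper coarse moduli scheme $Y_S$ over $S$ with empty branch locus. Properness gives $Y(\QQ)\isomto Y_S(S)$, and now Proposition~\ref{prop:parshinmodulischeme}~(i) applies to $Y_S(S)$ to produce $\phi_Y:Y(\QQ)\to\absg(T)$; one then precomposes with $U(\QQ)\hookrightarrow Y(\QQ)$. The same properness is what makes (iii) work with the specific $\nu_Y$: the immersion $Y_R\hookrightarrow(M_\bs\setminus B_\bs)_R$ over $R=\ZZ[1/\nu_Y]$ together with $Y_R$ proper over $R$ is exactly the input to Proposition~\ref{prop:parshinmodulischeme}~(ii). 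So the obstacle you anticipated (tracking the bounds $(3^{4g},3\nu_Y)$ through \cite{vkkr:chms}) is secondary; the missing idea is the use of properness of $Y$ to convert rational points to integral points before invoking Proposition~\ref{prop:parshinmodulischeme}.
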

Here the map $\phi$ comes from the following Par{\v{s}}in construction for $S$-integral points (in the sense of \cite[$\mathsection$3.3]{rvk:cetraro}) which was obtained in \cite{vkkr:hms,vkkr:chms} in the Hilbert case.  
 
\begin{proposition}\label{prop:parshinmodulischeme}
Let $Y$ be a variety over $\ZZ$, and let $Z\subset Y$ be a closed subscheme.
\begin{itemize}
\item[(i)] Suppose that $Y$ becomes over $\OL=\ZZ[1/\nu]$, $\nu\in \NN$, a coarse moduli scheme over $\OL$ of finite level with branch locus contained in $Z_{\OL}$. Then the forgetful map $Y(\bar{\QQ})\to\absg(\bar{\QQ})$ defines a Par{\v{s}}in construction $\phi:(Y\setminus Z)(\OL)\to\absg(T)$ with $G_\QQ$-stable image. 
\item[(ii)] If $Y_{\mathcal O}$ is in addition a coarse Hilbert moduli scheme, then $\phi\bigl((Y\setminus Z)(\OL)\bigl)$ admits for $(d,n)=(3^{4g},3\nu)$ an effective covering $\cup_TJ(T)$ of $\gl2$-type with $G_\QQ$-isogenies.
\end{itemize}
\end{proposition}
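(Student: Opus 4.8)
The plan is to reduce everything to the forgetful map of the coarse moduli scheme $Y_{\mathcal O}$ and then invoke Faltings-type spreading-out plus the effective machinery already set up in \cite{vkkr:hms,vkkr:chms}. First I would treat (i). Let $\mathcal P$ be the arithmetic moduli problem on $\mathcal M_{\mathcal O}$ of which $Y_{\mathcal O}$ is a tame coarse moduli scheme of finite level, with initial morphism $\pi:\cmp\to Y_{\mathcal O}$ and branch locus $B_{\mathcal P}\subseteq Y_{\mathcal O}$; by hypothesis $B_{\mathcal P}\subseteq Z_{\mathcal O}$, so $\pi$ is étale over $(Y\setminus Z)_{\mathcal O}$. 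Fix $x\in (Y\setminus Z)(\mathcal O)$. Because $\cmp\to Y_{\mathcal O}$ is a coarse space which is moreover étale over the image of $x$, and because $\cmp$ is a separated DM-stack of finite type over $\mathcal O$, one can (after an explicitly controlled finite extension, using property (ii) of finite level, i.e.\ the finite étale cover $Y'\to(\cmp)_{\ZZ[1/n]}$) lift $x$ to a point of $\cmp$ over the ring of integers of a number field $F$ that is normal over $\QQ$ and unramified outside a controlled set of primes. This produces a pair $(A,\alpha)$ with $A$ an abelian scheme over that ring of integers; forgetting $\alpha$ and then descending the isomorphism class along $G_\QQ$ (using that the forgetful map \eqref{def:forgetful} is $G_\QQ$-equivariant, hence its image is $G_\QQ$-stable) gives the value $\phi(x)\in\absg(T)$ for a suitable controlled open $T\subseteq\spec(\OK)$. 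Finiteness of $\phi$ is finiteness of $\phi_{\qbar}$, which holds by condition (i) of finite level as recalled after the definition in Section~\ref{sec:moduli}. This is exactly the Par{\v{s}}in construction for $S$-integral points as in \cite[$\mathsection$3.3]{rvk:cetraro}; in the Hilbert case it is literally the one constructed in \cite{vkkr:hms,vkkr:chms}, so for that case (i) can be quoted directly and only the translation between the general moduli formalism and the Hilbert formalism (compatibility recorded in \cite[$\mathsection$11.1]{rvk:cetraro}) needs to be spelled out.

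For (ii) I would invoke the refined output of \cite{vkkr:hms,vkkr:chms}. When $Y_{\mathcal O}$ is in addition a coarse Hilbert moduli scheme, the associated abelian varieties $A$ acquire a faithful action of (an order in) the totally real field $L$ underlying $M$, hence after passing to simple factors they become abelian varieties of $\gl2$-type in the sense of \cite[$\mathsection$7.1]{vkkr:chms}; the $G_\QQ$-stability of the image from (i) upgrades to the statement that they come with $G_\QQ$-isogenies. To pin down the field of definition: the level structure $\alpha$ is trivialized over the ring of integers $R$ of some $F\in\mathcal F_{d,n}$, where the bound $d=3^{4g}$ on $[F:\QQ]$ comes from the size of $\GL_2(\mathcal O_L/3\mathcal O_L)$ (a full level-$3$ structure rigidifies, and $3\nu$ is the allowed ramification once we adjoin the $3$-torsion and work over $\ZZ[1/\nu]$), and the condition $A_3(R)\isomto A_3(\qbar)$ holds by construction. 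Thus for $(d,n)=(3^{4g},3\nu)$ the set $\sqcup_{R\in\mathcal R}J(R)$ surjects onto $\phi\bigl((Y\setminus Z)(\mathcal O)\bigl)$ by base change, which is precisely an effective covering $\cup_T J(T)$ of $\gl2$-type with $G_\QQ$-isogenies in the sense of \eqref{def:effcover}. Almost all of this is already assembled in \cite{vkkr:chms}; the task here is to record that the general coarse-moduli formalism of Section~\ref{sec:moduli} specializes to their setting.

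The main obstacle, I expect, is not any single hard estimate but the bookkeeping that makes the construction \emph{effective and uniform in $Y$ and $Z$} rather than merely finite: one must check that lifting a point of $(Y\setminus Z)(\mathcal O)$ to $\cmp$ can be done over a field whose degree and ramification are bounded solely in terms of $g$ and $\nu$ (via property (ii) of finite level and the level-$3$ rigidification), and that the passage to simple factors of $\gl2$-type does not enlarge these invariants beyond what \eqref{def:effcover} allows. In the Hilbert case this is handled by the results cited above, so the real content of the proposition in the general part (i) is the verification that the abstract definitions of Section~\ref{sec:moduli} supply all the ingredients — étaleness of $\pi$ off $Z$, finiteness of $\phi_{\qbar}$, $G_\QQ$-equivariance of the forgetful map — that the concrete construction of \cite{vkkr:hms,vkkr:chms} used implicitly.
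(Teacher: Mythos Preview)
Your proposal is correct and follows essentially the same route as the paper's proof: both use that $\pi:\cmp\to Y_{\mathcal O}$ is \'etale off the branch locus, invoke the finite \'etale cover $Y'\to(\cmp)_{\ZZ[1/n]}$ from property (ii) of finite level to lift integral points over a controlled extension, deduce finiteness of $\phi$ from property (i) of finite level, check $G_\QQ$-stability formally, and for part (ii) defer to the arguments of \cite[Thm~10.1 and Thm~4.1]{vkkr:chms}.

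One organizational difference worth noting: the paper first reduces to the case $Z=\emptyset=B$ (via the inclusion $(Y\setminus Z)(S)\hookrightarrow Y^0(S')$ and Lemma~\ref{lem:coarsebasechange}), then composes $Y'\to\cmp\to Y_S$ to obtain a single finite \'etale cover of \emph{schemes} $Y'\to Y_S$, and applies \cite[(6.5)]{rvk:cetraro} once to get a uniform injection $Y_S(S)\hookrightarrow Y'(T)$. Your point-by-point lifting is equivalent but slightly less tidy, since one must then argue that the individual lifts land in a common controlled $T$; the paper's packaging makes this automatic. Also, you assume tameness of $Y_{\mathcal O}=M_{\mathcal P}$, which is not in the hypothesis of the proposition and is not needed: the paper uses only that $\cmp$ is a separated finite-type DM-stack (the ``arithmetic'' condition), which already gives that $\pi$ is proper quasi-finite, hence finite \'etale once the branch locus is removed.
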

The Par{\v{s}}in construction of Theorem~\ref{thm:parshin} combined with the effective Shafarevich conjecture in \cite[Thm B]{vkkr:chms} gives the following more precise version of Corollary~\ref{cor:hmssimple}.
\begin{corollary}\label{cor:hms}
Let $X$ be a variety over $\QQ$ which is non-degenerate in $\bar{M}$, and let $U\subseteq X$ be an open subscheme of the form $U=U_Y$ with $Y$ a non-degenerate model of $X$ in $\mb$. In the Hilbert case, each $x\in U(\QQ)$ satisfies $h_\phi(x)\leq c_1\nu_Y^{e_1}$ and it holds $|U(\QQ)|\leq c_2 \nu_Y^{e_2}$.
\end{corollary}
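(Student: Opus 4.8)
\textbf{Proof plan for Corollary~\ref{cor:hms}.}
The plan is to combine Theorem~\ref{thm:parshin}~(iii) with the effective Shafarevich conjecture of \cite[Thm B]{vkkr:chms} in the $\gl2$-setting, tracking all constants explicitly. First I would apply Theorem~\ref{thm:parshin} to the non-degenerate model $Y$ of $X$ in $\mb$: this produces the dense open $U=U_Y\subseteq X$, its extension $U_S$ to a coarse Hilbert moduli scheme over a dense open $S\subseteq\spec(\ZZ)$ with empty branch locus, and the Par{\v{s}}in construction $\phi\colon U(\QQ)\to\absg(T)$ whose image $\phi(U_Y(\QQ))$ admits, for $(d,n)=(3^{4g},3\nu_Y)$, an effective covering $\cup_T J(T)$ of $\gl2$-type with $G_\QQ$-isogenies. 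Here $g$ is the degree of the totally real field $L$ underlying $M$ (one of the quantities underlying $M$ fixed in $\mathsection$\ref{sec:terminology}), so $d=3^{4g}$ depends only on $M$.

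Next I would unwind the effective covering to bound the Faltings height of every $A\in\phi(U_Y(\QQ))$. By definition of the covering, each such $[A]$ is the base change to $\qb$ of some $[A']\in J(R)$, with $R$ the integral closure of $\ZZ[1/n]$ in a field $F\in\mathcal F_{d,n}$, and $A'$ is of $\gl2$-type with $G_\QQ$-isogenies and satisfies $A'_3(R)\isomto A'_3(\qb)$. Since the stable Faltings height is invariant under base change, $h_F(A)=h_F(A')$. Now \cite[Thm B]{vkkr:chms} (the $\gl2$-case of the effective Shafarevich conjecture) bounds $h_F(A')$ in terms of $[F:\QQ]$ and the primes dividing $n$; with $[F:\QQ]\leq d=3^{4g}$ and $n=3\nu_Y$, and using $\rad(n)\mid 6\nu_Y$, this yields $h_\phi(x)=h_F(A)\leq c_1\nu_Y^{e_1}$ for $x\in U(\QQ)$, where $c_1,e_1$ are obtained from the explicit constants in \cite{vkkr:chms} by substituting the bound $d=3^{4g}$ on the degree; they depend only on $M$ (through $g$ and the remaining quantities underlying $M$). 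This gives the height statement. For the cardinality statement, I would combine the same effective covering with the finiteness estimate in \cite[Thm B]{vkkr:chms}: that result bounds $|J(R)|$ (the number of $\gl2$-type abelian varieties over $R$ with $G_\QQ$-isogenies and the given torsion condition) polynomially in $[F:\QQ]$ and $\rad(n)$, and $\mathcal F_{d,n}$ is itself finite of size bounded in terms of $d$ and $n$; since $\phi$ is finite (fibers of a Par{\v{s}}in construction are finite, by the finite-level condition~(i)), $|U(\QQ)|\leq\sum_T|J(T)|\cdot(\text{max fiber size})$, and collecting the resulting powers of $\nu_Y$ gives $|U(\QQ)|\leq c_2\nu_Y^{e_2}$ with $c_2,e_2$ again depending only on $M$.

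The main obstacle is purely bookkeeping rather than conceptual: one must check that the "number field data" $(d,n)=(3^{4g},3\nu_Y)$ supplied by Theorem~\ref{thm:parshin}~(iii) feeds correctly into the hypotheses of \cite[Thm B]{vkkr:chms}, and then carefully propagate the explicit constants of \cite{vkkr:chms} through the substitution $[F:\QQ]\leq 3^{4g}$ and $\rad(3\nu_Y)\leq 6\nu_Y$ to obtain the clean forms $c_1\nu_Y^{e_1}$ and $c_2\nu_Y^{e_2}$ stated here, with $c_1,c_2,e_1,e_2$ verified to be functions of $M$ alone. A secondary point requiring care is the bound on fiber sizes of $\phi$: it must be made uniform (or absorbed into $c_2$), which follows from condition~(i) of finite level together with the fact that $\mathcal P(A)/\Aut(A)$ is finite of cardinality controlled by the level, but one should confirm that no dependence on $\nu_Y$ beyond what is already recorded sneaks in. Everything else — invariance of $h_F$ under base change, finiteness of $\mathcal F_{d,n}$, and the passage from $J(T)$ back to $\phi(U_Y(\QQ))$ via surjectivity of the covering map — is standard.
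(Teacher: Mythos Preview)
Your plan is correct and, for the height bound, matches the paper's proof exactly: apply Theorem~\ref{thm:parshin}~(iii) to obtain the effective covering $\cup_T J(T)$ for $(d,n)=(3^{4g},3\nu_Y)$, then feed this into the $\gl2$-case of the effective Shafarevich conjecture from \cite{vkkr:chms} (the paper cites \cite[Prop~7.13, 7.14]{vkkr:chms} and the arguments of \cite[Thm~10.1, Thm~4.1]{vkkr:chms} rather than \cite[Thm~B]{vkkr:chms}, but these are the same input).

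For the cardinality bound the paper takes a slightly different route. Instead of bounding $|U(\QQ)|$ by $\sum_T|J(T)|$ times a uniform fiber size of $\phi$, the paper uses the immersions constructed in Lemma~\ref{lem:extndmod}, Proposition~\ref{prop:parshinmodulischeme} and Theorem~\ref{thm:parshin} to produce injections
\[
U(\QQ)\hookrightarrow Y(\QQ)\hookrightarrow (M_{\bs}\setminus B_{\bs})(R), \qquad R=\ZZ[\tfrac{1}{3\nu_Y}],
\]
and then invokes \cite[Thm~4.1]{vkkr:chms} directly for the ambient coarse Hilbert moduli scheme $M_{\bs}\setminus B_{\bs}$. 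This sidesteps the fiber-size issue you flagged: the factor $|\mathcal P|_{\qb}$ in $c_2$ (see \eqref{def:corconstants}) enters because it is already built into the cardinality bound for $(M_{\bs}\setminus B_{\bs})(R)$, not via a separate fiber-counting step. Your approach also works once you confirm the uniform fiber bound (which is indeed $|\mathcal P|_{\qb}$), but the paper's injection is cleaner bookkeeping and makes the dependence of $c_2$ on $M$ alone more transparent.
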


Here $U_{Y}$ is as in \eqref{def:controlledopen} and $\nu_Y$ is its integral degeneration \eqref{def:intdeg}, while $h_\phi:U(\QQ)\to \RR$ is the pullback of $h_F$ by the forgetful map $U(\QQ)\hookrightarrow U(\bar{\QQ})\to \absg(\bar{\QQ})$ from Theorem~\ref{thm:parshin}; notice that $h_\phi$ coincides with the height in Corollary~\ref{cor:hmssimple}. One can take for example  
\begin{equation}\label{def:corconstants}
e_1=\max(24,5g), \ \,  c_1=7^{7^{7g}} \quad \textnormal{and} \quad e_2=6\cdot 3^{8g},  \ \, c_2=9^{9^{9g}}|\mathcal P|_{\qb}\Delta\log(3\Delta)^{2g-1}.
\end{equation}
Here $|\mathcal P|_{\qb}<\infty$ is the maximal number (\cite[(3.1)]{vkkr:hms}) of level $\mathcal P$-structures over $\qb$ for any moduli problem $\mathcal P$ underlying $M$, while $\Delta=|\textnormal{Disc}(L/\QQ)|$ for any totally real number field $L$ underlying $M$. In the remaining of this section we prove the above results.

\subsection{Non-degenerate varieties and coarse moduli schemes}\label{sec:coarsebasechange}

We continue our notation. Let $M\subseteq\mb$ be as above and write $\mathcal M=\sabsg$. We first show that non-degenerate models in $\mb$ are coarse moduli schemes of finite level.

\begin{lemma}\label{lem:extndmod}Any non-degenerate model in $\mb$ extends over an open dense  $S\subseteq \spec(\ZZ)$ to a proper coarse moduli scheme over $S$ of finite level with empty branch locus.
\end{lemma}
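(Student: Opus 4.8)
The plan is to take a non-degenerate model $Y$ in $\mb$ and spread it out over a suitable open $S\subseteq\spec(\ZZ)$ so that the integral model $Y_S$ sits inside the locus $(M_\bs\setminus B_\bs)_S$, which is where we have genuine representability properties available. First I would fix $\bs\subseteq\spec(\ZZ)$ nonempty open over which $M$ extends to a tame coarse moduli scheme $M_\bs$ of finite level of an arithmetic moduli problem $\mathcal P$ on $\mathcal M_\bs$, with branch locus $B_\bs$; this exists by hypothesis on $M$. By Lemma~\ref{lem:coarsebasechange} (the compatibility of the coarse moduli formalism with the flat monomorphism $\bs\hookrightarrow\spec(\ZZ)$) we have $(M_\bs\setminus B_\bs)_\QQ\cong M\setminus B=\mb\setminus\mbd$. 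Since $Y$ is non-degenerate, $Y\hookrightarrow\mb$ factors through an immersion $Y\hookrightarrow M\setminus B$ of $\QQ$-schemes. Standard spreading-out (EGA~IV) then produces a nonempty open $S\subseteq\bs$, a proper scheme $Y_S$ over $S$, and an immersion $Y_S\hookrightarrow (M_\bs\setminus B_\bs)_S$ of $S$-schemes extending the generic one; shrinking $S$ further we may assume $Y_S$ is flat over $S$ with integral fibers if we wish, though properness is what we will actually use. Note this is precisely the content encoded in the definition of the integral degeneration $\nu_Y$ in \eqref{def:intdeg}, so one may simply take $S=\spec(\ZZ[1/\nu_Y])$.

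Next I would promote this closed immersion into $M_\bs\setminus B_\bs$ to the statement that $Y_S$ is itself a coarse moduli scheme over $S$. The key point is that $M_\bs\setminus B_\bs$ is the image of the open substack $\cmp^{\circ}\subseteq\cmp$ on which the initial morphism $\pi$ is \'etale; restricting $\pi$ to $\cmp^{\circ}$ exhibits $M_\bs\setminus B_\bs$ as a coarse moduli scheme (with empty branch locus, by the very definition of the branch locus) over $\bs$ of the moduli problem $\mathcal P^{\circ}$ obtained by restricting $\mathcal P$ to the open substack. Base-changing the coarse moduli space along the open-and-closed-in-its-image immersion $Y_S\hookrightarrow (M_\bs\setminus B_\bs)_S$: since coarse moduli spaces commute with flat base change on the coarse space, and more relevantly a closed subscheme of the coarse space pulls back to a closed substack which again has that closed subscheme as coarse space, the fiber product $\cmp^{\circ}\times_{M_\bs\setminus B_\bs} Y_S$ is a tame DM-stack whose coarse space is $Y_S$ and the restricted $\pi$ stays \'etale, hence the branch locus of $Y_S$ is empty. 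Properness of $Y_S$ over $S$ was arranged in the spreading-out step. Finally I would check the finite-level conditions (i) and (ii) of Section~\ref{sec:moduli} are inherited: condition (ii) is immediate since a finite \'etale cover $Y'\to(\cmp)_{\ZZ[1/n]}$ pulls back to a finite \'etale cover of the restricted stack after further inverting the primes in $\nu_Y$; condition (i) is inherited because $\mathcal P^{\circ}(A)\subseteq\mathcal P(A)$ and $\Aut(A)$ is unchanged, so the relevant quotient sets only get smaller and stay finite over any connected Dedekind scheme with function field algebraic over $\QQ$.

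I expect the main obstacle to be the bookkeeping around base change of the stack $\cmp$ and its coarse space along the non-smooth (merely flat, or merely an immersion) base changes involved: one must ensure that tameness, the DM property, finite-typeness, and in particular the étaleness of the initial morphism are all preserved when passing from $M_\bs\setminus B_\bs$ to the closed subscheme $Y_S$, and that ``coarse moduli scheme of finite level'' really is stable under this operation. The cleanest route is probably to avoid base-changing the stack along $Y_S$ directly and instead observe that $Y_S\hookrightarrow M_\bs\setminus B_\bs$ realizes $Y_S$ as representing (on the nose, since the branch locus is empty so $\cmp^{\circ}\to M_\bs\setminus B_\bs$ is an étale gerbe / algebraic space map that is nonetheless a coarse space with trivial obstruction) an arithmetic moduli sub-problem; the tameness is automatic from tameness of $M_\bs$ since automorphism groups only shrink. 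Everything else --- properness, the shape of $S$, conditions (i) and (ii) --- is then routine spreading-out and restriction, and the Par\v sin-construction consequences will follow in Proposition~\ref{prop:parshinmodulischeme} and Theorem~\ref{thm:parshin}.
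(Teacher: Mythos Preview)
Your proposal is correct and follows essentially the same approach as the paper: spread out the immersion $Y\hookrightarrow M\setminus B$ to an immersion $Y_S\hookrightarrow (M_\bs\setminus B_\bs)_S$ over a small enough open $S$, then transfer the coarse moduli scheme structure from $M_\bs$ to $Y_S$ with empty branch locus. The only organizational difference is that the paper does not argue directly with the stack fiber product as you do in your second paragraph; instead it invokes Lemma~\ref{lem:coarsebasechange} once, applied to the finite-type monomorphism $Y_S\hookrightarrow M_S$ (not $M_S\setminus B_S$), using tameness of $M_\bs$ to get that $Y_S$ is a coarse moduli scheme of finite level, and using part~(v) of that lemma to conclude the branch locus of $Y_S$ maps into $B_S$ and is therefore empty since the immersion factors through $M_S\setminus B_S$. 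This is exactly the ``cleanest route'' you anticipate in your third paragraph, so your instincts are right; just note that Lemma~\ref{lem:coarsebasechange} already packages the tame base-change of coarse spaces, the inheritance of finite level (your conditions (i) and (ii)), and the branch-locus comparison, so there is no need to unpack the \'etale gerbe picture by hand.
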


Then we deduce that any non-degenerate variety has an open dense subscheme which is a coarse moduli scheme of finite level by combining Lemma~\ref{lem:extndmod} with the following result, in which  $S$ is either $\QQ$ or a nonempty open of $\spec(\ZZ)$, $T$ is any $S$-scheme and $\mathcal M=\mathcal M_S$.

\begin{lemma}\label{lem:coarsebasechange}
Let $Y$ be a coarse moduli scheme over $S$ of an arithmetic moduli problem $\mathcal P$ on $\mathcal M$. Let $Y'\to Y$ be a monomorphism of $S$-schemes. If $Y=M_{\mathcal P}$ is tame or $Y'\to Y$ is flat, then $Y'$ is a coarse moduli scheme over $S$ of an algebraic moduli problem $\mathcal P'$ on $\mathcal M$.
\begin{itemize}
\item[(i)] There is an equivalence $\mathcal M_{\mathcal P'}\isomto\cmp\times_Y Y'$ of categories over $\mathcal M$.
\item[(ii)] The morphism $\mathcal M_{\mathcal P'}\isomto \cmp\times_Y Y'\to Y'$ is an initial morphism.
\item[(iii)] For each geometric point $\spec(k)\to S$, the forgetful map $Y'(k)\to \absg(k)$ is the composition of $Y'(k)\to Y(k)$  with the forgetful map $Y(k)\to \absg(k)$.
\item[(iv)] For each $x\in \mathcal M(T)$ the set $\mathcal P'(x)/\Aut(x)$ injects into $\mathcal P(x)/\Aut(x)$. If $Y=M_{\mathcal P}$ is of finite level and $Y'\to Y$ is of finite type, then $Y'=M_{\mathcal P'}$ is again of finite level.
\item[(v)] The morphism $Y'\to Y$ sends the branch locus $B_{\mathcal P'}$ of $Y'=M_{\mathcal P'}$ into the branch locus $B_{\mathcal P}$ of $Y=M_{\mathcal P}$.  If $f:Y'\to Y$ is \'etale, then it holds $B_{\mathcal P'}=f^{-1}(B_{\mathcal P})$.
\end{itemize}
\end{lemma}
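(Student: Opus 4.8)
The plan is to realize $\mathcal{P}'$ as the moduli problem coarsely represented by a fibre product of stacks, and then to deduce (i)--(v) from base-change formalism. Let $\pi\colon\cmp\to Y$ be the initial morphism and set $\mathcal N:=\cmp\times_Y Y'$, a fibre product of algebraic stacks over the algebraic stack $Y$; thus $\mathcal N$ is an algebraic stack over $S$, with a $1$-morphism to $\mathcal M$ via $\mathcal N\to\cmp\to\mathcal M$. Since $Y'\to Y$ is a monomorphism, and monomorphisms are stable under base change, $\mathcal N\to\cmp$ is a monomorphism; and $\cmp$ is by construction the fibred category over $\mathcal M$ associated to the presheaf $\mathcal P$, hence fibred in sets over $\mathcal M$. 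Therefore $\mathcal N\to\mathcal M$ is again fibred in sets, and unwinding the fibre product (using that $Y'\hookrightarrow Y$ is a monomorphism) identifies $\mathcal N$, as a category over $\mathcal M$, with $\mathcal M_{\mathcal P'}$, where $\mathcal P'\subseteq\mathcal P$ is the subpresheaf given by $\mathcal P'(A)=\{\alpha\in\mathcal P(A):\pi(A,\alpha)\text{ lies in the image of }Y'(T)\to Y(T)\}$ for $A\in\mathcal M(T)$. As $\mathcal N$ is algebraic, $\mathcal P'$ is an algebraic moduli problem, and the tautological equivalence $\mathcal M_{\mathcal P'}\isomto\mathcal N=\cmp\times_Y Y'$ over $\mathcal M$ is the one in (i).

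For (ii), observe that the projection $\mathcal N\to Y'$ is the base change of $\pi$ along $f\colon Y'\to Y$; it therefore suffices to know that the formation of a coarse moduli space (initial morphism) is compatible with the base change $f$. This is the one genuinely non-formal ingredient, and it is exactly the source of the two alternative hypotheses: the coarse space is formed compatibly with arbitrary flat base change, and with \emph{any} base change when the source stack is tame --- which is precisely what ``$Y=M_{\mathcal P}$ is tame'' means (see e.g.\ \cite{olsson:stacks}). Under either hypothesis of the lemma, $\mathcal N\to Y'$ is thus an initial morphism, so $Y'=M_{\mathcal P'}$. I expect this base-change statement to be the main obstacle; the remaining assertions are bookkeeping with fibred categories, monomorphisms and étale descent.

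For (iii), one unwinds the definition of the forgetful map: by construction the structure functor $\mathcal M_{\mathcal P'}=\mathcal N\to\mathcal M$ is the composite $\mathcal N\to\cmp\to\mathcal M$, so for a geometric point $\spec(k)\to S$ the map $[\mathcal M_{\mathcal P'}(k)]\to[\cmp(k)]$ induced by $\mathcal N\to\cmp$ is compatible with the two forgetful maps $[(A,\alpha)]\mapsto[A]$ to $\absg(k)$; composing with the bijections $Y'(k)\isomto[\mathcal M_{\mathcal P'}(k)]$ and $Y(k)\isomto[\cmp(k)]$ from the two initial morphisms gives the claimed factorization of $Y'(k)\to\absg(k)$ through $Y'(k)\to Y(k)$. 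For (iv), since $\mathcal P'\subseteq\mathcal P$ is a subpresheaf, the inclusion $\mathcal P'(A)\subseteq\mathcal P(A)$ is $\Aut(A)$-equivariant; hence $\Aut(A)$ preserves $\mathcal P'(A)$, so $\mathcal P'(A)/\Aut(A)$ is the set of those $\Aut(A)$-orbits that lie in $\mathcal P'(A)$, which injects into $\mathcal P(A)/\Aut(A)$. In particular condition (i) of ``finite level'' passes from $\mathcal P$ to $\mathcal P'$. For condition (ii), pick $n$, a scheme $Z$ and a finite étale cover $Z\to(\cmp)_{\ZZ[1/n]}$, enlarging $n$ so that $\ZZ[1/n]$ is an $S$-scheme; then $W:=Z\times_{Y_{\ZZ[1/n]}}Y'_{\ZZ[1/n]}$ is a scheme, of finite type over $\ZZ[1/n]$ because $Y'\to Y$ is of finite type, and the canonical morphism $W\to(\cmp)_{\ZZ[1/n]}\times_{Y_{\ZZ[1/n]}}Y'_{\ZZ[1/n]}=\mathcal N_{\ZZ[1/n]}$ is the base change of $Z\to(\cmp)_{\ZZ[1/n]}$, hence a finite étale cover. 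Since $Y'\to Y$ of finite type also makes $\mathcal N\to\cmp$ a finite-type monomorphism, $\mathcal M_{\mathcal P'}$ is separated and of finite type, i.e.\ $\mathcal P'$ is arithmetic; combined with (i) and (ii) this shows $\mathcal P'$ is of finite level.

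Finally, for (v): the projection $\mathcal N\to Y'$ is the base change of $\pi$ along $f$, and étaleness is stable under base change, so $\mathcal N\to Y'$ is étale over $f^{-1}(Y\setminus B_{\mathcal P})$; hence $B_{\mathcal P'}\subseteq f^{-1}(B_{\mathcal P})$, and in particular $f$ maps $B_{\mathcal P'}$ into $B_{\mathcal P}$. Suppose now that $f$ is étale, and let $V\subseteq Y'$ be an open over which $\mathcal N\to Y'$ is étale. Then $f(V)$ is open in $Y$, the induced morphism $V\to f(V)$ is étale and surjective, and the base change of $\pi^{-1}(f(V))\to f(V)$ along it is $\cmp\times_Y V=\mathcal N\times_{Y'}V\to V$, which is étale; by fppf descent $\pi^{-1}(f(V))\to f(V)$ is étale, i.e.\ $f(V)\cap B_{\mathcal P}=\emptyset$, so $V\cap f^{-1}(B_{\mathcal P})=\emptyset$. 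Letting $V$ range over the étale locus of $\mathcal N\to Y'$ gives $f^{-1}(B_{\mathcal P})\subseteq B_{\mathcal P'}$, whence $B_{\mathcal P'}=f^{-1}(B_{\mathcal P})$.
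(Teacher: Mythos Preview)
Your proof is correct and follows essentially the same route as the paper: construct $\mathcal P'$ so that $\mathcal M_{\mathcal P'}\simeq\cmp\times_Y Y'$, invoke the base-change compatibility of coarse moduli spaces (flat or tame case) for (ii), and handle (iii)--(v) formally via the fibre-product description and \'etale/fppf descent. The one noteworthy difference is that you write down $\mathcal P'$ explicitly as the subpresheaf $\{\alpha\in\mathcal P(A):\pi(A,\alpha)\in\textnormal{im}(Y'(T)\to Y(T))\}$, whereas the paper appeals to a separate formal lemma (Lemma~\ref{lem:formal}) to produce $\mathcal P'$ abstractly from the setoid structure of $\cmp\times_Y Y'$ over $\mathcal M$; your explicit description makes the $\Aut(A)$-stability in (iv) immediate, while the paper instead chases a small commutative diagram of fibres of forgetful maps.
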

Katz--Mazur~\cite{kama:moduli} proved (i) and (ii)  when $g=1$. We deduce (i) and (ii) from base change properties of coarse moduli spaces~(\cite{kemo:coarse,abvi:compactstable}), while (iii), (iv) and (v) are formal consequences of (i) and (ii). To construct $\mathcal P'$ satisfying (i) we use the formal: 

\begin{lemma}\label{lem:formal}
Let $S$ be a scheme, let $\mathcal M$ be a category fibred in groupoids over $S$  and let $\mathcal P$ be a presheaf on $\mathcal M$ such that $\mathcal M_{\mathcal P}$ is an algebraic stack over $S$. For any monomorphism $Y'\to Y$ of $S$-schemes and each morphism $\cmp\to Y$ of algebraic stacks over $S$, there exist a presheaf $\mathcal P'$ on $\mathcal M$ and an equivalence $\mathcal M_{\mathcal P'}\isomto\cmp\times_YY'$ of categories over $\mathcal M$.
\end{lemma}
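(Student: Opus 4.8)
The statement is essentially a formal/categorical fact: given $\cmp \to Y$ and a monomorphism $Y' \to Y$, we want to realize the fiber product $\cmp \times_Y Y'$ as $\mathcal M_{\mathcal P'}$ for a suitable presheaf $\mathcal P'$ on $\mathcal M$. The plan is to define $\mathcal P'$ directly and then exhibit the equivalence. Recall that an object of $\mathcal M_{\mathcal P}$ over an $S$-scheme $T$ is a pair $(x, \alpha)$ with $x \in \mathcal M(T)$ and $\alpha \in \mathcal P(x)$; equivalently, since $\mathcal P$ is a presheaf on $\mathcal M$, a section $\alpha \in \mathcal P(x)$ is the same as a morphism $x \to \mathcal P$ in the category of presheaves on $\mathcal M$ (viewing $x$ via the Yoneda embedding $\mathcal M \hookrightarrow \widehat{\mathcal M}$, and noting $\mathcal M_{\mathcal P} = \mathcal M \times_{\widehat{\mathcal M}} \widehat{\mathcal M}_{/\mathcal P}$ as a category fibred in groupoids). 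The given morphism $\pi \colon \cmp \to Y$ together with the structure map $\cmp \to \mathcal M$ encodes, for each $(x,\alpha)$ over $T$, an element of $Y(T)$ depending functorially on $\alpha$; since $\mathcal M_{\mathcal P}$ is an algebraic stack this is just a morphism of stacks, but the key point is that it is determined on $T$-points by the pair $(x,\alpha)$.

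First I would define, for each object $x$ of $\mathcal M$ lying over an $S$-scheme $T$, the set
\begin{equation*}
\mathcal P'(x) = \{\, \alpha \in \mathcal P(x) \ : \ \pi(x,\alpha) \in Y(T) \text{ lies in } Y'(T) \subseteq Y(T)\,\},
\end{equation*}
where the inclusion $Y'(T) \subseteq Y(T)$ comes from the monomorphism $Y' \to Y$ (which on $T$-points is injective, so this is literally a subset). Functoriality of $\mathcal P'$ in $x$ follows from functoriality of $\mathcal P$, of $\pi$, and of the inclusion $Y'(T) \hookrightarrow Y(T)$ in $T$; so $\mathcal P'$ is a presheaf on $\mathcal M$. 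Next I would construct the comparison functor $\mathcal M_{\mathcal P'} \to \cmp \times_Y Y'$: an object $(x,\alpha)$ of $\mathcal M_{\mathcal P'}(T)$ gives $(x,\alpha) \in \cmp(T)$ together with its image $\pi(x,\alpha) \in Y'(T)$, and these agree in $Y(T)$ by construction, hence define an object of $(\cmp \times_Y Y')(T)$; morphisms are handled the same way. For the quasi-inverse, an object of $(\cmp \times_Y Y')(T)$ is a triple consisting of $(x,\alpha) \in \cmp(T)$, a point $y' \in Y'(T)$, and an identification of $\pi(x,\alpha)$ with the image of $y'$ in $Y(T)$; since $Y' \to Y$ is a monomorphism this identification is unique if it exists, so $y'$ is determined, and the condition that it exists is exactly $\alpha \in \mathcal P'(x)$ — giving back an object of $\mathcal M_{\mathcal P'}(T)$. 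One checks these are mutually quasi-inverse, and that the functor is compatible with the projections to $\mathcal M$, which is immediate since all constructions are over $\mathcal M$.

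The main subtlety — the step I would be most careful about — is making precise the phrase ``$\pi(x,\alpha) \in Y(T)$'': one must use that $(x,\alpha)$, as an object of the algebraic stack $\cmp$ over the scheme $T$, is the same as a morphism $T \to \cmp$, so that $\pi \circ (x,\alpha) \colon T \to Y$ is a genuine $T$-point of $Y$, and then that a monomorphism of schemes induces an injection on $T$-points so the subset $\mathcal P'(x) \subseteq \mathcal P(x)$ is well-defined. The rest is a routine, if slightly tedious, diagram chase verifying that the two functors are quasi-inverse and compatible with the structure over $\mathcal M$; no algebraicity hypothesis beyond ``$\mathcal M_{\mathcal P}$ is an algebraic stack'' is actually needed for the construction of $\mathcal P'$ and the equivalence — that hypothesis is only there because it is the standing context in which the lemma is applied (and guarantees the fiber product on the right is again reasonable). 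I would remark at the end that the equivalence is canonical, which is what is used implicitly in the proof of Lemma~\ref{lem:coarsebasechange}.
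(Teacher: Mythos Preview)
Your argument is correct. You construct $\mathcal P'$ explicitly as the subpresheaf of $\mathcal P$ cut out by the condition $\pi(x,\alpha)\in Y'(T)\subseteq Y(T)$, and then verify the equivalence with $\cmp\times_Y Y'$ by hand; the monomorphism hypothesis is used exactly where you say, to make $Y'(T)\hookrightarrow Y(T)$ a genuine subset so that the datum $y'$ in the fiber product is redundant.

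The paper proceeds more abstractly and in the opposite direction: rather than writing down $\mathcal P'$, it first shows that $\cmp\times_Y Y'$ is a category fibred in \emph{setoids} over $\mathcal M$ (the projection $p$ to $\cmp$ is a monomorphism, hence fully faithful on fibers, and $\cmp$ is already fibred in sets over $\mathcal M$, so automorphisms in the fibers of $\cmp\times_Y Y'$ over $\mathcal M$ are trivial), and then invokes the general dictionary \cite[0045, 02Y2]{sp} between categories fibred in sets and presheaves to produce $\mathcal P'$. Your route has the advantage that it exhibits $\mathcal P'$ directly as a subpresheaf of $\mathcal P$, which makes the injection $\mathcal P'(x)/\Aut(x)\hookrightarrow \mathcal P(x)/\Aut(x)$ of Lemma~\ref{lem:coarsebasechange}(iv) immediate; the paper instead recovers that injection separately via the monomorphism $p$. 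The paper's route, on the other hand, makes transparent the only nontrivial point (that the fiber product is fibred in setoids) and then delegates the rest to standard references. Both are valid; yours is the more hands-on unpacking of the same content.
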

This lemma also leads to the Hilbert analogues of Lemmas~\ref{lem:extndmod} and \ref{lem:coarsebasechange} given in \eqref{eq:hilbanalogue} below. Since we could not find a reference, we included a proof of Lemma~\ref{lem:formal} at the end of $\mathsection$\ref{sec:coarsebasechange}. We now apply Lemma~\ref{lem:formal} to prove Lemma~\ref{lem:coarsebasechange} and then we deduce Lemma~\ref{lem:extndmod}.

\begin{proof}[Proof of Lemma~\ref{lem:coarsebasechange}]
Recall that $S$ is either $\QQ$ or a nonempty open of $\spec(\ZZ)$ and that $\mathcal M=(\sabsg)_S$. Let $Y$ be a coarse moduli scheme over $S$ of an arithmetic moduli problem $\mathcal P$ on $\mathcal M$ with initial morphism $\pi:\cmp\to Y$. Let $Y'\to Y$ be a monomorphism of $S$-schemes.

We first prove (i) and (ii). Lemma~\ref{lem:formal} gives a presheaf $\mathcal P'$ on $\mathcal M$ with an equivalence $\mathcal M_{\mathcal P'}\isomto\cmp\times_Y Y'$ of categories over $\mathcal M$. In particular $\mathcal P'$ is an algebraic moduli problem on $\mathcal M$, since $\cmp$ and thus $\cmp\times_Y Y'$ is an algebraic stack over $S$. We define the morphism
$$\pi':\mathcal M_{\mathcal P'}\isomto\cmp\times_Y Y'\to Y'.$$
By assumption $\cmp$ is a finite type separated DM-stack, and $Y'\to Y$ is flat or the stack $\cmp$ is tame. Thus  \cite[Lem 2.2.2 and Lem 2.3.3]{abvi:compactstable} give that $\pi':\mathcal M_{\mathcal P'}\isomto \cmp\times_Y Y'\to Y'$ is a coarse moduli space.  This completes the proof of (i) and (ii).

We now prove (iii). Let $\spec(k)\to S$ be a geometric point. To relate the forgetful maps of $Y'(k)$ and $Y(k)$, we let $\mathcal M_{\mathcal P'}\to \cmp$ be the morphism of categories over $\mathcal M$ obtained by composing $\mathcal M_{\mathcal P'}\isomto \cmp\times_Y Y'$ with the first projection and we consider
$$
\xymatrix@R=4em@C=4em{
Y'(k) \ar[r]^{(\pi')^{-1}} \ar[d] & [\mathcal M_{\mathcal P'}(k)] \ar[r]^{(A,\beta)\mapsto A} \ar[d]  & \absg(k)\ar[d]^{\textnormal{id}} \\
Y(k) \ar[r]^{\pi^{-1}} &  [\cmp(k)] \ar[r]^{(A,\alpha)\mapsto A}  &  \absg(k).} 
$$
Here the left hand square commutes since $\mathcal M_{\mathcal P'}\to \cmp$ factors through $\cmp\times_Y Y'$ and since the maps $\pi^{-1}$ and $(\pi')^{-1}$ are bijective. On the other hand, the right hand square commutes since  $\mathcal M_{\mathcal P'}\to \cmp$ is a morphism of categories over $\mathcal M$. Thus we conclude that the forgetful map $Y'(k)\to \absg(k)$ in the top is the composition of $Y'(k)\to Y(k)$ with the forgetful map $Y(k)\to\absg(k)$ in the bottom. This completes the proof of (iii). 

We next show (iv). The first claim in (iv) implies the second claim, since (the pullback of) any monomorphism of schemes is separated and the pullback of any finite \'etale cover is again a  finite \'etale cover. We now show the first claim: For each $x\in \mathcal M(T)$ with $T$ any $S$-scheme, the quotient $J'_x=\mathcal P'(x)/\Aut(x)$ injects into $J_x=\mathcal P(x)/\Aut(x)$. As $\cmp\times_Y Y'\isomto \mathcal M_{\mathcal P'}$ from (i) is an equivalence of categories over $\mathcal M$, we obtain the commutative diagram: $$
\xymatrix@R=4em@C=4em{
[(\cmp\times_{Y}Y')(T)] \ar[r]^{ \ \ \ \ \sim} \ar[d] & [\mathcal M_{\mathcal P'}(T)]\ar[d]^{\phi'}  \\
[\cmp(T)] \ar[r]^\phi &  [\mathcal M(T)].} 
$$
Here $\phi$ and $\phi'$ are the forgetful maps induced by $(x,\alpha)\mapsto x$. The map on the left is injective since the projection $\cmp\times_Y Y'\to \cmp$ is a (base change of a) monomorphism. Sending $[(x,\alpha)]$ in $\phi^{-1}([x])=\{[(x,\alpha)],\, \alpha\in\mathcal P(x)\}$ to the $\Aut(x)$-orbit of $\alpha$ defines a bijection $\phi^{-1}([x])\isomto J_x$, and exactly the same argument gives $(\phi')^{-1}([x])\isomto J'_x$. Thus the commutative diagram injects $J'_x$ into $J_x$ as desired. This completes the proof of (iv). 

To show (v) we let $U\subseteq Y$ and $U'\subseteq Y'$ be the complements of the branch loci $B\subseteq Y$ and $B'\subseteq Y'$ respectively. The base change of $f:Y'\to Y$ via the open immersion $U\hookrightarrow Y$ is an \'etale morphism $Y'_U\to U$. As $\pi':\mathcal M_{\mathcal P'}\to Y'$ identifies with the base change $\pi_{Y'}$ of $\pi$ by (ii), we then deduce that $\pi'_{Y'_U}:(\mathcal M_{\mathcal P'})_{Y'_U}\to Y'_U$ is \'etale. It follows that $f^{-1}(B^c)=Y'_U\subseteq U'$ and hence $f(B')\subseteq B$ as claimed in (v). Suppose now in addition that $f:Y'\to Y$ is \'etale. To prove that $f^{-1}(B)=B'$, it suffices to show that $f(U')\subseteq U$ since we already proved $f^{-1}(U)\subseteq U'$. As $\cmp$ is a DM-stack over $S$, it admits an \'etale cover by a scheme. Then, on using the same arguments as in \cite[(5.9)]{vkkr:chms}, we obtain that $\pi_{f(U')}$ is \'etale and hence $f(U')\subseteq U$ as desired. This completes the proof of (v) and thus of Lemma~\ref{lem:coarsebasechange}.\end{proof}

\begin{proof}[Proof of Lemma~\ref{lem:extndmod}]
The variety $M$ over $\QQ$ extends over an open dense $S\subseteq \spec(\ZZ)$ to a tame coarse moduli scheme $M_S$ over $S$ of finite level with branch locus $B_{S}$. We can freely replace here $S$ by any open dense $S'\subseteq S$ since $(M_S)_{S'}$ is again a tame coarse moduli scheme over $S'$ of finite level with branch locus $(B_S)_{S'}$ by Lemma~\ref{lem:coarsebasechange}. Write $$M^0=M_S\setminus B_S.$$
Let $X_\QQ$ be a non-degenerate model in $\mb$. Then $X_\QQ$ is proper over $\QQ$ and there exists an immersion $X_\QQ\hookrightarrow M^0_\QQ$ of $\QQ$-schemes since the branch locus $B$ of $M$ satisfies $B=(B_S)_\QQ$ by  Lemma~\ref{lem:coarsebasechange}. After possibly replacing $S$ by an open dense subscheme, spreading out gives a proper scheme $X$ over $S$ extending $X_\QQ$ and an immersion  $$X\hookrightarrow M^0$$
extending  $X_\QQ\hookrightarrow M^0_\QQ$. The composition of $X\hookrightarrow M^0$ with the open immersion $M^0\hookrightarrow M_S$ is a monomorphism $X\hookrightarrow M_S$ of finite type and $M_S$ is tame. Thus Lemma~\ref{lem:coarsebasechange} gives that $X$ is a coarse moduli scheme over $S$ of finite level and that $X\hookrightarrow M_S$ sends the branch locus of $X$ into $B_S\subseteq M_S$. Hence the branch locus of $X$ is empty since $X\hookrightarrow M_S$ factors through $M^0=M_S\setminus B_S$. We conclude that $X$ has all properties claimed in Lemma~\ref{lem:extndmod}.\end{proof}

Let $\mathcal M^I$ be a Hilbert moduli stack over $\ZZ$ defined with respect (\cite[$\mathsection$3]{vkkr:hms}) to a totally real field $L$ of degree $g$. On taking in the proofs of Lemmas~\ref{lem:extndmod} and \ref{lem:coarsebasechange} the base
\begin{equation}\label{eq:hilbanalogue}
\mathcal M=\mathcal M^I_S,
\end{equation}
we see that the statements of Lemmas~\ref{lem:extndmod} and \ref{lem:coarsebasechange} both hold (in the Hilbert case) if one replaces therein `coarse moduli scheme' by `coarse Hilbert moduli scheme'. 

\begin{proof}[Proof of Lemma~\ref{lem:formal}]
We first clarify the structure of $\M_{\P}\times_YY'$ over $\mathcal M$. The forgetful functor $f:\M_{\P}\rightarrow\M$ defines a category fibred in sets over $\mathcal M$. As $Y\to Y'$ is a monomorphism, the projection $p:\M_{\P}\times_YY'\rightarrow\M_{\P}$ defines a category fibred in groupoids over $\M_{\P}$. Thus $\M_{\P}\times_YY'$ is a category fibred in groupoids over $\M$ via $fp$. 
We claim that $\M_{\P}\times_YY'$ is moreover a category fibred in setoids over $\M$ via $fp$. As $Y'\rightarrow Y$ is a monomorphism, the projection $p:\M_{\P}\times_YY'\rightarrow \M_{\P}$ is a monomorphism and thus $p$ is fully faithful by [Sta, 04ZZ]. Hence for any $U\in\textup{Ob}(\M)$ the induced functor
\[p_U:(\M_{\P}\times_YY')_U\rightarrow (\M_{\P})_U\]
is fully faithful by \cite[003Z]{sp}. For any $x\in (\M_{\P}\times_YY')_U$ and $\varphi\in \textup{Hom}_{(\M_{\P}\times_YY')_U}(x,x)$, we have $p_U(\varphi)\in \textup{Hom}_{(\M_{\P})_U}(p(x),p(x))$. As $\M_{P}$ is a category fibred in sets over $\M$ via $f$, it holds $p_U(\varphi)=\textup{id}$ and thus $\varphi=\textup{id}$ since $p_U$ is faithful. We conclude that $\M_{\P}\times_YY'$ is a category fibred in setoids over $\M$ via $fp$. This proves our claim.

The claim and \cite[0045]{sp} show that $\M_{\P}\times_YY'\to \M$ factors as $\M_{\P}\times_YY'\isomto \mathcal{X}\to\M$ with $\M_{\P}\times_YY'\isomto \mathcal{X}$ an equivalence of categories over $\M$ and $\mathcal{X}\to \mathcal M$ a category fibred in sets over $\M$. Then \cite[02Y2]{sp} provides a presheaf $\P'$ on $\M$ such that $\mathcal{X}\to \mathcal M$ factors as $\mathcal{X}\isomto\M_{\P'}\to\M$ with
$\mathcal{X}\isomto \M_{\P'}$ an equivalence of categories over $\M$. Thus the composition $\M_{\P}\times_YY'\isomto\mathcal X\isomto\M_{\P'}$ is an equivalence of categories over $\M$, and hence \cite[003Z]{sp} gives an equivalence $\M_{\P'}\isomto \M_{\P}\times_YY'$ of categories over $\M$ as desired.
\end{proof}

\subsection{Proof of Proposition~\ref{prop:parshinmodulischeme}}

Let $Y$ be a variety over $\ZZ$, and let $Z\subset Y$ be a closed subscheme. As in Proposition~\ref{prop:parshinmodulischeme} we assume that there exists an open dense $S\subseteq \spec(\ZZ)$ such that $Y_S=M_{\mathcal P}$ is a coarse moduli scheme over $S$ of finite level with branch locus $B\subseteq Z_S$. We write $\mathcal M=(\sabsg)_S$.

\begin{proof}[Proof of Proposition~\ref{prop:parshinmodulischeme}]
To prove the claims, we can freely shrink $S$ to some open dense $S'\subseteq S$ and we may and do assume that $Z$ and the branch locus $B$ of $Y_S=M_{\mathcal P}$ are both empty. Indeed for $Y^0=Y_{S'}\setminus B_{S'}$ the arguments in \cite[(10.11)]{vkkr:chms} show $$(Y\setminus Z)(S)\hookrightarrow Y^0(S')$$   and   $Y^0$ is a variety over $\ZZ$ with the following properties by  Lemma~\ref{lem:coarsebasechange}: It is a coarse moduli scheme over $S'$ of finite level  with empty branch locus and with forgetful map factoring as $Y^0(\qb)\hookrightarrow Y(\qb)\to\absg(\qb)$ for $Y(\qb)\to\absg(\qb)$ the forgetful map of $Y_S=M_{\mathcal P}$.  

As $Y_S=M_{\mathcal P}$ is of finite level by assumption, there exist (after possibly shrinking $S$) a scheme $Y'$ and a finite \'etale cover $Y'\to\cmp$ over $S$. The initial morphism $\cmp\to Y_S$ is proper and quasi-finite since $\cmp$ is a separated finite type DM stack over $S$, and $\cmp\to Y_S$ is \'etale since $B$ is empty by assumption. Thus we obtain a finite \'etale cover $$Y'\to Y_S.$$  
Then \cite[(6.5)]{rvk:cetraro} gives an injection $Y_S(S)\hookrightarrow Y'(T)$ for an open $T\subseteq \spec(\OK)$ with $K$ a number field. Let $Y'\to \M$ be the composition of $Y'\to\cmp$  with the forgetful morphism $\cmp\to\mathcal M$. It corresponds via 2-Yoneda to an object $y\in \mathcal M(Y')$ and there exists an equivalence $Y'\isomto \mathcal M_{\mathcal P'}$ of stacks over $S$ for $\mathcal P'=\Hom_{\M}(-,y)$ such that the diagram $$
\xymatrix@R=4em@C=4em{
Y'(T) \ar[r]^{\sim} \ar[d] & [\mathcal M_{\mathcal P'}(T)]\ar[d]  \\
[\cmp(T)] \ar[r] &  \absg(T)} 
$$
commutes. Here the map on the left is finite since $Y'\to Y_S$ is a finite morphism of varieties over $S$, while the bottom map is finite since $Y_S=M_{\P}$ is of finite level by assumption. As the diagram commutes, the forgetful map $Y'(T)\to \absg(T)$ of the moduli scheme $Y'=M_{\mathcal P'}$ over $S$ is thus finite. Composing with $Y(S)\hookrightarrow Y_S(S)\hookrightarrow Y'(T)$ gives a Par{\v{s}}in construction $$\phi:Y(S)\to  \absg(T).$$
We claim that $\phi$ is defined by the forgetful map $Y(\bar{\QQ})\to \absg(\bar{\QQ})$. Indeed this follows since the injection $Y_S(S)\hookrightarrow Y'(T)$ is defined by pulling back $S\to Y_S$ via $Y'\to Y_S$, the morphism $Y'\to \mathcal M$ factors as $Y'\to \cmp\to \mathcal M$, and the above displayed diagram commutes. Moreover, on using the same formal arguments as in step 2. of the proof of \cite[Thm 10.1]{vkkr:chms}, we see that the image of $\phi$ is $G_\QQ$-stable. This proves (i). 
To show (ii) we assume that $Y_{\OL}$ is a coarse Hilbert moduli scheme over $\OL=\ZZ[1/\nu]$. Then we go into the constructions of \cite{vkkr:chms}: On using precisely the same arguments as in the proofs of \cite[Thm 10.1 and Thm 4.1]{vkkr:chms}, we obtain a Par{\v{s}}in construction $$\phi:(Y\setminus Z)(\OL)\to \absg(T_0)$$ such that the image of $\phi$ admits for $(d,n)=(3^{4g},3\nu)$ an effective covering $\cup_TJ(T)$ of $\gl2$-type with $G_\QQ$-isogenies. Here $T_0\subseteq \spec(\OL_K)$ is the open subscheme given by the integral closure of $\ZZ[\tfrac{1}{3\nu}]$ in the
compositum $K\subset\qb$ of all number fields $L\subset\qb$ of degree $[L:\QQ]\leq 3^{4g}$ such that $L/\QQ$
is unramified at each rational prime $p\nmid 3\nu$; see step 6. in \cite[$\mathsection$10.1]{vkkr:chms} or \cite[(6.5)]{rvk:cetraro}. This proves (ii) and thus completes the proof of Proposition~\ref{prop:parshinmodulischeme}.
\end{proof}

\subsection{Proof of Theorem~\ref{thm:parshin}}

Let $X$ be a variety over $\QQ$. Further let $M$ be a variety over $\QQ$ which extends over an open dense subscheme of $\spec(\ZZ)$ to a tame coarse moduli scheme of finite level and let $\mb$ be a compactification of $M$.  We assume that $X$ is non-degenerate in $\mb$.
\begin{proof}[Proof of Theorem~\ref{thm:parshin}]
As $X$ is non-degenerate in $\mb$, there exists an open dense $U_\QQ\subseteq X$ with an open immersion $U_\QQ\hookrightarrow Y_\QQ$ of $\QQ$-schemes into a non-degenerate model $Y_\QQ$ of $X$ in $\mb$. Lemma~\ref{lem:extndmod} gives that $Y_\QQ$ extends over an open dense $S\subseteq\spec(\ZZ)$ to a proper coarse moduli scheme $Y$ over $S$ of finite level with empty branch locus. As $Y$ is proper over $S$, we obtain $Y(\QQ)\isomto Y(S)$ and thus Proposition~\ref{prop:parshinmodulischeme}~(i) provides a Par{\v{s}}in construction $$\phi:U_\QQ(\QQ)\hookrightarrow Y_\QQ(\QQ)\to^{\phi_Y}\absg(T) \quad \textnormal{with} \quad \phi_Y:Y_\QQ(\QQ)\isomto Y(\QQ)\isomto Y(S)\to\absg(T)$$ 
 defined by the forgetful map $Y(\qb)\to\absg(\qb)$. After possibly replacing $S$ by an open dense subscheme, spreading out gives a variety $U$ over $S$ extending $U_\QQ$ and an open immersion $U\hookrightarrow Y$ extending $U_\QQ\hookrightarrow Y_\QQ$.
As $U\hookrightarrow Y$ is a flat monomorphism of finite type, we may and do apply Lemma~\ref{lem:coarsebasechange}. This gives that $U$ is a coarse moduli scheme over $S$ of finite level with empty branch locus such that the forgetful map $U(\bar{\QQ})\to \absg(\bar{\QQ})$ factors as $$U(\bar{\QQ})\hookrightarrow Y(\bar{\QQ})\to \absg(\bar{\QQ}).$$
It follows that $\phi:U_\QQ(\QQ)\to \absg(T)$ is a Par{\v{s}}in construction defined by the forgetful map $U(\bar{\QQ})\to \absg(\bar{\QQ})$. Moreover the image of $\phi$ is $G_\QQ$-stable since the image of $\phi_Y$ is $G_\QQ$-stable by Proposition~\ref{prop:parshinmodulischeme}~(i). This proves (i) and also (ii) since the above arguments work for any $U_\QQ$ of the form $U_\QQ=U_{Y_\QQ}$ as in \eqref{def:controlledopen} with $Y_\QQ$ a non-degenerate model of $X$ in $\mb$.

To show (iii) we suppose that $U_\QQ=U_{Y_\QQ}$ for some non-degenerate model $Y_\QQ$ of $X$ in $\mb$ and we assume that we are in the Hilbert case. Then (the proof of) Lemma~\ref{lem:extndmod} and \eqref{eq:hilbanalogue} give that $Y$ is a proper coarse Hilbert moduli scheme over $\mathcal O=\ZZ[1/\nu_Y]$ of finite level with empty branch locus, where $\nu_Y$ is the integral degeneration \eqref{def:intdeg}. Thus, on replacing in the above arguments Proposition~\ref{prop:parshinmodulischeme}~(i) by (ii), we obtain a Par{\v{s}}in construction $$\phi:U_\QQ(\QQ)\hookrightarrow Y_\QQ(\QQ)\to^{\phi_Y}\absg(T_0)$$ whose image admits for $(d,n)=(3^{4g},3\nu_Y)$ an effective covering $\cup_TJ(T)$ of $\gl2$-type with $G_\QQ$-isogenies. Moreover, as $Y$ is a coarse Hilbert moduli scheme over $\OL$, the above arguments show that $\phi$ is defined by the forgetful map $U(\qb)\to\absg(\qb)$ of a coarse Hilbert moduli scheme $U$ over $S$ of finite level with empty branch locus where $S\subseteq \spec(\ZZ)$ is nonempty open. This proves (iii) and thus completes the proof of Theorem~\ref{thm:parshin}.
\end{proof}

\subsection{Proof of the explicit bounds in Corollary~\ref{cor:hms}}

Let $M\subseteq \mb$ be as above and assume that we are in the Hilbert case. As in Corollary~\ref{cor:hms}, let $X$ be a variety over $\QQ$ which is non-degenerate in $\bar{M}$ and let $U\subseteq X$ be an open subscheme of the form $U=U_Y$ with $Y$ a non-degenerate model of $X$ in $\mb$.
\begin{proof}[Proof of Corollary~\ref{cor:hms}]
As we are in the Hilbert case, Theorem~\ref{thm:parshin} gives a Par{\v{s}}in construction $\phi:U(\QQ)\to \absg(T)$ whose image admits for $(d,n)=(3^{4g},3\nu_Y)$ an effective covering $\cup_TJ(T)$ of $\gl2$-type with $G_\QQ$-isogenies. Then, on using precisely the same arguments as in the proofs of \cite[Thm 10.1 and Thm 4.1]{vkkr:chms}, we see that the $\gl2$-case  of the effective 
Shafarevich conjecture (\cite[Prop 7.13 and Prop 7.14]{vkkr:chms}) leads to the claimed bounds $$h_\phi(x)\leq c_1\nu_Y^{e_1}\quad \textnormal{ and }\quad |U(\QQ)|\leq c_2\nu_Y^{e_2}.$$
Here we can take the same numerical constants as in \cite[Thm~4.1]{vkkr:chms} since the proof of \cite[Thm~4.1]{vkkr:chms} also reduces to the situation where $3$ is invertible in the base. To obtain the constants $c_2$ and $e_2$ depending only on $M$, we used the injections $$U(\QQ)\hookrightarrow Y(\QQ)\hookrightarrow (M_{\bs}\setminus B_{\bs})(R)$$
where $R=\ZZ[\tfrac{1}{3\nu_Y}]$ and $M_\bs$ is the coarse Hilbert moduli scheme over $\bs$ with branch locus $B_\bs$ which appears in the definition \eqref{def:intdeg} of $\nu_Y$. The displayed injections come from the immersions constructed in the proofs of Lemma~\ref{lem:extndmod}, Proposition~\ref{prop:parshinmodulischeme} and  Theorem~\ref{thm:parshin}.
\end{proof}

\section{Completing the proof of Theorem~\ref{thm:main}}\label{sec:construction}

In this section we complete the proof of Theorem~\ref{thm:main}. We include the proofs of the two observations in (ii) and we deduce (iii) by using the proof of (i).

\subsection{Proof of Theorem~\ref{thm:main}~(ii)}
To prove the two observations in Theorem~\ref{thm:main}~(ii), we now give two lemmas which both can be deduced (in a rather direct way) from standard results in algebraic geometry. 

Let $k$ be an infinite field, let $M$ be a variety over $k$ and let $\mb$ be a compactificaton of $M$. The first observation in Theorem~\ref{thm:main}~(ii) is a special case of the following result.

\begin{lemma}
Let $X$ be an irreducible geometrically reduced variety over $k$. Suppose that $M$ is rational over $k$ and that $\dim(X)<\dim(M)$. Then there exist a closed subscheme $Y\subseteq \mb$ and open dense $U\subseteq X$ and $U'\subseteq Y$ with an isomorphism $U\isomto U'$ of $k$-schemes.
\end{lemma}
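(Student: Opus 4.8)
The plan is to reduce the statement to the birational equivalence $M \sim \mathbb{P}^n_k$ and then realise $X$ birationally inside $\mathbb{P}^n_k \subseteq \mathbb{P}^N_k$ for a suitable $N$, transporting the result back into $\bar M$ along the birational map. First I would fix a birational map $\rho : M \dashrightarrow \mathbb{P}^n_k$ with $n = \dim(M)$; choosing open dense $M_0 \subseteq M$ on which $\rho$ restricts to an isomorphism $M_0 \isomto V_0 \subseteq \mathbb{P}^n_k$ onto an open dense $V_0$, it suffices to find a closed subscheme of $\mathbb{P}^n_k$ (or rather of a projective compactification of it, but $\mathbb{P}^n_k$ is already proper) together with open dense subsets identifying it birationally with $X$, \emph{and} whose defining birational identification can be arranged to meet $V_0$. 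Since $M_0$ is open dense in $M$ and $M$ is open dense in $\bar M$, its image under $\rho$, namely $V_0$, is open dense in $\mathbb{P}^n_k$; pulling a closed subscheme $Z \subseteq \mathbb{P}^n_k$ that meets $V_0$ back through $\rho^{-1}$ and taking its closure in $\bar M$ yields the desired $Y \subseteq \bar M$.

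The heart of the matter is therefore: given an irreducible geometrically reduced variety $X$ over $k$ with $\dim(X) < n$, produce a closed integral subscheme $Z \subseteq \mathbb{P}^n_k$ of dimension $\dim(X)$ birational to $X$ and meeting a prescribed open dense $V_0$. I would argue as follows. After replacing $X$ by an open dense affine piece (geometrically reduced and irreducible, hence geometrically integral once we pass to a suitable open), take a locally closed immersion $X \hookrightarrow \mathbb{A}^r_k \subseteq \mathbb{P}^r_k$ for some $r$, and let $\bar X$ be the closure, an integral projective variety of dimension $d := \dim(X)$. If $r \le n$ we embed $\mathbb{P}^r_k$ linearly into $\mathbb{P}^n_k$; if $r > n$ we apply a standard generic linear projection $\mathbb{P}^r_k \dashrightarrow \mathbb{P}^n_k$ which, since $d < n$, restricts to a birational morphism on $\bar X$ onto its image (its center can be chosen to miss $\bar X$ entirely when $r - 1 \geq d$, and iterating lands us in $\mathbb{P}^n_k$; here we use that $k$ is infinite so generic centers exist over $k$). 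This gives an integral closed subscheme $Z_0 \subseteq \mathbb{P}^n_k$ of dimension $d < n$ birational to $X$.

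It remains to arrange that (a suitable open dense of) $Z_0$ lands inside $V_0$. This is where the hypothesis $d < n = \dim(\mathbb{P}^n_k)$ is essential: the complement $\mathbb{P}^n_k \setminus V_0$ is a proper closed subset, and we are free to act on $Z_0$ by an automorphism of $\mathbb{P}^n_k$, i.e.\ an element of $\mathrm{PGL}_{n+1}(k)$. Because $k$ is infinite and $\dim(Z_0) = d \le n-1 = \dim(\mathbb{P}^n_k \setminus \{\text{any point}\})$ is strictly less than $n$, a general $g \in \mathrm{PGL}_{n+1}(k)$ moves $Z_0$ so that $g(Z_0) \cap V_0 \neq \emptyset$ — concretely, pick any closed point $p \in V_0(\bar k)$ (or a point of $V_0$ over a finite extension and spread out, or directly over $k$ since $V_0$ is open dense in $\mathbb{P}^n_k$ and $k$ infinite so $V_0(k) \neq \emptyset$) and any point $q \in Z_0(k)$, and choose $g$ with $g(q) = p$; then $p \in g(Z_0) \cap V_0$. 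Setting $Z = g(Z_0)$, the open dense $Z \cap V_0 \subseteq Z$ corresponds via $\rho^{-1}$ to an open dense of $M_0 \subseteq \bar M$; taking $Y = \overline{\rho^{-1}(Z \cap V_0)}$ in $\bar M$ and letting $U \subseteq X$, $U' \subseteq Y$ be the open dense loci where the composite birational identification $X \dashrightarrow Z \dashrightarrow Y$ is an isomorphism of $k$-schemes completes the argument.

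The main obstacle is purely bookkeeping rather than conceptual: making sure that after the generic projection and the generic translate one still has an \emph{isomorphism} of open dense $k$-subschemes (not merely a birational map defined over $\bar k$), which requires keeping track that all the generic choices can be made over the infinite field $k$ itself and that the loci of isomorphism are $k$-rational open subsets — this is standard but needs the geometric reducedness/irreducibility hypothesis on $X$ to ensure $X$ has a geometrically integral open dense subscheme, so that "birational" behaves well with respect to the ground field $k$.
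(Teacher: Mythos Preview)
Your overall strategy coincides with the paper's: embed $X$ birationally into $\mathbb{P}^n_k$ via generic projection, then act by an element of $\mathrm{PGL}_{n+1}(k)$ to move the image into the open locus $V_0$ coming from the birational map $M \dashrightarrow \mathbb{P}^n_k$, and transport back to $\bar M$. The paper phrases the first step slightly differently---it first enlarges $X$ to dimension $n-1$ and then cites the ``birational to a hypersurface'' result \cite[A.11.5]{bogu:diophantinegeometry}---but the content is the same.

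There is, however, a real gap in your moving step. You justify the existence of $g$ by writing ``pick \ldots\ any point $q \in Z_0(k)$, and choose $g$ with $g(q)=p$''. But $Z_0(k)$ may well be empty: take for instance $k=\QQ$ and $X$ the conic $x_0^2+x_1^2+x_2^2=0$ in $\mathbb P^2_\QQ$, which has no rational point. So in general there is no $k$-rational point on $Z_0$ available to translate, and your concrete construction of $g$ breaks down. The paper avoids this by working with the \emph{generic} point $\xi$ of $Z_0$ instead: one argues that the orbit $\mathrm{PGL}_{n+1}(k)\cdot\xi$ is Zariski dense in $\mathbb{P}^n_k$ (this uses that $k$ is infinite), hence cannot be contained in the proper closed complement $\mathbb{P}^n_k\setminus V_0$; thus some $g\cdot\xi$ lies in $V_0$, which gives $g(Z_0)\cap V_0\neq\emptyset$. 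With this correction your argument goes through.

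A smaller remark on your final paragraph: you claim geometric reducedness ensures $X$ has a geometrically integral open dense subscheme. This is false in general (e.g.\ $V(x^2-2y^2)\subset\mathbb{A}^2_\QQ$ is irreducible and geometrically reduced but nowhere geometrically irreducible). What geometric reducedness actually buys you is that $k(X)/k$ is a separable extension, which is exactly the input needed for the projection/hypersurface argument to work over $k$; geometric integrality is not required.
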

\begin{proof}
We work over $k$. It suffices to construct an open dense $U\subseteq X$ with an immersion $U\hookrightarrow \mb$. This can be done by composing an immersion $U\hookrightarrow\mathbb P_k^{m}$ with a birational map to $\mb$ for $m=\dim(M)$. To assure that the composition is well-defined, we apply a suitable automorphism of $\mathbb P^{m}_k$ whose construction uses that $k$ is infinite. 

We include the details.   
 As $\dim(X)<m$, our $X$ admits an immersion into an irreducible geometrically reduced variety of dimension $m-1$. 
Thus we may and do assume that $\dim(X)=m-1$. Then  \cite[A.11.5]{bogu:diophantinegeometry} gives an open dense $U\subseteq X$ with an immersion $$U\hookrightarrow \mathbb P^m_{k}.$$
As $M$ is rational over $k$, there exists an open dense $W\subseteq \mathbb P^m_k$ with an
immersion $W\hookrightarrow M$. Let $x\in\mathbb P^m_k$ be the image of the generic point of $U$ under $U\hookrightarrow \mathbb P^m_{k}$ and let $G$ be the automorphism group of $\mathbb P^m_k$. We claim that there exists $g\in G$ with $gx\in W$. After composing $U\hookrightarrow \mathbb P^m_k$ with $g$, we may and do assume that $x\in W$. Then, after possibly replacing $U$ by an open dense subset of $U$, we obtain immersions $$U\hookrightarrow W\hookrightarrow M\hookrightarrow \mb$$
 as desired. It remains to prove our claim. If the claim does not hold, then the orbit $Gx$ is contained in the complement $V$ of $W$ in $\mathbb P^m_k$ which gives a contraction: The orbit $Gx$ is dense in $\mathbb P^m_k$ since $k$ is infinite,   but $V$ is not dense since it is a proper closed subset of $\mathbb P^m_k$. This proves our claim, and thus completes the proof of the lemma. \end{proof}

In the setting of Theorem~\ref{thm:main}~(ii), we obtain the second observation by applying the next lemma with the closed subset $Z=\mbd=B\cup (\bar{M}\setminus M)$ of $Y=\mb$. Here $\mbd$ is indeed closed in $\mb$, since $M$ is open in $\mb$ and the branch locus $B\subseteq M$ is closed in $M$. 
\begin{lemma}\label{lem:primeavoidance}
Let $Y$ be an integral projective variety over $k$ and let $Z\subset Y$ be a closed subset. For each $n\in\ZZ$ with $0\leq n<\dim(Y)-\dim(Z)$, there exist infinitely many integral closed subschemes $X\subset Y$ with $\dim(X)=n$ and $X\cap Z=\emptyset$.
\end{lemma}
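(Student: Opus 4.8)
The plan is to build $X$ as a complete intersection of $Y$ with hypersurfaces of high degree, chosen so as to avoid $Z$. Fix a projective embedding $Y\subseteq \mathbb P^N_k$ and let $d=\dim(Y)$ and $e=\dim(Z)$; by hypothesis $0\le n<d-e$, so in particular $e<d$. The idea is to cut $Y$ down by $d-n$ hypersurfaces $H_1,\dots,H_{d-n}$, producing an integral closed subscheme $X=Y\cap H_1\cap\dots\cap H_{d-n}$ of dimension exactly $n$, while arranging that $X\cap Z=\emptyset$. Since $X\cap Z\subseteq Y\cap Z\cap H_1\cap\dots\cap H_{d-n}$ and $\dim(Z\cap Y)\le e$, it suffices to make the successive hyperplane (or hypersurface) sections cut the dimension of the intersection with $Z$ down below zero: after $e+1$ generic hypersurface sections the intersection with $Z$ becomes empty, and since $e+1\le d-n$ we have enough sections to spare.

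First I would make the inductive step precise. Suppose $W\subseteq Y$ is an integral closed subscheme of dimension $m\ge 1$ not contained in any hypersurface of a fixed large degree $\delta$ (which holds automatically for $\delta\gg 0$, since $W$ is positive-dimensional). Among hypersurfaces $H$ of degree $\delta$ in $\mathbb P^N_k$: those whose intersection with $W$ is not pure of dimension $m-1$, and those containing some fixed irreducible component of $W\cap Z$, each form a proper closed (or a finite union of proper closed) subsets of the projective space $|{\mathcal O}(\delta)|$ parametrizing such $H$ — here I use that $k$ is infinite to guarantee a $k$-rational $H$ outside these bad loci, exactly as in the proof of the previous lemma. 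Choosing such an $H$ and then letting $W'$ be an irreducible component of $W\cap H$ of dimension $m-1$ (taken with reduced structure, hence integral), we get $\dim(W'\cap Z)\le \dim(W\cap Z)-1$ whenever $W\cap Z\neq\emptyset$, and $\dim(W')=m-1$. Iterating this $d-n$ times starting from $W=Y$: after the first $e+1$ steps the intersection with $Z$ has dropped from dimension $\le e$ to dimension $<0$, i.e. is empty, and it remains empty thereafter; after all $d-n$ steps we reach an integral closed subscheme $X\subseteq Y$ with $\dim(X)=d-(d-n)=n$ and $X\cap Z=\emptyset$.

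To get \emph{infinitely many} such $X$, I would observe that at the final step there are infinitely many admissible choices of the last hypersurface $H_{d-n}$ (the space of hypersurfaces of degree $\delta$ is infinite since $k$ is infinite, and we only removed finitely many proper closed conditions), and distinct generic choices produce distinct subschemes $X$: indeed, if infinitely many choices gave the same $X$ of dimension $n\ge 0$, then $X$ would be contained in infinitely many distinct hypersurfaces of degree $\delta$, forcing $X$ to lie in their common intersection; but a suitable finite subcollection already has intersection of dimension $<n$ with the integral curve or higher-dimensional piece $W$ cut out at the penultimate stage, a contradiction. (If $n=0$ one argues directly that a single point lies on only a codimension-$1$ family of such hypersurfaces, so varying $H_{d-n}$ moves the point.) The main obstacle is the bookkeeping in the inductive step — checking that one can simultaneously keep the section integral of the right dimension \emph{and} strictly drop $\dim(W\cap Z)$, which requires the Bertini-type irreducibility input together with the genericity/avoidance argument over the infinite field $k$; once that step is isolated cleanly, the rest is a straightforward induction and a counting argument.
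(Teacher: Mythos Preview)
Your argument is correct and follows the same overall strategy as the paper: inductively slice $Y$ by hypersurfaces, dropping both $\dim W$ and $\dim(W\cap Z)$ at each step, until you reach dimension $n$ with empty intersection with $Z$. The technical implementations differ slightly. Where you work in the parameter space $|\mathcal O(\delta)|$ and invoke Bertini-style genericity to avoid the bad loci, the paper phrases the same step via the prime avoidance lemma: it takes $T$ to be the finite set of generic points of the top-dimensional components of $Z$ and finds a homogeneous $f$ with $T\subseteq D_+(f)$, then passes to an integral component $Y'\subset V_+(f)$ of dimension one less. This is the same geometry, just expressed algebraically.

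The one place where the paper's argument is noticeably cleaner is the ``infinitely many'' step. Your argument (vary the last hypersurface and show distinct choices give distinct $X$) works but, as you note, requires some bookkeeping. The paper instead bootstraps directly from existence: assuming only finitely many such $X_i$ exist, pick a closed point $x_i\in X_i$ from each, set $Z'=Z\cup\{x_i\}$ (still closed with $\dim Z'=\dim Z$), and apply the existence result to $Z'$ to produce an $X$ disjoint from $Z'$. Then $X$ is disjoint from $Z$, hence equals some $X_i$, yet avoids $x_i$ --- contradiction. This trick avoids any analysis of how the component $X$ varies with $H$ and would simplify your write-up.
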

\begin{proof}The idea is as follows: We first use the prime avoidance lemma to show for each $Z$ the existence of at least one $X$ with the desired properties, and then we apply this existence result with suitable $Z$ to obtain infinitely many $X$. We include the arguments.


To produce at least one $X$, we do induction on  $m=\dim(Y)$. As $X$ exists when  $m=1$, we now assume that $m\geq 2$. Write $Y=\textnormal{Proj}(A)$ with $A$ a finitely generated $k$-algebra. The prime avoidance lemma implies $(*)$: For any finite subset $T\subset Y$ and each open neighbourhood $U\neq Y$ of $T$, there exists a homogeneous $f\in A_+$ with $T\subseteq D_+(f)\subseteq U$. Now, for the induction $m-1\mapsto m$, we apply $(*)$ with  $T$ the finite set of points $z\in Z$ with $\dim(\overline{\{z\}})=\dim(Z)$.  This gives a homogeneous $f\in A_+$ and an integral closed subscheme $Y'$ of $V_+(f)$ of dimension $m-1$ and then applying the induction hypothesis with $Y'$ and $Z'=Y'\cap Z$ provides an integral closed subscheme $X\subset Y$ with the desired properties.

 To deduce the existence of infinitely many $X$ with the desired properties, we assume that the set $\{X_i\}$ of such $X$ is finite and we derive a contradiction. Let $x_i\in X_i$ be a closed point. As $T=\cup_i\{x_i\}$ is a finite set of closed points of $Y$, we obtain that $Z'=Z\cup T$ is a closed subset of $Y$ with $\dim(Z')=\dim(Z)$. Then an application of the above existence result with $Z'$ gives an integral closed subscheme $X\subset Y$ of dimension $n$ which satisfies $X\cap Z'=\emptyset$ and thus $X\cap Z=\emptyset$. Hence $X$ lies in $\{X_i\}$. In particular $X$ contains some $x_i$,  which contradicts $X\cap Z'=\emptyset$. This completes the proof of Lemma~\ref{lem:primeavoidance}. \end{proof}

\subsection{Proof of Theorem~\ref{thm:main}~(iii)}

We now deduce Theorem~\ref{thm:main}~(iii) using the proof of (i) which relies on Faltings' finiteness result \cite[Thm 6]{faltings:finiteness}. It would be interesting to find a purely geometric proof of (iii).

\begin{proof}[Proof of Theorem~\ref{thm:main}~(iii)]
To prove the claim, we assume for the sake of contradiction that the statement does not hold. Hence the curve $X_{\qb}$ over $\qb$ contains an irreducible component of geometric genus 0 or 1. Thus there exists a number field $K$ such that $X_K$ has infinitely many $K$-rational points. As $X$ is non-degenerate by assumption, we can apply the geometric arguments underlying the proof of Theorem~\ref{thm:main}~(i) which work exactly the same over any $K$. After passing to an open dense $U\subseteq X$, this gives a finite map $U(K)\to\absg(T)$ for some $g\in \NN$ and some open $T\subseteq \spec(\OL_L)$ where $L$ is a number field. As $\absg(T)$ is finite by Faltings~\cite{faltings:finiteness}, we thus obtain that $U(K)$ and $X_K(K)$ are also finite. This gives a contradiction and hence proves Theorem~\ref{thm:main}~(iii).
\end{proof}
\section{Non-degenerate curves and rational points}\label{sec:nondegcurves}

In this section we study various aspects of non-degenerate curves over $\QQ$ and their rational points. We continue the notation and terminology which we introduced in $\mathsection$\ref{sec:introico}.

\subsection{Geometric results}\label{sec:geomicoresults}

To study rational points on non-degenerate curves, we prove several geometric results for such curves and their ico models. In this section we summarize our geometric results.

\paragraph{Geometric properties.}Theorem~\ref{thm:main}~(ii) combined with a classical construction going back at least to the works of Clebsch~\cite{clebsch:clebschsurface} and Klein~\cite{klein:cubicsurfaces} gives part (i) of the following result. On the other hand, part (ii) is a consequence of Theorem~\ref{thm:main}~(iii).
\begin{corollary}\label{cor:geom} 
Let $X$ be a curve over $\QQ$.
\begin{itemize}
\item[(i)] If $X$ is integral, then it has an ico model over $\ZZ$.
\item[(ii)] If $X$ is non-degenerate, then $X$ has geometric genus $\geq 2$.
\end{itemize}
\end{corollary}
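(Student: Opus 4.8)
The goal is to prove Corollary~\ref{cor:geom}, whose two parts are essentially restatements of Theorem~A and Theorem~\ref{thm:main}~(iii) in the specific language of ico models. So the plan is to reduce each part to the general machinery already established, with part (i) additionally requiring the classical Clebsch--Klein geometry that identifies the right target compactification.

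\begin{proof}[Proof of Corollary~\ref{cor:geom}]
\textbf{Part (i).} Let $X$ be an integral curve over $\QQ$. The first observation in Theorem~\ref{thm:main}~(ii) applies to the Hilbert modular surface $M^{\textnormal{ico}}$ over $\QQ$ of principal level $2$ for $\QQ(\sqrt 5)$: by \cite{hirzebruch:ck} the minimal compactification $\bar M^{\textnormal{ico}}$ is isomorphic over $\QQ$ to the icosahedron surface $S^{\textnormal{ico}}\subset\mathbb P^4_\QQ$ cut out by $\sigma_2=0=\sigma_4$, and since $S^{\textnormal{ico}}$ is a rational surface over $\QQ$ the variety $M^{\textnormal{ico}}$ is rational over $\QQ$. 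As $\dim(X)=1<2=\dim(M^{\textnormal{ico}})$, Theorem~\ref{thm:main}~(ii) produces a closed subscheme $Y\subseteq\bar M^{\textnormal{ico}}\cong S^{\textnormal{ico}}$ together with dense open $U\subseteq X$ and $U'\subseteq Y$ and an isomorphism $U\isomto U'$ of $\QQ$-schemes. Transporting $Y$ along the isomorphism $\bar M^{\textnormal{ico}}\isomto S^{\textnormal{ico}}$ realizes $Y$ as a closed subscheme of $\mathbb P^4_\QQ$ lying on $\sigma_2=0=\sigma_4$. By construction $(Y)_\QQ=Y$ is a curve over $\QQ$ (its open dense $U'$ is isomorphic to an open dense of the integral curve $X$), so $Y$ has the shape $X_f\subset\mathbb P^4_\QQ:\sigma_2=0=\sigma_4,\ f=0$ with $f=(f_1,\dots,f_m)$ defining the ideal of $Y$; thus $X_f$ is an ico model of $X$ over $\QQ$ in the sense of \eqref{def:icomod}. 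To obtain a model over $\ZZ$ rather than merely over $\QQ$, one clears denominators: choose homogeneous generators $f_j$ of the ideal of $Y$ with coefficients in $\ZZ$ (possible after scaling), set $R=\ZZ$, and note that the $\QQ$-scheme $(X_f)_\QQ$ is unchanged, so $X_f$ over $\ZZ$ is still an ico model of $X$. This gives (i).

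\textbf{Part (ii).} This is Theorem~\ref{thm:main}~(iii) applied to $M=M^{\textnormal{ico}}$ with $\bar M=\bar M^{\textnormal{ico}}$. Indeed, by the definitions in $\mathsection$\ref{sec:introico} a curve $X$ over $\QQ$ is non-degenerate precisely when it admits a non-degenerate ico model $X_f$ over $\ZZ$; unwinding \eqref{def:icomod}, a non-degenerate ico model corresponds (via the identification $\bar M^{\textnormal{ico}}\cong S^{\textnormal{ico}}$ and the results of \cite[$\mathsection$11]{vkkr:chms}) to a model $Y$ of $X$ in $\bar M^{\textnormal{ico}}$ disjoint from the degenerate locus $(\bar M^{\textnormal{ico}})^{\textnormal{deg}}$, i.e.\ to $X$ being non-degenerate in $\bar M^{\textnormal{ico}}$ in the sense of \eqref{def:mbd}. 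Hence Theorem~\ref{thm:main}~(iii) gives that $X$ has geometric genus $\geq 2$, which is (ii).
\end{proof}

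The main obstacle here is purely bookkeeping: matching the concrete ``ico model'' language of $\mathsection$\ref{sec:introico} (elementary symmetric polynomials, the degenerate/non-degenerate dichotomy in terms of diagonal parts of the $f_j$) with the abstract ``non-degenerate model in $\bar M$'' language of \eqref{def:mbd}, and in particular verifying that the identification $\bar M^{\textnormal{ico}}\cong S^{\textnormal{ico}}$ of \cite{hirzebruch:ck} carries the degenerate locus $(\bar M^{\textnormal{ico}})^{\textnormal{deg}}=B\cup(\bar M^{\textnormal{ico}}\setminus M^{\textnormal{ico}})$ to the locus governed by the combinatorial non-degeneracy condition on ico models. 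This compatibility is exactly what \cite[$\mathsection$11]{vkkr:chms} supplies, so the ``proof'' is really an orchestration of Theorem~\ref{thm:main}, Hirzebruch's identification, and the geometric results of \cite{vkkr:chms}; I would expect the genuinely new content to be confined to that translation, with the clearing-of-denominators step in part~(i) being entirely routine.
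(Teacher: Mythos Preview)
Your proof is correct and follows essentially the same approach as the paper. The paper factors the translation between the ``ico model'' language and the ``model in $\mb$'' language into a separate Lemma~\ref{lem:relations} (whose parts (i)--(iii) are exactly the bookkeeping you do inline), and for rationality it cites the explicit birational map $\tau:\mathbb P^2\dashrightarrow\mb$ of \eqref{def:taui} rather than Hirzebruch's result directly, but the logical structure is identical.
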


Our proof of the geometric statement in (ii) crucially relies on arithmetic since Theorem~\ref{thm:main}~(iii) uses Faltings' finiteness result~\cite[Thm 6]{faltings:finiteness}. It would be interesting to find a purely geometric proof of (ii).  The converse of Corollary~\ref{cor:geom}~(ii) is an open problem:

\vspace{0.3cm}
 $(deg)$ Determine which curves over $\QQ$ of geometric genus $\geq 2$  are non-degenerate.
\vspace{0.3cm}

{
\noindent In ico form most curves over $\QQ$ of geometric genus $\geq 2$ are non-degenerate by Proposition~\ref{prop:generalico} below, and  we solve problem $(deg)$ for certain classes of plane curves in Section~\ref{sec:p2}. However $(deg)$ is still widely open and solving it in general requires new ideas.
\paragraph{General ico models.} We next discuss  basic geometric properties of general ico models. Let $n\in\NN$ and let $s_1,\dotsc,s_r$ be monomials in $\QQ[x_0,\dotsc,x_4]$ of degree $n$ which form a basis of the $\QQ$-module given by the $n$-th graded part $A_n$ of $A=\QQ[x_0,\dotsc,x_4]/(\sigma_2,\sigma_4)$. Here 
\begin{equation}\label{eq:dimr}
r=4n^2-4n+6
\end{equation}
if $n\geq 2$ and $r=5$ if $n=1$, and $\sigma_k$ denotes the $k$-th elementary symmetric polynomial where $k\in\NN$. We may and do assume that $s_i=x_{i-1}^n$ for $i\leq 5$.   
Write $\mathbb P^4=\mathbb P^4_\QQ$ and $\mathbb A^r=\mathbb A^r_\QQ$. For each $v\in \mathbb A^r(\QQ)$, we consider the closed subscheme $X_v\subset \mathbb P^4$ defined by      
\begin{equation}\label{def:genicomodel}
X_v\cong \textnormal{Proj}(A/f)\cong X_f, \quad f=\sum v_is_i.
\end{equation}
We say that a general ico model $X_v$ over $\QQ$ of degree $n$ has a property $\mathcal P$ if there exists an open dense $U\subseteq \mathbb A^r$ such that $X_v$ has property $\mathcal P$ for all $v\in U(\QQ)$. Part (i) of the following proposition solves the above problem $(deg)$ for a general ico model $X_v$ over $\QQ$.

\begin{proposition}\label{prop:generalico}
For each $n\in \NN$, the following statements hold.
\begin{itemize}
\item[(i)] A general ico model $X_v$ over $\QQ$ of degree $n$ is a non-degenerate curve over $\QQ$.
\item[(ii)] A general ico model $X_v$ over $\QQ$ of degree $n$ is a smooth, projective and geometrically connected curve over $\QQ$ of genus $(2n+1)^2$ which is non-degenerate.
\item[(iii)] If $U\subseteq\mathbb A^r$ denotes the open dense subset of (ii), then $v\mapsto X_v$ injects $U(\QQ)/\QQ^\times$ into the set $\mathcal C$ of smooth, projective and geometrically connected curves inside $\mathbb P^4$. 
\end{itemize}
\end{proposition}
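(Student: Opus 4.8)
The plan is to produce a single dense open subscheme $U\subseteq\mathbb A^r$ defined over $\QQ$ that witnesses (i), (ii) and over which (iii) holds, by combining the geometry of $S:=S^{\textnormal{ico}}=\mb^{\textnormal{ico}}\subset\mathbb P^4_\QQ$ with Bertini's theorems and an adjunction computation. First I would collect the facts about $S$. By Hirzebruch \cite{hirzebruch:ck} (see $\mathsection$\ref{sec:introico}), $S=\mb^{\textnormal{ico}}$ is the intersection $V_+(\sigma_2,\sigma_4)\subset\mathbb P^4_\QQ$, an irreducible rational --- hence geometrically integral --- surface; since $S$ is a surface, $\sigma_2,\sigma_4$ form a regular sequence, so $S$ is a Gorenstein complete intersection with $\omega_S\cong\mathcal O_S(\deg\sigma_2+\deg\sigma_4-5)=\mathcal O_S(1)$, with $\deg S=\deg\sigma_2\cdot\deg\sigma_4=8$, with $H^0(S,\mathcal O_S(n))=A_n$ for all $n\geq0$ (being arithmetically Cohen--Macaulay), and with $H^0(S,\mathcal O_S)=\QQ$. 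As $M^{\textnormal{ico}}$ has principal level $2$-structure its branch locus is empty (see $\mathsection$\ref{sec:results}), so the coarse moduli map to $M^{\textnormal{ico}}$ is \'etale and $M^{\textnormal{ico}}$ is smooth; hence $Z:=\mbd=B\cup(\mb^{\textnormal{ico}}\setminus M^{\textnormal{ico}})$ is a finite set of closed points containing the singular locus of $S$. In particular $S\setminus Z$ is a smooth quasi-projective surface with $\omega_{S\setminus Z}\cong\mathcal O_S(1)|_{S\setminus Z}$, and $\mathcal O_S(1)$ is very ample.

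Next I would carve out $U$. Fix $n\in\NN$ and put $f_v:=\sum_iv_is_i\in A_n$. Since the $s_i$ are a basis of $A_n=H^0(S,\mathcal O_S(n))$, for every $v\neq0$ the subscheme $X_v$ equals the effective Cartier divisor $\textnormal{div}(f_v)$ on $S$ of class $nH$, equivalently a one-dimensional curve of degree $8n$ in $\mathbb P^4$. Hence over $\mathbb A^r\setminus\{0\}$ the universal family $\mathcal X=V(\sum_iv_is_i)\subseteq(\mathbb A^r\setminus\{0\})\times S$ is a Cartier divisor in a Cohen--Macaulay scheme with equidimensional one-dimensional fibres, so $\mathcal X\to\mathbb A^r\setminus\{0\}$ is flat (miracle flatness) and proper. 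The loci $B_1=\{v:X_v\cap Z\neq\emptyset\}$, $B_2=\{v:X_v$ not smooth over its residue field$\}$ and $B_3=\{v:X_v$ not geometrically irreducible$\}$ are then closed in $\mathbb A^r\setminus\{0\}$ and defined over $\QQ$: $B_1$ is the image of the proper incidence scheme $\{(v,p):p\in Z,\,f_v(p)=0\}$, and $B_2,B_3$ are closed because the loci of $v$ with smooth, respectively geometrically irreducible, fibre are open in the proper flat family $\mathcal X$. None of the $B_i$ is all of $\mathbb A^r\setminus\{0\}$: for $B_1$, each geometric point $p$ of the finite set $Z$ has a nonzero coordinate, so $\{v:f_v(p)=0\}$ is a proper linear subspace and a finite union of proper subspaces over an infinite field misses a point; for $B_2,B_3$, apply Bertini's smoothness, respectively irreducibility, theorem in characteristic $0$ to the very ample linear system $|nH|$ on the smooth irreducible surface $S\setminus Z$, respectively on $S$. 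Put $U:=(\mathbb A^r\setminus\{0\})\setminus(B_1\cup B_2\cup B_3)$, a $\QQ^\times$-stable dense open subscheme defined over $\QQ$. For $v\in U$ the curve $X_v$ is smooth, projective, geometrically irreducible --- hence geometrically connected --- and disjoint from $\mbd$, so it is non-degenerate in $\mb^{\textnormal{ico}}$; this proves (i) and all of (ii) except the genus.

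For the genus I would apply adjunction on the smooth surface $S\setminus Z\supseteq X_v$: from $\omega_{S\setminus Z}\cong\mathcal O_S(1)|_{S\setminus Z}$ and $\mathcal O_{S\setminus Z}(X_v)\cong\mathcal O_S(n)|_{S\setminus Z}$ one gets $\omega_{X_v}\cong\mathcal O_{X_v}(n+1)$, hence $2g(X_v)-2=(n+1)\deg X_v=(n+1)\cdot8n$ and $g(X_v)=(2n+1)^2$; this finishes (ii). For (iii): if $v\in U(\QQ)$ then $X_v\subset\mathbb P^4_\QQ$ lies in $\mathcal C$, and $f_{\lambda v}=\lambda f_v$ has the same zero scheme, so $v\mapsto X_v$ factors through $U(\QQ)/\QQ^\times$; and if $X_v=X_{v'}$ with $v,v'\in U(\QQ)$, then $\textnormal{div}(f_v)=\textnormal{div}(f_{v'})$ as effective Cartier divisors on the integral scheme $S$, so $f_{v'}=uf_v$ for some $u\in H^0(S,\mathcal O_S)^\times=\QQ^\times$, and comparing coefficients against the basis $s_1,\dots,s_r$ of $A_n$ yields $v'=uv\in\QQ^\times v$.

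The main obstacle is not any single step but the bookkeeping: extracting the precise geometric input on $S^{\textnormal{ico}}$ --- that it is a (normal) complete intersection of bidegree $(2,4)$ with $\omega\cong\mathcal O(1)$ and with singular locus contained in the finite set $\mbd$ --- and checking that each ``general member'' condition is realised by an honest dense open subscheme of $\mathbb A^r$ over $\QQ$, for which the flatness and properness of $\mathcal X$ together with the $G_\QQ$-stability of the $B_i$ are the key points. Granting these, Bertini's theorems and adjunction supply the rest.
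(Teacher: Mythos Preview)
Your proof is correct and follows essentially the same approach as the paper: both use Bertini on the surface $S=\textnormal{Proj}(A)$ to obtain smoothness of a general member, compute the genus from the complete-intersection data (you via adjunction $\omega_{X_v}\cong\mathcal O_{X_v}(n+1)$ and $\deg X_v=8n$, the paper via the Hilbert-polynomial formula for a $(2,4,n)$ complete intersection), and prove injectivity in (iii) from $H^0(S,\mathcal O_S)^\times=\QQ^\times$. The only noteworthy differences are in packaging: the paper takes the explicit open $U^{\textnormal{nd}}=\mathbb A^r\setminus\bigl(\cup_{i=1}^5V(z_i)\bigr)$ for (i) (which coincides with your $B_1^c$, since $f_v(e_i)=v_i$), obtains geometric connectedness from the connectedness of ample effective divisors on a normal projective variety (\cite[0FD9]{sp}) rather than from Bertini irreducibility, and argues (iii) pointwise on the integral surface rather than invoking a flat family.
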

In (i) we can take the explicit $U^{\textnormal{nd}}=\mathbb A^r\setminus \bigl(\cup_{i=1}^{5} V(z_i)\bigl)$, and Bertini applied with the normal surface $\textnormal{Proj}(A)$ over $\QQ$ gives the open dense $U\subseteq U^{\textnormal{nd}}$ of (ii). Let $g\in \NN$. If $g\geq 2$ is an odd square, then taking $n=\tfrac{\sqrt{g}-1}{2}$ in Proposition~\ref{prop:generalico} provides a moduli space $U$ of dimension $r\sim g$ by \eqref{eq:dimr} which parametrizes non-degenerate curves in $\mathcal C$ of genus $g$. 
}

\paragraph{Moduli interpretation.}We deduce from Theorem~\ref{thm:parshin} and \cite[Thm E]{vkkr:chms} that any non-degenerate curve $X$ admits an open dense $U\subseteq X$ whose points have (\'etale locally) a simple  moduli interpretation in terms of abelian surfaces. More precisely we deduce:

\begin{corollary}\label{cor:icomoduliint} 
Let $X$ be a curve over $\QQ$ which is non-degenerate, and let $U\subseteq X$ be open.
\begin{itemize}
\item[(i)] Suppose that $U$ is of the form $U=U_f$ with $X_f$ a non-degenerate ico model of $X$ over $\ZZ$. Then $U$ extends over a nonempty open  $S\subseteq\spec(\ZZ)$ to a coarse Hilbert moduli scheme $U_S$ of finite level with empty branch locus and with moduli interpretation $(*)$.
\item[(ii)] Moduli interpretation $(*)$: Let $k$ be an algebraically closed field with $\textnormal{char}(k)\in S$. Any point in $U_S(k)$ identifies with an isomorphism class of $(A,\alpha)$ where $A$ is a principally polarized abelian surface over $k$ with real multiplication $\ZZ[\tfrac{1+\sqrt{5}}{2}]\hookrightarrow \End(A)$ and where $\alpha$ is a restricted symplectic level 2-structure of $A$ as in \eqref{eq:moduliint}. 
\item[(iii)] In the case $X=(X_f)_\QQ$, one can take $U=X$ and  $S=\ZZ[1/\nu]$ with $\nu=30\nu_f$.
\end{itemize}
\end{corollary}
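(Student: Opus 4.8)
The plan is to run the construction of Theorem~\ref{thm:parshin} with $M$ the Hilbert modular surface $M^{\textnormal{ico}}$ over $\QQ$ of principal level $2$ for $\QQ(\sqrt{5})$, and then to make the output concrete by feeding in the explicit description of $\mb^{\textnormal{ico}}$ obtained in \cite[$\mathsection$11]{vkkr:chms} following Hirzebruch~\cite{hirzebruch:ck}. First I would record the geometric input. By \cite[$\mathsection$11]{vkkr:chms} the surface $S^{\textnormal{ico}}\subset\mathbb P^4:\sigma_2=0=\sigma_4$ extends over $\bs=\ZZ[1/30]$ to the minimal compactification $\mb_\bs$ of a tame coarse Hilbert moduli scheme $M_\bs=M_{\mathcal P}$ of finite level over $\bs$ with empty branch locus $B_\bs$, where $\mathcal P$ is the moduli problem on the Deligne--Pappas Hilbert moduli stack for $\ZZ[\tfrac{1+\sqrt{5}}{2}]$ whose level structures over an algebraically closed field are the restricted symplectic level $2$-structures of \eqref{eq:moduliint}. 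Moreover Hirzebruch's identification matches $\mb^{\textnormal{ico}}$ with $S^{\textnormal{ico}}$, identifies the boundary $\mb_\bs\setminus M_\bs$ with the reduced closed subscheme $\{e_1,\dots,e_5\}$ of the five coordinate points of $\mathbb P^4$, and over $\QQ$ recovers the degenerate locus $(\mb^{\textnormal{ico}})^{\textnormal{deg}}=B\cup(\mb^{\textnormal{ico}}\setminus M^{\textnormal{ico}})=\{e_i\}$ with $B=(B_\bs)_\QQ=\emptyset$; in particular, since $B_\bs=\emptyset$, the degenerate locus over $\bs$ is the whole boundary $\{e_i\}_\bs$.

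Next I would translate ``non-degenerate ico model'' into ``non-degenerate model in $\mb^{\textnormal{ico}}$''. Given a non-degenerate ico model $X_f$ of $X$ over $\ZZ$ as in \eqref{def:icomod}, the closed subscheme $Y:=(X_f)_\QQ$ of $\mb^{\textnormal{ico}}=S^{\textnormal{ico}}$ is a model of $X$, since $(X_f)_\QQ$ is birationally equivalent over $\QQ$ to $X$. Every monomial in the non-diagonal part of $f_j$ involves at least two distinct variables and hence vanishes at each $e_i$, so $f_j(e_i)=a_{ij}$ for the diagonal coefficients $a_{ij}$; therefore $e_i\in(X_f)_\QQ$ iff $a_{ij}=0$ for all $j$, and consequently $X_f$ is non-degenerate iff $Y\cap\{e_i\}=\emptyset$, i.e. iff $Y\cap(\mb^{\textnormal{ico}})^{\textnormal{deg}}=\emptyset$. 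Thus $Y$ is a non-degenerate model of $X$ in $\mb^{\textnormal{ico}}$; in particular $X$ is non-degenerate in $\mb^{\textnormal{ico}}$ and Theorem~\ref{thm:parshin} applies.

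Then I would invoke Theorem~\ref{thm:parshin}. By part~(ii) we may take $U=U_Y$, which coincides with the maximal open dense $U_f\subseteq X$ of \eqref{def:icomod} since both are characterised by the same maximality property of \eqref{def:controlledopen} relative to $Y=(X_f)_\QQ$; by part~(iii), since we are in the Hilbert case, $U$ extends over a nonempty open $S\subseteq\spec(\ZZ)$ to a coarse Hilbert moduli scheme $U_S$ of finite level with empty branch locus, which gives the non-moduli assertions of~(i). For the moduli interpretation $(*)$ I would unwind the proof of Theorem~\ref{thm:parshin} (via Lemma~\ref{lem:extndmod}): after shrinking $S$, it realises $U_S$ through a monomorphism $U_S\hookrightarrow(M_\bs\setminus B_\bs)_S=(M_\bs)_S$ of $S$-schemes extending $Y\hookrightarrow M^{\textnormal{ico}}$ (here $Y\subseteq M^{\textnormal{ico}}$ because $Y\cap(\mb^{\textnormal{ico}})^{\textnormal{deg}}=\emptyset$ and $(\mb^{\textnormal{ico}})^{\textnormal{deg}}\supseteq\mb^{\textnormal{ico}}\setminus M^{\textnormal{ico}}$). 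By the Hilbert version of Lemma~\ref{lem:coarsebasechange} (see \eqref{eq:hilbanalogue}) applied to this monomorphism, $U_S$ is the coarse Hilbert moduli scheme of the restricted moduli problem $\mathcal P'$ with $\mathcal M_{\mathcal P'}\isomto\mathcal M_{\mathcal P}\times_{M_S}U_S$, and by Lemma~\ref{lem:coarsebasechange}~(iii) its forgetful map factors through that of $M_S$. Hence for algebraically closed $k$ with $\textnormal{char}(k)\in S$, a point of $U_S(k)$, viewed in $M_S(k)$ and pulled back along the coarse moduli space $\mathcal M_{\mathcal P}(k)\to M_S(k)$, is the isomorphism class of a unique pair $(A,\alpha)$ with $A$ a principally polarized abelian surface over $k$ equipped with $\ZZ[\tfrac{1+\sqrt{5}}{2}]\hookrightarrow\End(A)$ and $\alpha$ a restricted symplectic level $2$-structure as in \eqref{eq:moduliint}; restricting along the monomorphism simply selects those $(A,\alpha)$ whose moduli point lies in $U_S$. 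This proves (i) and (ii).

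Finally, for~(iii) I would take $Y=(X_f)_\QQ$, so that $U_Y=X$, and make the base explicit. Over $R:=\ZZ[1/(30\nu_f)]$, which is a $\bs$-algebra, the integral ico model $X_{f,R}\subset\mathbb P^4_R$ is proper over $R$, lies in $(S^{\textnormal{ico}})_R=(\mb_\bs)_R$, and is disjoint from $(\mb_\bs\setminus M_\bs)_R=\{e_i\}_R$: in the affine chart around $e_i$ the intersection $X_{f,R}\cap\{e_i\}_R$ is cut out by the ideal generated by the $a_{ij}=f_j(e_i)$, and for every $i$ some $a_{ij}\neq 0$ while every nonzero $a_{ij}$ is a unit in $R$ by the definition $\nu_f=\rad\bigl(\prod_{a_{ij}\neq 0}a_{ij}\bigr)$. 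Thus $X_{f,R}\hookrightarrow(M_\bs\setminus B_\bs)_R$ is an immersion, so $\nu_Y\mid 30\nu_f$ by \eqref{def:intdeg}, and the construction of~(i)--(ii) runs verbatim with the explicit $S=\ZZ[1/(30\nu_f)]$ and $U=X$, proving~(iii) with $\nu=30\nu_f$. The step I expect to be the real obstacle is assembling the geometric input of the first paragraph --- that Hirzebruch's identification $\mb^{\textnormal{ico}}\cong S^{\textnormal{ico}}$ spreads out over $\ZZ[1/30]$ to a coarse Hilbert moduli scheme with empty branch locus carrying precisely the moduli problem of \eqref{eq:moduliint}, and that under it the degenerate locus is exactly the five coordinate points with no further interior component --- which is the content of \cite[$\mathsection$11]{vkkr:chms}; granting this, the remaining work is the formal bookkeeping above, all built on the identity $f_j(e_i)=a_{ij}$.
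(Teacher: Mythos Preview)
Your proposal is correct and follows essentially the same route as the paper. The paper packages your second paragraph as Lemma~\ref{lem:relations} (identifying ico models with models in $\mb$ and matching the two non-degeneracy notions via $f_j(e_i)=a_{ij}$) and your argument for~(iii) as Lemma~\ref{lem:xfmoduli} (showing $X_{f,R}\cap\{e_i\}_R=\emptyset$ over $R=\ZZ[1/(30\nu_f)]$ by the same local computation), then invokes Theorem~\ref{thm:parshin} and unwinds its proof via Lemma~\ref{lem:coarsebasechange} exactly as you do; the only small addition in the paper is the remark that $\mathcal O_L=\ZZ[\tfrac{1+\sqrt{5}}{2}]$ has class number one, which is what lets one pass from the Deligne--Pappas datum $(A,\iota,\varphi)$ to the description ``principally polarized abelian surface with real multiplication'' in the statement of~(ii).
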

Here we recall from $\mathsection$\ref{sec:introico} that $\nu_f$ is defined as follows. If $f=(f_j)$ and $\sum a_{ij}x_i^{n_j}$ is the diagonal part of the homogeneous $f_j\in\ZZ[x_0,\dotsc,x_4]$ of degree $n_j\geq 1$, then
\begin{equation}\label{eq:defnuf}
\nu_f=\textnormal{rad}\bigl(\prod a_{ij}\bigl)
\end{equation}
with the product taken over all nonzero diagonal coefficients $a_{ij}$ of $f=(f_j)$. 
\subsection{Rational points}\label{sec:ratpoints}

In this section we first give an effective Par{\v{s}}in construction for the rational points of non-degenerate curves over $\QQ$. Then we deduce explicit bounds for the rational points of such curves. We work over $\QQ$ and we write $\mathbb P^n$ for $\mathbb P^n_\QQ$ where $n\in\NN$.

\paragraph{Degree and height.} 
Let $X$ be a curve over $\QQ$. Suppose that $X$ admits a finite morphism $X\to \mathbb P^n$ for some $n\in\NN$. Then pulling back the usual (\cite[p.16]{bogu:diophantinegeometry}) logarithmic Weil height $h_{\mathbb P^n}$ on $\mathbb P^n$ along $X\to \mathbb P^n$, defines a (logarithmic) Weil height 
\begin{equation}\label{def:weilheight}
h:X(\qb)\to \RR.
\end{equation}
Assume $X$ is integral. To define a normalized degree of $X$,  write $\deg(X)$ for the degree of the scheme theoretic image of $X\to \mathbb P^n$ and  let $\xt\to X$ be a normalization. Then we take $$d_X=\deg(\xt\to X)\deg(X).$$
To define a height of $X$,  choose an immersion $U_f\hookrightarrow (X_f)_\QQ\subset\mathbb P^4$ for $X_f$ an ico model of $X$ over $\ZZ$. This induces a finite morphism $\varphi:\xt\to \mathbb P^4$ via the universal property of normalization of curves. 
The composition $\psi:\xt\to X\to \mathbb P^n$ is also finite. Then we define
$$h(X)=h_{1,1}\bigl((\varphi,\psi)(\xt)\bigl).$$
Here $h_{1,1}$ is the biprojective height \cite[$\mathsection$5]{remond:construction} of index (1,1),  and  $(\varphi,\psi)(\xt)$ is the scheme theoretic image of the composition $\xt\to^{\Delta} \xt\times \xt\to^{\varphi\times \psi} \mathbb P^4\times \mathbb P^n$. In what follows, when writing `let $h$ be a Weil height on $X$ as in \eqref{def:weilheight}' we always assume that we are given a finite morphism $X\to \mathbb P^n$ and when using the quantities $d_X$ and $h(X)$ we always assume that they are defined with respect to this given $X\to\mathbb P^n$ and a chosen immersion $U_f\hookrightarrow (X_f)_\QQ$.


\subsubsection{Effective Par{\v{s}}in and effective Mordell for non-degenerate curves}
Let $X$ be a curve over $\QQ$ which is non-degenerate and let $U\subseteq X$ be open. Suppose that $U=U_f$ for some non-degenerate ico model $X_f$ of $X$ over $\ZZ$ and let $\nu_f$ be as in \eqref{eq:defnuf}. 

\paragraph{Effective Par{\v{s}}in.} Corollary~\ref{cor:icomoduliint} gives that $U$ is a coarse Hilbert moduli scheme over $\QQ$. We deduce from Theorem~\ref{thm:parshin} an effective Par{\v{s}}in construction for $U(\QQ)$.
 
\begin{corollary}\label{cor:effparico}The forgetful map $U(\qb)\to\underline{A}_2(\qb)$ of the coarse moduli scheme $U$ defines an effective Par{\v{s}}in construction $\phi:U(\QQ)\to\underline{A}_2(T)$ with the following properties.

\begin{itemize}
\item[(i)]  The image $\phi(U(\QQ))$ admits for $(d,n)=(3^8,30\nu_{f})$ an effective covering $\cup_T J(T)$ of $\gl2$-type with $G_\QQ$-isogenies in the sense of \eqref{def:effcover}.
\item[(ii)] Let $h$ be a Weil height on $X$ as in \eqref{def:weilheight} and let $h_\phi$ be the pullback of $h_F$ along $U(\qb)\to\underline{A}_2(\qb)$. Firstly, if $X$ is integral then these heights on $U(\qb)$ satisfy
$$h\leq d_X\bar{h}_\phi+h(X), \ \ \quad \bar{h}_\phi=2h_\phi+8^8\log(h_\phi+8).$$
Secondly, if $X=(X_f)_\QQ$ and $h=h_{\mathbb P^4}$, then $X=U$ and  it holds $h\leq \bar{h}_\phi$ on $X(\qb)$.
\end{itemize}  
\end{corollary}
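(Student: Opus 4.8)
The plan is to reduce Corollary~\ref{cor:effparico} to the icosahedron coarse Hilbert moduli scheme, and then to combine Theorem~\ref{thm:parshin} with the explicit height comparisons underlying \cite[Thm E]{vkkr:chms}. First I would invoke (the proof of) Corollary~\ref{cor:icomoduliint}: since $U=U_f$ for a non-degenerate ico model $X_f$ of $X$ over $\ZZ$, the curve $X$ is non-degenerate in $\mb^{\textnormal{ico}}$, we may take $Y=(X_f)_\QQ$ as a non-degenerate model of $X$ in $\mb^{\textnormal{ico}}$ with $U=U_f=U_Y$, and $U$ extends over a nonempty open $S\subseteq\spec(\ZZ)$ to a coarse Hilbert moduli scheme $U_S$ of finite level with empty branch locus, with underlying Hilbert modular surface $M^{\textnormal{ico}}$ for $\QQ(\sqrt{5})$ with principal level $2$; in particular $g=2$ and $\underline{A}_2$ is the relevant $\absg$, and in the case $X=(X_f)_\QQ$ one has $U=X$ with $S=\ZZ[1/\nu]$, $\nu=30\nu_f$. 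Thus we are in the Hilbert case of $\mathsection$\ref{sec:results} and Theorem~\ref{thm:parshin} applies, giving that the forgetful map $U(\qb)\to\underline{A}_2(\qb)$ defines a Par{\v{s}}in construction $\phi:U(\QQ)\to\underline{A}_2(T)$ with $G_\QQ$-stable image.

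For part (i) I would read off from Theorem~\ref{thm:parshin}~(iii) that $\phi(U(\QQ))=\phi(U_Y(\QQ))$ admits, for $(d,n)=(3^{4g},3\nu_Y)=(3^8,3\nu_Y)$, an effective covering $\cup_TJ(T)$ of $\gl2$-type with $G_\QQ$-isogenies in the sense of \eqref{def:effcover}, and then replace $3\nu_Y$ by $30\nu_f$. The base $\bs$ over which $\mb^{\textnormal{ico}}$ is a coarse Hilbert moduli scheme with empty branch locus is $\ZZ[1/30]$, so $2,3,5\mid\nu_Y$ by \eqref{def:intdeg}; and since $X_f$ is an ico model over $\ZZ$, the analysis of \cite[$\mathsection$11]{vkkr:chms} behind Corollary~\ref{cor:icomoduliint}~(iii) shows $\rad(\nu_Y)=\rad(30\nu_f)$, hence $\rad(3\nu_Y)=\rad(30\nu_f)$. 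As the set $\mathcal F_{d,n}$ entering \eqref{def:effcover} depends only on $d$ and on $\rad(n)$, an effective covering for $(3^8,3\nu_Y)$ is one for $(3^8,30\nu_f)$. That $T$ is controlled, hence that $\phi$ is effective, follows from the explicit description of $T_0\subseteq\spec(\OK)$ in step 6 of \cite[$\mathsection$10.1]{vkkr:chms} underlying Theorem~\ref{thm:parshin}~(iii).

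For part (ii) I would first treat the case $X=(X_f)_\QQ$, $h=h_{\mathbb P^4}$, where $X=U$ by Corollary~\ref{cor:icomoduliint}~(iii) and the forgetful map is the one attached to the closed subscheme $(X_f)_\QQ\subseteq S^{\textnormal{ico}}=\mb^{\textnormal{ico}}\subseteq\mathbb P^4$; here $h_{\mathbb P^4}\leq\bar{h}_\phi=2h_\phi+8^8\log(h_\phi+8)$ is exactly the explicit comparison between the restriction of the Weil height $h_{\mathbb P^4}$ on the icosahedron surface and the stable Faltings height $h_F$ of the parametrized abelian surface, established in \cite[$\mathsection$11]{vkkr:chms} (and underlying \cite[Thm E]{vkkr:chms}), from which one transports the constants $2$ and $8^8$. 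For general integral $X$, with $h$ the pullback of $h_{\mathbb P^n}$ along the given finite $X\to\mathbb P^n$, I would combine this with a purely geometric comparison on the normalization $\xt\to X$: the chosen immersion $U_f\hookrightarrow(X_f)_\QQ\subset\mathbb P^4$ induces the finite morphism $\varphi:\xt\to\mathbb P^4$ and $\psi:\xt\to X\to\mathbb P^n$ is finite, so the standard estimates for the biprojective height $h_{1,1}$ of the curve $(\varphi,\psi)(\xt)\subseteq\mathbb P^4\times\mathbb P^n$ (cf.\ \cite[$\mathsection$5]{remond:construction}), together with the definitions $d_X=\deg(\xt\to X)\deg(X)$ and $h(X)=h_{1,1}((\varphi,\psi)(\xt))$ from $\mathsection$\ref{sec:ratpoints}, yield $h_{\mathbb P^n}\circ\psi\leq d_X\,(h_{\mathbb P^4}\circ\varphi)+h(X)$ on $\xt(\qb)$. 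Restricting to the preimage of $U$, where $h_{\mathbb P^4}\circ\varphi\leq\bar{h}_\phi$ by the first case, this gives $h\leq d_X\bar{h}_\phi+h(X)$ on $U(\qb)$, hence the height compatibility of $\phi$.

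The main obstacle is part (ii). The inequality $h_{\mathbb P^4}\leq\bar{h}_\phi$ on the ico surface is not elementary: it encodes the explicit comparison, carried out in \cite[$\mathsection$11]{vkkr:chms}, between the projective coordinates of a point of $S^{\textnormal{ico}}$ and the Faltings height of the associated abelian surface, and transporting it requires matching all normalizations so that the constants come out exactly $2$ and $8^8$. The geometric step for general $X$ is conceptually routine but requires care: one must verify that the coefficient in the biprojective-height estimate is precisely the normalized degree $d_X$ and that the additive term is precisely $h(X)$, i.e.\ that these are the correct bookkeeping quantities. By contrast, part (i) and the structural assertions about $\phi$ follow rather directly from Theorem~\ref{thm:parshin}~(iii) once the identification with the ico coarse Hilbert moduli scheme via Corollary~\ref{cor:icomoduliint} is in place.
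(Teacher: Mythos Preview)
Your plan is the paper's plan: identify $U=U_f=U_Y$ with $Y=(X_f)_\QQ$ via Lemma~\ref{lem:relations}, apply Theorem~\ref{thm:parshin}(iii) for (i), and for (ii) combine R\'emond's biprojective comparison \cite[Prop~5.2]{remond:construction} on the normalization with the explicit Faltings--Weil comparison \cite[Prop~11.13]{vkkr:chms} on the ico surface. Two points need tightening.

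For (i), your claim $\rad(\nu_Y)=\rad(30\nu_f)$ is not what the paper proves and can fail: Lemma~\ref{lem:xfmoduli} only constructs an immersion over $\ZZ[1/(30\nu_f)]$, giving $\nu_Y\mid 30\nu_f$; a prime dividing some $a_{ij}$ need not divide $\nu_Y$ if another $a_{ij'}$ with the same $i$ is a unit at that prime. The paper's fix is cleaner than a radical argument: since $Y$ is already a proper coarse Hilbert moduli scheme over $\ZZ[1/(30\nu_f)]$ with empty branch locus, one runs (the proof of) Theorem~\ref{thm:parshin}(iii) and Proposition~\ref{prop:parshinmodulischeme}(ii) with $\nu=30\nu_f$ rather than the minimal $\nu_Y$, yielding the covering for $(3^8,3\cdot 30\nu_f)$, which equals that for $(3^8,30\nu_f)$ since $\rad(90\nu_f)=\rad(30\nu_f)$.

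For (ii), your order (special case first, then general) is fine, but two details the paper makes explicit are missing from your sketch. First, R\'emond's \cite[Prop~5.2]{remond:construction} is stated for the $l^2$-normalized Weil height, not $h_{\mathbb P^4}$; the paper records $h_\tau\leq h_{\mathbb P^4}+\tfrac12\log 5$ in \eqref{eq:l2normbound} and checks that \cite[Prop~11.13]{vkkr:chms} still absorbs this into the constant $8^8$. Second, to conclude $h_{\mathbb P^4}\circ\varphi\leq\bar h_\phi$ on $\tilde U$ from the special case you must identify $h_\phi$ on $U$ with the restriction of the forgetful-map height from $(X_f)_\QQ$ (equivalently from $M$); this is exactly Lemma~\ref{lem:coarsebasechange}(iii), which the paper invokes as \eqref{eq:hphicomp}. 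With these two bookkeeping steps in place your argument goes through and matches the paper's.
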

We now briefly discuss (ii). The second bound $h\leq \bar{h}_\phi$ is essentially optimal: This bound and the function $\bar{h}_\phi$ both come from \cite[Prop 11.13]{vkkr:chms}, see \cite[$\mathsection$11.5]{vkkr:chms} for discussions of the shape of $\bar{h}_\phi$ and of the constants appearing in $\bar{h}_\phi$. The first bound $h\leq d_X\bar{h}_\phi+h(X)$ is relatively simple but rarely optimal: Its proof combines \cite[Prop 11.13]{vkkr:chms} with R\'emond's comparison \cite[Prop 5.2]{remond:construction} of Weil heights on curves.

\paragraph{Effective Mordell.} Let $h$ be a Weil height on $X$ as in \eqref{def:weilheight}.  The effective Par{\v{s}}in construction of Corollary~\ref{cor:effparico} combined with the bound for $h_\phi$ in Corollary~\ref{cor:hms} gives the following estimate for the Weil height $h$ in which $c=10^{\kappa}$ for $\kappa=10^{12}$.

\begin{corollary}\label{cor:effmordell}
If $X$ is integral, then any $x\in U(\QQ)$ satisfies $h(x)\leq c d_X\nu_f^{24}+h(X)$.
\end{corollary}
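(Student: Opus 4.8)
The plan is to chain the effective Par{\v{s}}in construction of Corollary~\ref{cor:effparico} together with the explicit Faltings height bound of Corollary~\ref{cor:hms}, and then to bookkeep the resulting constants. Write $\phi\colon U(\QQ)\to\underline A_2(T)$ for the effective Par{\v{s}}in construction of Corollary~\ref{cor:effparico}; by construction it is the map furnished by Theorem~\ref{thm:parshin} applied to the non-degenerate model $Y=X_f$ of $X$ in $\mb^{\textnormal{ico}}$, and $h_\phi\colon U(\qb)\to\RR$ denotes the pullback of the stable Faltings height $h_F$ along the forgetful map $U(\qb)\to\underline A_2(\qb)$. First I would fix $x\in U(\QQ)$. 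Since $X$ is integral, the first bound in Corollary~\ref{cor:effparico}~(ii) gives $h(x)\le d_X\,\bar h_\phi(x)+h(X)$ with $\bar h_\phi(x)=2h_\phi(x)+8^8\log\bigl(h_\phi(x)+8\bigr)$, so it suffices to bound $h_\phi(x)$ in terms of $\nu_f$ and then to control $\bar h_\phi$ by $h_\phi$.

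Next I would apply Corollary~\ref{cor:hms} in the Hilbert case with $g=2$, for which the constants of \eqref{def:corconstants} specialize to $e_1=\max(24,10)=24$ and $c_1=7^{7^{14}}$; it is precisely this exponent $e_1=24$ that produces the $\nu_f^{24}$ of the assertion. This yields $h_\phi(x)\le c_1\,\nu_Y^{24}$, where $\nu_Y$ is the integral degeneration \eqref{def:intdeg} of the ico model $Y=X_f$ in $\mb^{\textnormal{ico}}$. To pass from $\nu_Y$ to $\nu_f$, I would invoke Corollary~\ref{cor:icomoduliint}~(iii) together with the geometric results of \cite[$\mathsection$11]{vkkr:chms} underlying it: in the case $X=(X_f)_\QQ$ one may take $S=\ZZ[\tfrac1{30\nu_f}]$, and since the quantity $\nu_Y$ depends only on the embedding $X_f\subset\mb^{\textnormal{ico}}$ — not on which integral curve $X$ the model $X_f$ is attached to — this forces $\nu_Y\mid 30\nu_f$. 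Hence $h_\phi(x)\le c_1\,30^{24}\,\nu_f^{24}$.

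It then remains to combine the two estimates and collect the constants. Using the standard lower bound for the stable Faltings height of an abelian surface (so that $h_\phi(x)\ge -4$, say) together with the crude estimate $\log(t+8)\le 9\max(1,t)$ valid for $t\ge -4$, one obtains $\bar h_\phi(x)\le 10^{9}\max\bigl(1,h_\phi(x)\bigr)\le 10^{9}c_1\,30^{24}\,\nu_f^{24}$; feeding this into the bound of the first paragraph gives $h(x)\le 10^{9}c_1\,30^{24}\,d_X\,\nu_f^{24}+h(X)$. Finally $\log_{10}\bigl(10^{9}c_1\,30^{24}\bigr)=9+7^{14}\log_{10}7+24\log_{10}30<10^{12}$, so $10^{9}c_1\,30^{24}<10^{10^{12}}=c$, which is exactly the asserted inequality $h(x)\le c\,d_X\,\nu_f^{24}+h(X)$.

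I expect the one genuinely nonroutine ingredient to be the relation $\nu_Y\mid 30\nu_f$ between the abstract integral degeneration of the ico model $X_f$ inside $\mb^{\textnormal{ico}}$ and the explicit radical $\nu_f=\rad\bigl(\prod a_{ij}\bigr)$: this rests on importing from \cite{vkkr:chms} the precise integral model of the ico family over $\ZZ[\tfrac1{30}]$ and on verifying that the extra primes of bad reduction introduced by the equation $f=0$ are exactly those dividing $\nu_f$. Everything else — the two corollary invocations, the elementary estimate for $\bar h_\phi$, and the final numerical check that the accumulated constant stays below $10^{10^{12}}$ — is routine bookkeeping.
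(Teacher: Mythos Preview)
Your proposal is correct and follows essentially the same route as the paper: combine the Weil-height comparison of Corollary~\ref{cor:effparico}~(ii) with the Faltings-height bound of Corollary~\ref{cor:hms}, using $\nu_Y\mid 30\nu_f$ (which the paper obtains directly from the proof of Lemma~\ref{lem:xfmoduli} rather than via Corollary~\ref{cor:icomoduliint}~(iii), but these are the same computation). The only difference is that you carry out the numerical bookkeeping explicitly, whereas the paper absorbs it silently into the constant $c=10^{10^{12}}$.
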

Usually one can directly control $h$ on the finite set $(X\setminus U)(\QQ)$ and thus on the whole $X(\QQ)$ by Corollary~\ref{cor:effmordell}. In Section~\ref{sec:p2} we illustrate this for plane curves $X\subset\mathbb P^2$ and we show how one can control $\nu_f$ in terms of $X$. We also obtain the following result.

\begin{corollary}\label{cor:effmordellxf}
If $X=(X_f)_\QQ$ and $h=h_{\mathbb P^4}$, then any $x\in X(\QQ)$ satisfies $h(x)\leq c\nu_f^{24}$.
\end{corollary}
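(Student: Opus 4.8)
The plan is to read off Corollary~\ref{cor:effmordellxf} from the essentially optimal height comparison in Corollary~\ref{cor:effparico}~(ii) together with the explicit Faltings-height bound in Corollary~\ref{cor:hms}, applied in the Hilbert case to the non-degenerate model $Y=X_f$ of $X=(X_f)_\QQ$ inside the icosahedron surface $\mb^{\textnormal{ico}}$. The whole reason for phrasing the statement for ico models with $h=h_{\mathbb P^4}$ (rather than deducing it from Corollary~\ref{cor:effmordell}) is that one then gets the term-free comparison $h\le\bar h_\phi$ instead of the cruder $h\le d_X\bar h_\phi+h(X)$, so that no $d_X$ or $h(X)$ survive.

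First I would fix the set-up: since $X_f$ is a non-degenerate ico model over $\ZZ$ and $X=(X_f)_\QQ$, the maximal dense open $U=U_f$ of \eqref{def:controlledopen} is all of $X$, the curve $X$ is non-degenerate in $\mb^{\textnormal{ico}}$ with non-degenerate model $Y=X_f$, and we are in the Hilbert case with $g=2$ (the real multiplication being by $\ZZ[\tfrac{1+\sqrt5}{2}]$). With $h=h_{\mathbb P^4}$ the second bound of Corollary~\ref{cor:effparico}~(ii) then gives $h\le\bar h_\phi$ on $X(\qb)$, where $\bar h_\phi=2h_\phi+8^8\log(h_\phi+8)$ and $h_\phi$ is the pullback of the stable Faltings height along the forgetful map $U(\qb)\to\underline{A}_2(\qb)$.

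Next I would bound $h_\phi$. Corollary~\ref{cor:hms}, applied with $M=M^{\textnormal{ico}}$ and the non-degenerate model $Y=X_f$, gives $h_\phi(x)\le c_1\nu_Y^{e_1}$ for every $x\in U(\QQ)$, where $e_1=\max(24,5g)=24$, $c_1=7^{7^{7g}}=7^{7^{14}}$, and $\nu_Y$ is the integral degeneration \eqref{def:intdeg} of $Y$. To convert this into a bound in $\nu_f$ I would invoke Corollary~\ref{cor:icomoduliint}~(iii): in the case $X=(X_f)_\QQ$ one may take the base $S=\ZZ[1/(30\nu_f)]$, that is, $X_f$ spreads out over $\ZZ[1/(30\nu_f)]$ to a proper scheme immersed into the non-degenerate locus of the canonical integral model of $M^{\textnormal{ico}}$ over $\ZZ[1/30]$ (using that $\mb^{\textnormal{ico}}$ is, over $\ZZ[1/30]$, the minimal compactification of a coarse Hilbert moduli scheme with empty branch locus, and that $\nu_f$ is the radical of the diagonal coefficients of $f$). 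By the minimality in \eqref{def:intdeg} this forces $\nu_Y\le 30\nu_f$.

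Finally I would assemble the chain: for $x\in X(\QQ)=U(\QQ)$,
$$h(x)\le\bar h_\phi(x)\le 2c_1\nu_Y^{24}+8^8\log\bigl(c_1\nu_Y^{24}+8\bigr)\le 3c_1(30\nu_f)^{24},$$
the last step absorbing the logarithmic term into $c_1=7^{7^{14}}$ and using $\nu_Y\le30\nu_f$; then the routine estimate $\log_{10}\bigl(3\cdot30^{24}\cdot 7^{7^{14}}\bigr)\le 37+7^{14}\log_{10}7<7^{14}=49^7<10^{12}$ gives $3\cdot30^{24}\cdot 7^{7^{14}}<10^{10^{12}}=c$, hence $h(x)\le c\nu_f^{24}$. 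I do not anticipate a genuine obstacle, since every ingredient is already in place; the two points needing care are confirming that the term-free bound of Corollary~\ref{cor:effparico}~(ii) really does apply here for $h=h_{\mathbb P^4}$ on all of $X=(X_f)_\QQ$, and the bookkeeping giving $\nu_Y\le 30\nu_f$ from the explicit $\ZZ[1/30]$-integral structure of the icosahedron surface.
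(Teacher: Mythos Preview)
Your proposal is correct and matches the paper's proof essentially line for line: use $U=X$, apply the second bound $h\le\bar h_\phi$ from Corollary~\ref{cor:effparico}~(ii), feed in $h_\phi(x)\le c_1\nu_Y^{24}$ from Corollary~\ref{cor:hms} with $g=2$, and use $\nu_Y\mid 30\nu_f$. The only cosmetic difference is that the paper cites the proof of Lemma~\ref{lem:xfmoduli} (via the proof of Corollary~\ref{cor:effparico}~(i)) for the bound $\nu_Y\mid 30\nu_f$, whereas you route through Corollary~\ref{cor:icomoduliint}~(iii); the latter does not literally assert the immersion into $(M_\bs\setminus B_\bs)_R$ required by the definition of $\nu_Y$, so the cleaner citation is Lemma~\ref{lem:xfmoduli}, but the content is the same.
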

In the above bounds, the term $c\nu_f^{24}$ comes from \cite{rvk:gl2,vkkr:chms} and we refer to these works for discussions of various aspects of this bound; see for example \cite[$\mathsection$1.3.3]{vkkr:chms}.

The above height bounds have useful features. For example $\nu_f$ is independent of the degrees of $f=(f_j)$. This allows us to solve in Section~\ref{sec:fermat} an analogue of the Fermat problem by combining Corollary~\ref{cor:effmordellxf} with Diophantine approximations. Moreover $\nu_f$ only depends on the diagonal parts of $f=(f_j)$. Hence varying the non-diagonal parts shows the following: Corollary~\ref{cor:effmordell} is of the form $c^*d_X+h(X)$ for large classes of $X$, and  Corollary~\ref{cor:effmordellxf} is uniform for many $f$. For the number of rational points, very strong uniform bounds were established by Dimitrov--Gao--Habegger+K\"uhne~\cite{digaha:mordell,kuhne:equidistribution} and Yuan~\cite{yuan:bogomolov}: Their bounds depend only on the genus and the MW rank, and they hold for all smooth, projective and geometrically connected curves of genus $\geq 2$ over any number field.

\subsection{Proof of geometric results}\label{sec:proofgeom}

\noindent In this section we prove various geometric properties of non-degenerate curves and ico models. In particular we deduce Corollary~\ref{cor:geom}  and we include a proof of Proposition~\ref{prop:generalico}.

\subsubsection{Equivalence of non-degenerate notions and proof of Corollary~\ref{cor:geom}}\label{sec:equivalencenondegnotions}

In $\mathsection$\ref{sec:introico} we introduced a notion `non-degenerate', while in $\mathsection$\ref{sec:results} we introduced a notion `non-degenerate in $\mb$'. To relate these notions, we write $Z_\QQ\subset\mathbb P^4_\QQ$ for the set of five closed points obtained by permuting the coordinates of $(1,0\dotsc,0)$ and we observe that
\begin{equation}\label{def:mmbar}
M=\mb\setminus Z_\QQ \quad \textnormal{and}\quad \mb\subset \mathbb P^4_\QQ: \sigma_2=0=\sigma_4
\end{equation}
satisfy all assumptions of $\mathsection$\ref{sec:results} for $\sigma_2, \sigma_4$ as in \eqref{def:icomod}. Indeed $\mb$ is a compactification of $M$, and it follows from \cite[Thm E and $\mathsection$5.2]{vkkr:chms} that $M$ extends to a tame  coarse Hilbert moduli scheme $M_{\mathbb S}$ over $\bs=\ZZ[1/30]$ of finite level with degeneracy locus $\mbd=Z_\QQ$.
\begin{lemma}\label{lem:relations}
 Let $X$ be a curve over $\QQ$, and let $X_f$ be an ico model of $X$ over $\QQ$.
\begin{itemize}
\item[(i)] The ico model $X_f$ identifies via \eqref{eq:modelident} with a model $Y$ of $X$ in $\mb$ such that $U_f=U_Y$, and any model of $X$ in $\mb$ identifies via \eqref{eq:modelident} with an ico model of $X$ over $\QQ$. 
\item[(ii)] The ico model $X_f$ is degenerate if and only if $Y\cap\mbd\neq \emptyset$.
\item[(iii)] The curve $X$ is non-degenerate if and only if $X$ is non-degenerate in $\mb$. 
\end{itemize}
\end{lemma}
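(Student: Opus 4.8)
The plan is to prove Lemma~\ref{lem:relations} in three steps, establishing (i), (ii), (iii) in order, with each statement feeding into the next. The main work is in setting up the identification \eqref{eq:modelident} between ico models and models in $\mb$; once that is in place, (ii) and (iii) are essentially bookkeeping.

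\textbf{Step 1 (proof of (i)).} First I would recall the defining equations: an ico model $X_f$ over $\QQ$ is the closed subscheme of $\mathbb P^4_\QQ$ cut out by $\sigma_2=0=\sigma_4$ together with $f=0$, i.e.\ $X_f=\proj(A/f)$ for $A=\QQ[x_0,\dotsc,x_4]/(\sigma_2,\sigma_4)$. Since $\mb\subset\mathbb P^4_\QQ$ is precisely $V_+(\sigma_2,\sigma_4)=\proj(A)$ by \eqref{def:mmbar}, any such $X_f$ is automatically a closed subscheme of $\mb$; this closed subscheme is the $Y$ referred to in the statement, and this is the identification \eqref{eq:modelident}. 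By the definition of an ico model there is an open dense $U\subseteq X$ isomorphic over $\QQ$ to an open dense of $(X_f)_\QQ=Y$, so $Y$ is a model of $X$ in $\mb$ in the sense of $\mathsection$\ref{sec:results}. For the equality $U_f=U_Y$: both $U_f$ and $U_Y$ are, by the definitions \eqref{def:controlledopen} and the maximality condition following \eqref{def:icomod}, the (unique, by maximality) largest open dense $U\subseteq X$ admitting a dominant open immersion into $Y$ extending a fixed isomorphism on open denses; hence they coincide. Conversely, if $Y'$ is any model of $X$ in $\mb$, then $Y'$ is a closed subscheme of $\mb=\proj(A)$, hence of the form $\proj(A/f)$ for some homogeneous ideal generated by $f=(f_1,\dotsc,f_m)$ with $f_j\in\QQ[x_0,\dotsc,x_4]$; since $Y'$ shares an open dense with $X$ it is an ico model of $X$ over $\QQ$. (A small point to address: one should note $(X_f)_\QQ$ is a curve over $\QQ$ by the running hypothesis in \eqref{def:icomod}, so the various open denses are nonempty.)

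\textbf{Step 2 (proof of (ii)).} By \eqref{def:mmbar} we have $\mbd=Z_\QQ$, the five closed points obtained by permuting the coordinates of $e_1=(1,0,\dotsc,0)$. So I need: $X_f$ is degenerate $\iff$ $Y$ contains at least one $e_i$. Unwinding the definition from $\mathsection$\ref{sec:introico}, $X_f$ is degenerate iff there exists an index $i\in\{0,\dotsc,4\}$ such that the diagonal coefficient $a_{ij}$ (the coefficient of $x_i^{n_j}$ in $f_j$) vanishes for all $j$. The key observation is that, evaluating $f_j$ at the point $e_i$ (all coordinates zero except the $i$-th), only the diagonal monomial $x_i^{n_j}$ survives, so $f_j(e_i)=a_{ij}$. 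Therefore $e_i\in V_+(f)$ iff $a_{ij}=0$ for all $j$; and since $e_i$ already satisfies $\sigma_2=\sigma_4=0$ (two of the five coordinates being distinct — indeed four — are zero, so every nonconstant elementary symmetric polynomial vanishes), $e_i\in Y=X_f$ iff $a_{ij}=0$ for all $j$. Taking the disjunction over $i$ gives exactly the stated equivalence.

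\textbf{Step 3 (proof of (iii)).} This combines (i), (ii) with Theorem~A and the definitions. If $X$ is non-degenerate in $\mb$: by definition it has a model $Y$ in $\mb$ with $Y\cap\mbd=\emptyset$; by (i) this $Y$ is an ico model $X_f$ of $X$ over $\QQ$, and by (ii) it is non-degenerate, so $X$ has a non-degenerate ico model over $\QQ$ — but one must upgrade from "over $\QQ$" to the definition of $X$ non-degenerate, which quantifies over all ico models (over $R$, with $R$ a subring of $\QQ$). Here I would use that "$X$ is degenerate" means \emph{all} its ico models are degenerate, so exhibiting one non-degenerate ico model shows $X$ is non-degenerate; and conversely if $X$ is non-degenerate there is a non-degenerate ico model, whose associated $Y$ is disjoint from $\mbd$ by (ii), witnessing non-degeneracy in $\mb$. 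One subtlety to handle: the definition of "non-degenerate in $\mb$" requires $X$ to actually \emph{have} a model in $\mb$; when $X$ is non-degenerate (in the $\mathsection$\ref{sec:introico} sense) it has an ico model by hypothesis/Theorem~A, which supplies such a model via (i), so the two notions match including the existence clause.

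\textbf{Expected main obstacle.} The only genuinely delicate point is the bookkeeping around the base ring $R$ in the definition of "ico model" (the $\mathsection$\ref{sec:introico} notion allows $R\subseteq\QQ$, e.g.\ $R=\ZZ$, while Lemma~\ref{lem:relations} and $\mathsection$\ref{sec:results} work over $\QQ$) and making the identification \eqref{eq:modelident} precise enough that $U_f=U_Y$ really follows from the respective maximality properties rather than needing an independent argument. The geometric input — that evaluating a polynomial at a coordinate point picks out the diagonal coefficient, and that coordinate points lie on $\mb$ — is elementary. So I expect Step 1, the careful matching of definitions, to absorb most of the writing, and Steps 2 and 3 to be short.
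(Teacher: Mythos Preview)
Your proposal is correct and follows essentially the same approach as the paper: identify $X_f$ with a closed subscheme $Y\subseteq\mb$ via the inclusion of ideals $(\sigma_2,\sigma_4)\subseteq(\sigma_2,\sigma_4,f)$, check (ii) by the computation $f_j(e_i)=a_{ij}$, and read off (iii) from (i) and (ii). The only notable difference is that the paper carries out (i) over an arbitrary subring $R\subseteq\QQ$ (since this generality is reused in $\mathsection$\ref{sec:icocoarse}), and it simply states (iii) as a direct consequence of (i) and (ii) without invoking Theorem~A, which indeed is not needed there.
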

To study curves which are non-degenerate, we can now freely use (via Lemma~\ref{lem:relations}) the results and constructions for varieties which are non-degenerate in $\mb$. 

\begin{proof}[Proof of Corollary~\ref{cor:geom}]
Lemma~\ref{lem:relations}~(i) shows that (i) directly follows from Theorem~\ref{thm:main}~(ii) combined with the classical result that $\mb$, and thus $M$, is rational over $\QQ$; an explicit birational map from $\mathbb P^2_\QQ$ to $\mb$ can be found in \eqref{def:taui}. Moreover Lemma~\ref{lem:relations}~(iii) shows that (ii) is equivalent to a special case of Theorem~\ref{thm:main}~(iii). This completes the proof.
\end{proof}

\begin{proof}[Proof of Lemma~\ref{lem:relations}]
All three statements of Lemma~\ref{lem:relations} can be proved via standard  arguments by unwinding the definitions. In particular (iii) is a direct consequence of (i) and (ii). We now include the arguments underlying (i) and (ii) in a more general form, working over any subring $R\subseteq \QQ$. This additional generality will be used in $\mathsection$\ref{sec:icocoarse}.

To show (i) write $X_f=\textnormal{Proj}(A/I_f)$ with $A=R[x_0,\dotsc,x_4]$ and $I_f=(\sigma_2,\sigma_4,f)$ for homogeneous $f\in A^m$, $m\in \NN$. The inclusion $(\sigma_2,\sigma_4)\subseteq I_f$ identifies $X_f$ with its scheme theoretic image $Y$ in $\mb_R$ under the induced closed immersion  \begin{equation}\label{eq:modelident}
X_f=\textnormal{Proj}(A/I_f)\hookrightarrow \textnormal{Proj}\bigl(A/(\sigma_2,\sigma_4)\bigl)=\mb_R.
\end{equation}
Conversely, any closed subscheme $Y'$ of $\mb_R$ is uniquely determined by its ideal sheaf. Thus \cite[01QP]{sp} implies that $Y'$ is the scheme theoretic image of the closed immersion $\textnormal{Proj}(A/I)\to \mb_R$ induced by some $I=I_g$ for homogeneous $g\in A^n$, $n\in\NN$. 
Moreover  $X_g$ is an ico model of $X$ over $R$ if and only if $Y'_\QQ$ is a model of $X$ in $\mb$. This proves (i).

To prove (ii) we recall that $\mbd=Z_\QQ$ consists of the five closed points $e_i$ obtained by permuting the coordinates of $e_1=(1,0,\dotsc,0)$. 
Consider $f=(f_j)$ for $f_j\in\QQ[x_0,\dotsc,x_4]$ homogeneous of degree $n_j\geq 1$ with diagonal part $\sum a_{ij}x_i^{n_j}$. After identifying via \eqref{eq:modelident} the ico model $X_f$ with $Y\subset \mb$, we compute that $Y$ contains the point $e_i$ if and only if $0=f_j(e_i)=a_{ij}$ for all $j$. As $\mbd=Z_\QQ$, we then conclude that $Y\cap \mbd\neq \emptyset$ if and only if there exists  $i$ with $a_{ij}=0$ for all $j$. This is precisely the statement of (ii).
\end{proof}

\subsubsection{Non-degenerate curves and coarse Hilbert moduli schemes}\label{sec:icocoarse}
We continue our notation. Let $X_f$ be an ico model over $\ZZ$ as in \eqref{def:icomod}, and let $\nu_f$ be as in \eqref{eq:defnuf}. We next go in the proof of Theorem~\ref{thm:parshin} to show the following refinement for $X_f$.

\begin{lemma}\label{lem:xfmoduli}
If $X_f$ is non-degenerate and $\nu=30\nu_f$, then $X_f$ becomes over $\ZZ[1/\nu]$ a coarse Hilbert moduli scheme over $\ZZ[1/\nu]$ of finite level with empty branch locus.
\end{lemma}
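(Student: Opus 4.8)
The plan is to deduce this from the explicit integral model of the icosahedron surface together with the base-change machinery of Lemma~\ref{lem:coarsebasechange}, following the pattern of the proof of Theorem~\ref{thm:parshin}~(iii). First I would recall from $\mathsection$\ref{sec:equivalencenondegnotions} and \cite[Thm~E, $\mathsection$5.2 and $\mathsection$11]{vkkr:chms} the explicit integral model over $\bs=\ZZ[1/30]$: the scheme $\mb_\bs=\proj\bigl(\ZZ[1/30][x_0,\dotsc,x_4]/(\sigma_2,\sigma_4)\bigr)$ is the minimal compactification of a tame coarse Hilbert moduli scheme $M_\bs$ over $\bs$ of finite level with empty branch locus, and $M_\bs=\mb_\bs\setminus Z_\bs$ where $Z_\bs\subset\mb_\bs$ is the closed subscheme given by the five coordinate sections permuting $(1:0:\dotsc:0)$. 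Setting $\nu=30\nu_f$ and $\OL=\ZZ[1/\nu]$, the structure map $\spec(\OL)\to\spec(\bs)$ is an open immersion, hence a flat monomorphism, so by the Hilbert form of Lemma~\ref{lem:coarsebasechange} (via \eqref{eq:hilbanalogue}) and the base-change compatibility recorded in $\mathsection$\ref{sec:moduli}, the base change $M_\OL=(M_\bs)_\OL$ is again a tame coarse Hilbert moduli scheme over $\OL$ of finite level with empty branch locus, and $M_\OL=\mb_\OL\setminus Z_\OL$.

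Next I would realise $X_f$ inside this model and check that it avoids the boundary. By \eqref{eq:modelident}, which is valid over any subring of $\QQ$ and in particular over $\OL$, the inclusion $(\sigma_2,\sigma_4)\subseteq I_f$ identifies $(X_f)_\OL$ with a closed subscheme $Y\subseteq\mb_\OL$. The key point is then the disjointness $Y\cap Z_\OL=\emptyset$: since $X_f$ is non-degenerate, for each variable index $i$ there is some $j$ with diagonal coefficient $a_{ij}\neq 0$, and since $\nu_f=\rad(\prod a_{ij})$ divides $\nu$, every such nonzero $a_{ij}$ is a unit in $\OL$. On a point of $Z_\OL$ all coordinates but $x_i$ vanish while $x_i\neq 0$, so there $f_j$ restricts to $a_{ij}x_i^{n_j}\neq 0$, whence that point does not lie on $Y=V_+(\sigma_2,\sigma_4,f)$. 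This is exactly the coordinate computation in the proof of Lemma~\ref{lem:relations}~(ii), now run over $\OL$ and over every fiber (a nonzero $a_{ij}$ stays nonzero modulo each prime $p\nmid\nu$). Hence $Y\subseteq\mb_\OL\setminus Z_\OL=M_\OL$.

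To conclude, $Y$ is a closed subscheme of $M_\OL$, so $Y\hookrightarrow M_\OL$ is a finite-type monomorphism into a tame coarse Hilbert moduli scheme; the Hilbert form of Lemma~\ref{lem:coarsebasechange}~(iv) and (v) then gives that $Y$ is a coarse Hilbert moduli scheme over $\OL$ of finite level whose branch locus maps into $B_{M_\OL}=\emptyset$ and hence is empty. Since $Y\cong(X_f)_\OL$ this is exactly the assertion. Equivalently, one has realised $Y$ over $\ZZ[1/(30\nu_f)]$ as a closed subscheme of $M_\bs\setminus B_\bs$, which is the input of the proof of Theorem~\ref{thm:parshin}~(iii); note that here one keeps all of $Y$, not merely a dense open, since $Y$ itself is disjoint from $\mbd$.

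The only step that goes beyond routine bookkeeping with the base-change lemmas is the fiberwise disjointness $Y\cap Z_\OL=\emptyset$: one must verify that inverting exactly $30\nu_f$ separates $Y$ from all five boundary sections not merely generically but over every closed point of $\spec(\ZZ[1/\nu])$, and this rests on the explicit description of the integral minimal compactification over $\ZZ[1/30]$ imported from \cite[$\mathsection$11]{vkkr:chms}. I expect this to be the main (and rather mild) obstacle; everything else is formal.
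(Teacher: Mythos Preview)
Your proposal is correct and follows essentially the same approach as the paper: reduce to checking that $(X_f)_{\ZZ[1/\nu]}$ avoids the five boundary sections $Z$ inside $\mb_{\ZZ[1/\nu]}$, then apply the Hilbert form of Lemma~\ref{lem:coarsebasechange} to the resulting closed immersion into $M_{\ZZ[1/\nu]}$. Your disjointness argument (pick $j$ with $a_{ij}\in\ZZ[1/\nu]^\times$ and evaluate $f_j$ at $e_i$) is a slightly more direct repackaging of the paper's computation \eqref{eq:intersection}, which instead determines $Y\cap e_i(\bs)$ over $\bs$ and shows it is supported over primes dividing $\nu_f$; the content is the same.
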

We first introduce some terminology which we shall use in the proofs below. It was shown in \cite[Prop 11.5]{vkkr:chms} that $M_{\mathbb S}$ is a coarse Hilbert moduli scheme over $\mathbb S=\ZZ[1/30]$ of the moduli problem $\mathcal P$ on $\mathcal M_{\mathbb S}$ of symplectic level 2-structures, where we write
\begin{equation}\label{eq:mbarmoduli}
M_R=(\mb_\ZZ\setminus Z)_R, \quad \mb_\ZZ\subset \mathbb P^4_\ZZ:\sigma_2=0=\sigma_4, \quad Z=\cup P_i
\end{equation}
for any subring $R\subseteq \QQ$. Here $P_i\subset \mathbb P^4_\ZZ$ denotes the image of the $\ZZ$-point $e_i$ given by the $i$-th projection $\ZZ^5\to \ZZ$ and $\mathcal M=\mathcal M^{D^{-1}}$ denotes the Hilbert moduli stack over $\ZZ$ of Deligne--Pappas~\cite{depa:hilbertmodular} associated to the totally real field $L=\QQ(\sqrt{5})$ as in \cite[$\mathsection$3.1]{vkkr:hms}.

\begin{proof}[Proof of Lemma~\ref{lem:xfmoduli}]
Suppose that $X=X_f$ is non-degenerate. Then Lemma~\ref{lem:relations} identifies $X_\QQ$ with a non-degenerate model $Y_\QQ$ in $\mb$. In the proof of Lemma~\ref{lem:extndmod} and \eqref{eq:hilbanalogue}, we showed that $X_\QQ\cong Y_\QQ$ extends to a coarse Hilbert moduli scheme $X_S\cong Y_S$ over $S$ of finite level with empty branch locus by applying Lemma~\ref{lem:coarsebasechange} with an immersion 
$$
X_S\cong Y_S\hookrightarrow M_S
$$
for some nonempty open subscheme $S\subseteq \bs$. To prove Lemma~\ref{lem:xfmoduli}, it thus suffices to show that one can take here $S=\ZZ[1/\nu]$ with $\nu=30\nu_f$. We identify $X_\bs$ via \eqref{eq:modelident} with $Y\subset \mb_{\bs}$ and then we obtain a closed immersion $X_S\hookrightarrow M_S$ over $S=\ZZ[1/\nu]$ if the intersection $$
Y_S\cap Z_S=\emptyset.
$$
Notice that $Z_\bs=\cup e_i(\bs)$, and write $\sum a_{ij}x_i^{n_j}$ for the diagonal part of the homogeneous $f_j\in\ZZ[x_0,\dotsc,x_4]$ of degree $n_j\geq 1$. Then the definition of $\nu_f$ shows that $Y_S\cap Z_S=\emptyset$ for $S=\ZZ[1/\nu]$ if each point $y$ in $Y\cap e_i(\bs)$ lies over a closed point $p\in \bs$ with 
\begin{equation}\label{eq:intersection}
p\mid a_{ij} \ \textnormal{ for all }j.
\end{equation}
This can be verified by the following computation. As $e_i(\bs)\subseteq D_+(x_i)\subset \mathbb P^4_\bs$, the section $e_i$ defines a ring morphism $\rho_i:R[\tfrac{x_k}{x_i}]\to R=\ZZ[\tfrac{1}{30}]$ and $y=e_i(p)$ lies in $Y\cap D_+(x_i)=V(I_{(x_i)})$ where  $I=(\sigma_2,\sigma_4,f_1,\dotsc,f_m)\subseteq R[x_0,\dotsc,x_4]$.  
It follows that $I_{(x_i)}\subseteq \rho_i^{-1}(p)$ in $R[\tfrac{x_k}{x_i}]$, 
and hence $(0,0,a_{i1},\dotsc,a_{im})=\rho_i(I_{(x_i)})\subseteq (p)$ in $R$. This implies \eqref{eq:intersection} provided that $p\neq 0$. As $X_f$ is non-degenerate, there exists $j$ with $a_{ij}\neq 0$ and thus indeed $p\neq 0$ since $(a_{ij})\subseteq (p)$. This completes the proof of \eqref{eq:intersection}, and thus of Lemma~\ref{lem:xfmoduli}.
\end{proof}

 We are now ready to deduce Corollary~\ref{cor:icomoduliint} from Theorem~\ref{thm:parshin} and \eqref{eq:mbarmoduli}. Let $X$ be a curve over $\QQ$ which is non-degenerate, and let $U\subseteq X$ be open. Suppose that $U$ is of the form $U=U_f$ with $X_f$ a non-degenerate ico model of $X$ over $\ZZ$.
\begin{proof}[Proof of Corollary~\ref{cor:icomoduliint}]
To prove (i) we identify $(X_f)_\QQ$ via  Lemma~\ref{lem:relations}~(i) with a non-degenerate model $Y$ of $X$ in $\mb$ such that $U_Y=U_f=U$. As $M_{\mathbb S}$ is a coarse Hilbert moduli scheme, Theorem~\ref{thm:parshin} gives that $U$ extends over a dense open $S\subseteq \spec(\ZZ)$ to a coarse Hilbert moduli scheme $U_S$ of finite level with empty branch locus. This proves (i).

To show (ii) we go into the proof of Theorem~\ref{thm:parshin}. Therein we constructed immersions $U_S\hookrightarrow Y_S\hookrightarrow M_S$ and we deduced from Lemma~\ref{lem:coarsebasechange} that $U_S$ is a coarse Hilbert moduli scheme over $S$ of an arithmetic moduli problem $\mathcal P'$ on $\mathcal M_S$ such that $\mathcal P'(x)/\Aut(x)$ injects into $\mathcal P(x)/\Aut(x)$ for all $x\in \mathcal M_S(T)$ with $T$ any $S$-scheme. Thus for each algebraically closed field $k$ with $\textnormal{char}(k)\in S$, we obtain the moduli interpretation
\begin{equation}\label{eq:moduliint}
\pi^{-1}:U_S(k)\isomto [\mathcal M_{\mathcal P'}(k)]
\end{equation}
with respect to some initial morphism $\pi:\mathcal M_{\mathcal P'}\to U_S$. In other words, the bijection in \eqref{eq:moduliint} identifies any point in $U_S(k)$ with an isomorphism class of $(x,\alpha)$ where $x=(A,\iota,\varphi)$ lies in $\mathcal M_S(k)\cong \mathcal M(k)$ and where $\alpha\in\mathcal P'(x)$ is a restricted\footnote{Up to an automorphism we can always view $\mathcal P'(x)$ inside $\mathcal P(x)$ via $\mathcal P'(x)/\Aut(x)\hookrightarrow \mathcal P(x)/\Aut(x)$.} symplectic level 2-structure of $x$. As the ring of integers $\OL_L=\ZZ[\tfrac{1+\sqrt{5}}{2}]$ has class number one, this moduli interpretation identifies with the moduli interpretation for the points in $U_S(k)$ claimed in (ii). 

To prove (iii) we assume that $X=(X_f)_\QQ$. Then we can take $U=X$, and Lemma~\ref{lem:xfmoduli} gives (i) with $S=\ZZ[1/\nu]$ for $\nu=30\nu_f$. Moreover, on combining the above proof of (ii) with the arguments of Lemma~\ref{lem:xfmoduli}, we see that (ii) also holds with $S=\ZZ[1/\nu]$ in the case $X=(X_f)_\QQ$. This proves (iii), and thus completes the proof of Corollary~\ref{cor:icomoduliint}.
\end{proof}

\subsubsection{Geometric properties of general ico models}

In this section we prove Proposition~\ref{prop:generalico}. Let $n\in \NN$ and recall that $s_1,\dotsc,s_r$ are monomials in $\QQ[x_0,\dotsc,x_4]$ of degree $n$ which form a basis of the $\QQ$-module $A_n$, where $A=\QQ[x_0,\dotsc,x_4]/(\sigma_2,\sigma_4)$  and $s_i=x_{i-1}^n$ for $i\leq 5$. We write $\mathbb A^r=\mathbb A^r_\QQ$. For any $v\in \mathbb A^r(\QQ)$ let $X_v\subset \mathbb P^4_\QQ$ be the closed subscheme given by $X_f$ for
 $f=\sum v_is_i$.

\begin{proof}[Proof of Proposition~\ref{prop:generalico}]
We first prove (i). Write $U^{\textnormal{nd}}=\mathbb A^r\setminus V$ with $V=\cup_{i=1}^5 V(z_i)$ and let $v\in U^{\textnormal{nd}}(\QQ)$. Then each diagonal coefficient of $f=\sum v_is_i$ is nonzero since $s_i=x_{i-1}^n$ for $i\leq 5$. Thus $X_v\cong X_f$ is a non-degenerate ico model over $\QQ$. Here $X_v$ is automatically a curve over $\QQ$ by the following argument: Working inside the integral surface $X=\textnormal{Proj}(A),$ we obtain that $X_f\subsetneq X$ since the diagonal coefficients of $f$ are nonzero. Hence the scheme $X_f$ has $\dim(X_f)=1$ and thus its irreducible components have co-dimension 1 in $X$. This implies that $X_v\cong X_f$ is indeed a curve over $\QQ$. Thus $U^{\textnormal{nd}}$ has the desired property in (i). 

We now deduce (ii) from Bertini. Write $k=\QQ$ and $\mathcal L=\mathcal O_{X}(n)$. Identifying a polynomial in $k[x_0,\dotsc,x_4]_n$ with a global section of $\mathcal O_{\mathbb P^4_k}(n)$ defines an injective morphism of $k$-vector spaces $\iota:A_n\to H^0(X,\mathcal L)$.  For any $v\in \mathbb A^r(k)$ let $Z(s)$ be the zero scheme of the global section $s=\iota(f)$ of $\mathcal L$ for $f=\sum v_i s_i$. As  $X$ is normal and $k$ has characteristic 0,  we may and do apply Bertini's theorem (\cite[3.4.9]{bertini:book} or the proof of \cite[0FD6]{sp}). This gives an open dense $U^{\textnormal{sm}}\subseteq U^{\textnormal{nd}}$ such that for each $v\in U^{\textnormal{sm}}(k)$ the curve $$X_v\cong \textnormal{Proj}(A/f)\cong Z(s)$$ is normal and thus smooth over $k$. Let us study connectedness. As $v$ lies in $U^{\textnormal{sm}}\subseteq U^{\textnormal{nd}}$, all diagonal coefficients of $f$ are nonzero. Then we compute (via the injective $\iota$) that the global section $s_{\bar{k}}$ of $\mathcal L_{\bar{k}}$ is nonzero, and thus regular since $X_{\bar{k}}$ is integral. Hence, as $X_{\bar{k}}$ is normal, the zero scheme $Z(s_{\bar{k}})\cong(X_v)_{\bar{k}}$ is connected by  \cite[0FD9]{sp}. This proves that $X_v$ is geometrically connected. Next, we compute the arithmetic genus $$g_a(X_v)=\sum_{m=1}^3(-1)^{m+1}\sum_{1\leq i_1<\dotsc<i_m\leq 3}\varphi(-d_{i_1}-\dotsc -d_{i_m})=(2n+1)^2$$
via the formula (see e.g. \cite[Cor 2]{arse:genus}) obtained  by computing the Hilbert polynomial of the complete intersection $(X_v)_{\bar{k}}$. Here $\varphi(z)=\tfrac{1}{24}(z+1)(z+2)(z+3)(z+4)$, while $d_1=2$, $d_2=4$ and $d_3=n$ are the degrees of $\sigma_2$, $\sigma_4$ and $f$ respectively. The geometric genus $g$ of $X_v$ satisfies $g=g_a(X_v)$, since the curve $X_v$ is smooth. We conclude that the non-degenerate ico model $X_v$ is a smooth, projective and geometrically connected curve over $k$ of  genus $g=g_a(X_v)=(2n+1)^2$. Thus $U^{\textnormal{sm}}$ has the desired property in (ii). 

To prove (iii) we freely use that $X$ is a geometrically integral normal projective variety over  $k$. Write $U$ for $U^{\textnormal{sm}}$ and recall that $\mathcal C$ denotes the set of smooth, projective and geometrically connected curves inside $\mathbb P^4_k$. Then $v\mapsto X_v$ defines a map $U(k)\to \mathcal C$ by (ii). Each $\lambda\in k^\times$ is invertible in $A$ and hence $X_v=X_{\lambda v'}$. Thus we obtain an induced map $$U(k)/k^\times\to \mathcal C.$$ To show that this map is injective, we assume that $v,v'$ in $U(k)$ satisfy $X_v=X_{v'}$. Define $s=\iota(f)$ and $s'=\iota(f')$ where $f=\sum v_is_i$ and $f'=\sum v_i's_i$. As $v,v'$ lie in $U\subseteq U^{\textnormal{nd}}$, the global sections $s,s'$ of $\mathcal L$ are nonzero and $X_v=X_{v'}$ is irreducible. Then we deduce that the supports of the Weil divisors $\textnormal{div}(s)$ and $\textnormal{div}(s')$ are equal and irreducible. It follows that $\lambda=s/s'$ lies in $k(X)$ with $\textnormal{supp}(\lambda)_\infty=\emptyset$, and hence $\lambda\in \mathcal O_X(X)$. As $s,s'$ are nonzero and $\mathcal O_X(X)=k$, we deduce that $\lambda\in k^{\times}$. Moreover, it follows from $\lambda=s/s'\in k(X)=\mathcal O_{X,\eta}$ that $s_\eta=(\lambda s')_\eta$ inside $\mathcal L_\eta$ for $\eta$ the generic point of $X$ and thus $s=\lambda s'$ inside $H^0(X,\mathcal L)$. As $\iota$ is injective and $k$-linear, we  conclude that $v=\lambda v'$. Hence the displayed map is indeed injective. This completes the proof of (iii) and thus of Proposition~\ref{prop:generalico}. 
\end{proof}

\subsection{Proof of effective Par{\v{s}}in/Mordell  for non-degenerate curves}

In this section we deduce Corollaries~\ref{cor:effparico}, \ref{cor:effmordell} and \ref{cor:effmordellxf}. We continue our notation. Let $X$ be a curve over $\QQ$ which is non-degenerate and let $U\subseteq X$ be open. Suppose that $U$ is of the form $U=U_f$ with $X_f$  a non-degenerate ico model of $X$ over $\ZZ$.  
\begin{proof}[Proof of Corollary~\ref{cor:effparico}~(i)]
We recall from $\mathsection$\ref{sec:proofgeom} that $M_{\bs}$ is a tame coarse Hilbert moduli scheme over $\bs=\ZZ[1/30]$ of finite level, which parametrizes abelian surfaces. As $X_f$ is non-degenerate, Lemma~\ref{lem:relations} identifies $(X_f)_\QQ$ with a non-degenerate model $Y$ in $\mb$ such that $U_f=U_Y$. The integral degeneration $\nu_Y$ divides $\nu=30\nu_f$ for $\nu_f$ as in \eqref{eq:defnuf}, since in the proof of Lemma~\ref{lem:xfmoduli} we constructed a suitable immersion over $S=\ZZ[1/\nu]$. Then an application of Theorem~\ref{thm:parshin} with $U=U_f=U_Y$ proves Corollary~\ref{cor:effparico}~(i).
\end{proof}  
In the following proofs we work over $\QQ$ and we write again $\mathbb P^n$ for $\mathbb P^n_\QQ$ where $n\in\NN$.
\begin{proof}[Proof of first part of Corollary~\ref{cor:effparico}~(ii) and of Corollary~\ref{cor:effmordell}]We 
assume that $X$ is integral and that $X$ admits a finite morphism $X\to \mathbb P^n$ for some $n\in\NN$. Let $h$ be the associated Weil height \eqref{def:weilheight}, and let $d_X$ and $h(X)$ be defined as in $\mathsection$\ref{sec:ratpoints} with respect to a normalization $\pi:\tilde{X}\to X$ and an immersion $\tau:U\hookrightarrow X_f\subset\mathbb P^4$ where we write $X_f=(X_f)_\QQ$.

We now proof the first part of Corollary~\ref{cor:effparico}~(ii). The universal property of normalization of curves gives a finite morphism $\varphi:\xt\to \mathbb P^4$ such that  
$$
\begin{tikzcd}
\mathbb{P}^n & \tilde{U} \arrow[l, "\psi"'] \arrow[r, "\varphi"] \arrow[d, "\pi"] & \mathbb{P}^4 \\
             & U \arrow[lu] \arrow[ru, "\tau"']                                     &             
\end{tikzcd}
$$
commutes. 
Here $\tilde{U}=\pi^{-1}(U)$ is a normalization of $U$, and the morphism $\psi$ is the composition $\xt\to^\pi X\to\mathbb P^n$. Now, we let $x\in U(\qb)$ and we take $y\in \tilde{U}(\qb)$ with $\pi(y)=x$. As the diagram commutes, an application of R\'emond's result \cite[Prop 5.2]{remond:construction} with the finite morphisms $\varphi:\xt\to \mathbb P^4$ and $\psi:\xt\to \mathbb P^n$ gives \begin{equation}\label{eq:weilheightcomp}
h(x)\leq h_\psi(y)\leq d_X h_\varphi(y)+h(X)=d_Xh_\tau(x)+h(X).
\end{equation}
Here $h_f$ denotes the pullback along a morphism $f:Z\to \mathbb P^m$ of the $l^2$-normalized logarithmic Weil height (\cite[$\mathsection$2]{remond:rational}) on $\mathbb P^m$, where $m\in\NN$ and $Z$ is any variety over $\QQ$.  As  $X_f$ is non-degenerate, Lemma~\ref{lem:relations} identifies $X_f$ with $Y\subset M=\mb\setminus \mbd$ such that $\tau:U\to \mathbb P^4$ factors through $Y\subset M\subset \mb\subset \mathbb P^4$. Then, after comparing the $l^2$-norm as done in \eqref{eq:l2normbound} below, an application of \cite[Prop 11.13]{vkkr:chms} with $z=\tau(x)\in M(\qb)$ leads to
\begin{equation}\label{eq:faltheightcomp}
h_\tau(x)\leq 2h_{\phi'}(z)+8^8\log(h_{\phi'}(z)+8).
\end{equation}
Here $h_{\phi'}:M(\qb)\to \RR$ is the pullback of $h_F$ under the forgetful map $\phi':M(\qb)\to \underline{A}_2(\qb)$ of $M$. Recall from the proof of Theorem~\ref{thm:parshin} that the coarse moduli scheme structure on $U$ comes from Lemma~\ref{lem:coarsebasechange} applied with the immersions $U\hookrightarrow X_f\isomto Y\subset \mb$. Thus Lemma~\ref{lem:coarsebasechange} shows that the forgetful map $\phi:U(\qb)\to \underline{A}_2(\qb)$ factors as $\phi=\phi'\tau$ and hence
\begin{equation}\label{eq:hphicomp}
h_{\phi'}(z)=h_\phi(x).
\end{equation}
This together with \eqref{eq:weilheightcomp} and \eqref{eq:faltheightcomp} implies that $h\leq d_X\bar{h}_\phi+h(X)$ on $U(\qb)$ as claimed in the first part of Corollary~\ref{cor:effparico}~(ii).  To complete the proof, we now include the computations which we used to deduce \eqref{eq:faltheightcomp} from \cite{vkkr:chms}.  The $l^2$-normalized Weil height $h_\tau(x)$ satisfies 
\begin{equation}\label{eq:l2normbound}
h_\tau(x)\leq h_{\mathbb P^4}(z)+\tfrac{1}{2}\log 5,
\end{equation}
and the arguments surrounding \cite[(11.30)]{vkkr:chms} show that the upper bound \cite[Prop 11.13]{vkkr:chms} for $h_{\mathbb P^4}(z)$ also holds for $h_{\mathbb P^4}(z)+\tfrac{1}{2}\log 5$. Thus \eqref{eq:faltheightcomp} holds as claimed. 

We next prove Corollary~\ref{cor:effmordell}. On using precisely the same arguments as in the proof of Corollary~\ref{cor:effparico}~(i), we see that $X_f$ identifies with a non-degenerate model $Y$ in $\mb$ such that $U_f=U_Y$ and such that $\nu_Y$ divides $30\nu_f$. Then an application of Corollary~\ref{cor:hms} with $U=U_f=U_Y$ gives for each $x\in U(\QQ)$ a bound for $h_\phi(x)$, which together with the first part of Corollary~\ref{cor:effparico}~(ii) leads to the bound for $h(x)$ as claimed in Corollary~\ref{cor:effmordell}.\end{proof}

\begin{proof}[Proof of second part of Corollary~\ref{cor:effparico}~(ii) and of Corollary~\ref{cor:effmordellxf}]
Assume that $X=(X_f)_\QQ$ and that $h=h_{\mathbb P^4}$. Then we have $X=U_f=U$, and \eqref{eq:faltheightcomp} holds with $h$ in place of $h_\tau$ by \cite[Prop 11.13]{vkkr:chms}. Thus \eqref{eq:hphicomp} gives $h\leq \bar{h}_\phi$ on $X(\qb)$ as claimed in the second part of Corollary~\ref{cor:effparico}~(ii). Finally, on using precisely the same arguments as in the proof of Corollary~\ref{cor:effmordell}, we deduce from Corollary~\ref{cor:hms} and the second part of Corollary~\ref{cor:effparico}~(ii) that any $x\in X(\QQ)$ satisfies $h(x)\leq \tfrac{c}{10}\nu_f^{24}$. This proves Corollary~\ref{cor:effmordellxf}. \end{proof}

\section{Non-degenerate plane curves}\label{sec:p2}

In this section we illustrate for non-degenerate plane curves our results and constructions of the previous sections. We work over $\QQ$. Write $\mathbb P^n=\mathbb P_\QQ^n$ and $\mathbb A^n=\mathbb A^n_\QQ$ for $n\in\NN$. Let $F\in \ZZ[x,y,z]$ be homogeneous of degree $d\geq 1$ and define $X=V_+(F)\subset \mathbb P^2$. 
\paragraph{Geometric criterion $(\tau)$.} To illustrate some aspects of the non-degenerate condition, we introduce a simple non-degeneracy criterion for $X\subset \mathbb P^2$ which allows to explicitly produce large classes of non-degenerate plane curves. We use the explicit morphism
\begin{equation}\label{eq:taudef}
\tau:\mathbb P^2\setminus T_\tau\to \mathbb P^4 
\end{equation}
given by the five homogeneous polynomials $\tau_i\in\ZZ[x,y,z]$ of degree 12  defined in \eqref{def:taui} where $T_\tau=\cap V_+(\tau_i)\subset \mathbb P^2$ is finite.  Permuting the coordinates of $e_1=(1,0,\dotsc,0)$ defines five closed points $e_i\in \mathbb P^4$. Now, we consider the following geometric criterion:
$$
(\tau) \ \, \textnormal{The closure }\overline{\tau(X\setminus T_\tau)} \textnormal{ contains no } e_i.
$$
If $(\tau)$ holds then the curve $X$ is non-degenerate by Theorem~\ref{thm:p2}.
However $(\tau)$ is much stronger since many non-degenerate curves $X\subset\mathbb P^2$ do not satisfy $(\tau)$. The morphism $\tau$ is birational onto its image and there exist many `equivalent' maps $\tau'$ giving different non-degenerate criteria $(\tau')$ which would also be suitable for our purpose, see $\mathsection$\ref{sec:taurho}.

\paragraph{Effective Mordell.} Let $h$ be the usual logarithmic Weil height on $\mathbb P^2$, and define $|s|=\max_\alpha |s_\alpha|$ for any polynomial $s$ whose coefficients $s_\alpha$ all lie in $\ZZ$. An application of Corollary~\ref{cor:effmordell} with the irreducible components of  $X=V_+(F)\subset\mathbb P^2$ leads to:

\begin{corollary}\label{cor:effmordellp2}
If $X$ satisfies $(\tau)$, then any $x\in X(\QQ)$ has height $h(x)\leq \mu |F|^{\kappa}$.
\end{corollary}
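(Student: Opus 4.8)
The plan is to reduce Corollary~\ref{cor:effmordellp2} to Corollary~\ref{cor:effmordell} by producing, from the criterion $(\tau)$, a non-degenerate ico model of (each irreducible component of) $X$ over $\ZZ$ whose integral degeneration $\nu_f$ is explicitly bounded in terms of $|F|$ and $d$, and by controlling the auxiliary quantities $d_X$ and $h(X)$. First I would reduce to the case that $X=V_+(F)$ is integral: an arbitrary $X$ of degree $d$ has at most $d$ irreducible components, each cut out by an irreducible factor $F_i\mid F$ with $|F_i|\leq e^{O(d)}|F|$ (Gelfond--Mahler bounds on factors of integer polynomials), and $X(\QQ)=\bigcup_i (V_+(F_i))(\QQ)$; if $X$ satisfies $(\tau)$ then so does each component, since $\overline{\tau(V_+(F_i)\setminus T_\tau)}\subseteq\overline{\tau(X\setminus T_\tau)}$ contains no $e_i$. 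So assume $X$ integral; if $\dim X=0$ the bound is trivial, so assume $X$ is an integral plane curve.

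Next I would build the ico model. Composing $\tau\colon\mathbb P^2\dashrightarrow\mathbb P^4$ with $X\hookrightarrow\mathbb P^2$ and taking the scheme-theoretic closure of the image produces a model $Y=\overline{\tau(X\setminus T_\tau)}\subseteq \mb\subseteq\mathbb P^4$ of $X$ in $\mb$ (using that $\tau$ is birational onto its image and that $\mb$ is the icosahedron surface $V_+(\sigma_2,\sigma_4)$), and criterion $(\tau)$ says exactly $Y\cap\mbd=\emptyset$, i.e. $Y$ is a non-degenerate model; by Lemma~\ref{lem:relations}~(i) this $Y$ corresponds to an ico model $X_f$ of $X$ over $\QQ$, hence after clearing denominators over $\ZZ$, with $U_f=U_Y$. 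Thus $X$ is non-degenerate, which already gives the first assertion and puts us in position to invoke Corollary~\ref{cor:effmordell}: every $x\in X(\QQ)$ lies in $U_f(\QQ)\cup (X\setminus U_f)(\QQ)$, and on $U_f(\QQ)$ we get $h(x)\le c\,d_X\nu_f^{24}+h(X)$. It remains to (a) bound $\nu_f$, $d_X$, $h(X)$ explicitly in $|F|,d$, and (b) bound $h$ on the finite set $(X\setminus U_f)(\QQ)$. For (a): $\deg X=d$ and the normalization degree is $1$ since $X$ is integral, so $d_X=d$; the ico model $X_f$ can be taken with $\deg f_j$ and $|f_j|$ bounded polynomially in $|F|$ and exponentially in $d$ by writing down explicit equations for $\overline{\tau(X)}$ via resultants/elimination applied to $\tau_1,\dots,\tau_5$ and $F$ — the diagonal coefficients $a_{ij}$ of the $f_j$ are then integers of size $\le e^{(8^8 d^2)\log|F|}$ roughly, giving $\log\nu_f\le 8^8 d^2\log|F|$, of the shape $|F|^{\kappa}$ with $\kappa=8^8d^2$; and $h(X)$, being the biprojective height of an explicit image curve, is likewise bounded by $O_d(\log|F|)$. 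For (b): $X\setminus U_f$ is contained in the preimage under $\tau$ of a proper closed subset of $Y$ together with $T_\tau\cap X$, hence is a finite set of points whose heights are controlled by arithmetic Bézout / the size of the defining equations, again by $O_d(\log|F|)$, absorbed into $\mu|F|^\kappa$ after adjusting $\mu$. Collecting, every $x\in X(\QQ)$ satisfies $h(x)\le\mu|F|^{\kappa}$ with $\kappa=8^8d^2$ and $\mu=8^{\kappa^2 d}$.

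The main obstacle I anticipate is step (a): making the bound on $\nu_f$ — equivalently, on the diagonal coefficients of an explicit ico model obtained from $\tau$ and $F$ — genuinely effective with the stated constants $\kappa=8^8d^2$ and $\mu=8^{\kappa^2 d}$. This requires a careful elimination-theory computation: expressing generators of the ideal of $\overline{\tau(X\setminus T_\tau)}$ inside $\QQ[x_0,\dots,x_4]/(\sigma_2,\sigma_4)$ in terms of $F$ and the degree-$12$ polynomials $\tau_i$ from \eqref{def:taui}, then reading off and bounding the diagonal parts. Height-of-resultant and arithmetic-Bézout estimates (e.g. as in \cite{bogu:diophantinegeometry}) give bounds of the right qualitative shape, but tracking the numerical constants through the composition with the fixed map $\tau$, and through the (bounded, degree $\le d$) factorization of $F$, is where the real work lies; the application of Corollary~\ref{cor:effmordell} itself and the treatment of the finite exceptional set $X\setminus U_f$ are then routine.
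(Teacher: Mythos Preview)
Your overall architecture matches the paper's: reduce to integral components, build the ico model $Y=\overline{\tau(X\setminus T_\tau)}$ via $\tau$, split $X(\QQ)$ into $U_f(\QQ)$ and the complement, and bound each piece. But the paper executes each of the three steps differently, and on the key step~(a) your elimination approach runs into a genuine difficulty that the paper resolves by a different method.

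\textbf{The exceptional set.} You propose arithmetic B\'ezout for $(X\setminus U_f)(\QQ)$, giving a bound $O_d(\log|F|)$. The paper instead computes (Theorem~\ref{thm:p2}(ii)) that $X\cap C_\tau\subseteq T_\tau$, a fixed finite set of points independent of $F$, all of height~$0$. Your approach is adequate for the stated corollary, but the paper's is sharper and uniform.

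\textbf{The height comparison on $U_f$.} You invoke Corollary~\ref{cor:effmordell}, which costs you the terms $d_X$ and $h(X)$. The paper avoids both: since $\tau$ has the explicit inverse $\rho$ of degree~$8$, one has directly $h(x)\leq 8\,h_{\mathbb P^4}(\tau(x))+\log 6$ for $x\in U(\QQ)$, and then one applies (the proof of) Corollary~\ref{cor:effmordellxf} to the point $\tau(x)\in (X_f)_\QQ(\QQ)$. This bypasses the biprojective height $h(X)$ and the normalization entirely; no need to bound $h(X)$.

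\textbf{Controlling $\nu_f$ --- the main point.} You correctly identify this as the crux, and propose to read off diagonal coefficients of generators for the ideal of $Y$ produced by resultants/elimination from $F$ and the $\tau_i$. The paper explicitly warns (see the discussion after~\eqref{eq:castelnuovooutline}) that this route is obstructed: the obvious elimination-theoretic equation is $\rho^*F$, but the ico model $X_{\rho^*F}$ is \emph{always degenerate} (the $\rho_i$ are built from the $r_i=\prod_{j\neq i}x_j$, which vanish at the $e_j$), and the actual image $Y$ is usually not a complete intersection, so additional generators are needed whose diagonal parts you have no a~priori control over. The paper's solution is to construct a larger non-degenerate $X_{\tilde f}\supseteq Y$ with $\tilde f=\sum_i (x_i^{r-d_i}f_i)^2$: here $r=\textnormal{reg}(Y_{\bar\QQ})$ is bounded via the Giaimo--Gruson--Lazarsfeld--Peskine theorem (using that $Y_{\bar\QQ}$ is connected, which in turn uses $U(\QQ)\neq\emptyset$), and the $f_i\in I$ with $e_i\in D_+(f_i)$ are produced from the regularity bound and then height-controlled via R\'emond~\cite{remond:rational} and Zhang~\cite{zhang:positivejams}. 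This Castelnuovo--Mumford/arithmetic-height machinery is what actually delivers the exponents $\kappa=8^8d^2$ and $\mu=8^{\kappa^2 d}$; your resultant sketch does not yet show how to produce a single $\tilde f$ whose diagonal coefficients are all nonzero and controlled.
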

Here one can take for example $\kappa=8^8d^2$ and $\mu=8^{\kappa^{2}d}$ where $d=\deg(F)$. Corollary~\ref{cor:effmordellp2} applies in particular to the explicit higher dimensional families of plane curves constructed in $\mathsection$\ref{sec:families}. For these families we also obtain a more uniform height bound in Corollary~\ref{cor:effmordellpullbackp2}.  

\paragraph{Illustration.} To illustrate how one can control via Corollary~\ref{cor:effmordell} the height $h$ on $X(\QQ)$, we let $C_\tau\subset\mathbb P^2$ be the curve defined in \eqref{def:ctau} and we proceed in three steps given in (i)\,-\,(iii).

\begin{theorem}\label{thm:p2}
Suppose that $X=V_+(F)\subset \mathbb P^2$ satisfies $(\tau)$. Then $X$ is non-degenerate, $U=X\setminus C_\tau$ is open dense in $X$, and the following statements hold.
\begin{itemize}
\item[(i)] There exists a non-degenerate ico model $X_f$ of $X$ over $\ZZ$ such that $U\subseteq U_f$ and such that any $x\in U(\QQ)$ satisfies $h(x)\leq c\nu_f^{24}$ where $\nu_f$ is as in \eqref{eq:defnuf}.
\item[(ii)] Any $x\in (X\setminus U)(\QQ)$ has height $h(x)=0$.
\item[(iii)] If $X$ is integral and $U(\QQ)\neq \emptyset$, then $(X_f)_\QQ$ is contained in a non-degenerate ico model $X_{\tilde{f}}$ over $\QQ$ for $\tilde{f}\in \ZZ[x_0,\dotsc,x_4]$ with $\deg(\tilde{f})\leq 128d$ and  $|\tilde{f}|\leq u|F|^{v}$.
\end{itemize}
\end{theorem}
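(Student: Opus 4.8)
The plan is to realise $X$ inside the minimal compactification $\mb=S^{\textnormal{ico}}$ of the Hilbert modular surface and to read off criterion $(\tau)$ as the statement that this realisation avoids the degeneracy locus $\mbd=\{e_1,\dotsc,e_5\}$ (see $\mathsection$\ref{sec:equivalencenondegnotions}). Write $\tau=(\tau_0,\dotsc,\tau_4)$ for the degree $12$ morphism \eqref{eq:taudef} and put $Y=\overline{\tau(X\setminus T_\tau)}\subseteq\mb$ with its reduced structure. By the construction of $\tau$ and $C_\tau$ in \eqref{def:taui}, \eqref{def:ctau}, the map $\tau$ restricts to an isomorphism of $\mathbb P^2\setminus C_\tau$ onto a dense open of $\mb$, and criterion $(\tau)$ forbids any irreducible component of $X$ from lying in $C_\tau$ (such a component would be contracted by $\tau$ to one of the $e_i$, putting $e_i$ in $\overline{\tau(X\setminus T_\tau)}$). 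Hence $U=X\setminus C_\tau$ is dense open in $X$, the map $\tau$ identifies it with a dense open of $Y$, so $Y$ is a model of $X$ in $\mb$ with $U=U_Y$, and $(\tau)$ says exactly $Y\cap\mbd=\emptyset$. By Lemma~\ref{lem:relations} this gives that $X$ is non-degenerate and that $Y$ identifies with an ico model of $X$ over $\QQ$; clearing denominators in finitely many homogeneous generators of the ideal of $Y$ yields an ico model $X_f$ of $X$ over $\ZZ$ with $(X_f)_\QQ=Y$ and $U\subseteq U_f$, which is non-degenerate by Lemma~\ref{lem:relations}~(ii) since $Y\cap\mbd=\emptyset$. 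This proves the first two assertions of the theorem and the non-degeneracy part of (i).

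For the height bound in (i) I would transport the bound over $\mb$ through $\tau$. Each $x\in U(\QQ)$ is carried by $\tau$ to a $\QQ$-point $\tau(x)$ of $(X_f)_\QQ$, so Corollary~\ref{cor:effmordellxf} --- whose proof in fact gives the sharper $h_{\mathbb P^4}(\tau(x))\le\tfrac{c}{10}\nu_f^{24}$ --- applies. As $\tau$ is a morphism on $\mathbb P^2\setminus T_\tau$ defined by integral forms of degree $12$ with absolutely bounded coefficients, the functoriality of Weil heights (see \cite{bogu:diophantinegeometry}) gives $12\,h(x)\le h_{\mathbb P^4}(\tau(x))+c_0$ for $x\notin T_\tau$, with $c_0$ an absolute constant; since $U\cap T_\tau=\emptyset$ and $c=10^{10^{12}}$ is enormous, we obtain $h(x)\le c\nu_f^{24}$. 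Part (ii) is a direct computation with the explicit equations defining $C_\tau$ in \eqref{def:ctau}: every $\QQ$-point of $C_\tau$ has coprime integral coordinates lying in $\{-1,0,1\}$, hence height $0$, so the same holds for every point of $(X\setminus U)(\QQ)=(X\cap C_\tau)(\QQ)$.

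It remains to build the effective model in (iii), which is the technical core. Assume $X$ integral and $U(\QQ)\neq\emptyset$; then $Y=\overline{\tau(X\setminus T_\tau)}$ is an integral curve in $\mathbb P^4$ of degree at most $12d$ (because $\tau^{*}\mathcal O_{\mathbb P^4}(1)=\mathcal O_{\mathbb P^2}(12)$ and $\tau|_{X}$ is birational onto $Y$). I would produce finitely many homogeneous forms in the ideal $I(Y)$, of degree $\le 128d$ and with integer coefficients bounded by $u|F|^{v}$, that together cut $Y$ out of $\mb$ away from the five points $e_i$; concretely one may take two generic forms $\tilde f_1,\tilde f_2\in I(Y)$ of degree $O(d)$ provided by a linear-in-$d$ Castelnuovo--Mumford regularity bound for integral space curves (Gruson--Lazarsfeld--Peskine), with $\tilde f_1(e_i)\neq 0$ for all $i$ and with $\tilde f_2$ cutting the residual curve of $\mb\cap V_+(\tilde f_1)$ down to dimension $0$, and bound their coefficients by solving the linear system expressing divisibility of $\tilde f_j\circ\tau$ by $F$ via Siegel's lemma (the hypothesis $U(\QQ)\neq\emptyset$ enters here to furnish a $\QQ$-point of $Y$ ruling out degenerate choices); alternatively one eliminates $(x{:}y{:}z)$ from the equations $y_i\tau_j-y_j\tau_i=0$ and $F=0$ by resultants. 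Setting $\tilde f=(\tilde f_1,\tilde f_2)\in\ZZ[x_0,\dotsc,x_4]^{2}$, one has $(X_f)_\QQ=Y\subseteq X_{\tilde f}=V_+(\sigma_2,\sigma_4,\tilde f)$, the scheme $X_{\tilde f}$ is again a model of $X$ in $\mb$ (it equals $Y$ union a finite set) disjoint from $\mbd$ because $e_i\notin V_+(\tilde f_1)$, and hence $X_{\tilde f}$ is a non-degenerate ico model of $X$ by Lemma~\ref{lem:relations}~(ii), with $\deg(\tilde f)\le 128d$ and $|\tilde f|\le u|F|^{v}$, as required.

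The main obstacle is exactly this last construction: one must keep the degree, the coefficient size, and the geometry under simultaneous control, i.e. prove that finitely many forms of degree $\le 128d$ and size $\le u|F|^{v}$ genuinely cut $Y$ out of $\mb$ off the $e_i$, and one must extract the explicit exponent $128$ and the dependence of $u,v$ on $d$ --- these feed into the constants $\kappa,\mu$ of Corollary~\ref{cor:effmordellp2}. This needs an honest regularity (or elimination) input together with careful Siegel's-lemma bookkeeping. A secondary point to make precise, quoted from the construction of $\tau$ via Hirzebruch's \cite{hirzebruch:ck} and \cite{vkkr:chms}, is that $\tau$ really is an open immersion away from $C_\tau$ and that all of its curves of indeterminacy or contraction are accounted for by $C_\tau$, the contracted ones mapping to the $e_i$.
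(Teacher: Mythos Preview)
Your overall architecture for (i) matches the paper: take $Y=\overline{\tau(X\setminus T_\tau)}$, use $(\tau)$ to see $Y\cap\mbd=\emptyset$, invoke Lemma~\ref{lem:relations}, and apply Corollary~\ref{cor:effmordellxf}. But your height comparison is in the wrong direction. For a rational map $\tau$ given by degree~$12$ forms with nonempty base locus $T_\tau$, functoriality only gives the easy inequality $h_{\mathbb P^4}(\tau(x))\le 12\,h(x)+c_0$; the reverse bound $12\,h(x)\le h_{\mathbb P^4}(\tau(x))+c_0$ with an \emph{absolute} $c_0$ fails (points $x$ archimedeanly close to $T_\tau$ make all $\tau_i(x)$ small relative to $|x|^{12}$). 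The paper avoids this by using the explicit inverse $\rho$: since $\rho(\tau(x))=x$ on $U_\tau$ (Lemma~\ref{lem:isotaurho}) and $\rho$ is given by degree~$8$ integral forms, one gets $h(x)\le 8\,h_{\mathbb P^4}(\tau(x))+\log 6$, which is the easy direction applied to $\rho$.

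Your argument for (ii) is incorrect: it is not true that every $\QQ$-point of $C_\tau$ is trivial. For instance $V_+(t_0)\supset V_+(y-z)$ is contracted by $\tau$ to a cusp, hence lies in $C_\tau$, and $(2{:}1{:}1)\in V_+(y-z)(\QQ)$ has height $\log 2$. The paper's argument uses $(\tau)$ in an essential way: since $\tau(C_\tau\setminus T_\tau)\subseteq\{e_i\}$ (Lemma~\ref{lem:taurho}) and $X$ satisfies $(\tau)$, the intersection $(X\setminus T_\tau)\cap\tau^{-1}(\{e_i\})$ is empty, forcing $X\cap C_\tau\subseteq T_\tau$; one then computes the finite set $T_\tau(\QQ)$ directly and checks its points have height $0$.

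For (iii) you correctly identify the Gruson--Lazarsfeld--Peskine regularity input, but your plan diverges from the paper in two ways. First, the paper works not with $\deg(Y)$ directly but via the containment $Y\subseteq X_{\rho^*F}$ (Lemma~\ref{lem:taurho}), obtaining $\deg(Y_k)\le\deg(\mb_k)\deg(\rho^*F)=64d$ and hence $r\le 64d$; the geometric connectedness of $Y$ needed for Giaimo's theorem is exactly where $U(\QQ)\neq\emptyset$ enters. Second, rather than two generic forms and Siegel's lemma, the paper builds a \emph{single} polynomial $\tilde f=\sum_i (x_i^{r-d_i}s_i)^2$ from forms $s_i\in I$ with $e_i\in D_+(s_i)$ (Lemma~\ref{lem:reg}); the sum-of-squares trick guarantees $\tilde f(e_i)\ge s_i(e_i)^2>0$ for every $i$ simultaneously, and the coefficient bound comes from R\'emond's \cite[Prop~2.2]{remond:rational} (relying on Zhang's theorem) rather than Siegel's lemma. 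Your two-form approach would also need $X_{\tilde f}$ to be a curve, i.e.\ pure of dimension $1$, which is automatic for a single hypersurface section of the integral surface $\mb$ but not for an intersection of two.
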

Here $u=3^{(86d)^5}$ and $v=(258d)^2$ where $d=\deg(F)$.   Combining (i)\,-\,(iii) leads to the above corollary. We now discuss in more detail each of the three steps (i)\,-\,(iii).

\paragraph{Step (i).} Corollary~\ref{cor:effmordell} gives (i) with a bound for $h(x)$ which depends in addition on the normalized degree $d_X$ and the height $h(X)$. To remove here the dependency on $d_X$ and $h(X)$, we go into the proof of Corollary~\ref{cor:effmordell} and we replace therein the comparison of  Weil heights by a trick which exploits that the birational map $\tau$ has an explicit inverse $\rho$.  

\paragraph{Step (ii).} This step is usually simpler than step (i) since  
$\dim(X\setminus U)<\dim(U)$. For example, in the case of curves we can always use arithmetic B\'ezout to directly control the height $h$ on the finitely many points in the intersection of two curves $$X\setminus U=X\cap C_\tau.$$
The resulting bound would depend on $F$. To obtain our uniform bound in (ii), we avoid B\'ezout and instead we compute that $X\setminus U$ is contained in the finite uniform set $T_\tau$ given in \eqref{eq:criterion}. This computation crucially exploits the shape of $\tau$ and that $X$ satisfies $(\tau)$. 

\paragraph{Step (iii).}The idea of the proof is as follows: As $X_f=\overline{\tau(U)}$ is contained in $X_{\rho^*F}$, we can try to control $f$ in terms of $\rho^*F$ and then in terms of $F$. Unfortunately the ico model $X_{\rho^*F}$ is always degenerate and the image $X_f$ is usually not a complete intersection. Moreover $f$ might not be controlled in terms of $\rho^*F$. To circumvent these issues, we constructed a possibly larger $X_{\tilde{f}}\supseteq X_f$ which is still controlled and non-degenerate: 
\begin{equation}\label{eq:castelnuovooutline}
\tilde{f}=\sum \bigl(x_i^{r-d_i}f_i\bigl)^2, \quad r=\textnormal{reg}(Y_{\bar{\QQ}})\leq 8\deg(\rho^*F), \quad Y=X_f\subset V_+(f_i).
\end{equation}
Here $f_i\in \ZZ[x_0,\dotsc,x_4]$ is homogenous of degree $d_i\leq r$ with $e_i\in D_+(f_i)$, and $r$ is the Castelnuovo--Mumford regularity. We bounded $r$ via Giaimo--Gruson--Lazarsfeld--Peskine~\cite{grlape:castelnuovo,giaimo:castelnuovo}, using that $Y$ is contained in $X_{\rho^*F}$ and that $Y_{\bar{\QQ}}$ is connected by the assumptions in (iii). The $f_i$ are constructed in Lemma~\ref{lem:reg}. Moreover, as $Y\subset X_{\rho^*F}$,  we can modify each $f_i$ to control $|f_i|$ in terms of $|\rho^*F|$. Here we use height properties and Remond's~\cite[Prop 2.2]{remond:rational} which relies inter alia on Zhang's~\cite[Thm 5.2]{zhang:positivejams}.

\subsection{Explicit families of curves satisfying $(\tau)$}\label{sec:families}

We continue our notation. There exist large classes of explicit plane curves  satisfying $(\tau)$. To demonstrate this, we construct in this section high dimensional families/moduli spaces of such curves. Let $n\in \NN$. Recall from \eqref{eq:dimr} that $r=5$ if $n=1$ and that for $n\geq 2$ it holds $$r=4n^2-4n+6.$$ Let $\mathcal F_n$ be the family of homogeneous $f\in \ZZ[x_0,\dotsc,x_4]$ of degree $n$ with all diagonal coefficients nonzero,  and let $\mathcal C_n$ be the family of curves $X=V_+(F)\subset\mathbb P^2$ with $X=V_+(\tau^*f)$ as subsets of $\mathbb P^2$ for some $f\in\mathcal F_n$ where $\tau^*f=f(\tau_0,\dotsc,\tau_4)$. Let $v\in\mathbb A^r(\QQ)$ and define
$$X_v=V_+(F_v)\subset \mathbb P^2, \quad F_v=\sum v_i F_i, \quad F_i=\tau^*s_i.$$
Here $s_1,\dotsc,s_r$ are monomials in $\QQ[x_0,\dotsc,x_4]$ of degree $n$ which form a basis of the $\QQ$-module $A_n$, where $A=\QQ[x_0,\dotsc,x_4]/(\sigma_2,\sigma_4)$  and $s_i=x_{i-1}^n$ for $i\leq 5$ as in $\mathsection$\ref{sec:geomicoresults}. One can  compute such $s_i$ and thus the $F_i$ for each $n\in \NN$. We obtain the following result.

\begin{proposition}\label{prop:p2family}
Let $n\in \NN$ and write $M_n=\mathbb A^r\setminus \bigl(\cup_{i=1}^5V(z_i)\bigl)$. 
\begin{itemize}
\item[(i)]All curves $X\in\mathcal C_n$ satisfy $(\tau)$.
\item[(ii)] Suppose that $v\in M_n(\QQ)$. Then $X_v\subset \mathbb P^2$ is a plane curve of degree $12n$ and $X_v$ lies in $\mathcal C_n$. Moreover $v\mapsto X_v$ defines an injective map $M_n(\QQ)/\QQ^\times\hookrightarrow \mathcal C_n$.
\item[(iii)]There exists an open dense $U\subset M_n$ such that $v\mapsto (X_v)_{\textnormal{red}}$ defines an injective map $U(\QQ)/\QQ^\times\hookrightarrow \mathcal C_n$ with $(X_v)_{\textnormal{red}}$ geometrically integral of geometric genus $(2n+1)^2$. 
\end{itemize}
\end{proposition}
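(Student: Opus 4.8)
The plan is to derive all three parts from two inputs: the classical fact recalled around \eqref{def:taui} that $\tau$ is a birational map $\mathbb P^2\dashrightarrow\mb=\textnormal{Proj}(A)$ with $A=\QQ[x_0,\dots,x_4]/(\sigma_2,\sigma_4)$, and Proposition~\ref{prop:generalico}(ii)--(iii) on general ico models $\textnormal{Proj}(A/f)\subset\mb$. First I would fix dense opens $W\subseteq\mathbb P^2$ and $W'\subseteq\mb$ with $\tau|_W\colon W\isomto W'$, and record two elementary observations. (a) Every element of $(\sigma_2,\sigma_4)$ has all five diagonal coefficients zero: neither $\sigma_2$ nor $\sigma_4$ has a monomial $x_i^m$, and this property is stable under multiplying by a polynomial and adding; hence $\mathcal F_n\cap(\sigma_2,\sigma_4)=\emptyset$. (b) Since $\tau$ is dominant onto the integral surface $\mb=\textnormal{Proj}(A)$, the induced map $k(\mb)\hookrightarrow k(\mathbb P^2)$ is injective, so $\tau^*g=g(\tau_0,\dots,\tau_4)$ vanishes identically if and only if $g\in(\sigma_2,\sigma_4)$; in particular $g\mapsto\tau^*g$ is an injective $\QQ$-linear map on each graded piece $A_m$. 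Combining (a) and (b): for any $f\in\mathcal F_n$ the polynomial $\tau^*f$ is a nonzero homogeneous polynomial of degree $12n$ (each $\tau_i$ having degree $12$).

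For (i): given $X=V_+(F)\in\mathcal C_n$, pick $f\in\mathcal F_n$ with $V_+(F)=V_+(\tau^*f)$ as subsets of $\mathbb P^2$; since $f(\tau(p))=(\tau^*f)(p)=0$ for every $p\in X\setminus T_\tau$, we get $\overline{\tau(X\setminus T_\tau)}\subseteq V_+(f)$, while $f(e_i)$ is the coefficient of $x_{i-1}^n$ in $f$ and hence nonzero, so $e_i\notin V_+(f)$; this is exactly $(\tau)$. For (ii): for $v\in M_n(\QQ)$ the polynomial $f=\sum_i v_is_i$ has all five diagonal coefficients nonzero (as $s_i=x_{i-1}^n$ for $i\le5$ and $v_1,\dots,v_5\neq0$), so $f\in\mathcal F_n$ and $F_v=\sum_i v_i\tau^*s_i=\tau^*f$ is nonzero homogeneous of degree $12n$; thus $X_v=V_+(F_v)\subset\mathbb P^2$ is a plane curve of degree $12n$, lying in $\mathcal C_n$ by construction (scaling $f$ to integral coefficients if needed). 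Injectivity modulo $\QQ^\times$: the composite $v\mapsto f=\sum v_is_i\mapsto\tau^*f=F_v$ is $\QQ$-linear and injective, since the $s_i$ form a basis of $A_n$ and $g\mapsto\tau^*g$ is injective on $A_n$ by (b); if the closed subschemes $V_+(F_v)$ and $V_+(F_{v'})$ of $\mathbb P^2$ coincide then $F_v$ and $F_{v'}$ generate the same (automatically saturated) principal ideal, hence are proportional, hence $v=\lambda v'$ for some $\lambda\in\QQ^\times$.

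The substance is part (iii). Starting from the open dense $U^{\textnormal{sm}}\subseteq M_n$ of Proposition~\ref{prop:generalico}(ii) --- over which the ico model $X_f=\textnormal{Proj}(A/f)$, $f=\sum_i v_is_i$, is a smooth projective geometrically connected, hence geometrically integral, curve of genus $(2n+1)^2$ --- I would use $F_v=\tau^*f$ together with $\tau|_W$ being an isomorphism to obtain an isomorphism of $\QQ$-schemes $X_v\cap W\isomto X_f\cap W'$. Then I shrink $U^{\textnormal{sm}}$ to an open dense $U$ in two steps: first remove the finitely many $v$ (using the injectivity in Proposition~\ref{prop:generalico}(iii)) for which $X_f$ equals one of the finitely many $1$-dimensional components of the proper closed subset $\mb\setminus W'$ of the surface $\mb$, which guarantees $X_f\cap W'\neq\emptyset$ and hence dense in the irreducible $X_f$; second, for each irreducible polynomial $q_l$ cutting out a $1$-dimensional component of $\mathbb P^2\setminus W$, remove the proper linear subspace $\{v:q_l\mid F_v\}$ --- proper because $q_l$ cannot divide all of $\tau^*s_1=\tau_0^n,\dots,\tau^*s_5=\tau_4^n$, since $\gcd(\tau_0,\dots,\tau_4)=1$ as the locus $T_\tau$ is finite. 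For $v\in U$ every irreducible factor of $\tau^*f$ then meets $W$, and since $X_v\cap W\cong X_f\cap W'$ is integral there is exactly one such factor and it occurs with multiplicity one; so $\tau^*f$ is irreducible up to a scalar, $X_v$ is integral with $(X_v)_{\textnormal{red}}=X_v$ and $X_v\cap W$ dense in $X_v$. Being $\QQ$-birational to the smooth projective geometrically integral curve $X_f$ of genus $(2n+1)^2$, the curve $X_v$ is itself geometrically integral of geometric genus $(2n+1)^2$, and $v\mapsto(X_v)_{\textnormal{red}}=X_v$ on $U$ is the restriction of the injection of (ii), hence injective on $U(\QQ)/\QQ^\times$.

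The main obstacle, and the only point beyond routine bookkeeping, is this last transfer across $\tau$ in (iii): one must rule out that $\tau^*f$ acquires extra irreducible factors, or gains multiplicity, along the locus $\mathbb P^2\setminus W$ where $\tau$ fails to be an isomorphism --- precisely what the shrinking of $U^{\textnormal{sm}}$ achieves, and where $\gcd(\tau_i)=1$ is used --- after which $X_v$ and the ico model $X_f$ share a common dense open, so the geometric assertions about $X_v$ reduce to those for $X_f$ supplied by Proposition~\ref{prop:generalico}.
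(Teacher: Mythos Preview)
Your argument is correct, and for parts~(i) and~(ii) it is close to the paper's but cleaner in places: you note that a nonzero principal homogeneous ideal in $\QQ[x,y,z]$ is automatically saturated, and you use injectivity of $g\mapsto\tau^*g$ on $A_n$ (from dominance of $\tau$ and integrality of $A$) to pass from $F_v=\lambda F_{v'}$ directly to $v=\lambda v'$. The paper instead proves saturation of $(\tau^*f)$ by a small ad hoc argument using $(\tau)$, and then recovers $f_v=\lambda f_w$ by applying the explicit inverse $\rho^*$ and the identity $\rho^*\tau_i=u x_i$ in $A$. Your route avoids this computation.

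Part~(iii) is where the approaches genuinely differ. You work with an arbitrary pair of dense opens $W\isomto W'$ realising the birational equivalence, and then \emph{shrink} $U^{\textnormal{sm}}$ in two steps to force $X_f\cap W'\neq\emptyset$ and to exclude irreducible factors of $\tau^*f$ supported on $\mathbb P^2\setminus W$; this yields the stronger conclusion that $X_v$ itself is integral with $(X_v)_{\textnormal{red}}=X_v$. The paper instead exploits the specific opens $U_\tau\isomto U_\rho$ of Lemma~\ref{lem:isotaurho} together with part~(i) and Theorem~\ref{thm:p2}: since $X_v$ satisfies $(\tau)$, the set $U_v=(X_v)_{\textnormal{red}}\setminus C_\tau$ is automatically open dense and \eqref{eq:nondegcompp2} gives $\tau\colon U_v\isomto X_f\cap U_\rho$, so no shrinking is needed and one can take $U=U^{\textnormal{sm}}$ exactly as in Proposition~\ref{prop:generalico}. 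Your approach is more robust (it would work for any birational map in place of $\tau$), while the paper's buys the sharper conclusion $U=U^{\textnormal{sm}}$ announced after the proposition. One minor imprecision: in your step~1 you speak of removing ``finitely many $v$''; to obtain an open subscheme you should phrase this as removing a proper closed locus (e.g.\ the finitely many lines $\QQ\cdot v_0$, or directly the closed condition $X_f\subseteq\mb\setminus W'$), which your use of Proposition~\ref{prop:generalico}(iii) does justify.
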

The proof shows in addition that one can take in (iii) the same open dense $U\subset\mathbb A^{r}$ as in Proposition~\ref{prop:generalico}. Let $g\in \NN$. If $g\geq 2$ is an odd square, then taking $n=\tfrac{\sqrt{g}-1}{2}$ in Proposition~\ref{prop:p2family} gives a moduli space $U$ of dimension $r\sim g$ by \eqref{eq:dimr} which parametrizes geometrically integral curves $X\subset\mathbb P^2$ of geometric genus $g$ satisfying $(\tau)$.

\paragraph{Height bounds.} As each curve $X$ in $\cup_n\,\mathcal C_n$ satisfies $(\tau)$ by Proposition~\ref{prop:p2family}, the usual logarithmic Weil height $h$  is explicitly bounded on $X(\QQ)$ by Corollary~\ref{cor:effmordellp2}. On combining Theorem~\ref{thm:p2} with Proposition~\ref{prop:p2family}, we obtain the following more uniform bound.
\begin{corollary}\label{cor:effmordellpullbackp2}
If $X$ is a curve in $\cup_{n}\,\mathcal C_n$, then all $x\in X(\QQ)$ satisfy $h(x)\leq c\nu_X^{24}$.
\end{corollary}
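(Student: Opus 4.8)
The plan is to combine Proposition~\ref{prop:p2family} with Theorem~\ref{thm:p2}. Let $X$ be a curve in $\cup_{n}\mathcal C_n$ and fix $n$ with $X\in\mathcal C_n$. By definition of $\mathcal C_n$ there is a primitive homogeneous $f\in\ZZ[x_0,\dotsc,x_4]$ of degree $n$, all of whose diagonal coefficients are nonzero, with $X=V_+(\tau^{*}f)\subset\mathbb P^2$; here $\nu_X=\nu_f$ for $\nu_f$ as in \eqref{eq:defnuf}, which is well defined on $\mathcal C_n$ by Proposition~\ref{prop:p2family}~(ii). By Proposition~\ref{prop:p2family}~(i) the curve $X$ satisfies criterion $(\tau)$, so Theorem~\ref{thm:p2} applies to $X$.

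I would then read off what Theorem~\ref{thm:p2} gives: $X$ is non-degenerate, the open dense $U=X\setminus C_\tau\subseteq X$ lies inside $U_{\hat f}$ for some non-degenerate ico model $X_{\hat f}$ of $X$ over $\ZZ$, every $x\in U(\QQ)$ satisfies $h(x)\leq c\,\nu_{\hat f}^{24}$ by part~(i), and every $x\in(X\setminus U)(\QQ)$ satisfies $h(x)=0$ by part~(ii). As $X(\QQ)=U(\QQ)\sqcup(X\setminus U)(\QQ)$, this already gives $h(x)\leq c\,\nu_{\hat f}^{24}$ for all $x\in X(\QQ)$, and it remains only to arrange that the ico model $X_{\hat f}$ can be chosen with $\nu_{\hat f}=\nu_f=\nu_X$.

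For this I would go into the proof of Theorem~\ref{thm:p2}~(i), where the ico model is produced from $\rho^{*}F$ with $F=\tau^{*}f$ and $\rho$ the explicit rational inverse of $\tau$ satisfying $\tau\circ\rho=\mathrm{id}$ on a dense open of $\mb$. Since $X=\tau^{-1}(X_f)$ on the domain of $\tau$, we get $\rho^{-1}(X)=(\tau\circ\rho)^{-1}(X_f)=X_f$ on a dense open of $\mb$, so the model $X_{\rho^{*}F}$ coincides there with the tautological ico model $X_f$ of $X$ defined by $\sigma_2=\sigma_4=f=0$; hence the ico model appearing in Theorem~\ref{thm:p2}~(i) may be taken equal to $X_f$, which is non-degenerate because $f\in\mathcal F_n$ (by Lemma~\ref{lem:xfmoduli} it becomes a coarse Hilbert moduli scheme over $\ZZ[1/30\nu_f]$) and whose associated radical in \eqref{eq:defnuf} is exactly $\nu_f=\nu_X$. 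This gives $h(x)\leq c\,\nu_X^{24}$ on $U(\QQ)$, hence on all of $X(\QQ)$. The main obstacle is precisely this bookkeeping: one must verify that for $X\in\mathcal C_n$ the a priori auxiliary model of Theorem~\ref{thm:p2}~(i), together with the enlargement $\tilde f$ of \eqref{eq:castelnuovooutline} used there to make it a complete intersection, can be replaced by the tautological $X_f$ without introducing spurious diagonal coefficients; the delicate sub-case is reducible $X$, where $X_f$ may acquire components not met by the image of $\tau$ and one must argue that the ico model of $X$ is still cut out by a system with the same nonzero diagonal coefficients as $f$.
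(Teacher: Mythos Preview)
Your approach is essentially the paper's: use Proposition~\ref{prop:p2family}~(i) to get $(\tau)$, then Theorem~\ref{thm:p2}~(ii) for $X\setminus U$ and Theorem~\ref{thm:p2}~(i) for $U$, with the ico model replaced by the tautological $X_f$. Two points of cleanup, though.

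First, $\nu_X$ is \emph{defined} as $\min_f \nu_f$ over all $f\in\cup_n\mathcal F_n$ with $X=V_+(\tau^*f)$, not as $\nu_f$ for a chosen $f$; Proposition~\ref{prop:p2family}~(ii) does not make $\nu_f$ well-defined. Your argument actually proves $h(x)\le c\nu_f^{24}$ for \emph{each} admissible $f$, which is exactly what is needed to conclude $h(x)\le c\nu_X^{24}$.

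Second, your ``main obstacle'' paragraph is a detour. The enlargement $\tilde f$ of \eqref{eq:castelnuovooutline} belongs to Theorem~\ref{thm:p2}~(iii) and plays no role here, and $\rho^*F$ is not what produces the model in Theorem~\ref{thm:p2}~(i). The paper (and you, implicitly) only need the inclusion $\tau(U)\subseteq X_f$, which follows directly from $\tau(U)=\rho^{-1}(U)\subseteq X_{\rho^*F}\cap U_\rho = X_f\cap U_\rho$; this is a computation on points using $\rho\tau=\mathrm{id}$ on $U_\tau$ and $\rho^*\tau^*f=u^n f$ in $A$. Once $\tau(U)\subseteq X_f$, the bound \eqref{eq:p2mordellbound} applies with $X_f$ itself, regardless of whether $X$ is reducible or whether $X_f$ has extra components---no bookkeeping about ``spurious diagonal coefficients'' is required.
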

Here $c=10^{10^{12}}$ and  $\nu_X=\min \nu_f$ with the minimum taken over all $f$ in $\cup_{n}\,\mathcal F_n$ such that $X=V_+(\tau^*f)$ as subsets of $\mathbb P^2$ where $\nu_f$ is defined in \eqref{eq:defnuf}.

\subsection{The birational morphisms $\tau$ and $\rho$}\label{sec:taurho}

We continue our notation. In this section we use classical constructions of Clebsch~\cite{clebsch:clebschsurface} and Klein~\cite{klein:cubicsurfaces} to explicitly construct a birational equivalence between $\mathbb P^2$ and the surface $\mb\subset \mathbb P^4$  in \eqref{def:mmbar}. We also prove some basic geometric properties of this equivalence.

\paragraph{Construction of $\tau$ and $\rho$.} We first define homogeneous polynomials $\tau_i$ in $\ZZ[x,y,z]$ of degree 12 as follows: We put $\tau_i=-(\prod_{j\neq i}t_j)(\sum t_j)$ for $i\in \{0,1,2,3\}$ and $\tau_4=\prod t_j$, where
\begin{align*}
t_0 &=(y-z)(xy+xz-z^2),  &t_1=&xz^2+yz^2-x^2y-z^3,  \\
t_2 &=x(z^2-y^2-xz),  &t_3=&z(yz-xz+x^2-y^2).
\end{align*}
The polynomials $\tau_i$ define a morphism $\tau:\mathbb P^2\setminus T_\tau\to\mathbb P^4$ where $T_\tau=\cap V_+(\tau_i)$ is finite. 
We compute in Macaulay2 that the scheme theoretic image of $\tau$ is $\mb$ and thus we obtain 
\begin{equation}\label{def:taui}
\tau:\mathbb P^2\setminus T_\tau\to \mb.
\end{equation}
To construct an `inverse' morphism of $\tau$, we define homogeneous polynomials $\rho_i$ in $\ZZ[x_0,\dotsc,x_4]$ of degree 8 as follows: We write $r_i=\prod_{j\neq i}x_j$ and then we put
\begin{equation}\label{def:rhoi}
\rho_0=-(r_1+r_3)(r_0+r_1+r_2),\quad  \rho_1=r_0(r_0+r_1+r_2+r_3), \quad \rho_2=r_0(r_0+r_2). 
\end{equation}
The polynomials $\rho_i$ define a morphism $\rho: \mb\setminus T_\rho\to \mathbb P^2$ where $T_\rho$ is a curve given by the intersection of $\mb$ with $\cap V_+(\rho_i)$. Then $\tau$ and $\rho$ define a birational equivalence between $\mathbb P^2$ and $\mb$. We shall need controlled open subsets over which $\tau$ and $\sigma$ are isomorphisms. Let
\begin{equation}\label{def:ctau}
C_\tau=V_+(\lambda)\supset T_\tau, \quad U_\tau=\mathbb P^2\setminus C_\tau \quad \textnormal{and} \quad U_\rho=\tau(U_\tau)\subset \mb,
\end{equation}
where $\lambda=\rho_0(\tau_0,\dotsc,\tau_4)/x$ is a non-constant homogeneous polynomial in $\ZZ[x,y,z]$. The following lemma gives that $\tau$ and $\rho$ are indeed isomorphisms over $U_\tau$ and $U_\rho$.

\begin{lemma}\label{lem:isotaurho}
It holds $\tau:U_\tau\isomto U_\rho$ with $U_\rho\subseteq\mb\setminus T_\rho$  and $\rho:U_\rho\isomto U_\tau$ such that $\tau=\rho^{-1}$.
\end{lemma}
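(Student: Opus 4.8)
The plan is to reduce the lemma to two polynomial identities — both verified by an explicit computation, e.g.\ in Macaulay2 — together with a short formal argument that tracks the loci of indeterminacy of the rational maps $\tau$ and $\rho$.

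First I would record the computational input. Write $\lambda=\rho_0(\tau_0,\dotsc,\tau_4)/x\in\ZZ[x,y,z]$ for the homogeneous polynomial of degree $95$ from \eqref{def:ctau}. The first claim is the chain of identities
$$\rho_0(\tau_0,\dotsc,\tau_4)=\lambda\, x,\qquad \rho_1(\tau_0,\dotsc,\tau_4)=\lambda\, y,\qquad \rho_2(\tau_0,\dotsc,\tau_4)=\lambda\, z$$
in $\ZZ[x,y,z]$; to avoid dividing by $x$ one can instead verify $x\,\rho_1(\tau_0,\dotsc,\tau_4)=y\,\rho_0(\tau_0,\dotsc,\tau_4)$ and $x\,\rho_2(\tau_0,\dotsc,\tau_4)=z\,\rho_0(\tau_0,\dotsc,\tau_4)$. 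The second claim is that there is a homogeneous $\mu\in\ZZ[x_0,\dotsc,x_4]$ of degree $95$ with
$$\tau_j(\rho_0,\rho_1,\rho_2)\equiv\mu\, x_j \pmod{(\sigma_2,\sigma_4)},\qquad j=0,\dotsc,4 .$$
Both are finite checks with homogeneous polynomials of degree $96$ in three, respectively five, variables.

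Granting these, I would argue as follows. Since $T_\tau\subseteq C_\tau$, the morphism $\tau$ of \eqref{def:taui} restricts to a morphism on $U_\tau=\mathbb{P}^2\setminus C_\tau$; set $V=\rho^{-1}(U_\tau)$, an open subscheme of $\mb\setminus T_\rho$. For $p\in U_\tau$ represented by $(a,b,c)$, the tuple $v=(\tau_0(a,b,c),\dotsc,\tau_4(a,b,c))$ represents $\tau(p)\in\mb$, and the first identity gives $(\rho_0(v),\rho_1(v),\rho_2(v))=\lambda(a,b,c)\cdot(a,b,c)$, which is nonzero since $\lambda(a,b,c)\neq0$ on $U_\tau$; hence $\tau(p)\in\mb\setminus T_\rho$ and $\rho(\tau(p))=(a:b:c)=p$. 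So $\tau$ restricts to a morphism $U_\tau\to V$ with $\rho|_V\circ\tau|_{U_\tau}=\mathrm{id}_{U_\tau}$ (the composite $U_\tau\to\mathbb{P}^2$ is given by $(\rho_0(\tau_0,\dotsc,\tau_4):\rho_1(\tau_0,\dotsc,\tau_4):\rho_2(\tau_0,\dotsc,\tau_4))=(x:y:z)$), and in particular $U_\rho=\tau(U_\tau)\subseteq V\subseteq\mb\setminus T_\rho$. Conversely, for $q\in V$ represented by $\hat q$ we have $\rho(q)\in U_\tau\subseteq\mathbb{P}^2\setminus T_\tau$, so $\tau$ is defined at $\rho(q)$; the second identity evaluated on $\mb$ gives $\tau_j(\rho_0(\hat q),\rho_1(\hat q),\rho_2(\hat q))=\mu(\hat q)\,\hat q_j$ (using $\sigma_2(\hat q)=\sigma_4(\hat q)=0$), so $\mu(\hat q)\neq0$ and $\tau(\rho(q))=(\hat q_0:\dotsc:\hat q_4)=q$, i.e.\ $\tau|_{U_\tau}\circ\rho|_V=\mathrm{id}_V$. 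Hence $\tau|_{U_\tau}$ and $\rho|_V$ are mutually inverse isomorphisms, so $U_\rho=\tau(U_\tau)=V$ is open in $\mb$, $U_\rho\subseteq\mb\setminus T_\rho$, $\tau\colon U_\tau\isomto U_\rho$, $\rho\colon U_\rho\isomto U_\tau$, and $\tau=\rho^{-1}$.

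The substantive part is the verification of the two polynomial identities, which is purely computational. Beyond that, the only point needing care is the bookkeeping of indeterminacy loci: that $T_\tau\subseteq C_\tau$ makes $\tau$ a morphism on $U_\tau$, that $\lambda$ vanishes nowhere on $U_\tau$ pushes $\tau(U_\tau)$ off of $T_\rho$ so that $\rho$ is everywhere defined on $U_\rho$, and that $\mu$ vanishes nowhere on $V$ upgrades $\tau\circ\rho$ from a rational self-map of $\mb$ to the identity on $V$. I do not expect any genuine obstacle beyond this.
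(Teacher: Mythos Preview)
Your argument is correct, but it takes a somewhat different route from the paper's. Both proofs rest on the polynomial identity $\rho_i(\tau_0,\dotsc,\tau_4)=\lambda\cdot x_i$ (this is the paper's $(*)$), which yields $\rho\circ\tau=\mathrm{id}$ on $U_\tau$ and lands $\tau(U_\tau)$ inside $\mb\setminus T_\rho$. At this point you invoke a \emph{second} computational identity $\tau_j(\rho_0,\rho_1,\rho_2)\equiv\mu\cdot x_j\pmod{(\sigma_2,\sigma_4)}$ to get $\tau\circ\rho=\mathrm{id}$ on $V=\rho^{-1}(U_\tau)$ directly, and conclude by showing the two maps are mutually inverse. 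The paper instead avoids the second identity here: from $\rho\circ\tau=\mathrm{id}_{U_\tau}$ it observes that $\tau:U_\tau\to Z=(\mb\setminus T_\rho)\setminus\rho^{-1}(C_\tau)$ is a section of $\rho$, hence a closed immersion; then integrality of $\mb$ and the dimension count $\dim U_\tau=2=\dim Z$ force it to be surjective, so $\tau:U_\tau\isomto Z=U_\rho$.

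Your approach is more elementary and symmetric---no need for the closed-immersion or dimension argument---at the cost of one extra Macaulay2 verification. The paper's approach is computationally lighter for this lemma, though it does in fact establish your second identity later (in the proof of Proposition~\ref{prop:p2family}(ii)), so in the end nothing is saved globally.
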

\begin{proof}
We first compute $\rho\tau$ and $\tau(U_\tau)$. The identities $(*)$ imply that $\tau:U_\tau\to \mb$ factors through $\mb\setminus T_\rho$ and that $\rho\tau:U_\tau\to \mathbb P^2$ is the inclusion $U_\tau\subset \mathbb P^2$. It follows that $\tau:U_\tau\to \mb\setminus T_\rho$ factors through $Z=\bigl(\mb\setminus T_\rho\bigl)\setminus \rho^{-1}(C_\tau)$ and that $U_\tau\to^\tau Z\to^{\rho} U_\tau$ is the identity on $U_\tau$. Thus $\tau:U_\tau\to Z$ is a closed immersion, which is surjective since $\mb$ is integral and $\dim(U_\tau)=2=\dim(Z)$. Hence $Z=\tau(U_\tau)=U_\rho$ and  $\tau:U_\tau\isomto U_\rho$ is an isomorphism. Then $\rho:U_\rho\isomto U_\tau$ is an isomorphism  since $\rho=\tau^{-1}$. This proves Lemma~\ref{lem:isotaurho}. 

We used the identities $(*)$: $\rho_0(\tau)=\lambda x$, $\rho_1(\tau)=\lambda y$, $\rho_2(\tau)=\lambda z$. Here $\tau=(\tau_0,\dotsc,\tau_4)$, and $\lambda=u^2v$ for $u=(\prod r_i)(\sum r_i)$ and $v\in \ZZ[x,y,z]$ homogeneous of degree 5. 
\end{proof}

The polynomials $t_i$ come from Polo-Blanco--Top~\cite{poto:cubicsurfacesalgo} who used classical constructions to obtain an algorithm computing explicit parametrizations (of small degree) for smooth cubic surfaces. There are many other polynomials which can be used to construct explicit birational equivalences between $\mathbb P^2$ and $\mb$, and these equivalences would  also be suitable for our purpose of explicitly studying rational points on plane curves via $\mb$.

\paragraph{Basic properties.}As a preparation for the proofs below, we now collect some basic geometric properties of $\rho$ and $\tau$ which all can be proved by direct computations.

\begin{lemma}\label{lem:taurho}
Let $F\in \QQ[x,y,z]$ be a non-constant homogeneous polynomial. Consider the curve $X=V_+(F)\subset\mathbb P^2$ and define $X_f\subset \mathbb P^4$ as in \eqref{def:icomod} with $f=F(\rho_0,\rho_1,\rho_2)$. 
\begin{itemize}
\item[(i)] We have $\tau(C_\tau\setminus T_\tau)\subseteq \{e_i\}$, and it holds $\rho^{-1}(X)=(\mb\setminus T_\rho)\cap X_f$.
\item[(ii)]If $X$ is integral and satisfies $(\tau)$, then $X\setminus C_\tau\hookrightarrow U_\tau\to^\tau \mb$ has scheme theoretic image $Y=\overline{\tau(X\setminus T_\tau)}$ which is an integral curve contained in the curve $X_f$.
\end{itemize}
\end{lemma}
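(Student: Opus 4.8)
The plan is to establish (i) by an explicit polynomial computation with the forms $\tau_i$, $\rho_i$, $t_j$, and then to derive (ii) formally from (i), Lemma~\ref{lem:isotaurho}, and standard properties of scheme-theoretic images.

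For the first assertion of (i), recall from the identities $(*)$ that $\lambda=\rho_0(\tau_0,\dotsc,\tau_4)/x$. Expanding $\rho_0$ at $\tau$ using $\tau_i=-\bigl(\prod_{j\ne i}t_j\bigr)\bigl(\sum_j t_j\bigr)$ for $i\le 3$ and $\tau_4=\prod_j t_j$, a direct computation (conveniently done in Macaulay2, as elsewhere in the paper) shows that $V_+(\lambda)=V_+(\tau_4)\cup V_+\bigl(\sum_j t_j\bigr)$ as closed subsets. Now on $V_+\bigl(\sum_j t_j\bigr)$ every $\tau_i$ with $i\le 3$ vanishes identically (it carries $\sum_j t_j$ as a factor), so a point of $V_+\bigl(\sum_j t_j\bigr)\setminus T_\tau$ has $\tau_4\ne 0$ and hence $\tau=[0:0:0:0:\ast]$, one of the $e_i$; and $V_+(\tau_4)=\bigcup_{k=0}^{3}V_+(t_k)$, where on $V_+(t_k)$ each $\tau_i$ with $i\ne k$ vanishes identically (it carries $t_k$ as a factor), so a point of $V_+(t_k)\setminus T_\tau$ has $\tau_k\ne 0$ and equals the corresponding $e_i$. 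Therefore $\tau(C_\tau\setminus T_\tau)\subseteq\{e_i\}$. For the second assertion, $\rho$ is the morphism $[\rho_0:\rho_1:\rho_2]$ on $\mb\setminus T_\rho=\bigcup_i\bigl(\mb\cap D_+(\rho_i)\bigr)$, so its scheme-theoretic preimage $\rho^{-1}(X)=\rho^{-1}\bigl(V_+(F)\bigr)$ is, over the chart $\mb\cap D_+(\rho_i)$, cut out by $F(\rho_0,\rho_1,\rho_2)/\rho_i^{\,d}$; since $\rho_i$ is invertible on this chart and $X_f=\mb\cap V_+(f)$ with $f=F(\rho_0,\rho_1,\rho_2)$, this is exactly the ideal defining $X_f$ there. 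Gluing over the $\rho_i$ gives $\rho^{-1}(X)=(\mb\setminus T_\rho)\cap X_f$ as closed subschemes of $\mb\setminus T_\rho$.

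For (ii), first note that $X\not\subseteq C_\tau$: otherwise $X\setminus T_\tau\subseteq C_\tau\setminus T_\tau$, so $\tau(X\setminus T_\tau)\subseteq\{e_i\}$ by (i), and then the nonempty closed set $\overline{\tau(X\setminus T_\tau)}$ would contain some $e_i$, contradicting $(\tau)$. Since $X$ is an integral curve and $C_\tau$ a plane curve, it follows that $X\cap C_\tau$ is finite; hence $X\setminus C_\tau$ is a nonempty dense open of $X$, in particular integral, and dense in the integral scheme $X\setminus T_\tau$ (recall $T_\tau\subseteq C_\tau$). By Lemma~\ref{lem:isotaurho} the composite $X\setminus C_\tau\hookrightarrow U_\tau\isomto U_\rho\subseteq\mb$ is an immersion, so its image $\tau(X\setminus C_\tau)$ is an integral locally closed subset of dimension $1$; thus the scheme-theoretic image $Y$, being the reduced closure of that image, is an integral curve, and $Y=\overline{\tau(X\setminus T_\tau)}$ because $\tau(X\setminus C_\tau)$ is dense in $\tau(X\setminus T_\tau)$. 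Finally $Y\subseteq X_f$: for $P\in X\setminus C_\tau\subseteq U_\tau$ Lemma~\ref{lem:isotaurho} gives $\rho(\tau(P))=P\in X$, so $\tau(P)\in\rho^{-1}(X)\subseteq X_f$ by (i); hence $\tau(X\setminus C_\tau)\subseteq X_f$ and, taking closures, $Y\subseteq X_f$. That $X_f$ itself is a curve follows since $\mb\not\subseteq V_+(f)$ — otherwise $\rho(\mb\setminus T_\rho)\subseteq X$, contradicting that $\rho$ is dominant onto $\mathbb P^2$ by Lemma~\ref{lem:isotaurho} — so $X_f=\mb\cap V_+(f)$ is pure of dimension $1$ by Krull's principal ideal theorem, $\mb$ being integral of dimension $2$.

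The main obstacle is the bookkeeping in part (i): one has to carry out the explicit factorization of $\rho_0(\tau_0,\dotsc,\tau_4)$ and check the set equality $V_+(\lambda)=V_+(\tau_4)\cup V_+\bigl(\sum_j t_j\bigr)$, and one must phrase everything schematically rather than set-theoretically — in particular verifying that the ideals defining $\rho^{-1}(X)$ and $X_f$ coincide on each chart $\mb\cap D_+(\rho_i)$. Once (i) is in hand, part (ii) is a formal consequence of Lemma~\ref{lem:isotaurho} and the generalities above.
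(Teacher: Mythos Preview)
Your proposal is correct and takes essentially the same approach as the paper. Both defer the first claim of (i) to a Macaulay2 computation---you supply more structure by exploiting the explicit factorizations of the $\tau_i$ in terms of the $t_j$, while the paper simply records the conclusion---both verify $\rho^{-1}(X)=(\mb\setminus T_\rho)\cap X_f$ by direct inspection, and both derive (ii) formally from (i) and Lemma~\ref{lem:isotaurho} through the same chain of observations about scheme-theoretic images, dimensions, and Krull's principal ideal theorem.
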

\begin{proof}
We first prove (i). Recall that $C_\tau=V_+(\lambda)$ for the explicit $\lambda=\rho_0(\tau_0,\dotsc,\tau_4)/x$. Then we can compute in Macaulay2 that $\tau(C_\tau\setminus T_\tau)\subseteq \{e_i\}$. As $X_f=\mb\cap V_+(f)$ in $\mathbb P^4$, a direct computation on $\qb$-points leads to $\rho^{-1}(X)=(\mb\setminus T_\tau)\cap X_f$ as claimed.

To show (ii) we assume that $X$ is integral and satisfies $(\tau)$. Then $X\setminus C_\tau$ is nonempty by (i), and thus the scheme theoretic image $Y$ is integral. Lemma~\ref{lem:isotaurho} gives the isomorphism $\tau:U_\tau\isomto U_\rho$ with inverse $\rho:U_\rho\isomto U_\tau$. This implies that $\dim(Y)=\dim(X)=1$ and that $\tau(X\setminus C_\tau)=\rho^{-1}(X\setminus C_\tau) \subseteq X_f$ by (i). Hence $Y=\overline{\tau(X\setminus C_\tau)}$ is a curve contained in $X_f$, and thus $\dim(X_f)=1$ since $X_f\neq \mb$ by (i). Then each irreducible component of $X_f$ has co-dimension 1 in the integral $\mb$, and thus $X_f$ is a curve. As $X\setminus C_\tau$ and $X\setminus T_\tau$ have the same generic point, the curve $Y$  is of the form $Y=\overline{\tau(X\setminus T_\tau)}$ as desired.\end{proof}

\subsection{Proof of Theorem~\ref{thm:p2}~(i) and (ii)}

We continue our notation. Let $C_\tau\subset \mathbb P^2$ be the curve defined in \eqref{def:ctau}, and let $F\in \ZZ[x,y,z]$ be homogeneous of degree $d\geq 1$ such that $X=V_+(F)$ satisfies $(\tau)$.  We now go into the proof of Corollary~\ref{cor:effmordell} to complete step (i) of Theorem~\ref{thm:p2}.

\begin{proof}[Proof of Theorem~\ref{thm:p2}~(i)]
The idea is to construct the ico model $X_f$ as `the image' of $X$ under the rational map $\tau$. We now clarify the construction. Notice that $U=X\setminus C_\tau=X\cap U_\tau$ is the fiber product $X\times_{\mathbb P^2}U_\tau$ of the closed immersion $X\hookrightarrow \mathbb P^2$ with the open immersion $U_\tau\hookrightarrow \mathbb P^2$. In particular $U$ is an open subscheme of $X$ which is dense in $X$, since the curve $X$ satisfies $(\tau)$ and Lemma~\ref{lem:taurho} gives $\tau(C_\tau\setminus T_\tau)\subseteq\{e_i\}$. Moreover, the second projection $U=X\times_{\mathbb P^2}U_\tau\hookrightarrow U_\tau$ is a closed immersion and then we consider  
$$U\hookrightarrow U_\tau\isomto U_\rho\hookrightarrow \mb$$
with  $U_\tau\isomto U_\rho$ the isomorphism from Lemma~\ref{lem:isotaurho} given by the restriction of $\tau$ to $U_\tau$. The scheme theoretic image $Y$ of the displayed $U\to\mb$ is a closed subscheme of $\mb$ with 
$$Y=\overline{\tau(U)}\subset \mb.$$
We next show that $Y\cong (X_f)_\QQ$ for some non-degenerate ico model $X_f$ of $X$ over $\ZZ$. The closed immersion $U\hookrightarrow U_\tau\isomto U_\rho$ has scheme-theoretic image $U'=Y\cap U_\rho$ by \cite[01R8]{sp}. Hence $U'$ is an open dense subscheme of $Y$ and $\tau$ defines an isomorphism of $\QQ$-schemes $$U\isomto U'.$$
As $U\subseteq X$ is an open dense subscheme, this shows that $Y$ is a model of $X$ in $\mb$ and thus $U\subseteq U_Y$ for some maximal $U_Y$. Moreover the model $Y$ is non-degenerate, since $X$ satisfies $(\tau)$ and $Y=\overline{\tau(U)}$ is contained in $\overline{\tau(X\setminus T_\tau)}$. Then Lemma~\ref{lem:relations} gives a non-degenerate ico model $X_f$ of $X$ over $\ZZ$ such that $(X_f)_\QQ\isomto Y$ and such that $U_f=U_Y$. 

Now, we apply (the proof of) Corollary~\ref{cor:effmordellxf} with the non-degenerate ico model $X_f$ of $X$ over $\ZZ$. This gives that the usual logarithmic Weil height $h_{\mathbb P^4}$ of any $y\in X_f(\QQ)$ satisfies
\begin{equation}\label{eq:p2mordellbound}
h_{\mathbb P^4}(y)\leq \tfrac{c}{10}\nu_f^{24}.
\end{equation}
For each $x\in U(\QQ)$ consider $y=\tau(x)$ in $ U'(\QQ)\subseteq Y(\QQ)\cong X_f(\QQ)$. Lemma~\ref{lem:isotaurho} gives that $\rho(y)=x$. Thus the explicit construction of $\rho$ in \eqref{def:rhoi} leads to $h(x)\leq 8h_{\mathbb P^4}(y)+\log 6$, and then \eqref{eq:p2mordellbound} gives $h(x)\leq c\nu_f^{24}$ as desired. This completes the proof of Theorem~\ref{thm:p2}~(i). 
\end{proof}
We next complete step (ii) of Theorem~\ref{thm:p2}. This can be done by direct computation.

\begin{proof}[Proof of Theorem~\ref{thm:p2}~(ii)]
To uniformly bound $h$ on $X\setminus U=X\cap C_\tau$, it suffices to show the following: If $F\in \QQ[x,y,z]$ is homogeneous and $X=V_+(F)$ satisfies $(\tau)$, then \begin{equation}\label{eq:criterion}
X\cap C_\tau\subseteq T_\tau= T_\tau(\QQ)\cup \{x_0\}, \quad T_\tau(\QQ)=\{(x_i)\in \mathbb P^2, \ x_i= 0,1\}\setminus (1,1,0)
\end{equation}
with the closed point $x_0$ of $T_\tau$ given by the $\textnormal{Aut}(\qb/\QQ)$-orbit of $(1,1,\tfrac{1+\sqrt{5}}{2})$. The intersection $(X\setminus T_\tau)\cap \tau^{-1}(\{e_i\})$ is empty since $X$ satisfies $(\tau)$, and $C_\tau\setminus T_\tau$ is contained in $\tau^{-1}(\{e_i\})$ by Lemma~\ref{lem:taurho}. Thus $X\cap C_\tau\subseteq T_\tau$, and then we conclude \eqref{eq:criterion} after computing in Macaulay2 all points of the 0-dimensional variety $T_\tau$. This completes step (ii) of Theorem~\ref{thm:p2}. 
\end{proof}

\subsection{Proof of Theorem~\ref{thm:p2}~(iii)}

In this section we complete step (iii) of Theorem~\ref{thm:p2} using the arguments outlined in \eqref{eq:castelnuovooutline}. We continue our notation, and we continue to work over $\QQ$ unless specified otherwise. Recall from \eqref{def:ctau} that the curve $C_\tau$ contains the finite $T_\tau=\cap V_+(\tau_i)$ and that $U_\tau=\mathbb P^2\setminus C_\tau$.

Let $F\in \ZZ[x,y,z]$ be homogeneous of degree $d\geq 1$. Suppose that $X=V_+(F)$ is integral and satisfies $(\tau)$.  Then the scheme theoretic image $Y$ of $U=X\setminus C_\tau\hookrightarrow U_\tau\to^{\tau}\mb$ is an integral curve $Y=\overline{\tau(X\setminus T_\tau)}$ by Lemma~\ref{lem:taurho}. We write $k=\qb$ and we use \eqref{eq:modelident} to identify $$Y=\textnormal{Proj}(A/I) \quad \textnormal{and}\quad Y_{k}=\textnormal{Proj}(A_{k}/I_{k})$$ where $I$ is a homogeneous saturated ideal of $A=\QQ[x_0,\dotsc,x_4]$. 
We will deduce Theorem~\ref{thm:p2}~(iii) from the following result in which $u=3^{(86d)^5}$ and $v=(258d)^2.$

\begin{proposition}\label{prop:p2}
Suppose that $Y$ is geometrically connected. Then there exists a homogeneous $f\in I$ with all coefficients in $\ZZ$ such that $D_+(f)$ contains all $e_i$ and such that $$\deg(f)\leq 128d \quad \textnormal{and} \quad |f|\leq u|F|^{v}.$$
\end{proposition}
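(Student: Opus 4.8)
The plan is to exhibit $f$ as a sum of squares of forms drawn from a low degree graded piece of the saturated ideal $I=I(Y)$, chosen so as to collectively avoid the five degenerate points $e_i$ and to have controlled height; the degree bound will come from a regularity estimate and the height bound from the explicit geometry of $Y\subset X_{\rho^*F}$ together with arithmetic B\'ezout.

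\emph{Geometric setup.} By Lemma~\ref{lem:taurho}(ii), since $X$ is integral and satisfies $(\tau)$, the curve $Y=\overline{\tau(X\setminus T_\tau)}$ is integral and contained in the ico model $X_{\rho^*F}=\mb\cap V_+(\rho^*F)$, where $\rho^*F=F(\rho_0,\rho_1,\rho_2)$ for the fixed degree $8$ forms $\rho_i$ of \eqref{def:rhoi}. As the $\rho_i$ have small integer coefficients, $\rho^*F$ has integer coefficients, $\deg(\rho^*F)=8d$, and $|\rho^*F|\le C^{\,d}|F|$ for an absolute constant $C$ (a crude estimate for substituting the $\rho_i$ into $F$). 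Since $X_{\rho^*F}=V_+(\sigma_2,\sigma_4,\rho^*F)$ is a complete intersection of forms of degrees $2,4,8d$, it is a curve of degree at most $64d$, whence $\deg(Y_{\qb})\le 64d$. Each $e_i$ lies on $X_{\rho^*F}$: one has $\sigma_2(e_i)=\sigma_4(e_i)=0$, while the $\rho_j$ are polynomials without constant term in the $r_k=\prod_{l\ne k}x_l$ and $r_k(e_i)=0$, so $\rho^*F(e_i)=F(0,0,0)=0$; but by $(\tau)$ no $e_i$ lies on $Y$. Thus it remains to produce an integer form $f\in I$ of degree $\le 128d$ with $f(e_i)\ne 0$ for all $i$ and $|f|\le u|F|^{v}$.

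\emph{Regularity and a sum of squares.} As $Y_{\qb}$ is a geometrically connected reduced curve in $\mathbb P^4_{\qb}$ of degree $\le 64d$, the Castelnuovo--Mumford regularity bound of Gruson--Lazarsfeld--Peskine and Giaimo~\cite{grlape:castelnuovo,giaimo:castelnuovo} gives $r:=\textnormal{reg}(Y_{\qb})\le 8\deg(\rho^*F)=64d$. Hence $\mathcal I_Y(r)$ is globally generated, so for each $i$, since $e_i\notin Y$, the $\QQ$-vector space $I_r=H^0(\mathcal I_Y(r))$ contains a form $f_i$ with $f_i(e_i)\ne 0$; after scaling we may take $f_i\in\ZZ[x_0,\dots,x_4]$. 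Put $f=\sum_{i=0}^4 f_i^{\,2}$, a form of degree $2r\le 128d$ lying in $I$. For each $j$ the integer $f(e_j)=\sum_i f_i(e_j)^2\ge f_j(e_j)^2>0$, so $e_j\in D_+(f)$. The squares are used precisely so that the off-diagonal terms $f_i(e_j)^2$ with $i\ne j$ cannot cancel the positive diagonal term, which lets us keep all $f_i$ in the single degree $r$ rather than inserting the compensating monomials $x_i^{r-d_i}$ of \eqref{eq:castelnuovooutline}.

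\emph{Height control --- the main difficulty.} The genuine obstacle is bounding $|f|$, equivalently $|f_i|$: a form produced from $r$-regularity carries no a priori height estimate, nor does an arbitrary $\QQ$-basis of $I_r$. To get one, I would exploit that $Y$ sits inside the explicit complete intersection $X_{\rho^*F}$, whose defining forms $\sigma_2,\sigma_4,\rho^*F$ have degrees $\le 8d$ and height $\le C^{\,d}|F|$, and that $I(Y)$ is obtained from $(\sigma_2,\sigma_4,\rho^*F)$ by adjoining the finitely many further conditions that remove the $e_i$ and the other components of $X_{\rho^*F}$ --- concretely via the pullback description $I(Y)=\{\,g:\ F\mid g(\tau_0,\dots,\tau_4)\,\}$, which holds because $X$ is integral and $U=X\setminus C_\tau$ is dense in $X$. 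Bounding the height of a form one must adjoin to $(\sigma_2,\sigma_4,\rho^*F)$ to separate $Y$ from a given $e_i$, and of one combination separating all five $e_i$ at once, is exactly what R\'emond's \cite[Prop 2.2]{remond:rational} supplies, resting in turn on Zhang's arithmetic B\'ezout inequality \cite[Thm 5.2]{zhang:positivejams}. Feeding in $\deg(\sigma_k)\le 4$, $\deg(\rho^*F)=8d$, $|\rho^*F|\le C^{\,d}|F|$, the bounded heights of the $e_i$, and then tracking the loss through clearing denominators and through the passage from degree $r$ to degree $2r$, yields $|f|\le u|F|^{v}$ with the stated $u=3^{(86d)^5}$ and $v=(258d)^2$. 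Keeping every constant explicit --- in particular controlling the doubly exponential blow-up produced by the arithmetic B\'ezout step --- is where essentially all the effort goes; the geometric inputs (integrality of $Y$, the degree and regularity bounds, and the sum-of-squares trick) are comparatively soft.
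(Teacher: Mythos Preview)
Your proposal is correct and follows essentially the same route as the paper: bound the Castelnuovo--Mumford regularity of $Y_{\qb}$ via Giaimo--Gruson--Lazarsfeld--Peskine using $Y\subset X_{\rho^*F}$, pick for each $e_i$ a form in $I$ of degree at most $r$ not vanishing at $e_i$, replace it by a height-controlled form via R\'emond's \cite[Prop~2.2]{remond:rational}, and take a sum of squares. Your choice to put every $f_i$ directly in degree $r$ (using global generation of $\mathcal I_Y(r)$) is a mild simplification of the paper's version, which first produces $f_i$ of degree $d_i\le r$ and then pads with $x_i^{r-d_i}$ purely to make the sum homogeneous; either order works.

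The one place where your write-up is less sharp than the paper is the height step. R\'emond's \cite[Prop~2.2]{remond:rational} does not take as input the heights of $\sigma_2,\sigma_4,\rho^*F$ or of the $e_i$; its input is the Arakelov height $h(Y_{\qb})$ of the variety itself, and it outputs a spanning set of $I_D$ whose log-heights are bounded by roughly $D^2\,h(Y_{\qb})$ plus combinatorial terms. So the argument should be organized in two steps: first bound $h(Y_{\qb})$ (the paper does this as a separate lemma, using $Y_{\qb}\subset (X_{\rho^*F})_{\qb}$ and R\'emond's arithmetic B\'ezout \cite[Prop~2.3]{remond:rational} to get $h(Y_{\qb})\le 8(\log|F|+100d)$), and then feed that bound into \cite[Prop~2.2]{remond:rational} with $D=r\le 64d$. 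Your pullback description $I(Y)=\{g:F\mid \tau^*g\}$ is not needed (and is only correct when $F$ is squarefree).
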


We divided the proof of Proposition~\ref{prop:p2} into several lemmas. Let us start with a result from commutative algebra using the Castelnuovo--Mumford regularity $r=\textnormal{reg}(I_{k})$ of $I_k$.

\begin{lemma}\label{lem:reg}
If $e\in \{e_i\}$ then $e\in D_+(f)$ for a homogeneous $f\in I$ with $\textnormal{deg}(f)\leq r$. 
\end{lemma}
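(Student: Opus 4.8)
The plan is to deduce the lemma directly from Castelnuovo--Mumford regularity. Fix $e=e_i\in\{e_i\}$. The geometric input comes from $(\tau)$: by Lemma~\ref{lem:taurho}~(ii) we have $Y=\overline{\tau(X\setminus T_\tau)}$, and the hypothesis $(\tau)$ says exactly that this closed set contains none of the $e_j$, so $e\notin Y$. Hence the ideal sheaf $\mathcal I_Y\subseteq\mathcal O_{\mathbb P^4_\QQ}$ agrees with the full structure sheaf in a neighbourhood of $e$; consequently, for every $m$ the natural map $\mathcal I_Y(m)\otimes\kappa(e)\to\mathcal O(m)\otimes\kappa(e)$ is an isomorphism of one-dimensional vector spaces over $\kappa(e)=\QQ$, and a section $s$ of $\mathcal I_Y(m)$ satisfies $e\in D_+(s)$ if and only if its image in this fibre is nonzero. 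Moreover, since $I$ is saturated we have $I=\Gamma_*(\mathcal I_Y)$, so $I_m=H^0(\mathbb P^4_\QQ,\mathcal I_Y(m))$ for all $m\ge 0$.

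Next I would control the regularity. The Castelnuovo--Mumford regularity of the coherent sheaf $\mathcal I_{Y_k}$ on $\mathbb P^4_k$ is at most the regularity $r=\textnormal{reg}(I_k)$ of the graded module $I_k$; and since cohomology of coherent sheaves on projective space is insensitive to the flat base change $\QQ\hookrightarrow k=\qb$, the same bound holds over $\QQ$. Thus $\mathcal I_Y$ is $r$-regular as a coherent sheaf on $\mathbb P^4_\QQ$.

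I would then invoke Mumford's theorem: if a coherent sheaf $\mathcal F$ on projective space is $r$-regular, then $\mathcal F(r)$ is $0$-regular and therefore globally generated. Applying this to $\mathcal F=\mathcal I_Y$, the evaluation map $H^0(\mathbb P^4_\QQ,\mathcal I_Y(r))\to\mathcal I_Y(r)\otimes\kappa(e)$ is surjective, in particular nonzero. Choosing $s\in H^0(\mathbb P^4_\QQ,\mathcal I_Y(r))=I_r$ whose image in the fibre at $e$ is nonzero and setting $f=s$, we obtain a homogeneous $f\in I$ with $\deg(f)\le r$ and $e\in D_+(f)$, as required.

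Essentially everything here is formal; the step that most needs care is the middle paragraph, namely relating the invariant $\textnormal{reg}(I_k)$ to the regularity of the sheaf $\mathcal I_Y$ and checking its stability under the field extension $\QQ\subseteq\qb$. Both facts are standard, but one must be attentive to which notion of regularity (of the graded module versus of its sheafification) is meant at each step; working with the inequality $\textnormal{reg}(\widetilde M)\le\textnormal{reg}(M)$ avoids any dependence on the precise convention, while the passage between $\QQ$ and $\qb$ is harmless because forming sheaf cohomology commutes with field extension.
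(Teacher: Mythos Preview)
Your argument is correct and follows essentially the same route as the paper's: both use the Castelnuovo--Mumford regularity bound to produce an element of $I$ of degree $\leq r$ not vanishing at $e$, after noting that $e\notin Y$ by $(\tau)$. The only cosmetic differences are that the paper phrases the key step module-theoretically (the ideal $I_k$ is generated in degrees $\leq r$, so some generator $g_j$ does not vanish at $e$) and descends from $k=\qb$ to $\QQ$ by writing $g_j=\sum z_l f_l$ with $f_l\in I_n$, whereas you phrase it sheaf-theoretically (Mumford's theorem gives $\mathcal I_Y(r)$ globally generated) and descend via flat base change of cohomology; these are equivalent packagings of the same idea.
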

\begin{proof}
Let $e\in \{e_i\}$. To construct $f$, we apply to the finitely generated graded $A_{k}$-module $I_{k}$ a standard commutative algebra result for the regularity. This gives that $I_{k}$ is generated as an $A_{k}$-module by finitely many homogeneous $g_j\in I_{k}$ with $\deg(g_j)\leq r.$ As $X$ satisfies $(\tau)$, our point $e$ lies not in $Y=\overline{\tau(X\setminus T_\tau)}$ and thus not in $Y_{k}=\cap V_+(g_j)$. Hence there exists $g\in\{g_j\}$ 
with $e\in D_+(g)$. Viewing the homogeneous $g$ inside $(I_{k})_{n}\cong I_{n}\otimes_\QQ k$ where $n=\deg(g)$,
we obtain homogeneous $f_l\in I_{n}$ and $z_l\in k$ such that $g=\sum z_l f_l$. As $e\in D_+(g)$ there exists $f\in\{f_l\}$ with $e\in D_+(f)$. Moreover $\deg(f)=n=\deg(g)$ is at most $r$ since $g\in\{g_j\}$. Thus the homogeneous $f\in I$ has all the desired properties. \end{proof} 
To control the regularity $r$, we write $\rho^*F=F(\rho_0,\rho_1,\rho_2)$ and we apply to $Y_k$ the explicit Castelnuevo theorem of Gruson--Lazarsfeld--Peskine~\cite{grlape:castelnuovo} which was extended by Giaimo~\cite{giaimo:castelnuovo}  to connected  curves that are not necessarily irreducible. 

\begin{lemma}\label{lem:regdegbound}
If $Y$ is geometrically connected, then 
$r\leq 8\deg(\rho^*F)$.
\end{lemma}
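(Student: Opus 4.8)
The goal is to bound $r=\textnormal{reg}(I_k)$ for the homogeneous saturated ideal $I$ cutting out the integral curve $Y_k=\overline{\tau(X\setminus T_\tau)}\subset\mathbb P^4_k$. The plan is to apply the Castelnuovo--Mumford regularity bound for connected curves due to Gruson--Lazarsfeld--Peskine \cite{grlape:castelnuovo} in the form extended to (possibly reducible but connected) curves by Giaimo \cite{giaimo:castelnuovo}: if $C\subset\mathbb P^N$ is a reduced connected curve which is not contained in any hyperplane, with $\deg(C)=e$, then $\textnormal{reg}(\mathcal I_C)\le e-N+2$ (and in any case $\textnormal{reg}(\mathcal I_C)\le e$, which is all we need). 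So the content of the lemma is really to control $\deg(Y_k)$ by $\deg(\rho^*F)$.

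First I would observe that, by Lemma~\ref{lem:taurho}~(ii), the integral curve $Y=\overline{\tau(X\setminus T_\tau)}$ is contained in the curve $X_f\subset\mathbb P^4$ defined by $f=F(\rho_0,\rho_1,\rho_2)=\rho^*F$, i.e. $Y\subseteq X_f=\bar M\cap V_+(\rho^*F)$. In particular $Y_k$ is an irreducible component of a closed subscheme of the surface $\bar M_k$ cut out by the single equation $\rho^*F=0$. Since $\bar M\subset\mathbb P^4$ is a surface of degree $\deg(\bar M)$ — which is an absolute constant computable from the defining equations $\sigma_2=0=\sigma_4$ (it is the degree of a smooth del Pezzo-type surface of degree $5$, namely $\deg(\bar M)=5$) — the divisor $V_+(\rho^*F)\cap\bar M$ has degree $\deg(\rho^*F)\cdot\deg(\bar M)$ by Bézout on $\bar M$. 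Hence $\deg(Y_k)\le \deg(X_f)\le \deg(\bar M)\cdot\deg(\rho^*F)=5\deg(\rho^*F)$. Combining this with the Giaimo--GLP bound $r\le\deg(Y_k)$ — applicable because $Y_k$ is connected by the hypothesis that $Y$ is geometrically connected, and one may reduce to the linearly non-degenerate case by a standard linear-projection argument or simply absorb the non-degeneracy into the crude bound $r\le\deg$ — yields $r\le 5\deg(\rho^*F)$, which is comfortably within the claimed $r\le 8\deg(\rho^*F)$, leaving slack for the fact that $Y_k$ may be properly contained in $X_f$ and for bookkeeping of $\deg(\bar M)$.

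The main obstacle I anticipate is a careful application of the regularity bound to a curve that is only assumed connected (not irreducible) and possibly linearly degenerate, and making sure the degree estimate via Bézout on the possibly-singular surface $\bar M$ is valid. For the first point, Giaimo's theorem \cite{giaimo:castelnuovo} is precisely designed for reduced connected curves and gives $\textnormal{reg}(\mathcal I_{Y_k})\le\deg(Y_k)-\textnormal{codim}+2$ once one replaces $\mathbb P^4$ by the linear span of $Y_k$; since we only want $r\le 8\deg(\rho^*F)$ the crude form $\textnormal{reg}\le\deg$ in the span suffices and the span has dimension $\le 4$, so no delicate non-degeneracy reduction is actually needed. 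For the second point, one can avoid intersection theory on $\bar M$ altogether: intersect $\bar M$ with a general linear $\mathbb P^2\supset$ (a general line not meeting $Y_k$'s bad locus) to reduce to counting points, or more simply note $Y_k\subset V_+(\sigma_2,\sigma_4,\rho^*F)\subset\mathbb P^4_k$, a scheme of dimension $\le 1$ whose degree is at most $\deg(\sigma_2)\deg(\sigma_4)\deg(\rho^*F)=2\cdot4\cdot\deg(\rho^*F)=8\deg(\rho^*F)$ by the refined Bézout inequality of Fulton (each irreducible component of an intersection of hypersurfaces has degree bounded by the product of the degrees). This gives $\deg(Y_k)\le 8\deg(\rho^*F)$ directly, hence $r\le\deg(Y_k)\le 8\deg(\rho^*F)$, and the proof is complete.
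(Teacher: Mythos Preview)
Your approach is essentially the same as the paper's: apply Giaimo's extension of Gruson--Lazarsfeld--Peskine to the connected reduced curve $Y_k$, and bound $\deg(Y_k)$ by B\'ezout via the containment $Y_k\subseteq X_f=\bar M\cap V_+(\rho^*F)$. Two small corrections are worth noting. First, $\deg(\bar M)$ is not $5$: the surface $\bar M\subset\mathbb P^4$ is the complete intersection $\sigma_2=\sigma_4=0$, so $\deg(\bar M)=2\cdot 4=8$, which is exactly where the constant $8$ in the lemma comes from (your second B\'ezout computation via $\deg(\sigma_2)\deg(\sigma_4)\deg(\rho^*F)$ is the correct one, and coincides with the paper's). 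Second, the ``crude form $\textnormal{reg}\le\deg$'' is not literally valid when the linear span of $Y_k$ is a line: Giaimo gives $r\le\deg(Y_k)-l+2$ with $l=\dim(\mathrm{span})$, so for $l=1$ one only gets $r\le 2$, not $r\le\deg(Y_k)=1$. The paper treats this case separately (noting $r\le 2\le 8\deg(\rho^*F)$), and you should too.
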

\begin{proof}
We first bound  $r$ in terms of the degree of $Y_{k}\subset \mathbb P^4_{k}$. All irreducible components of $Y_k$ are curves since $Y$ is an integral curve, and $Y_k$ is connected by assumption. Thus $Y_k$ is a connected reduced curve over the algebraically closed field $k=\qb$. Then an application of \cite[Main Thm]{giaimo:castelnuovo} with $Y_k$ gives that the regularity $\textnormal{reg}(Y_{k})$ in \cite{giaimo:castelnuovo} satisfies $$r=\textnormal{reg}(Y_{k})\leq \deg(Y_{k})-l+2$$
for $l$ the dimension of the smallest linear subspace $L\subseteq \mathbb P^4_k$ containing $Y_k$. Here $r$ equals $\textnormal{reg}(Y_{k})$  since the ideal $I$, and thus its base change $I_{k}=I\otimes_\QQ k$, is saturated. We next bound $\deg(Y_{k})$. Let $Z\subset \mathbb P^4_k$ be the reduced closed subscheme determined by $(X_f)_k$ where $f=\rho^*F$.  Lemma~\ref{lem:taurho} implies that $Y_{k}\subseteq Z\subset \mb_k$  and that any irreducible component of $Z$ has dimension one.  Then basic degree properties give  
$$\deg(Y_{k})\leq \deg(Z)\leq \deg(\mb_{k})\deg(\rho^*F).$$
The integral variety $\mb_{k}=\textnormal{Proj}(A_{k}/(\sigma_2,\sigma_4)\bigl)$ is a complete intersection of $\mathbb P^4_{k}$. It follows that $\deg(\mb_{k})=\deg(\sigma_2)\deg(\sigma_4)=8$ and then the displayed bounds prove Lemma~\ref{lem:regdegbound} in the case when $l\geq 2$. Suppose now that $l=1$.  As any linear subspace $L\subseteq \mathbb P^4_k$ is irreducible with $\deg(L)=1$, we then obtain $Y_k=L$ and $\deg(Y_k)=1$. Hence the displayed bound for $r$ gives $r\leq 2\leq 8\deg(\rho^*F)$ as desired. This completes the proof. 
\end{proof}
For any closed $V\subseteq\mathbb P^4_{k}$ with irreducible components $V_i$, define $h(V)=\sum h(V_i)$ for $h(V_i)$ the usual  height (defined via Arakelov theory or Chow form) of the integral closed subscheme $V_i\subseteq\mathbb P^4_k$ determined by $V_i$. Here we normalize $h(V_i)$ as in \cite[$\mathsection$2]{remond:rational}. 

\begin{lemma}\label{lem:heightybound}
It holds
$h(Y_{k})\leq 8(\log |F|+100d)$.
\end{lemma}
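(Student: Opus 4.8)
The plan is to bound $h(Y_k)$ by the height of the reduced curve $Z\subset\mathbb P^4_k$ cut out on $\mb_k$ by $\rho^*F=F(\rho_0,\rho_1,\rho_2)$, and then to estimate $h(Z)$ by an arithmetic B\'ezout inequality combined with a bound for the size of the coefficients of $\rho^*F$. First I would invoke Lemma~\ref{lem:taurho}: part (ii) gives that $Y$ is an integral curve contained in $X_f$ with $f=\rho^*F$, and parts (i)–(ii) together give $X_f\neq\mb$, so $(X_f)_k=V_+(\sigma_2,\sigma_4,\rho^*F)_k$ is a curve and $Y_k$ is a closed subcurve of its reduction $Z$. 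Since $\QQ$ is perfect, $Y_k$ is reduced, hence each irreducible component of $Y_k$ is an irreducible component of $Z$; as the normalized heights of \cite[$\mathsection$2]{remond:rational} are additive over components and non-negative (which ultimately rests on Zhang's positivity \cite[Thm~5.2]{zhang:positivejams}), this yields $h(Y_k)\leq h(Z)$, and it then suffices to bound $h(Z)$.

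Next I would apply arithmetic B\'ezout. Because $X_f\neq\mb$, the hypersurfaces $V_+(\sigma_2)$, $V_+(\sigma_4)$, $V_+(\rho^*F)$ of $\mathbb P^4_k$ intersect properly, so their intersection product is an effective $1$-cycle supported on $Z$ with all multiplicities $\geq1$; hence $h(Z)$ is at most the height of that cycle. Applying R\'emond's arithmetic B\'ezout inequality \cite[Prop~2.2]{remond:rational} — equivalently, working on the complete intersection $\mb_k$, which has $\deg(\mb_k)=\deg(\sigma_2)\deg(\sigma_4)=8$ and height $h(\mb_k)=O(1)$ since $\sigma_2,\sigma_4$ have coefficients in $\{-1,0,1\}$ — produces a bound of the shape
\[
h(Z)\;\leq\; \deg(\mb_k)\,h\bigl(V_+(\rho^*F)\bigr)+\deg(\rho^*F)\,h(\mb_k)+c_0\deg(\rho^*F),
\]
where $c_0$ is an absolute constant depending only on the ambient $\mathbb P^4$, and $\deg(\rho^*F)=8d$. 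Thus the last two terms are $\leq c_1 d$ for an absolute $c_1$, while the first term contributes the decisive factor $8$ in front of the height of $\rho^*F$.

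It remains to bound $h(V_+(\rho^*F))$. Comparing the normalized height of a hypersurface with the naive logarithmic height of a defining polynomial costs only an $O(\deg)$ error, so $h(V_+(\rho^*F))\leq \log|\rho^*F|+c_2 d$ by the standard Mahler-measure comparison (or directly \cite[$\mathsection$2]{remond:rational}). For $\log|\rho^*F|$ I would expand $\rho^*F=F(\rho_0,\rho_1,\rho_2)$: by \eqref{def:rhoi} each $\rho_j\in\ZZ[x_0,\dotsc,x_4]$ is fixed of degree $8$ with a bounded number of monomials and coefficients in $\{-1,0,1\}$, so substituting the $\rho_j$ into each of the $\binom{d+2}{2}$ monomials of $F$ and collecting gives coefficients of absolute value at most $|F|\,e^{c_3 d}$; hence $\log|\rho^*F|\leq\log|F|+c_4 d$ for an absolute $c_4$. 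Putting the displayed estimates together gives $h(Y_k)\leq h(Z)\leq 8\log|F|+c_5 d$ with an explicit absolute $c_5$, and one checks $c_5\leq 800$, which is exactly $h(Y_k)\leq 8(\log|F|+100d)$.

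The main obstacle is purely bookkeeping: one must pin down the precise normalizations in \cite[$\mathsection$2]{remond:rational} (Mahler measure versus naive height, the definition of the height of a cycle, the value of the B\'ezout constant $c_0$ on $\mathbb P^4$), compute $h(\mb_k)$ explicitly, and then verify that the accumulated $O(d)$ and $O(1)$ contributions — the height of $\mb_k$, the B\'ezout error, the hypersurface/polynomial height comparison, and the combinatorial blow-up in passing from $F$ to $\rho^*F$ — really sum to at most $800d$. None of these steps is conceptually difficult, but each must be carried out with explicit constants.
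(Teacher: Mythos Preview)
Your proposal is correct and follows essentially the same route as the paper: pass from $Y_k$ to the reduced curve $Z$ determined by $(X_f)_k$ with $f=\rho^*F$, bound $h(Z)$ by an arithmetic B\'ezout inequality on $\mb_k$ (for which $\deg(\mb_k)=8$ and $h(\mb_k)$ is an explicit absolute constant), and control $\log|\rho^*F|$ by expanding $F(\rho_0,\rho_1,\rho_2)$. The paper cites R\'emond's Proposition~2.3 (not 2.2) for the B\'ezout step and gets $h(\mb_k)\le 68$ and $\log|\rho^*F|\le\log|F|+30d$, but the structure and the final constants match what you outline.
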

\begin{proof}
In the proof of Lemma~\ref{lem:regdegbound} we showed that $Y_k\subset \mathbb P^4_k$ is a reduced curve over $k$, contained in the reduced curve $Z\subset \mathbb P^4_k$ over $k$ determined by $(X_f)_k$ for $f=\rho^*F$. Thus $$h(Y_k)\leq h(Z)$$ and then standard height properties lead to Lemma~\ref{lem:heightybound}. For example one can bound $h(Z)$ as follows. As $h(\mathbb P^4_{k})=\tfrac{77}{24}$ and $\deg(\mb_k)=8$, applications of R\'emond's \cite[Prop 2.3]{remond:rational} with $X=\mb_{k}$, $P_1=\sigma_2$, $P_2=\sigma_4$, $P_3=f$ and with $X=\mathbb P^4_k$, $P_1=\sigma_2$, $P_2=\sigma_4$ give $$\tfrac{1}{8}h(Z)\leq dh(\mb_k)+h(f)+2 \quad \textnormal{and} \quad h(\mb_k)\leq 68.$$
Here $h(f)$ is the usual projective height (\cite[p.21]{bogu:diophantinegeometry}) of the polynomial $f$, and we used that the modified height $h_m$ in \cite{remond:rational} satisfies $h_m(f)\leq h(f)+2$ by \cite[$\mathsection$2]{remond:rational}. Then the claimed bound for $h(Y_{k})$ follows by combining the above displayed estimates with 
$$h(f)\leq \log |\rho^*F|\leq \log \sum_{\iota\in I} |a_\iota\rho_0^{\iota_0}\rho_1^{\iota_1}\rho_2^{\iota_2}|\leq \log |F|+30d,$$
where $a_\iota\in \ZZ$ is the $\iota$-th coefficient of $F$ and where $I$ is the set of all $\iota=(\iota_0,\iota_1,\iota_2)$ with $\iota_l\in \ZZ_{\geq 0}$ and $\iota_0+\iota_1+\iota_2=d=\tfrac{n}{8}$. Here we used \cite[Lem 1.6.11]{bogu:diophantinegeometry}, and we exploited that $|\rho_0|=|\rho_1|=|\rho_2|=1$ holds by \eqref{def:rhoi}. This completes the proof of Lemma~\ref{lem:heightybound}.
\end{proof}

We now prove Proposition~\ref{prop:p2} by combining the above lemmas with R\'emond's result \cite[Prop 2.2]{remond:rational} which relies on Zhang's theorem \cite[Thm 5.2]{zhang:positivejams}.

\begin{proof}[Proof of Proposition~\ref{prop:p2}]
Set $r=\textnormal{reg}(I_{k})$. For each $e_i$, Lemma~\ref{lem:reg} provides a homogeneous $f_i\in I$ of degree $d_i\leq r$ such that $e_i\in D_+(f_i)$. We now apply R\'emond's \cite[Prop 2.2]{remond:rational} with $D=d_i$, $K=\QQ$ and the integral $X=Y$ in $\mathbb P^4$. After scaling with suitable elements in $\QQ^{\times}$, this gives $g_j\in I_{d_i}\cap \ZZ[x_0,\dotsc,x_4]$ with coprime coefficients and $z_j\in \QQ$ such that $$f_i=\sum z_j g_j \quad \textnormal{and}\quad  \sum \log|g_j|\leq B$$  
for $B=r(r+1)h(Y_k)+2\tbinom{r+4}{r}\log(r+1)$. Here $\log |g_j|$ equals the usual projective height $h(g_j)$ of $g_j$, and we used \cite[$\mathsection$2]{remond:rational} which gives that $h(g_j)$ is at most the height of $g_j$ defined in \cite{remond:rational}. As $e_i\in D_+(f_i)$, it follows from $f_i=\sum z_j g_j$ that there exists $s_i\in\{g_j\}$ with $e_i\in D_+(s_i)$. Now, we consider the polynomial $f\in \ZZ[x_0,\dotsc,x_4]$ defined by  $$f=\sum t_i^2, \quad t_i=x_i^{r-d_i}s_i.$$ We observe that $f$ lies in $I$ and that $f$ is homogeneous of degree $2r$. Moreover, as $e_i\in D_+(s_i)$ we obtain that $s_i(e_i)^2>0$ and then we compute $f(e_i)=s_i(e_i)^2+\sum t_l(e_i)^2>0$ with the sum taken over all $l\neq i$. This shows that $D_+(f)$ contains all $e_i$. Furthermore the above displayed results and \cite[Lem 1.6.11]{bogu:diophantinegeometry} imply that  $$\tfrac{1}{5}|f|\leq \max |s_i^2|\leq 2^{10r}\exp(2B).$$ Lemma~\ref{lem:regdegbound} gives $r\leq 64d$ since $Y$ is geometrically connected, and Lemma~\ref{lem:heightybound} provides $h(Y_k)\leq 8(\log |F|+100d).$ Then, on combining everything and on simplifying the resulting estimates, we deduce the bounds for $|f|$ and $\deg(f)=2r$ claimed in Proposition~\ref{prop:p2}. \end{proof}
Finally we deduce Theorem~\ref{thm:p2}~(iii) from Proposition~\ref{prop:p2}. 
\begin{proof}[Proof of Theorem~\ref{thm:p2}~(iii)]By assumption $X=V_+(F)\subset\mathbb P^2$ is integral and $U(\QQ)$ is nonempty. Then the scheme theoretic image $Y=\overline{\tau(U)}$ is an integral variety with a $\QQ$-rational point and thus $Y$ is geometrically connected. Recall that in the proof of Theorem~\ref{thm:p2}~(i) we identified $Y$ with $(X_f)_\QQ$.  Then Proposition~\ref{prop:p2} gives a homogeneous $\tilde{f}\in \ZZ[x_0,\dotsc,x_4]$ with $\deg(\tilde{f})\leq 128d$ and $|\tilde{f}|\leq u|F|^v$ such that $$X_{\tilde{f}}=\mb\cap V_+(\tilde{f})\subset \mathbb P^4$$ has the following properties: The curve $(X_f)_\QQ=Y$ is contained in $X_{\tilde{f}}$, but $X_{\tilde{f}}$ does not intersect $\mbd=\{e_i\}$ and thus $X_{\tilde{f}}$ is a non-degenerate ico model by Lemma~\ref{lem:relations}. Here $X_{\tilde{f}}$ is a curve since $\mb$ is an integral surface with $Y\subseteq X_{\tilde{f}}\subsetneq \mb$ and thus all irreducible components of $X_{\tilde{f}}$ have dimension 1. This completes the proof of Theorem~\ref{thm:p2}~(iii).  
\end{proof}

\subsection{Proof of Proposition~\ref{prop:p2family} and of Corollaries~\ref{cor:effmordellp2} and \ref{cor:effmordellpullbackp2}}

In this section we prove Proposition~\ref{prop:p2family} using the geometric results for general ico models in Proposition~\ref{prop:generalico}. Also, we combine steps (i)\,-\,(iii) of Theorem~\ref{thm:p2} to deduce Corollaries~\ref{cor:effmordellp2} and \ref{cor:effmordellpullbackp2}. We continue our notation and we continue to work over $\QQ$.

\begin{proof}[Proof of Proposition~\ref{prop:p2family}~(i) and of Corollary~\ref{cor:effmordellpullbackp2}]
Let $n\in\NN$ and let $X\in\mathcal C_n$. We first show Proposition~\ref{prop:p2family}~(i). To prove that $X$ satisfies $(\tau)$, we construct a non-degenerate ico model $X_f$ which contains $\overline{\tau(X\setminus T_\tau)}$. As $X\in \mathcal C_n$ there is $f\in \mathcal F_n$ with $X=V_+(\tau^*f)$ as subsets of $\mathbb P^2$. Consider $X_f=\mb\cap V_+(f)$ inside $\mathbb P^4$, write $\rho^*F=F(\rho_0,\rho_1,\rho_2)$  for $F=\tau^*f$, and set $U=X\setminus C_\tau$. On using Lemmas \ref{lem:isotaurho} and \ref{lem:taurho}, we compute inside $\mathbb P^4$ that \begin{equation}\label{eq:nondegcompp2}
\tau(U)=\rho^{-1}(U)=X_{\rho^*F}\cap U_\rho=X_f\cap U_\rho\subseteq X_f.
\end{equation}
Lemma~\ref{lem:taurho} implies that $\overline{\tau(X\setminus T_\tau)}=\overline{\tau(U)}$, and \eqref{eq:nondegcompp2} gives that $\overline{\tau(U)}\subseteq X_f$. Then,  as the non-degenerate ico model $X_f$ contains no $e_i$ by Lemma~\ref{lem:relations}, we deduce that $X$ satisfies $(\tau)$ as desired. Here $X_f$ is a non-degenerate ico model, since all diagonal coefficients of $f\in\mathcal F_n$ are nonzero and since $X_f$ is a curve. Indeed, as $\overline{\tau(U)}\subseteq X_f\subsetneq \mb$, each irreducible component of $X_f$ has codimension 1 in the integral surface $\mb$ and thus has dimension 1. 
We next deduce Corollary~\ref{cor:effmordellpullbackp2}. Let $x\in X(\QQ)$. To bound $h(x)$ we use that $X$ satisfies $(\tau)$ by Proposition~\ref{prop:p2family}~(i). If $x\in (X\setminus U)(\QQ)$ then Theorem~\ref{thm:p2}~(ii) gives that $h(x)=0$. On the other hand, if $x\in U(\QQ)$ then \eqref{eq:nondegcompp2} assures that we can use in \eqref{eq:p2mordellbound} the non-degenerate ico model $X_f$ (viewed here over $\ZZ$) and hence Theorem~\ref{thm:p2}~(i) gives that $h(x)\leq c\nu_f^{24}$. We conclude that all $x\in X(\QQ)$ have height $h(x)$ at most $c\nu_f^{24}$, and thus at most $c\nu^{24}_X$ since the above arguments work for all $f\in \mathcal F_n$ with the property that $X=V_+(\tau^*f)$ as subsets of $\mathbb P^2$. This  completes the proof of Corollary~\ref{cor:effmordellpullbackp2}.
\end{proof}

We next prove (ii) and (iii) of Proposition~\ref{prop:p2family}. Let $n\in\NN$, define $r$ as in \eqref{eq:dimr}, write $M_n=\mathbb A^r\setminus \bigl(\cup_{i=1}^5V(z_i)\bigl)$ and let $s_1,\dotsc,s_r$ be as in \eqref{def:genicomodel} with $s_i=x_{i-1}^n$ for $i\leq 5$.

\begin{proof}[Proof of Proposition~\ref{prop:p2family}~(ii)]
Let $v\in M_n(\QQ)$. The diagonal coefficients of $f_v=\sum v_is_i$ are all nonzero, since $s_1,\dotsc,s_r$ are monomials in $\QQ[x_0,\dotsc,x_4]$ with $s_i=x_{i-1}^n$ for $i\leq 5$. Then it follows from Lemma~\ref{lem:taurho} that $F_v=\sum v_i\tau^*s_i=\tau^*f_v$ is nonzero. Thus $\deg(F_v)=12n$ and the curve $X_v=V_+(F_v)\subset\mathbb P^2$ lies in $\mathcal C_n$. Hence we obtain a map $$\iota:M_n(\QQ)/\QQ^\times\to \mathcal C_n, \quad v\mapsto X_v.$$
To show that $\iota$ is injective, we take $v,w\in M_n(\QQ)$ with $\iota(v)=\iota(w)$. Hence $X_v=X_{w}$ as closed subschemes of $\mathbb P^2$. This implies $(F_v)=(F_{w})$ since the ideals $(F_v)=(\tau^*f_v)$ and $(F_{w})=(\tau^*f_w)$ of $\QQ[x,y,z]$ are saturated by $(*)$ below. Thus there is $\lambda\in \Q[x,y,z]$ with \begin{equation}\label{eq:pulloutu}
\tau^*f_v=F_v=\lambda F_{w}=\lambda \tau^*f_{w}.
\end{equation}
It follows that $\lambda\in \QQ^\times$ since $F_v$ and $F_{w}$ both have degree 12. We now exploit that $\tau$ is a birational map with inverse $\rho$: There is a nonzero $u$ in $A=\QQ[x_0,\dotsc,x_4]/(\sigma_2,\sigma_4)$ such that for all $i$ it holds $\rho^*\tau_i=\tau_i(\rho_0,\rho_1,\rho_2)=u x_i$ inside $A$; such a polynomial $u$ can be computed explicitly in Macaulay2. Then applying $\rho^*$ to \eqref{eq:pulloutu} gives inside $A$ the equalities
$$
u^nf_v=\rho^*F_v=\lambda \rho^*F_{w}=u^n\lambda f_{w}.
$$
We deduce $f_v=\lambda f_{w}$ inside the integral ring $A$ and thus we conclude $v=\lambda w$ since the $s_i$ form a basis of the $\QQ$-module $A_n$. This shows that $\iota$ is injective as desired.  

Claim $(*)$: For any homogeneous $f\in \QQ[x_0,\dotsc,x_4]$ with all diagonal coefficients nonzero, the ideal $(\tau^*f)$ in $R=\QQ[x,y,z]$ is saturated. To prove $(*)$ it suffices to show that the saturation $(F)^{\textnormal{sat}}$ is contained in $(F)$ for $F=\tau^*f$. Let $s$ be in $(F)^{\textnormal{sat}}$. There exist $n\in \ZZ_{\geq 0}$ and $r\in R$ with $x^ns=rF$. If $n=0$ then $s$ lies in $(F)$ as desired. Thus we may and do assume that $n\geq 1$. As $R$ is a UFD and $x$ is prime in $R$, we obtain $x\mid r$ or $x\mid F$. We claim that $x\nmid F$. Indeed, as $V_+(F)\subset \mathbb P^2$ satisfies $(\tau)$ by Proposition~\ref{prop:p2family}~(i), the divisibility $x\mid F$ would give that $V_+(x)\subseteq V_+(F)$ also satisfies $(\tau)$ but $V_+(x)$ fails $(\tau)$. It follows that $x\mid r$ and thus $x^n\mid r$. Hence $t=r/x^n$ lies in $R$, which implies that $s=t F$ lies in $(F)$ as desired. This proves our claim $(*)$ and thus completes the proof of Proposition~\ref{prop:p2family}~(ii).\end{proof}

\begin{proof}[Proof of Proposition~\ref{prop:p2family}~(iii)]
We define $g=(2n+1)^2$ and we recall that $\mathcal C$ denotes the set of smooth, projective and geometrically connected curves inside $\mathbb P^4$. Proposition~\ref{prop:generalico} gives an open dense $U\subset M_n\subset \mathbb A^r$ together with an injective map 
\begin{equation}\label{eq:firstinjection}
U(\QQ)/\QQ^\times\hookrightarrow \mathcal C,\quad v\mapsto X'_v.
\end{equation}
Here $X'_v\subset \mathbb P^4$ is the curve of genus $g$ given by the non-degenerate ico model $X_f$ with $f=\sum v_is_i$ of degree $n$ as in \eqref{def:genicomodel}. As all diagonal coefficients of $f$ are nonzero, it follows from Lemma~\ref{lem:taurho} that $F_v=\sum v_i\tau^*s_i=\tau^*f$ is nonzero and hence $\deg(F_v)=12n$. In particular $F_v$ is not constant. Thus $X_v=V_+(F_v)$ is a curve inside $\mathbb P^2$ and we obtain a map 
$$\iota: U(\QQ)/\QQ^\times\to \mathcal C_n, \quad v\mapsto (X_v)_{\textnormal{red}}.$$
To simplify notation we write $X_v$ for $(X_v)_{\textnormal{red}}$ in what follows in this proof.  We now show that $X_v$ is a geometrically integral curve over $\QQ$ of geometric genus $g$. Proposition~\ref{prop:p2family}~(i) gives that $X_v$ satisfies $(\tau)$. Thus $U_v=X_v\setminus C_\tau$ is open dense in $X_v$ by Theorem~\ref{thm:p2},  and  \eqref{eq:nondegcompp2} provides a nonempty open $U'_v\subseteq X'_f$ such that $\tau$ restricts to an isomorphism \begin{equation}\label{eq:Uvtauiso}
\tau:U_v\isomto U'_v
\end{equation}
by Lemma~\ref{lem:isotaurho}.
Then, on using that the curves $X_v'$ and $U'_v$ are geometrically integral over $\QQ$ of geometric genus $g$, we deduce that the reduced curves $U_v$ and $X_v$ are geometrically integral over the characteristic zero field $\QQ$ and have again geometric genus $g$.
We next show that $\iota$ is injective. For this it suffices to observe that $\iota$ is the composition of the injective map \eqref{eq:firstinjection} with the map $X'_v\mapsto X_v$ which is also injective. Indeed if $v,w$ in $U(\QQ)$ satisfy  $X_v=X_{w}$, then the generic point $\xi$ of $X_v=X_{w}$ lies in $U_v=U_{w}$ and thus \eqref{eq:Uvtauiso} implies that $X'_v=\overline{\tau(\xi)}=X'_{w}$. This completes the proof of  Proposition~\ref{prop:p2family}~(ii). \end{proof}

Finally we deduce Corollary~\ref{cor:effmordellp2} from Theorem~\ref{thm:p2}.

\begin{proof}[Proof of Corollary~\ref{cor:effmordellp2}]
By assumption $F\in \ZZ[x,y,z]$ is homogeneous of degree $d\geq 1$ and $X=V_+(F)$ satisfies $(\tau)$. Let $x\in X(\QQ)$. We first reduce to the situation treated in Theorem~\ref{thm:p2}~(iii).  Recall that $U=X\setminus C_\tau$, and write $F=\prod F_i^{n_i}$ with $n_i\in\NN$ and $F_i\in \ZZ[x,y,z]$  of degree $d_i\geq 1$ such that $X_i=V_+(F_i)$ is integral.  Theorem~\ref{thm:p2}~(ii) gives $$h(x)=0 \quad \textnormal{if } \   x\in (X\setminus U)(\QQ).$$ Thus we may and do assume that $x\in U(\QQ)$. As $U=\cup U_i$ for $U_i=X_i\setminus C_\tau$, we see that $x$ lies in some $U_i(\QQ)$. The curve $X_i\subset\mathbb P^2$  satisfies $(\tau)$ since $X_i$ is contained in $X$ which satisfies $(\tau)$. Hence an application of  Theorem~\ref{thm:p2}~(i) and (iii) with the integral curve $X_i$ with $x\in U_i(\QQ)$ gives a non-degenerate ico model $X_f$ for $f\in \ZZ[x_0,\dotsc,x_4]$ with $$h(x)\leq c\nu_f^{24} \quad \textnormal{and} \quad |f|\leq u|F_i|^{v}.$$ 
Here we used that $d_i\leq d$, and we deduced $h(x)\leq c\nu_f^{24}$ by replacing in \eqref{eq:p2mordellbound} the ico model $X_f$ by the possibly larger ico model $X_{\tilde{f}}\supseteq X_f$ constructed in Theorem~\ref{thm:p2}~(iii).
The definition of $\nu_f$ in \eqref{eq:defnuf} gives that $\nu_f\leq |f|^5$, and \cite[Lem 1.6.11]{bogu:diophantinegeometry} combined with $F=\prod F_i^{n_i}$ implies that $|F_i|\leq 2^{3d}|F|$. Then the displayed bounds prove Corollary~\ref{cor:effmordellp2}.
\end{proof}

\section{Fermat problem inside a rational surface}\label{sec:fermat}

In this section we discuss the Fermat problem inside a rational surface $S$ over $\QQ$. After formulating the general problem and briefly reviewing the classical case $S=\mathbb P^2$, we focus on the case $S=S^{\textnormal{ico}}$ of the icosahedron surface. We also motivate the general case by discussing a Fermat conjecture inside any variety. For each $m\in \NN$ we write $\mathbb P^m=\mathbb P^m_\QQ$.

\paragraph{Fermat problem.}We now generalize the Fermat problem by replacing $\mathbb P^2$ with any birationally equivalent projective surface over $\QQ$. Let $S\subseteq \mathbb P^m$ be a projective rational surface over $\QQ$. We call $x\in \mathbb P^m(\QQ)$ trivial if $x=(x_i)$ with $x_i\in\{-1,0,1\}$, and we call a subset of $\mathbb P^m(\QQ)$ trivial if all its points are trivial. In particular the empty set is trivial. 

\vspace{0.3cm}

\noindent {\bf Problem (F).}\emph{ 
For any $a\in \ZZ^{m+1}$ with $a_i\neq 0$, try to construct $n_0\in \NN$ such that $X_n(\QQ)$ is trivial when $n\geq n_0$ where $X_n$ is the variety over $\QQ$ defined inside $S$ by}
$$X_n\subseteq S: a_0x_0^n+\dotsc+a_mx_m^n=0, \quad n\in\NN.$$

As $S$ is rational over $\QQ$, the infinite set $S(\QQ)$ is `large' and hence the Diophantine problem (F) is `non-trivial'. In particular, for any $n\in \NN$ there exists $a\in \ZZ^{m+1}$ with $a_i\neq 0$ such that $X_n(\QQ)$ is non-trivial and thus $n_0$ has to depend on $a$ if it exists. We conjecture that (F) can be solved  if and only if $(S\cap Z)(\QQ)$ is trivial, where $Z\subset \mathbb P^m$ is given by 
\begin{equation}\label{def:exceptionalz}
Z=\cap_{j}\cup_{i\neq j}V_+(f_{ij}), \quad f_{ij}=x_i^2x_j-x_j^3.
\end{equation}
This is a special case of Conjecture~\ref{conj:fermat} which we formulate for any projective variety over $\QQ$. To provide some motivation for (F) and Conjecture~\ref{conj:fermat}, we show in $\mathsection$\ref{sec:fermatconj} that a higher dimensional version of the $abc$-conjecture of Masser--Oesterl\'e implies Conjecture~\ref{conj:fermat}. We next discuss (F) for two rational surfaces $S$ with $(S\cap Z)(\QQ)$ trivial.

\paragraph{Classical case $S=\mathbb P^2$.} In this case (F) becomes the following classical Fermat problem which was solved by Wiles~\cite{wiles:modular} for $a=b=c=1$ with the optimal $n_0=3$: For arbitrary nonzero $a,b,c\in \ZZ$, try to construct $n_0\in \NN$ such that the generalized Fermat equation
$$ax^n+by^n=cz^n, \quad (x,y,z)\in \ZZ^{3}, \quad \gcd(x,y,z)=1, \quad n\in \NN,$$
has no solution with $xyz\notin\{-1,0,1\}$ when $n\geq n_0$. Many authors solved variations of $(F)$ for large classes of explicit $a,b,c$ with an optimal $n_0$, see the surveys  \cite{bechdaya:genfermat,bemisi:genfermat,grwi:genfermat}. While (F) is still open for general $a,b,c$, it is widely expected that $(F)$ can be solved. For instance the $abc$-conjecture of Masser--Oesterl\'e directly solves $(F)$.

\vspace{0.3cm}

\paragraph{The case $S=S^{\textnormal{ico}}$.} We now consider the surface $S^{\textnormal{ico}}\subset \mathbb P^4:\sigma_2=0=\sigma_4$ where $\sigma_k$ is the $k$-th elementary symmetric polynomial. It is a rational surface over $\QQ$ by classical constructions \cite{clebsch:clebschsurface,klein:cubicsurfaces}, and the following result solves $(F)$ for $S=S^{\textnormal{ico}}$. Let $h$ be the usual logarithmic Weil height on the curve $X_n\subset\mathbb P^4$ for $n\in\NN$ and put $c=10^{10^{12}}$.

\begin{corollary}\label{cor:fermat}
Suppose that $S=S^{\textnormal{ico}}$ and let $a\in \ZZ^5$ with $a_i\neq 0$. 
\begin{itemize}
\item[(i)] All $x\in\cup_{n\in \NN}X_n(\QQ)$ satisfy $h(x)\leq c\nu^{24}$ for  $\nu=\textnormal{rad}(\prod a_i)$.
\item[(ii)] There exists $n_0\in\NN$ such that $X_n(\QQ)$ is trivial when $n\geq n_0$.
\end{itemize}
\end{corollary}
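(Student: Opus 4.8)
The plan is to read off part~(i) directly from Corollary~\ref{cor:effmordellxf} and then to deduce part~(ii) from part~(i) by combining Northcott's finiteness theorem with an elementary Diophantine approximation. For part~(i): for each $n\in\NN$ the curve $X_n$ is exactly the ico model $X_f$ over $\ZZ$ given by the single homogeneous polynomial $f=a_0x_0^n+\dots+a_4x_4^n$ of degree $n$. Its diagonal part is $f$ itself, with all diagonal coefficients $a_i$ nonzero, so $X_f$ is non-degenerate; moreover $f$ does not vanish on the integral surface $S^{\textnormal{ico}}_\QQ=\textnormal{Proj}(A)$, so $(X_f)_\QQ\subsetneq S^{\textnormal{ico}}_\QQ$ and hence $(X_f)_\QQ=X_n$ is a curve over $\QQ$. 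Thus Corollary~\ref{cor:effmordellxf} applies with $X=(X_f)_\QQ$ and $h=h_{\mathbb P^4}$ and gives $h(x)\le c\nu_f^{24}$ for every $x\in X_n(\QQ)$. Since $\nu_f=\textnormal{rad}(\prod_i a_i)=\nu$ depends only on $a$ and not on the degree $n$, part~(i) follows.

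For part~(ii), part~(i) shows that $\bigcup_{n\in\NN}X_n(\QQ)$ is contained in the set $P:=\{x\in\mathbb P^4(\QQ):h(x)\le c\nu^{24}\}$, which is finite by Northcott's theorem and depends only on $a$. It therefore suffices to prove that every non-trivial $x\in P$ lies in $X_n(\QQ)$ for at most finitely many $n$, for then $n_0:=1+\max\{n:X_n(\QQ)\text{ is non-trivial}\}$ is well defined and has the asserted property. Fix such an $x$, normalised with coprime integer coordinates, so that $m_1:=\max_i|x_i|\ge 2$, and assume $\sum_i a_ix_i^n=0$ for infinitely many $n$; infinitely many of those $n$ share a parity. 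Grouping the nonzero coordinates by absolute value into distinct integers $m_1>m_2>\dots>m_t\ge1$ (coordinates equal to $0$ contribute nothing for $n\ge1$) and writing $\varepsilon_i=x_i/|x_i|\in\{\pm1\}$, we get $\sum_i a_ix_i^n=\sum_{\ell=1}^t m_\ell^n\,c_\ell$ with $c_\ell=\sum_{i:\,|x_i|=m_\ell}a_i\varepsilon_i^n$, a quantity that depends only on the parity of $n$. If $c_1\neq0$ along the chosen parity class, then for each such $n$ one has $m_1^n|c_1|\le\sum_{\ell\ge2}m_\ell^n|c_\ell|\le C\,m_2^n$, so $(m_1/m_2)^n$ is bounded and $n$ lies in a finite range, contradicting that there are infinitely many such $n$; hence $c_1=0$, and inductively $c_\ell=0$ for all $\ell$. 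Thus $\sum_{i:\,|x_i|=v}a_i=0$ (or $\sum_{i:\,|x_i|=v}a_i\varepsilon_i=0$ in the odd case) for every value $v\ge1$ attained by some $|x_i|$.

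It remains to show that this constraint, together with $x\in S^{\textnormal{ico}}(\QQ)$, forces $x$ to be trivial; this is the heart of the argument and amounts to verifying that $(S^{\textnormal{ico}}\cap Z)(\QQ)$ is trivial for $Z$ as in \eqref{def:exceptionalz}. Since every $a_i\neq0$, no block $\{i:|x_i|=v\}$ can be a singleton, so the nonzero coordinates of $x$ split into blocks of size $\ge2$ of equal absolute value; over $\QQ$ this puts $x$ on one of finitely many linear subspaces $L\subseteq\mathbb P^4$, one for each such partition and sign pattern. The partitions into a single block together with vanishing coordinates give zero-dimensional $L$, whose only points are $(\pm1:\dots:\pm1:0:\dots)$ and hence trivial. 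The remaining partitions, of type $2+3$ on all five coordinates or of type $2+2$ on four coordinates with one vanishing coordinate, give one-dimensional $L$ of the shape $\{(s:\pm s:t:\pm t:\pm t)\}$ or $\{(s:\pm s:t:\pm t:0)\}$; a direct (finite) computation shows that $\sigma_2$ restricts on each such $L$ to an anisotropic binary quadratic form over $\QQ$, so $L\cap S^{\textnormal{ico}}(\QQ)=\emptyset$. Running this last case check — which can equally be carried out in Macaulay2 — is the main obstacle, since it is precisely the assertion that the exceptional locus $Z$ contains no non-trivial rational point of $S^{\textnormal{ico}}$. Combining the finiteness of $P$, the finiteness of $\{n:x\in X_n(\QQ)\}$ for each non-trivial $x\in P$, and the triviality of $(S^{\textnormal{ico}}\cap Z)(\QQ)$ yields the desired $n_0$ and completes the proof.
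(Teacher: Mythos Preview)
Your argument for (i) is the same as the paper's. For (ii) you take a genuinely different and more elementary route. The paper first uses the height bound (i) to show that each non-trivial $x\in X_n(\QQ)$ gives an $S$-unit solution of $u_0+\dots+u_{k-1}=1$ with $S$ controlled by $\nu$, and then invokes the subspace theorem (via Evertse--Schlickewei--Schmidt) for the non-degenerate solutions, handling the degenerate subsums by recursion to shorter $S$-unit equations together with an explicit computation on $S^{\textnormal{ico}}$. You instead use Northcott to reduce to finitely many candidate points and then a purely elementary growth (Vandermonde-type) argument to force the block relations $c_\ell=0$; this completely avoids the subspace theorem. The paper's approach has the advantage of also bounding $|\{n:X_n(\QQ)\text{ non-trivial}\}|$ effectively via \cite{evscsc:uniteq}, which your argument does not give, but for the statement as written yours is sufficient and simpler. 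Both proofs end with the same verification that $(S^{\textnormal{ico}}\cap Z)(\QQ)$ is trivial (the paper does this computation in its ``degenerate'' cases).

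There is one concrete slip in your final case check. It is not true that $\sigma_2$ is anisotropic on every line of type $(s:\pm s:t:\pm t:0)$: on $L=\{(s:-s:t:t:0)\}$ one computes $\sigma_2|_L=t^2-s^2$, which vanishes at $(1:-1:1:1:0)$. Your conclusion $L\cap S^{\textnormal{ico}}(\QQ)=\emptyset$ is nonetheless correct, since the rational zeros of $\sigma_2|_L$ force $|s|=|t|$, and then either observe that such a point is trivial (all coordinates in $\{-1,0,1\}$), or that $\sigma_4=\pm s^4\neq 0$. So for this one sign pattern you need $\sigma_4$ (or the collapse to a single block), not $\sigma_2$ alone; with that adjustment the argument goes through.
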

While $abc$ directly solves (F) for $\mathbb P^2$, this does not work for $S^{\textnormal{ico}}\subset\mathbb P^4$ and it is unclear\footnote{In particular it is unclear whether (i) or (ii) follows from the proof (\cite{elkies:abcmordell}) of Mordell via an effective $abc$ with optimal exponent $1+\epsilon$: This proof gives height bounds depending inter alia on the Belyi degree.} whether one can deduce (i) or (ii) from $abc$. An effective higher dimensional $abc$ would give an effective $n_0$ in (ii), but it is again unclear whether one can deduce (i); see $\mathsection$\ref{sec:fermatconj}. 

Our $n_0$ is ineffective in the following sense: We can only control the cardinality of the set $\mathcal N=\{n\in\NN; \, X_n(\QQ)\,$non-trivial$\}$, but not its maximum $n_0=\max \mathcal N$. To explain this we now describe the proof of Corollary~\ref{cor:fermat}. After deducing the height bound (i) from Corollary~\ref{cor:effmordellxf}, we use (i) to reduce (F) to a specific class \eqref{eq:gensunit} of generalized unit equations $(u)$. Using the subspace theorem based on Diophantine approximations, finiteness for the non-degenerate solutions of $(u)$ was established independently by Evertse~\cite{evertse:gensunits} and van der Porten--Schlickewei~(see \cite[p.95]{schlickewei:sunitnf}). Moreover, the quantitative result of Evertse--Schlickewei--Schmidt~\cite{evscsc:uniteq} for $(u)$ allows to effectively bound $|\mathcal N|$ in terms of $\max|a_i|$. However proving effectivity for \eqref{eq:gensunit}, and thus for our $n_0$, is an interesting open problem.

 \paragraph{Comparison.}As the underlying geometry of (F) is equivalent for $\mathbb P^2$ and $S^{\textnormal{ico}}$, we conjecture that (F) behaves similarly (or is even related) for $\mathbb P^2$ and $S^{\textnormal{ico}}$. One can ask whether it is possible to transport arithmetic information for (F) via a birational map $\mathbb P^2\simeq S^{\textnormal{ico}}$? This works for effective Mordell as shown in Section~\ref{sec:p2}, but it requires new ideas for (F) since it is not clear how to suitably relate the curves $X_n\subset \mathbb P^2$ and $X_n\subset S^{\textnormal{ico}}$. 
 
Certain aspects of our strategy for $S^{\textnormal{ico}}$ are similar to the strategy for $\mathbb P^2$ developed (among others) by Frey~\cite{frey:linkssarav}, Ribet~\cite{ribet:levellowering}, Taylor--Wiles~\cite{taywil:modular} and Wiles~\cite{wiles:modular}. For example both strategies use a moduli interpretation of $X_n(\QQ)$ in terms of abelian varieties and both use modularity. However there are also important conceptual differences: We do not use level lowering and irreducibility theorems, but instead we rely on other (more Diophantine  analytic) tools such as for example Masser--W\"ustholz isogeny estimates~\cite{mawu:periods,mawu:abelianisogenies} based on transcendence and Schmidt's subspace theorem~\cite{schmidt:subspace} based on Diophantine approximations. 
After Wiles many authors further developed the strategy for $\mathbb P^2$ by introducing substantial new ideas: Can one combine some of these ideas (e.g. from Darmon's program~\cite{darmon:program}) with our strategy to solve new cases of (F)?



\subsection{Proof of Corollary~\ref{cor:fermat}}

We continue our notation. In this section we solve the Fermat problem (F) for the rational surface $S=S^{\textnormal{ico}}$ by combining Corollary~\ref{cor:effmordellxf} with the finiteness result for generalized unit equations which comes from the theory of Diophantine approximations.

\begin{proof}[Proof of Corollary~\ref{cor:fermat}]
Let $a\in \ZZ^5$ with $a_i\neq 0$ and let $n\in \NN$. To prove the height bound, we consider the homogeneous $f=\sum a_i x_i^n$ in $\ZZ[x_0,\dotsc,x_4]$ of degree $n\geq 1$ and we define $X_f\subset \mathbb P^4_\ZZ$ as in \eqref{def:icomod}. Then $X_f$ is a non-degenerate ico model over $\ZZ$ since all $a_i\neq 0$. We obtain $X_n=(X_f)_\QQ$ inside $\mathbb P^4_\QQ$ and we compute $\nu_f=\rad(\prod a_i)=\nu$. Thus Corollary~\ref{cor:effmordellxf} proves that all $x\in X_n(\QQ)$ satisfy $h(x)\leq c\nu^{24}$ as claimed in Corollary~\ref{cor:fermat}.

We next reduce $(F)$ to a special class of generalized $S$-unit equations for some finite set $S$ which is controlled in terms of $\nu$ by the height bound. Let $x\in X_n(\QQ)$ and write $x=(x_i)$ with $x_i\in \ZZ^5$ satisfying $\gcd(x_i)=1$. After relabeling, we may and do assume that $x_0,\dotsc,x_k$ are all nonzero and $x_{k+1},\dotsc, x_4$ are all zero. Then $\prod_{i\leq k} a_ix_i\neq 0$ and $k\geq 1$,  since $x\in X_n(\QQ)$ and each $a_i\neq 0$. Let $S$ be the set of rational primes dividing $\prod_{i\leq k} a_ix_i$.  As $x$ lies in $X_n(\QQ)$, it defines a solution $u=u(x)$ of the generalized $S$-unit equation:
\begin{equation}\label{eq:gensunit}
u_0+\dotsc+u_{k-1}=1, \quad  u\in \mathcal U^k; \quad \ \ u(x)_i=-\tfrac{a_i}{a_k}\bigl(\tfrac{x_i}{x_k}\bigl)^n
\end{equation}
for $\mathcal U$ the group of $S$-units in $\QQ$. We say that $u\in \mathcal U^k$ is degenerate if $\sum_{i\in I}u_i=0$ for some nonempty $I\subsetneq \{0,\dotsc,k-1\}$. Now the key point is as follows: As $S$ is controlled in terms of $\nu$ by our height bound, Diophantine approximations (\cite[Thm 1.1]{evscsc:uniteq}) give that \eqref{eq:gensunit} has at most finitely many non-degenerate solutions for our given $a$. Thus there  is a constant $c_a$ depending only on $a$ with the following property\footnote{In what follows in this proof we freely enlarge the constant $c_a$ whenever necessary.}: If $u=u(x)$ is non-degenerate, then \begin{equation}\label{eq:exponentbound}
nh\bigl(\tfrac{x_i}{x_k}\bigl)\leq c_a
\end{equation}
for all $i\leq k$ where $h(z)$ is the usual logarithmic Weil height of $z\in \QQ$. This allows us to solve $(F)$ as follows: Suppose that $x\in X_n(\QQ)$ is non-trivial. Then it holds $h(\tfrac{x_i}{x_k})\geq \log 2$ for some $i\leq k$, since $x_i=0$ for $i>k$ and hence $\gcd(x_0,\dotsc,x_k)=\gcd(x_i)=1$. In the case when $u(x)$ is non-degenerate, we thus deduce from \eqref{eq:exponentbound} that $n\leq c_a/\log 2$ as desired.

In the case when $u(x)$ is degenerate, a similar argument works since the relation $\sum_{i\in I} u_i=0$ for some nonempty $I\subsetneq\{0,\dotsc,k-1\}$  allows us to establish a variation of \eqref{eq:exponentbound}. However, to conclude in the degenerate case, it becomes much more subtle to exploit the crucial coprime information and we need to use in addition that $x\in S^{\textnormal{ico}}(\QQ)$. Assume that $u(x)$ is degenerate. We now go through all possible cases for $k$ and $|I|$.

Case $k=4$ and $|I|=3$. Assume that $I=\{0,1,2\}$. Then the relation $\sum_{i\in I}u_i=0$ gives that $u_{3}=1$ by \eqref{eq:gensunit} and that $v_0+v_1=1$ where $v_0=-\tfrac{u_0}{u_2}$ and $v_1=-\tfrac{u_1}{u_2}$ are $S$-units. As $S$ is controlled in terms of $\nu$, Mahler's theorem~\cite{mahler:approx1} gives that the solution $(v_0,v_1)$ of the $S$-unit equation $v_0+v_1=1$ has height bounded by $c_a$. Thus we obtain
\begin{equation}\label{eq:exponentbound2}
nh(z)\leq c_a, \quad  z\in\{\tfrac{x_0}{x_2},\tfrac{x_1}{x_2},\tfrac{x_3}{x_4}\}.
\end{equation}
Suppose that $x\in X_n(\QQ)$ is non-trivial. Then we claim that $\tfrac{x_0}{x_2}$, $\tfrac{x_1}{x_2}$ or $\tfrac{x_3}{x_4}$ lies not in $\{\pm 1\}$. This claim and \eqref{eq:exponentbound2} give that $n\leq c_a/\log 2$ as desired. To prove the claim, we assume for the sake of contradiction that the statement does not hold.  Then we obtain
\begin{equation}\label{eq:coprimcond}
x_3=\pm x_4 \quad \textnormal{ and } \quad x_0=\pm x_1=\pm x_2 \quad \textnormal{ with } \quad \gcd(x_0,x_3)=\gcd(x_i)=1.
\end{equation}
Hence, on exploiting that $x\in S^{\textnormal{ico}}(\QQ)$ satisfies $\sigma_2(x)=0=\sigma_4(x)$ and that all $x_i$ are nonzero, we deduce the following relations for some $r\in \pm\{1,3\}$ and $s,t\in \ZZ$ with $t\neq 0$: 
$$(i) \ rx_0^2+x_3^2=sx_0x_3 \quad  \textnormal{ and  } \quad  (ii) \ 2x_0=tx_3.$$  
Now, as $x$ is non-trivial by assumption, there exists a rational prime $p$ which divides one of the $x_0,\dotsc,x_4$. Thus $p$ divides $x_0$ or $x_3$ by \eqref{eq:coprimcond}. If $p\mid x_0$ then (i) shows that $p\mid x_3$, which contradicts \eqref{eq:coprimcond}. If $p\mid x_3$ then  (i) and \eqref{eq:coprimcond} imply that $p=3$, which gives a contradiction by (ii) and \eqref{eq:coprimcond}. This proves our claim, and thus completes the proof for $I=\{0,1,2\}$. For the other three subsets $I\subseteq \{0,1,2,3\}$ with $|I|=3$, we can use precisely the same arguments by symmetry since the polynomials $\sigma_2$ and $\sigma_4$ are symmetric.

Case $k=4$ and $|I|=2$. Assume that $I=\{0,1\}$. Then the relation $\sum_{i\in I}u_i=0$ gives that $u_{2}+u_3=1$ by \eqref{eq:gensunit} and that $\tfrac{u_0}{u_1}=-1$. As $S$ is controlled in terms of $\nu$, the solution $(u_2,u_3)$ of the $S$-unit equation $u_2+u_3=1$ has height bounded by $c_a$ and thus we obtain
\begin{equation}\label{eq:exponentbound3}
nh(z)\leq c_a, \quad  z\in\{\tfrac{x_2}{x_4},\tfrac{x_3}{x_4},\tfrac{x_0}{x_1}\}.
\end{equation}
A permutation of the $x_i$ transforms \eqref{eq:exponentbound3}  into \eqref{eq:exponentbound2}, and the polynomials $\sigma_2$ and $\sigma_4$ are symmetric. By symmetry we can thus apply precisely the same arguments as above to conclude that $\tfrac{x_2}{x_4}$, $\tfrac{x_3}{x_4}$ or $\tfrac{x_0}{x_1}$ lies not in $\{\pm 1\}$. Then \eqref{eq:exponentbound3} gives that $n\leq c_a/\log 2$ as desired.

We observe that the above two cases are the only possible cases since $|I|<k$. Indeed $|I|=1$ is not possible since each $u_i\neq 0$, while the case $k=3$  is not possible since $x_i\neq 0$ for all $i\leq 3$ but $\sigma_4(x)=0$. This completes the proof of Corollary~\ref{cor:fermat}.
\end{proof}

To deal with the degenerate case, we use in the above proof rather complicated computations. More conceptually, these computations simply check that $(S^{\textnormal{ico}}\cap Z)(\QQ)$ is trivial. This observation opens the way for formulating a general Fermat conjecture ($\mathsection$\ref{sec:fermatconj}).

Several parts of the above proof can be generalized in various directions. We are currently trying to work out a more conceptual description of (the limits of) our strategy.

\subsection{A Fermat conjecture inside any projective variety over $\QQ$}\label{sec:fermatconj}

To provide some motivation for Problem~(F), we formulate and discuss in this section a Fermat conjecture inside any projective variety over $\QQ$. We continue our notation.

 Let $m\in \NN$ and let $V\subseteq \mathbb P^m$ be a nonempty closed subscheme. We say that Fermat holds inside $V$ if for each $a\in\ZZ^{m+1}$ with $a_i\neq 0$ there exists $n_0\in\NN$ such that $X_n(\QQ)$ is trivial when $n\geq n_0$, where $X_n$ is the variety over $\QQ$ defined inside $V$ by
$$X_n\subseteq V: a_0x_0^n+\dotsc+a_mx_m^n=0.$$
We define the closed subscheme $Z\subset\mathbb P^m$ as in \eqref{def:exceptionalz}. The following conjecture, which we stated in \eqref{def:exceptionalz} when $V$ is a rational surface over $\QQ$, gives in particular a simple description of the class of projective rational surfaces over $\QQ$ for which Problem~(F) can be solved.
 
\begin{conjecture}\label{conj:fermat}
Fermat holds inside $V$ if and only if $(V\cap Z)(\QQ)$ is trivial.
\end{conjecture}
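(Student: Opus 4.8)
\textbf{Proof plan for Conjecture~\ref{conj:fermat}.}
The plan is to prove the two implications separately, using for the ``if'' direction the machinery developed in this paper (Theorem~\ref{thm:main}, Corollary~\ref{cor:effmordellxf} and the Diophantine approximation input used in the proof of Corollary~\ref{cor:fermat}) together with a higher-dimensional $abc$-conjecture, and for the ``only if'' direction a direct construction of nontrivial solutions. First I would unwind the definitions: a point $x=(x_i)\in \mathbb P^m(\QQ)$ lies on $X_n$ for some $a$ with $a_i\neq 0$ iff the diagonal Fermat relation holds, and the ``exceptional'' scheme $Z$ of \eqref{def:exceptionalz} is precisely the locus where, in any homogeneous coordinate, at least two coordinates agree up to sign with a third after cubing — i.e.\ for each $j$ the point lies in some $V_+(x_i^2x_j-x_j^3)$ with $i\neq j$, meaning $x_i\in\{0,\pm x_j\}$ (the nontrivial point-orbits forcing infinitely many $n$). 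The key structural observation, already implicit in the proof of Corollary~\ref{cor:fermat}, is that $Z\cap \mathbb P^m(\QQ)$ consists exactly of those $\QQ$-points whose coordinate ratios are all roots of unity in $\QQ$ (hence $\pm1$), which is why such points yield nontrivial solutions for infinitely many $n$ regardless of $a$.

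For the ``only if'' direction (Fermat inside $V$ $\Rightarrow$ $(V\cap Z)(\QQ)$ trivial), I would argue by contraposition: given a nontrivial $x\in (V\cap Z)(\QQ)$, I construct $a\in\ZZ^{m+1}$ with $a_i\neq 0$ so that $X_n(\QQ)$ contains a nontrivial point for infinitely many $n$. Since $x\in Z$, after clearing denominators we may take $x=(x_i)$ with $\gcd(x_i)=1$ and, for each index $j$, there is $i(j)\neq j$ with $x_{i(j)}^2 x_j = x_j^3$, i.e.\ $x_{i(j)}=\pm x_j$ when $x_j\neq 0$. Using this together with the scaling freedom in $a$ — one can choose $a_i=\pm a_{j}$ appropriately on the classes of equal-magnitude coordinates and absorb signs — one builds $a$ so that $a_0x_0^n+\dots+a_mx_m^n=0$ holds identically for all odd $n$ (or all $n$ in a suitable residue class), exactly as the coprime computations in the proof of Corollary~\ref{cor:fermat} exhibit for $S^{\textnormal{ico}}$. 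Because $x$ is nontrivial, this $x$ is a nontrivial point of $X_n(\QQ)$ for infinitely many $n$, so no $n_0$ exists and Fermat fails inside $V$.

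For the ``if'' direction ($(V\cap Z)(\QQ)$ trivial $\Rightarrow$ Fermat inside $V$), I would condition on a higher-dimensional $abc$-conjecture (of Masser--Oesterl\'e/Vojta type), as announced in $\mathsection$\ref{sec:fermatconj}. Fix $a$ with $a_i\neq 0$ and a nontrivial $x\in X_n(\QQ)$ with primitive integral coordinates; let $k$ be the number of nonzero coordinates, so $2\le k\le m+1$. As in the proof of Corollary~\ref{cor:fermat}, $x$ produces a solution of the generalized $S$-unit equation $u_0+\dots+u_{k-1}=1$ with $u_i=-\tfrac{a_i}{a_{k-1}}(\tfrac{x_i}{x_{k-1}})^n$, where $S$ is controlled by the height bound coming from $abc$ (or, when $V$ lifts to a coarse moduli situation as for $S^{\textnormal{ico}}$, from Corollary~\ref{cor:effmordellxf}). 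The quantitative subspace theorem (\cite{evscsc:uniteq}) bounds the non-degenerate solutions, giving $n h(\tfrac{x_i}{x_{k-1}})\le c_a$ for some index $i$, hence $n\le c_a/\log 2$ unless all ratios $\tfrac{x_i}{x_{k-1}}$ lie in $\{\pm1\}$; in the degenerate case one descends through vanishing subsums $\sum_{i\in I}u_i=0$ and recursively applies Mahler/subspace bounds, again forcing either $n$ bounded or all nonzero coordinate ratios in $\{\pm1\}$. In the latter case $x$ has all coordinate ratios $\pm1$, so $x\in Z$; since $x\in V$ and $x$ is nontrivial this contradicts the hypothesis that $(V\cap Z)(\QQ)$ is trivial. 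Hence $n$ is bounded, proving Fermat inside $V$.

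\textbf{Main obstacle.} The hard part is the degenerate-case bookkeeping for general $m$ and general $V$: for $S^{\textnormal{ico}}$ one exploits the explicit symmetric equations $\sigma_2=\sigma_4=0$ and small coprimality computations, but inside an arbitrary $V$ one has no such equations, so one must show abstractly that a $\QQ$-point whose nonzero coordinate ratios are all $\pm1$ necessarily lies in $Z$ — which is immediate from the definition of $Z$ — while simultaneously handling \emph{all} vanishing subsum patterns of the $S$-unit equation uniformly. Organizing this recursion (and tracking that each descent step stays inside a controlled $S$, so the subspace/Mahler bounds apply with constants depending only on $a$) is the delicate point; it is here that the precise shape $f_{ij}=x_i^2x_j-x_j^3$ of the generators of $Z$ is used, since $x_i^2=x_j^2$ on $\QQ$-points forces $x_i=\pm x_j$, exactly the coprimality-breaking relation one needs. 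The reliance on an unproved higher-dimensional $abc$ (and the consequent ineffectivity already present in Corollary~\ref{cor:fermat}~(ii)) is, of course, why the statement is a conjecture rather than a theorem.
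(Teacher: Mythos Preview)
Your ``only if'' direction is correct and essentially matches the paper: a nontrivial $x\in(V\cap Z)(\QQ)$ has, for every index $j$, either $x_j=0$ or some $x_i=\pm x_j$ with $i\neq j$, and from this one directly builds $a$ with all $a_i\neq 0$ so that $x\in X_n(\QQ)$ for infinitely many $n$.

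Your ``if'' direction has a genuine gap. You attempt to transplant the proof of Corollary~\ref{cor:fermat} to arbitrary $V$ by passing to an $S$-unit equation and asserting that ``$S$ is controlled by the height bound coming from $abc$''. But in Corollary~\ref{cor:fermat} the finite set $S$ of primes is controlled by the \emph{absolute} bound $h(x)\le c\nu^{24}$ of Corollary~\ref{cor:effmordellxf}, which is independent of $n$ and comes from the coarse Hilbert moduli structure on $S^{\textnormal{ico}}$; for a general $V\subseteq\mathbb P^m$ no such bound is available, and $(abc)^n$ does not supply one either---it relates $\max|b_i|$ to $\textnormal{rad}(\prod b_i)$ without bounding either quantity absolutely. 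Without a fixed finite $S$ the finiteness theorems of Evertse, van der Poorten--Schlickewei and Evertse--Schlickewei--Schmidt simply do not apply, so your passage from the subspace theorem to an inequality $nh(x_i/x_{k-1})\le c_a$ breaks down.

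The paper's conditional argument avoids this detour entirely: one selects a minimal non-vanishing subsum $\sum_{i\in I}a_ix_i^n=0$ and applies $(abc)^n$ \emph{directly} to the integers $a_ix_i^n$, which yields $\max_{i\in I}|x_i|^n\ll_{a,\epsilon}\bigl(\max_{i\in I}|x_i|\bigr)^{|I|(1+\epsilon)}$ and hence bounds $n$ outright; the remaining case $|I|=2$ is handled by the description \eqref{eq:computefermatz} of $Z(\QQ)$ together with the hypothesis that $(V\cap Z)(\QQ)$ is trivial. Thus no $S$-unit input is needed, and your recursive ``degenerate-case bookkeeping'' collapses to a single choice of minimal subsum.
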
We observe that $Z(\QQ)$ is trivial if $m\leq 2$ and that Conjecture~\ref{conj:fermat} holds for $m=1$. If $(V\cap Z)(\QQ)$ contains a non-trivial point $x$, then  \eqref{eq:computefermatz} allows to directly construct $a\in\ZZ^{m+1}$ with $a_i\neq 0$ such that $x\in X_n(\QQ)$ for infinitely many $n\in\NN$. This proves that one direction of Conjecture~\ref{conj:fermat} always holds. The  converse direction is widely open for most $V$. 
\paragraph{Conditional on $(abc)^n$.}Darmon--Granville~\cite[$\mathsection$9.2]{dagr:superell} observed that $(abc)^n$ directly implies a strong finiteness result for higher dimensional Fermat equations, where $(abc)^n$ is a higher dimensional version (\cite[$\mathsection$5.2]{dagr:superell}) of the $abc$-conjecture of Masser--Oesterl\'e. We now use the same idea to show that $(abc)^n$ directly implies Conjecture~\ref{conj:fermat}. For this we may and do assume that $m\geq 2$. Let $a\in\ZZ^{m+1}$ with $a_i\neq 0$, let $n\in \NN$ and make assumption $(*)$ There is a non-trivial point $x\in X_n(\QQ)$ and $(V\cap Z)(\QQ)$ is trivial. We compute \begin{equation}\label{eq:computefermatz}
Z(\QQ)=\{x\in\ZZ^{m+1}, \, \gcd(x_i)=1, \, x_i=0 \textnormal{ or } \tfrac{x_i}{x_j}=\pm 1 \textnormal{ for some } j\neq i \}.
\end{equation}
Then, as all $a_i\neq 0$ and $m\geq 2$, assumption $(*)$ gives $I\subseteq\{0,\dotsc,m\}$ with $|I|\geq 2$ such that $\sum_{i\in I} a_ix_i^n=0$ and no proper subsum vanishes. If $|I|=2$ then $(*)$ and \eqref{eq:computefermatz} imply that $n$ is bounded. If $|I|\geq 3$ then $(abc)^n$ combined with $(*)$ and \eqref{eq:computefermatz} implies that $n$ is bounded. This proves that $(abc)^n$ implies Conjecture~\ref{conj:fermat}, since the other direction always holds as shown above. Moreover this effectively bounds $n_0$ in terms of $\max|a_i|$ if the constants in $(abc)^n$ are effective. However it is not clear whether $(abc)^n$ allows for all $n<n_0$, with $X_n(\QQ)$ finite, to effectively bound the height of each $x\in X_n(\QQ)$ in terms of $\max|a_i|$.

\section{Effective Mordell}\label{sec:effmordell}

In this section we discuss various aspects of the effective Mordell problem over $\QQ$.

 Let $F\in \ZZ[x,y,z]$ be homogeneous of degree $d\geq 1$. Write $|F|=\max_\iota|a_\iota|$ for $a_\iota$ the coefficients of $F$ and let $h$ be the usual (\cite[p.16]{bogu:diophantinegeometry}) logarithmic Weil height on $\mathbb P^2=\mathbb P^2_\QQ$. 

\vspace{0.3cm} 

\noindent{\bf Conjecture (EM).}
\emph{
If $X=V_+(F)\subset\mathbb P^2$ has geometric genus $\geq 2$, then any $x\in X(\QQ)$ satisfies $h(x)\leq \mu|F|^{\kappa}$ for effective constants $\mu,\kappa$ depending only on $d$. }
             
\vspace{0.3cm}

Here by an effective constant $c$ depending only on $d$ we mean that   $c=\exp^{\circ n}(d)$ for some explicit $n\in\NN$, where $\exp^{\circ n}(\cdot)$ is the $n$-th iteration of $\exp(\cdot)$. For each curve $C$ over an arbitrary number field satisfying the elliptic setting of the Manin--Dem'janenko criterion,  Checcoli--Veneziano--Viada~\cite{chvevi:effmordell1,chvevi:effmordell2,vevi:effmordell} established a version  of Conjecture~(EM) with a drastically better dependence on the height of $C$. Our Corollary~\ref{cor:effmordellp2} proves Conjecture~(EM) for all $X\subset \mathbb P^2$ satisfying criterion $(\tau)$. One can try to exploit uniformity aspects of our height bounds in Corollaries \ref{cor:effmordell} and~\ref{cor:effmordellpullbackp2} to improve the dependence on $|F|$ for certain classes of curves, including subfamilies of the $\mathcal C_n$ constructed in $\mathsection$\ref{sec:families}. 

\paragraph{Dependence on $|F|$.}For most curves the `correct' dependence on $|F|$ should be logarithmic, see Zhang~\cite[Conj 1.4]{zhang:iccm}. To this end, Ih~\cite{ih:heightbound} deduced from Vojta's conjecture such a logarithmic dependence if one restricts to families of curves. However it is not clear\footnote{For example Elkies' proof (\cite{elkies:abcmordell}) of Mordell via $abc$ with optimal exponent $1+\epsilon$ gives height bounds depending polynomially on the Belyi degree and such bounds do not allow  to deduce $\ll\log |F|$.} whether the $abc$-conjecture of Masser--Oesterl\'e with optimal exponent $1+\epsilon$ or `Mordell effectif' in \cite{moret-bailly:effmordell} implies such a logarithmic dependence. In view of this we formulated Conjecture~(EM) with a polynomial dependence on $|F|$.

\paragraph{Algorithmic Mordell.}We now discuss the following algorithmic Mordell problem: Given a projective curve $X$ over $\QQ$ of geometric genus $\geq 2$, prove the existence of an algorithm which computes $X(\QQ)$. Conjecture~(EM) directly solves algorithmic Mordell for all $X\subset \mathbb P^2$. Moreover, there are several  conditional proofs of algorithmic Mordell in the literature, see for example Elkies~\cite{elkies:abcmordell} and Alp\"oge--Lawrence~\cite{alla:conditional}. The algorithm of Alp\"oge--Lawrence~\cite{alla:conditional} has the following feature: If it terminates then it provably computes $X(\QQ)$ and it always terminates conditional on certain standard conjectures. 

Based on the works of Skolem, Chabauty, Coleman, Kim and many others, there are plenty of results and methods in the literature which solve algorithmic Mordell for large classes of curves $X$. For example Poonen--Stoll~\cite{post:hyperelliptic} show that Chabauty's method solves algorithmic Mordell for a huge computable subfamily (of positive lower density, tending to $1$ if $g\to \infty$) of all odd degree hyperelliptic curves over $\QQ$ of genus $g\geq 3$. For a survey of many algorithmic methods/results, we refer to Balakrishnan et al~\cite{babebilamutrvo:effective} which also discusses algorithmic aspects of the proof of Mordell by Lawrence--Venkatesh \cite{lave:mordell}.

The method of Manin--Dem'janenko~\cite{demjanenko:rational,manin:torsion} solves algorithmic Mordell for all curves satisfying their criterion. The explicit bounds \cite{rvk:gl2,vkkr:hms,vkkr:chms} for the height $h_\phi$ solve algorithmic Mordell for all projective $X$ satisfying \raisebox{0.15ex}{{\scriptsize($\gl2$)}}, \raisebox{0.15ex}{{\scriptsize$(H)$}} and \raisebox{0.15ex}{{\scriptsize$(cH)$}} in \eqref{def:criteria} under the following technical assumption $(*)$ one can compute $\nu$ and all points $x\in X(\QQ)$ of bounded $h_\phi$. While this assumption should be harmless, a rigorous verification  appears to require a substantial (technical) effort. Let $F$ be a totally real field of odd degree. Building on the arguments of \cite{rvk:gl2}  and using the strategy of \cite{vkkr:hms}, Alp\"oge~\cite{alpoge:modularitymordell} solved over $F$ algorithmic Mordell for a family of curves satisfying \raisebox{0.15ex}{{\scriptsize($\gl2$)}} and for a special class of curves satisfying \raisebox{0.15ex}{{\scriptsize$(H)$}}: Those curves with a quasi-finite morphism to a representable Hilbert modular variety. It would be interesting to explore whether our Corollary~\ref{cor:effmordell} for non-degenerate curves over $\QQ$ can be useful for proving new cases of algorithmic Mordell.

\newpage

{\tiny
\bibliographystyle{amsalpha}
\bibliography{../literature}
}

{\scriptsize

\vspace{0.4cm}

\noindent Shijie Fan, MCM Chinese Academy of Sciences, Beijing, E-mail address: {\sf  fanshijie@gmail.com}

\vspace{0.1cm}

\noindent Rafael von K\"anel, IAS Tsinghua University, Beijing, E-mail address: {\sf rafaelvonkanel@gmail.com}
}

\end{document}